\newtheorem{thm}{Theorem}
\newtheorem{conj}[thm]{Conjecture}
\newtheorem{lemma}[thm]{Lemma}
\newtheorem*{fakeprop}{Proposition}
\newtheorem*{fakethm}{Theorem}
\newtheorem{coro}[thm]{Corollary}
\newtheorem{definition}[thm]{Definition}
\newtheorem{prop}[thm]{Proposition}
\newtheorem{example}[thm]{Example}
\newtheorem*{Remark}{Remark}
\numberwithin{thm}{section}
\numberwithin{equation}{section}
\newcommand*{\R}{\mathbb{R}}
\newcommand*{\C}{\mathbb{C}}
\renewcommand*{\S}{\Sigma}
\newcommand*{\g}{\mathfrak{g}}
\newcommand*{\h}{\mathfrak{h}}
\newcommand*{\mf}{\mathfrak}
\newcommand*{\mc}{\mathcal}
\newcommand*{\bb}{\mathbb}
\newcommand*{\del}{\partial}
\newcommand*{\T}{\hat{\mc{T}}}
\DeclareMathOperator{\tr}{tr}
\DeclareMathOperator{\ad}{ad}
\DeclareMathOperator{\Sym}{Sym}
\DeclareMathOperator{\Symp}{Symp}
\DeclareMathOperator{\Hilb}{Hilb}
\DeclareMathOperator{\Rep}{Rep}
\DeclareMathOperator{\id}{id}
\DeclareMathOperator{\Comm}{Comm}
\DeclareMathOperator{\Hom}{Hom}
\DeclareMathOperator{\rk}{rk}
\DeclareMathOperator{\Stab}{Stab}
\DeclareMathOperator{\supp}{supp}
\DeclareMathOperator{\codim}{codim}
\DeclareMathOperator{\Span}{Span}
\DeclareMathOperator{\PSL}{PSL}
\DeclareMathOperator{\SO}{SO}
\DeclareMathOperator{\Ortho}{O}
\begin{document}

\author{Alexander Thomas}
\title{Generalized Punctual Hilbert Schemes and \texorpdfstring{$\g$}{g}-Complex Structures}
\address{Max-Planck Institute for Mathematics, Vivatsgasse 7, 53111 Bonn, Germany}
\email{athomas@mpim-bonn.mpg.de}

\begin{abstract}
We define and analyze various generalizations of the punctual Hilbert scheme of the plane, associated to complex or real Lie algebras. Out of these, we construct new geometric structures on surfaces whose moduli spaces share multiple properties with Hitchin components, and which are conjecturally homeomorphic to them. For simple complex Lie algebras, this generalizes the higher complex structure from \cite{FockThomas}. For real Lie algebras, this should give an alternative description of the Hitchin-Kostant-Rallis section defined in \cite{HKR-section}.
\end{abstract}

\maketitle

\section*{Introduction}

\subsection*{Motivation.}
The main motivation for this paper is to get a geometric approach to \textit{Hitchin components}. These components were constructed by Nigel Hitchin in his famous paper \cite{Hit.1} using analytic methods (Higgs bundles). Hitchin components are connected components of the character variety $\Hom(\pi_1(\Sigma), G)/G$ where $\Sigma$ is a smooth surface, closed and without boundary and $G$ is a adjoint group of a split real form of a complex simple Lie group (for example $\PSL_n(\R)$).

For the group $G=\PSL_2(\R)$, the Hitchin component is nothing but \textit{Teichm\"uller space}, which has various geometric descriptions, for example as the moduli space of complex structures on $\S$. 

For $G=\PSL_n(\R)$, Vladimir Fock and the author defined in \cite{FockThomas} a new geometric structure, called the \textit{higher complex structure}, whose moduli space shares various properties with Hitchin's component for $\PSL_n(\R)$. 
In \cite{Thomas}, the author proved several steps towards a \textit{canonical diffeomorphism between the moduli space of higher complex structures and the $\PSL_n(\R)$-Hitchin component}, which stays a conjecture despite the progress.
The main ingredient to construct the higher complex structure is the punctual Hilbert scheme of the plane and its zero-fiber.

In this article, we pursue these ideas by defining a $\g$-complex structure, for a complex simple Lie algebra $\g$, using a generalization of the punctual Hilbert scheme. This should give a geometric approach to Hitchin components for any split real group $G$.

Our strategy to define these new objects is twofold: on the one hand we use the various descriptions of the punctual Hilbert scheme, especially the description as variety of commuting matrices, in order to generalize to an arbitrary $\g$. On the other, we got inspiration from Hitchin's original paper \cite{Hit.1} (section 5) where he starts with a principal nilpotent element and deforms it into an element of a principal slice (a generalized companion matrix). Instead of deforming the principal nilpotent element, we add an element which commutes with it. This has the same number of degrees of freedom as the deformation.

One might also ask what happens for real groups which are not split. In the theory of Higgs bundles, there is the notion of a \textit{$G_\R$-Higgs bundle} (or just $G$-Higgs bundle), which was used in \cite{HKR-section} to define a generalization of the Hitchin section, called \textit{Hitchin-Kostant-Rallis section}. A $G_\R$-Higgs bundle canonically gives a connection with monodromy in $G_\R$ (through the non-ablian Hodge correspondence). In the split real case, the Hitchin-Kostant-Rallis section can be identified with Hitchin's component, but in general, it does not give a component.

We give a counterpart of $G_\R$-Higgs bundles in our language of punctual Hilbert schemes using the theory of Kostant-Rallis \cite{KR} and we introduce the notion of a $\g_\R$-complex structure.

\medskip
\subsection*{Results.}
The punctual Hilbert scheme of the plane allows a description in terms of commuting matrices. Imitating this description, we define, for a simple complex Lie algebra $\g$, the \textbf{$\g$-Hilbert scheme} by 
$$\Hilb(\g) = \{(A,B) \in \g^2 \mid [A,B]=0 \text{ + generic condition}\} /G$$
where the generic condition is described below in Definition \ref{maindef}.
Its zero-fiber $\Hilb_0(\g)$ are those pairs of matrices which are nilpotent. The regular part are those pairs, for which at least one element is regular.
The first result of the paper describes the regular part of the zero-fiber (see Corollary \ref{regzerohilb}):
\begin{fakethm}
The regular part of $\Hilb_0(\g)$ is an affine space of dimension $\rk \g$.
\end{fakethm}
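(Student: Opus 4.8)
The plan is to exhibit an explicit isomorphism between the regular part of $\Hilb_0(\g)$ and the centralizer $V:=\mathfrak{z}_\g(e)$ of a fixed principal nilpotent element $e\in\g$, and then to invoke Kostant's structure theory to recognize $V$ as an affine space of dimension $\rk\g$. First I would normalize the regular element: a point of the regular part is represented by a commuting pair $(A,B)$ of nilpotent elements at least one of which is regular, and — after using the generic condition of Definition \ref{maindef} to fix which one — I may assume $A$ is regular. Since $\g$ has a unique regular nilpotent orbit, conjugation brings $A$ to a fixed principal nilpotent $e$, completed to a principal $\mathfrak{sl}_2$-triple $(e,h,f)$; the only remaining freedom is the stabilizer $Z_G(e)$.

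Next, since $[e,B]=0$ we have $B\in V=\mathfrak{z}_\g(e)$. By Kostant's theorem, $V$ is an abelian subalgebra of $\g$ of dimension $\rk\g$, contained in the nilradical $\mathfrak{n}=\bigoplus_{i>0}\g_i$ for the grading by $\ad h$; in particular every element of $V$ is nilpotent, and $V$ carries a basis of $\ad h$-homogeneous vectors of degrees $2m_1,\dots,2m_{\rk\g}$, the $m_i$ being the exponents of $\g$. This has two consequences: the zero-fibre condition on the second coordinate is automatic, so $B$ ranges over the entire vector space $V$; and $V$ is already a linear space of the right dimension. It then remains to see that the residual $Z_G(e)$-action is trivial. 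Here I would use Kostant's description of $Z_G(e)$ as being generated by the centre of $G$ and by $\exp(V)$: the centre acts trivially on $\g$ by the adjoint action, and $\exp(V)$ acts trivially on $V$ precisely because $V$ is abelian, since $\mathrm{Ad}_{\exp X}Y=Y$ whenever $[X,Y]=0$. Consequently the map $V\to\Hilb_0(\g)$, $B\mapsto[(e,B)]$, is well-defined and injective — if $g\cdot(e,B)=(e,B')$ then $g$ fixes $e$, so $g\in Z_G(e)$ and hence $B=B'$ — it lands in the regular part, and a short argument with local sections of the adjoint orbit map $G\to G\cdot e$ upgrades the bijection to an isomorphism of varieties. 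Surjectivity is the content of the first step together with the routine verification that $(e,B)$ satisfies the generic condition for every $B\in V$, which should be immediate since $e$ is already regular. Thus the regular part is isomorphic to $V\cong\C^{\rk\g}$.

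The step I expect to be the main obstacle — and the one where the precise form of the generic condition of Definition \ref{maindef} really enters — is the normalization in the first paragraph: one must check that a regular pair is $G$-equivalent to exactly one pair $(e,B)$ with $B\in V$, i.e. that the clause ``at least one element is regular'' does not tacitly glue two coordinate charts. Such gluing is exactly what happens for the ordinary punctual Hilbert scheme, whose zero-fibre in rank one is $\mathbb{P}^1$ rather than $\C$; so the generic condition must break the $A\leftrightarrow B$ symmetry and single out a canonical normal form in which the regular element sits in a preferred slot. Once that is established, the remaining steps rest only on Kostant's descriptions of $\mathfrak{z}_\g(e)$ and of $Z_G(e)$, and the conclusion that the regular part is the affine space $V\cong\C^{\rk\g}$ follows.
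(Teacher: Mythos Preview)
Your approach is essentially the paper's: normalize the regular nilpotent element to a fixed principal nilpotent $f$, so the partner lies in $Z(f)\cong\C^{\rk\g}$, and then argue that the residual stabilizer acts trivially. Two points deserve comment.

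First, your claim that $Z_G(e)$ is ``generated by the centre of $G$ and by $\exp(V)$'' is, for the adjoint group (which has trivial centre), exactly the statement that $Z_G(e)$ is connected. This is precisely the paper's main technical input (the two lemmas following Proposition~\ref{paramit}): the paper reduces to the principal nilpotent case via the Jordan decomposition and then checks the tables in Collingwood--McGovern type by type, explicitly remarking that it is ``surprising that the last lemma has never been stated''. So you should not simply attribute this to Kostant; it is the substantive step, and your argument that $\exp(V)$ acts trivially on $V$ because $V$ is abelian only handles the identity component.

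Second, your worry about the $A\leftrightarrow B$ symmetry is well placed, but your proposed resolution is wrong: the generic condition of Definition~\ref{maindef} is $\dim Z(A,B)=\rk\g$, which is manifestly symmetric in $A$ and $B$ and breaks nothing. In fact your $\mathbb{P}^1$ intuition is exactly right: for $\g=\mathfrak{sl}_2$ the paper itself identifies $\Hilb(\mathfrak{sl}_2)=\Hilb^2_{red}(\C^2)$, whose zero-fibre is $\mathbb{P}^1$, and since every nonzero element of $\mathfrak{sl}_2$ is regular one gets $\Hilb_0^{reg}(\mathfrak{sl}_2)=\mathbb{P}^1$, not $\C$. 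The paper's Proposition~\ref{paramit} is stated only for classes ``where $A$ is regular'', and the corollary's one-line proof parametrizes only that chart by $Z(f)$; everything downstream (the case analysis in Section~\ref{classical-case}, the definition of $\g$-complex structures) uses this chart. So read the statement as describing the affine chart $\{A\text{ regular}\}$ of the regular zero-fibre, not as a claim that two such charts glue to an affine space.
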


We obtain several results in analogy with the classical theory for punctual Hilbert scheme, for example the existence of a Chow map $\Hilb(\g) \rightarrow \h^2/W$ and a description of $\Hilb(\g)$ as a space of ideals. 

Further, the functorial behavior of $\Hilb(\g)$ is analyzed. In particular, we construct an inclusion $\Hilb(\mf{sl}_2) \hookrightarrow \Hilb(\g)$ and a sort of inverse on the level of the regular zero-fiber:
\begin{equation}\label{muintro}
\mu:\Hilb_0^{reg}(\g) \rightarrow \Hilb_0(\mf{sl}_2).
\end{equation}

\medskip
Using the generalized Hilbert scheme, we define a \textbf{$\g$-complex structure} on a surface $\S$ to be a $G$-gauge class of elements of the form $$\Phi_1(z) dz+ \Phi_2(z) d\bar{z} \in \Omega^1(\Sigma, \g) = \Omega^1(\Sigma,\C)\otimes \g$$ such that $$[(\Phi_1(z), \Phi_2(z))] \in \Hilb^{reg}_0(\g)$$ for all $z\in \Sigma$ and some generic constraint (see Definition \ref{def-g-complex-1}).

For $\g=\mf{sl}_2$, we recover the complex structure. For classical $\g$, a $\g$-complex structure is described by \emph{higher Beltrami differentials}.
Using the map $\mu$ from Equation \eqref{muintro} above, we get a first result for $\g$-complex structures (see Proposition \ref{inducedcomplex}):
\begin{fakeprop}
A $\g$-complex structure induces a complex structure on $\S$.
\end{fakeprop}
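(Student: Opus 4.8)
The plan is to apply the map $\mu$ from \eqref{muintro} pointwise. If the $\g$-complex structure is represented by $\Phi = \Phi_1(z)\,dz + \Phi_2(z)\,d\bar z$, then by hypothesis $[(\Phi_1(z),\Phi_2(z))] \in \Hilb_0^{reg}(\g)$ for every $z \in \S$, so $\mu$ may be evaluated at each point. This produces pairs $(\tilde\Phi_1(z),\tilde\Phi_2(z))$ with $[(\tilde\Phi_1(z),\tilde\Phi_2(z))] \in \Hilb_0(\mf{sl}_2)$, which we assemble into $\tilde\Phi = \tilde\Phi_1(z)\,dz + \tilde\Phi_2(z)\,d\bar z \in \Omega^1(\S,\mf{sl}_2)$. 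The claim is that $\tilde\Phi$ is an $\mf{sl}_2$-complex structure, and since for $\g = \mf{sl}_2$ a $\g$-complex structure is by definition a complex structure on $\S$, this proves the proposition. The proof thus reduces to three verifications: that the assignment $\Phi \mapsto \tilde\Phi$ is well defined on $G$-gauge classes and independent of the choice of local holomorphic coordinate; that $[(\tilde\Phi_1(z),\tilde\Phi_2(z))]$ lands in the locus $\Hilb_0^{reg}(\mf{sl}_2)$ together with the genericity constraint of Definition \ref{def-g-complex-1} for $\mf{sl}_2$; and that $\tilde\Phi$ depends smoothly on $z$ with the correct bidegrees.

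For well-definedness I would invoke the equivariance and naturality properties of $\mu$ coming from the functoriality results: $\mu$ is built compatibly with the principal embedding $\mf{sl}_2 \hookrightarrow \g$ and descends to the quotient defining $\Hilb_0(\mf{sl}_2)$, so a $G$-gauge transformation $g\colon \S \to G$ acting fiberwise on $\Phi$ leaves $[(\tilde\Phi_1(z),\tilde\Phi_2(z))]$ unchanged (a $G$-gauge transformation of the $\g$-theory is sent to a gauge transformation of the $\mf{sl}_2$-theory). Coordinate-independence follows from the naturality of $\mu$ under the common rescaling of both slots induced by a holomorphic change $w = w(z)$, namely $\Phi_1 \mapsto (dw/dz)\,\Phi_1$ and $\Phi_2 \mapsto \overline{(dw/dz)}\,\Phi_2$: this is exactly the rescaling that makes the original $\g$-complex structure coordinate-free, and it is intertwined by $\mu$ because $\mu$ is a morphism compatible with the scaling actions on source and target.

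The substantive step is the genericity constraint. By the Theorem quoted in the introduction, $\Hilb_0^{reg}(\mf{sl}_2)$ is an affine line, parametrized by a single coefficient $\nu$ of Beltrami type, and the genericity condition defining an $\mf{sl}_2$-complex structure is the open condition $|\nu| < 1$. I would check that the generic condition imposed pointwise on $(\Phi_1(z),\Phi_2(z))$ in Definitions \ref{maindef} and \ref{def-g-complex-1} maps under $\mu$ precisely into this bound, so that the induced Beltrami coefficient is non-degenerate; in real dimension two a non-degenerate Beltrami differential defines an almost complex structure, which is automatically integrable, hence an honest complex structure on $\S$. Smoothness and bidegree are then immediate: $\mu$ is a morphism of varieties, hence smooth, so the $\tilde\Phi_i$ are smooth in $z$, and since $\mu$ sends the first slot to the first slot, $\tilde\Phi_1$ is coupled to $dz$ and $\tilde\Phi_2$ to $d\bar z$.

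I expect the genericity step to be the main obstacle: one must pin down exactly how the generic condition entering Definition \ref{maindef} behaves under $\mu$, and in particular verify that $\mu$ does not destroy non-degeneracy, i.e. that the image of the constrained locus genuinely lies inside the unit disc rather than merely in $\C$. The equivariance and smoothness steps are essentially bookkeeping once the properties of $\mu$ from the functoriality section are in hand.
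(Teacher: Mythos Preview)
Your approach is the same as the paper's: apply the map $\mu$ pointwise and observe that an $\mf{sl}_2$-complex structure is a complex structure. The paper's proof is two sentences long and invokes nothing beyond Equation~\eqref{mu}.

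Where you go astray is in identifying the genericity step as ``the main obstacle.'' It is in fact trivial by design. Look again at Definition~\ref{def-g-complex-1}: the generic constraint imposed on a $\g$-complex structure is literally the condition $\mu_2(z)\bar\mu_2(z) \neq 1$, where $\mu_2(z)$ is the number produced by Proposition~\ref{mu2prop}, i.e.\ the very output of the map $\mu$. So the Beltrami differential of the induced $\mf{sl}_2$-structure is $\mu_2$ itself, and the non-reality constraint for $\mf{sl}_2$ is the \emph{same} inequality $\mu_2\bar\mu_2 \neq 1$ already assumed. There is nothing to verify; the constraint was put into Definition~\ref{def-g-complex-1} precisely so that this proposition would be immediate. (Note also that the condition is $\mu_2\bar\mu_2 \neq 1$, not $\lvert\mu_2\rvert < 1$; the latter is the orientation-compatibility condition imposed only later when defining the moduli space.)

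Your discussion of gauge-equivariance and coordinate-independence is fine but unnecessary at this level of detail: $\mu$ is a map on conjugacy classes $\Hilb_0^{reg}(\g) \to \Hilb_0(\mf{sl}_2)$, so gauge-invariance is automatic, and the scalar $\mu_2$ transforms as a Beltrami differential because it is extracted from the ratio of the $d\bar z$- and $dz$-components.
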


Using a representation $\g \hookrightarrow \mf{sl}_m$, we define the notion of a \textit{higher diffeomorphism of type $\g$}. We consider $\g$-complex structures modulo these transformations.
For $\g$ of classical type, the local theory is described by Theorem \ref{thm1}:
\begin{fakethm}
For $\g$ of type $A_n$, $B_n$ or $C_n$, any two $\g$-complex structures are locally equivalent under higher diffeomorphism of type $\g$.

For $\g$ of type $D_n$, all $\g$-complex structures with non-vanishing higher Beltrami differential $\sigma_n$ are equivalent under higher diffeomorphisms. However, the zero locus of $\sigma_n$ is an invariant.
\end{fakethm}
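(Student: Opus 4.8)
The statement being local, I would work on a disk $\Delta\subset\C$ and aim to trivialize, by a higher diffeomorphism of type $\g$, the \emph{higher Beltrami differentials} that encode a $\g$-complex structure on $\Delta$. First I would set these up: fixing a principal $\mf{sl}_2$-triple $(e,h,f)$ in $\g$, Corollary~\ref{regzerohilb} says $\Hilb_0^{reg}(\g)$ is affine of dimension $\rk\g=n$, and after gauging $\Phi_1$ to $e$ it is coordinatized by the components of $\Phi_2$ along a basis $e=e_1,e_2,\dots,e_n$ of $\ker(\ad e)$, with $e_i$ of $\ad h$-eigenvalue $2m_i$, where $m_1=1<m_2<\dots$ are (up to reordering) the exponents of $\g$. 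So a $\g$-complex structure on $\Delta$ becomes a tuple $(\mu^{(1)},\dots,\mu^{(n)})$ with $\mu^{(i)}$ a $(-m_i,1)$-form, i.e.\ a section of $K^{-m_i}\otimes\bar K$, the flat structure being $\mu^{(1)}=\dots=\mu^{(n)}=0$. Since $\mu^{(1)}$ is the ordinary Beltrami differential, which by Proposition~\ref{inducedcomplex} carries the induced complex structure, I would first apply an ordinary diffeomorphism solving the Beltrami equation on $\Delta$ to arrange $\mu^{(1)}=0$.

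For types $A_n$, $B_n$, $C_n$ the principal nilpotent of $\g$ is regular in the defining representation $\mf{sl}_m$ (a single Jordan block of size $m\in\{n+1,2n+1,2n\}$), so via $\g\hookrightarrow\mf{sl}_m$ a $\g$-complex structure is an honest higher complex structure in the sense of \cite{FockThomas}, all of whose Beltrami differentials of degree not an exponent of $\g$ vanish. I would then run a recursive trivialization: assuming $\mu^{(1)}=\dots=\mu^{(k-1)}=0$, I kill $\mu^{(k)}$ with the higher vector field $v_k$ along $e_k$, locally a section $\varepsilon$ of $K^{-m_k}$. The crucial input is the transformation law: the action on the higher Beltrami differentials should be triangular for the exponent filtration with leading term $\delbar$, so that $\mu^{(k)}\mapsto\mu^{(k)}-\delbar\varepsilon$ up to terms vanishing once $\mu^{(1)},\dots,\mu^{(k-1)}$ do, and $\mu^{(j)}$ for $j<k$ moves only by terms vanishing at the flat structure. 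As $\delbar$ is surjective on $(-m_k,1)$-forms over a disk ($\delbar$-Poincar\'e lemma), solving $\delbar\varepsilon=\mu^{(k)}$ removes $\mu^{(k)}$ to higher order, after which the lower differentials are put back to zero by re-running earlier steps. I would control convergence by a scaling parameter ($\mu^{(i)}\mapsto t^{m_i}\mu^{(i)}$), solving order by order in $t$ and then passing to an actual trivialization via a real-analytic Cauchy--Kovalevskaya / implicit-function argument. No differential of non-exponent degree can appear, for it would violate $(\Phi_1,\Phi_2)\in\Hilb_0^{reg}(\g)$; and each $v_k$ used is a generator of the higher diffeomorphisms of type $\g$. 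This would settle $A_n$, $B_n$, $C_n$.

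For $D_n$ the situation differs: the principal nilpotent of $\mf{so}_{2n}$ has Jordan type $[2n-1,1]$ in $\C^{2n}$, hence is \emph{not} regular in $\mf{sl}_{2n}$, so the topmost $\mf{sl}_{2n}$-differential is missing and its slot among the $n$ coordinates on $\ker(\ad e)$ is taken by the Pfaffian direction $e_{\mathrm{Pf}}$, of $\ad h$-eigenvalue $2(n-1)$, with higher Beltrami differential $\sigma_n$ a $(1-n,1)$-form. The recursion above should trivialize every higher Beltrami differential except $\sigma_n$. To remove $\sigma_n$ I would need a higher diffeomorphism along $e_{\mathrm{Pf}}$, but because the Pfaffian is a square root of the determinant the relevant parameter is not a free section but one built from a local square root of $\sigma_n$, so the governing equation has leading symbol degenerating exactly along $\{\sigma_n=0\}$. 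Hence $\sigma_n$ can be brought to a fixed non-vanishing local model precisely over the locus where it is nowhere zero, which gives the first $D_n$ assertion. For the second, any higher diffeomorphism of type $\g$ acts on $\sigma_n$ by an invertible bundle map (multiplication by a nowhere-vanishing function) plus terms vanishing wherever $\sigma_n$ does, so it carries the zero divisor of $\sigma_n$ onto that of the transformed differential; the divisor is therefore an invariant.

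The hard part will be the transformation law used in the second paragraph: proving, by unwinding the definition of higher diffeomorphisms of type $\g$ through $\g\hookrightarrow\mf{sl}_m$ and the bracket relations inside $\ker(\ad e)$, that the action on $(\mu^{(1)},\dots,\mu^{(n)})$ is triangular with $\delbar$ as its leading part, and that the ensuing unipotent recursion converges. For $D_n$ there is the additional difficulty of pinning down the obstruction to trivializing $\sigma_n$ — in particular of showing that it is exactly the zero divisor, multiplicities and all, not merely its support.
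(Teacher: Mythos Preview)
Your overall strategy for types $A_n$, $B_n$, $C_n$ matches the paper's: embed $\g\hookrightarrow\mf{sl}_m$ via the standard representation and reduce to the local triviality of higher complex structures from \cite{FockThomas}. The paper simply invokes that result; your recursion-plus-Cauchy--Kovalevskaya scheme is more involved than needed. Both there and for $D_n$ the paper uses a Moser-type flow rather than a formal power series: one fixes a path of structures, writes down an explicit time-dependent Hamiltonian, and solves for its coefficients using the integral operator $T$ inverting $\bar\partial$ on the disk, then cuts off with a bump function so the flow integrates for all time. For $D_n$, after killing the $\mu_{2i}$ with Hamiltonians polynomial in $p$, the paper uses $H=w_-\bar p+w_{2n-3}p^{2n-3}$ and computes $\tfrac{d}{dt}\sigma_n^t=\bar\partial(w_-^t\sigma_n^t)$, solved along the linear path to $\sigma_n\equiv 1$ by $w_-^t=(\sigma_n^t)^{-1}T(1-\sigma_n^0)$; the factor $(\sigma_n^t)^{-1}$ is exactly the degeneracy you anticipate at the zero locus.

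Your invariance argument for the zero locus has a gap. You assert that a higher diffeomorphism acts on $\sigma_n$ as ``an invertible bundle map plus terms vanishing wherever $\sigma_n$ does,'' but the computed infinitesimal action is $\delta\sigma_n=\bar\partial(w_-\sigma_n)=(\bar\partial w_-)\sigma_n+w_-\,\bar\partial\sigma_n$, whose second term need not vanish on $\{\sigma_n=0\}$; zeros can move, and your description as stated does not establish that they cannot be destroyed. The paper takes a genuinely different route: it recognises the ideal $\langle p^{2n-1},\,p\bar p,\,-\bar p^2+\nu_{2n-2}p^{2n-2}\rangle$ (with $\nu_{2n-2}=\sigma_n^2$ once the $\mu_{2i}$ vanish) as, up to adding $\langle p,\bar p\rangle^{n-1}$, the Jacobian ideal of $f=-\tfrac{\nu_{2n-2}}{2n-1}p^{2n-1}+p\bar p^{\,2}$, which is a Kleinian singularity of type $D_{2n}$ precisely when $\nu_{2n-2}\neq 0$. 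Since the singularity type of $f$ is a diffeomorphism invariant, no higher diffeomorphism can turn $\sigma_n(z_0)=0$ into $\sigma_n(z_0)\neq 0$. If you want to salvage a direct argument, the homogeneous linear form of $\tfrac{d}{dt}\sigma_n^t=\bar\partial(w_-^t\sigma_n^t)$ does imply that the vanishing ideal of $\sigma_n$ is preserved along the flow, but that is a different statement from the one you wrote and would need to be argued carefully.
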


The moduli space of $\g$-complex structures, denoted by $\hat{\mc{T}}_\g$ enjoys the following properties (see Theorem \ref{thm2}):
\begin{fakethm}
For $\g$ of classical type, the moduli space $\hat{\mathcal{T}}_{\g}$ is a contractible manifold of complex dimension $(g-1)\dim \g$. Further, there is a copy of Teichm\"uller space inside $\hat{\mathcal{T}}_{\g}$. 
Along this copy of Teichmüller space, the cotangent space at any point $I$ is given by 
$$T^*_{I}\hat{\mathcal{T}}_{\g} = \bigoplus_{m=1}^{r} H^0(K^{m_i+1})$$ where $(m_1,...,m_r)$ are the exponents of $\g$ and $r=\rk \g$ denotes the rank of $\g$.
\end{fakethm}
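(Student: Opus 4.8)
The plan is to establish the three assertions — contractibility with the correct dimension, the embedding of Teichmüller space, and the cotangent space computation along that locus — in sequence, building on the local theory from Theorem \ref{thm1} and the structure of $\Hilb_0^{reg}(\g)$ from Corollary \ref{regzerohilb}.

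First I would prove contractibility and compute the dimension. The key input is that $\Hilb_0^{reg}(\g)$ is an affine space of dimension $\rk\g$ (Corollary \ref{regzerohilb}), so a $\g$-complex structure is locally a choice of $\rk\g$ functions on $\Sigma$ (the higher Beltrami differentials $\sigma_{m_i+1}$, of which the first is the classical Beltrami differential $\mu$). I would set up the moduli space $\hat{\mc T}_\g$ as the quotient of the (infinite-dimensional affine) space of such fields by the group of higher diffeomorphisms of type $\g$. Contractibility should follow by a scaling argument: the family $t \mapsto t\cdot I$ for $t\in[0,1]$ retracts any $\g$-complex structure to the standard one (corresponding to the principal nilpotent pair $(e,0)$ at every point), provided one checks this descends to the quotient — which it does because scaling commutes with the gauge action, and the generic/regularity condition defining $\Hilb_0^{reg}(\g)$ is preserved along the path (the principal nilpotent stays regular). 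The dimension count: naively the space of fields modulo higher diffeomorphisms has dimension equal to $\sum_{i=1}^r (\text{space of } \sigma_{m_i+1} \text{ on } \Sigma) - (\text{higher diffeomorphisms})$; by a deformation-theoretic / Riemann–Roch argument paralleling the $\PSL_n$ case in \cite{FockThomas}, each exponent $m_i$ contributes $(2m_i+1)(g-1)$ to the complex dimension (sections of $K^{m_i+1}$ minus obstructions, or equivalently a Dolbeault-type complex), and $\sum_i (2m_i+1) = \dim\g$ by the standard identity relating exponents and dimension of a simple Lie algebra. Hence $\dim_{\C}\hat{\mc T}_\g = (g-1)\dim\g$. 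Manifold structure follows from the local equivalence in Theorem \ref{thm1}, which provides charts (for type $D_n$ one restricts to $\sigma_n \neq 0$ or handles the zero locus stratification — but the theorem statement here should be read as asserting the manifold structure on the open dense part, consistently with Theorem \ref{thm1}).

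Next, the copy of Teichmüller space. The embedding comes from the $\mf{sl}_2$-inclusion $\Hilb(\mf{sl}_2)\hookrightarrow\Hilb(\g)$: a genuine complex structure $J$ on $\Sigma$, viewed as an element of $\Hilb_0^{reg}(\mf{sl}_2)$ pointwise, maps to a $\g$-complex structure via the principal $\mf{sl}_2\hookrightarrow\g$. I would check this is injective on gauge-equivalence classes (a higher diffeomorphism of type $\g$ restricting to the image must come from an ordinary diffeomorphism, using that the $\mf{sl}_2$-locus is characterized by the vanishing of $\sigma_{m_i+1}$ for $m_i > 1$, an invariant condition), giving Teichmüller space $\mc T(\Sigma) \hookrightarrow \hat{\mc T}_\g$. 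The Proposition from the excerpt ($\g$-complex structures induce complex structures, via $\mu$) shows this embedding has a retraction, consistent with the picture.

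Finally, the cotangent space along Teichmüller space. At a point $I$ lying in the image of $\mc T(\Sigma)$, i.e. coming from an honest complex structure with canonical bundle $K$, I would linearize the moduli problem. The tangent space decomposes according to the grading: the $\mf{sl}_2$-direction gives the usual $H^0(K^2)$ (cotangent to Teichmüller space, the classical quadratic differentials), and each higher exponent $m_i$ contributes an independent direction because at the $\mf{sl}_2$-locus the deformation complex decouples into the $\mf{sl}_2$-representation-theoretic pieces of $\g$ under the principal $\mf{sl}_2$ — namely $\g = \bigoplus_i V_{2m_i}$ where $V_{2m_i}$ is the irreducible of dimension $2m_i+1$. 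The infinitesimal higher Beltrami differential $\delta\sigma_{m_i+1}$ transforms as a $(m_i+1,-m_i+1)$... more precisely, the relevant Dolbeault complex for the $i$-th piece is $\delbar: \Omega^{0}(K^{-m_i}\otimes\bar K) \to \Omega^{0,1}(\cdots)$ whose cokernel, by Serre duality, is $H^0(K^{m_i+1})$. Taking duals, $T^*_I\hat{\mc T}_\g = \bigoplus_{i=1}^r H^0(K^{m_i+1})$. The main obstacle I anticipate is justifying that the deformation complex genuinely splits along the $\mf{sl}_2$-locus into these $r$ independent Dolbeault-type complexes — this requires understanding the linearization of the regularity/generic condition defining $\Hilb_0^{reg}(\g)$ and showing it imposes no coupling between the $V_{2m_i}$-components at first order, which is where the affine-space description of $\Hilb_0^{reg}(\g)$ and the explicit form of the $\mu$-map are essential; the rest is Riemann–Roch bookkeeping and Serre duality.
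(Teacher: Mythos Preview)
Your overall architecture matches the paper's: contractibility by retraction, Teichm\"uller copy via the principal $\mf{sl}_2$ inclusion, and cotangent space via a linearized deformation complex dualized by Serre duality. The execution differs, and in one place your version leaves a genuine gap.

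For the cotangent computation you propose to decompose $\g = \bigoplus_i V_{2m_i}$ as a principal $\mf{sl}_2$-module and argue that the deformation complex splits accordingly along the Teichm\"uller locus. You correctly flag the decoupling as ``the main obstacle'', but you don't resolve it. The paper bypasses this entirely: it works directly in the explicit Beltrami coordinates $(\mu_2,\mu_4,\ldots)$ on $\Hilb_0^{reg}(\g)$ and computes the infinitesimal action of a higher diffeomorphism generated by $H = \sum w_k p^k$ (reduced mod $I$) to get, at the Teichm\"uller locus, $\delta\mu_{2i} = \bar\partial w_{2i-1}$ independently for each $i$. Decoupling is then a triviality rather than something to be justified, and dualizing via the pairing $\int t_{2i}\,\delta\mu_{2i}$ gives $H^0(K^{2i})$ immediately. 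Your representation-theoretic route would ultimately amount to reproving this same formula, so it buys no economy here.

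You also underestimate the $D_n$ subtlety. The natural Teichm\"uller copy from the principal $\mf{sl}_2$ has $\sigma_n\equiv 0$, which by Theorem~\ref{thm1} lies in the locus where the local triviality fails; the paper must instead use a \emph{different} embedding of Teichm\"uller space with $\sigma_n$ generically nonzero, and there the variation reads $\delta\sigma_n = \bar\partial(w_{-}\sigma_n)$ rather than a plain $\bar\partial w$. The dualization then genuinely uses that the zero set of $\sigma_n$ is discrete in order to conclude $\bar\partial t_n = 0$ from $\int \bar\partial t_n\, w_{-}\sigma_n = 0$ for all $w_{-}$. Your parenthetical ``or handles the zero locus stratification'' does not capture this, and in particular your claimed splitting via $V_{2m_i}$ would put $\sigma_n$ in the wrong piece (the exponent $n-1$ for $D_n$ is the exceptional one) with the wrong variation formula.

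The dimension count and contractibility are fine and essentially as in the paper; the identity $\sum_i(2m_i+1)=\dim\g$ is exactly what the paper invokes.
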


Note the appearance of the Hitchin base, which serves as parametrization for the Hitchin component. This explains why we think that the moduli space should be canonically homeomorphic to Hitchin's component.

To a point in the cotangent bundle $T^*\hat{\mathcal{T}}_{\g}$, we can associate a \textit{spectral curve}, living in $T^{*\C}\S$. We recover the spectral data of \cite{Hit3} in our setting.

\medskip
For a real Lie algebra $\g_\R$, using the theory of Kostant-Rallis \cite{KR}, we define a \textit{punctual Hilbert scheme associated to $g_\R$}, denoted by $\Hilb(\g_\R)$.
The link to the complex case is given by (see Theorem \ref{split-complex}):
\begin{fakethm}
For any $\g_\R$, there is a map $\Hilb(\g_\R)\rightarrow \Hilb(\g)$. In the case of the split real form, this is an isomorphism on the regular parts.
\end{fakethm}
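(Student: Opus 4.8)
The plan is to get the map directly from the inclusion of the $(-1)$-eigenspace of the Cartan involution into $\g$, and then, for the split form, to show that both regular parts are cut out by the same Kostant-type slice. Recall that, writing the complexified Cartan decomposition $\g=\mathfrak{k}\oplus\mathfrak{p}$ with $K$ the (complexified) group acting on $\mathfrak{p}$, the scheme $\Hilb(\g_\R)$ is defined as the set of $K$-orbits of commuting pairs $(A,B)\in\mathfrak{p}\times\mathfrak{p}$ satisfying the Kostant--Rallis genericity condition. Since $\mathfrak{p}\subset\g$ and the $K$-action is the restriction of the $G$-action, the assignment $(A,B)\mapsto(A,B)$ sends commuting pairs in $\mathfrak{p}$ to commuting pairs in $\g$ and carries $K$-orbits into $G$-orbits. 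To descend to the quotients one checks that the genericity condition of Definition~\ref{maindef}, restricted to $\mathfrak{p}^2$, is implied by the Kostant--Rallis one; as both are read off from Chow data and the Chow map of $\Hilb(\g_\R)$ factors through that of $\Hilb(\g)$ via the restriction homomorphism $\C[\g]^G\to\C[\mathfrak{p}]^K$, this is a short verification. This produces the morphism $\Hilb(\g_\R)\to\Hilb(\g)$.

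For the split real form, $\mathfrak{a}:=\h\cap\mathfrak{p}$ is a Cartan subalgebra of $\g$, the restricted Weyl group is the full $W$, so the real rank equals $\rk\g=:r$ and the two Chow bases coincide. Fix a $\mathfrak{p}$-principal nilpotent $e\in\mathfrak{p}$ inside a normal $\mathfrak{sl}_2$-triple $(e,h,f)$ (i.e.\ with $h\in\mathfrak{k}$, $e,f\in\mathfrak{p}$), as provided by \cite{KR}. In the split case such an $e$ (hence also $f$) is $\g$-principal, so $\dim\mathfrak{z}_\g(f)=r=\dim\mathfrak{z}_\mathfrak{p}(f)$; since $\mathfrak{z}_\mathfrak{p}(f)\subseteq\mathfrak{z}_\g(f)$ these spaces coincide, and therefore the Kostant slice $\mathfrak{s}=e+\mathfrak{z}_\g(f)$ of $\g$ \emph{equals} the Kostant--Rallis slice $\mathfrak{s}_\R=e+\mathfrak{z}_\mathfrak{p}(f)$ of $\mathfrak{p}$. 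Running the same dimension count at an arbitrary $s\in\mathfrak{s}=\mathfrak{s}_\R$ (simultaneously $\g$-regular by Kostant and $\mathfrak{p}$-regular by Kostant--Rallis) gives $\mathfrak{z}_\g(s)=\mathfrak{z}_\mathfrak{p}(s)$ at every point of the common slice; more generally, using the Jordan decomposition inside $\mathfrak{p}$ together with the fact that the centralizer of a $\mathfrak{p}$-semisimple element inherits a quasi-split symmetric pair, one gets that every $\mathfrak{p}$-regular element of $\mathfrak{p}$ is $\g$-regular and has the same centralizer whether computed in $\mathfrak{p}$ or in $\g$.

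Now identify the regular parts. By Kostant's section theorem every regular $G$-orbit meets $\mathfrak{s}$ in exactly one point; conjugating the regular element of the pair into $\mathfrak{s}$ and noting that the residual stabilizer $Z_G(s)$ acts trivially on $\mathfrak{z}_\g(s)$, the locus of $\Hilb(\g)$ where the first element is regular is identified with the total space $E_\g$ of the rank-$r$ ``universal centralizer'' bundle $s\mapsto\mathfrak{z}_\g(s)$ over $\mathfrak{s}$. By the Kostant--Rallis section theorem the analogous locus in $\Hilb(\g_\R)$ is the total space $E_{\g_\R}$ of $s\mapsto\mathfrak{z}_\mathfrak{p}(s)$ over $\mathfrak{s}_\R$. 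The previous paragraph gives $E_{\g_\R}=E_\g$ and shows that our map restricts there to the identity; the same applies verbatim to the locus where the second element is regular, and these two loci cover $\Hilb^{reg}$. Hence $\Hilb(\g_\R)\to\Hilb(\g)$ is an isomorphism on the regular parts.

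The main obstacle is this last step together with the precise Kostant--Rallis input it rests on: existence and $K$-conjugacy of $\mathfrak{p}$-principal nilpotents, the fact that in the split case they are $\g$-principal, and that $e+\mathfrak{z}_\mathfrak{p}(f)$ is a genuine section of $\mathfrak{p}\to\mathfrak{a}/W$ sitting in the $\mathfrak{p}$-regular locus, plus the bookkeeping needed to see that $\Hilb^{reg}$ has no components beyond the two slice charts (first element regular / second element regular) and that these charts glue compatibly on both sides. A lesser point is to confirm that the genericity condition of Definition~\ref{maindef} really does restrict to the Kostant--Rallis one; if it is phrased purely through Chow data this is immediate from the first paragraph, otherwise it must be checked directly.
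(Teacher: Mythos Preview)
Your approach is essentially the same as the paper's: obtain the map from the inclusion $\mathfrak{p}\subset\g$ (together with $K_\theta\subset G$), and for the split case parametrize both regular parts as a centralizer bundle over a principal slice and identify them. The paper does this more abstractly, writing $\Hilb^{reg}(\g)\cong \g^{reg}/G \times \C^{\rk\g}$ and $\Hilb^{reg}(\g_{split})\cong \mathfrak{p}^{reg}/K_\theta \times \C^{\rk\g_{split}}$, then invoking $\g^{reg}/G\cong\h/W=\mathfrak{a}/W(\mathfrak{a})\cong\mathfrak{p}^{reg}/K_\theta$. You instead show directly that the Kostant slice $e+Z_\g(f)$ and the Kostant--Rallis slice $e+Z_{\mathfrak{p}}(f)$ literally coincide (via $Z_\g(f)=Z_{\mathfrak{p}}(f)$, both having dimension $r$), and likewise for the centralizer fibers. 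This is a bit more work but has the virtue of showing that the \emph{inclusion-induced} map realizes the isomorphism, whereas the paper's argument only exhibits an abstract bijection between the two spaces.

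Two small points. First, the paper quotients by $K_\theta=\{g\in G\mid\theta(g)=g\}$, not by $K=\exp(\mathfrak{k})$; the distinction matters precisely for having a single principal nilpotent $K_\theta$-orbit (cf.\ the paper's $\mathfrak{sl}_2(\R)$ example). Second, your caveat about whether the genericity condition transfers is well-placed: it is \emph{not} ``read off from Chow data'' (minimal centralizer dimension is not determined by the Chow image), so your first justification does not work. For the split case your equality $Z_\g(s)=Z_{\mathfrak{p}}(s)$ on the slice does the job; for general $\g_\R$ the paper simply asserts the map without verifying that $\dim Z_{\mathfrak{p}}(A,B)=\rk\g_\R$ forces $\dim Z_\g(A,B)=\rk\g$, so you are in no worse shape than the paper here.
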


In the same vein as for a complex Lie algebra, we define a \textit{$\g_\R$-complex structure} and its moduli space. For the split real form, we recover $\hat{\mc{T}}_\g$.

\medskip
\subsection*{Perspectives.}
We wish to give a larger conjectural picture describing the link between Hitchin's component and the moduli space of $\g$-complex structures $\hat{\mathcal{T}}_{\g}$. This motivates the definition of the $\g$-Hilbert scheme. In the case of $\g = \mf{sl}_n$, parts of this large picture are proven in \cite{Thomas}.

Hitchin's original construction in \cite{Hit.1} of components in character varieties uses Higgs bundles and the hyperk\"ahler structure of its moduli space $\mc{M}_H$. In one complex structure, say $I$, $\mc{M}_H$ has the complex structure from Higgs bundles. In all combinations of $J$ and $K$, it is the moduli space of flat $G^{\C}$-connections. The non-abelian Hodge correspondence is equivalent to the twistor description of this hyperk\"ahler manifold. Hitchin constructs a fibration of $\mc{M}_H$ over a space of holomorphic differentials, whose fibers via the non-abelian Hodge correspondence give flat connections with monodromy in the split real group $G$.

There is a similar conjectural picture for $\g$-complex structures: a hyperk\"ahler manifold $\mc{M}$, which in complex structure $I$ is the cotangent space to the moduli space of $\g$-complex structures $T^*\hat{\mc{T}}_{\g}$ and in all combinations of $J$ and $K$ is the moduli space of flat $G^{\C}$-connections. The analogue of Hitchin's fibration is simply the projection $\pi: T^*\hat{\mc{T}}_{\g} \rightarrow \hat{\mc{T}}_{\g}$. One has to prove an analogue of the non-abelian Hodge correspondence, i.e. a deformation of a pair ($\g$-complex structure, set of holomorphic differentials) to flat connections, and that the monodromy of the fibers of the projection $\pi$ lies in the split real group $G$.

The conception behind this analogy is the following: In Hitchin's case, we have a fixed complex structure on $\Sigma$ and a holomorphic Higgs field $\Phi \in H^{(1,0)}(\Sigma,\g)$ which gives a flat connection $\mc{A}(\lambda)=\lambda \Phi +A+ \lambda^{-1}\Phi^*$.
To get the Hitchin section, we choose a principal nilpotent element $f$ in the Lie algebra $\g$ and deform it into an element of the principal slice $f+Z(e)$ where $e$ is the nilpotent element of $\g$ which together with $f$ forms a principal $\mf{sl}_2$-triple and $Z(e)$ denotes its centralizer.

To avoid fixing a complex structure, we start with $\Phi = \Phi_1 dz+\Phi_2 d\bar{z}$. The flatness of $\mc{A}(\lambda)$ gives that $\Phi_1$ and $\Phi_2$ commute. We further impose $\Phi_1$ and $\Phi_2$ to be nilpotent. More specifically, we take $\Phi_1$ to be the principal nilpotent element $f$ and we choose $\Phi_2 \in Z(f)$.
Thus we have the same number of degrees of freedom as in the Higgs bundle setting. A pair of commuting nilpotent matrices of this form is precisely a point in $\Hilb^{reg}_0(\g)$ which we used to construct $\g$-complex structures.

\medskip
\subsection*{Structure.}
The outline of the paper is the following:
\begin{itemize}
	\item[$\bullet$] Section \ref{section1} treats the definition and properties of the $\g$-Hilbert scheme. In particular we describe its regular part, define a Chow morphism, a map to a space of ideals and we invest its topology.
	
	\item[$\bullet$] Section \ref{classical-case} gives explicit descriptions of the $\g$-Hilbert scheme for classical $\g$.
	
	\item[$\bullet$] Section \ref{section2} is devoted to the construction of $\g$-complex structures and basic properties.
	
	\item[$\bullet$] Section \ref{section3} analyzes the moduli space of $\g$-complex structures. In particular, we define a notion of higher diffeomorphisms, study the local theory and give a construction of a spectral curve.
	
	\item[$\bullet$] Section \ref{realcase} generalizes both punctual Hilbert schemes and complex structures to the case of a real Lie algbra $\g_\R$. For the split real form we recover the $\g$-complex structure.
	
	
	\item[$\bullet$] Appendix \ref{appendix:A} reviews the main properties of the punctual Hilbert scheme of the plane.
	
	\item[$\bullet$] Appendix \ref{appendix:B} gathers all properties we need in the paper of regular elements in semisimple Lie algebras.
	
	\item[$\bullet$] Appendix \ref{haimancoords} presents Haiman's coordinates on the Hilbert scheme and an apparently new result on its symplectic structure.
	
\end{itemize}

\medskip
\subsection*{Notations.}
Throughout the paper, we denote by $\g$ a complex simple Lie algebra, by $\h$ a Cartan subalgebra, by $W$ its Weyl group and by $G$ its adjoint group (the unique Lie group $G$ with Lie algebra $\g$ with trivial center). For $A\in \g$, we denote by $Z(A)$ its centralizer, i.e. the elements commuting with $A$.
Whenever we speak about real objects (Section \ref{realcase}), we explicitly put an index, for example $\g_\R$ for a real Lie algebra. Similarly, we write $\mf{sl}_2$ for $\mf{sl}_2(\C)$, but for the real Lie algebra, we will always write $\mf{sl}_2(\R)$.

$\Sigma$ denotes a smooth surface, closed, without boundary and orientable. A reference complex coordinate system on $\S$ is denoted by $(z,\bar{z})$, and the induced linear coordinates on $T^{*\C}\S$ are denoted by $(p,\bar{p})$.
The equivalence class of an element $A$  will be written $[A]$.

\medskip
\subsection*{Acknowledgments.}
I wish to express my gratitude towards Vladimir Fock and Oscar Garcìa-Prada for all the fruitful insights.
I also thank Loren Spice and Mykola Matviichuk for helpful comments.
Most of this paper is part of my PhD thesis which I accomplished at the University of Strasbourg. The last section was done during my stay at the Max-Planck Institute for Mathematics in Bonn.

\section{Generalized punctual Hilbert scheme}\label{section1}

In this section, we generalize the punctual Hilbert scheme to a $\g$-Hilbert scheme and explore the properties of the new object. In particular we define a Chow map, and give a description as space of ideals.
The reader not familiar with punctual Hilbert schemes should consult Appendix \ref{appendix:A}.

\subsection{Definitions and first properties}

The punctual Hilbert scheme $\Hilb^n(\C^2)$ has several descriptions:
\begin{itemize}
\item as a space of ideals (the \textit{idealic viewpoint})
\item as a desingularization of the configuration space $\h^2/W$ for $\g=\mf{gl}_n$
\item as a space of commuting matrices (the \textit{matrix viewpoint}). 
\end{itemize}
It is the matrix viewpoint which will be generalized. So let us recall it quickly here:
$$\Hilb^n(\C^2) \cong \{(A,B) \in \mf{gl}_n^2 \mid [A,B]=0, (A,B) \text{ admits a cyclic vector}\} / GL_n.$$

The main difficulty is to find an intrinsic condition which generalizes the existence of a cyclic vector. Here is our proposal:

\begin{definition}\label{maindef}
The generalized punctual Hilbert scheme, or \textbf{$\g$-Hilbert scheme}, denoted by $\Hilb(\g)$, is defined by
$$\Hilb(\g) = \{(A,B) \in \g^2 \mid [A,B]=0, \dim Z(A,B) = \rk \g\} /G$$
where $Z(A,B)$ denotes the common centralizer of $A$ and $B$, i.e. the set of elements $C \in \g$ which commute with $A$ and $B$.
\end{definition}

The condition on the dimension of the common centralizer does not come from nowhere: Proposition \ref{doublecomm} of Appendix \ref{appendix:B} shows that $\rk \g$ is the minimal possible dimension for the centralizer of a commuting pair.
Define the \textit{commuting variety} by $\Comm(\g)=\{(A,B)\in \g^2 \mid [A,B]=0\}$. The $\g$-Hilbert scheme is the set of all regular points of $\Comm(\g)$ modulo $G$.

\begin{Remark}
Ginzburg has defined the notion of a principal nilpotent pair in \cite{Ginzburg}, which is more restrictive than ours. He calls ``nil-pairs'' elements of our $\g$-Hilbert scheme, but he does not investigate them.
\end{Remark}

Let us give two examples of elements in the $\g$-Hilbert scheme:
\begin{example}
Let $A \in \g$ be a regular element. Then by a theorem of Kostant (see \ref{thmKost}), its centralizer $Z(A)$ is abelian. So for any $B\in Z(A)$, we have $Z(A) \subset Z(B)$, thus $Z(A,B) = Z(A) \cap Z(B) = Z(A)$ is of dimension $\rk \g$. Therefore $[(A,B)] \in \Hilb(\g)$.

If $A$ is principal nilpotent, then $B\in Z(A)$ is also nilpotent. So $[(A,B)]\in \Hilb_0(\g)$, the zero-fiber defined below.

If $B=0$ then $[(A,0)]$ is in $\Hilb(\g)$ iff $A$ is regular.
\end{example}

\begin{example}\label{Young}
Let $(A,B)$ be a commuting pair of matrices in $\mf{sl}_n$ admitting a cyclic vector, i.e. an element of the reduced Hilbert scheme. 
One way to get such a pair is the following construction: take a Young diagram (our convention is to put the origin in the upper left corner as for matrices) with $n$ boxes (see Figure \ref{Youngdiag}). Associate to each box a vector of a basis of $\C^n$. Define $A$ to be the matrix which translates to the right, i.e. sends a vector to the vector in the box to the right or to 0 if there is none. Let $B$ be the matrix which translates to the bottom. Then $A$ and $B$ clearly commute and are nilpotent.
In Proposition \ref{cycliccentralizer} below, we show that $Z(A,B)$ is of minimal dimension in that case.

\begin{figure}[H]
\centering
\includegraphics[height=2cm]{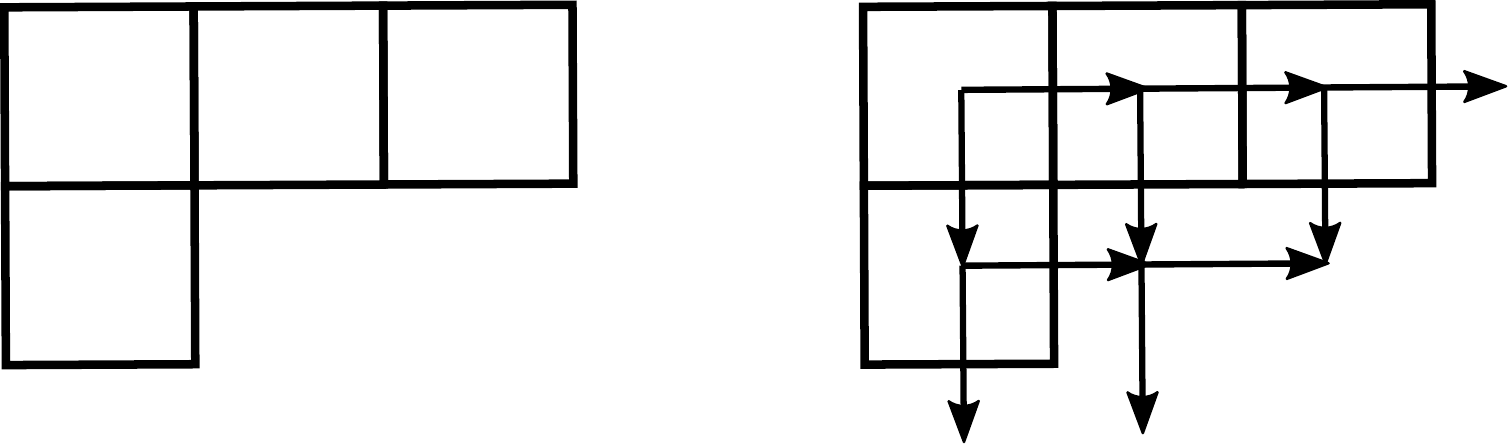}

\caption{Young diagram and commuting nilpotent matrices}
\label{Youngdiag}
\end{figure}
\end{example}

Guided by these examples, we define several subsets of the $\g$-Hilbert scheme and explore their relations.
First, we define the zero-fiber and the regular part which will both play a mayor role in the definition of a $\g$-complex structure. We also define the cyclic part, which is not intrinsically defined since it uses a representation of $\g$. The cyclic part will be used to define a map to a space of ideals, getting a generalization of the original description of the punctual Hilbert scheme.

\begin{definition}\label{partshilb}
The \textbf{zero-fiber} of the $\g$-Hilbert scheme is defined by $$\Hilb_0(\g)=\{[(A,B)] \in \Hilb(\g) \mid A \text{ and } B \text{ nilpotent}\}.$$

We define the \textbf{regular part} of the $\g$-Hilbert scheme, denoted by $\Hilb^{reg}(\g)$, to be those conjugacy classes $[(A,B)]$ in which $A$ or $B$ is a regular element of $\g$.

Finally for classical $\g$, let $\rho$ denote the natural representation of $\g$ (i.e. $\mf{sl}_n \subset \mf{gl}_n, \mf{so}_n \subset \mf{gl}_n$ and $\mf{sp}_{2n} \subset \mf{gl}_{2n}$). Define the \textbf{cyclic part} of the $\g$-Hilbert scheme by 
$$\Hilb^{cycl}(\g) = \{(A,B) \in \g^2 \mid [A,B]=0, (\rho(A),\rho(B)) \text{ admits a cyclic vector}\}/G.$$
\end{definition}

\begin{Remark}
In the definition of the cyclic part, it would be more natural to consider the adjoint representation, but even in the case of $\mf{sl}_2$, this would give a map to a space of ideals, which is not the one of $\Hilb^2_{red}(\C^2)$.

Instead of the standard representation, one could also use a non-trivial representation of minimal dimension, which is defined for all $\g$. For classical $\g$, this is always the standard representation, apart from type $D_3$ and $D_4$. 
\end{Remark}

The first relation between the various Hilbert schemes is the inclusion of the cyclic part in the $\g$-Hilbert scheme, which justifies the name ``cyclic part'':
\begin{prop}\label{cycliccentralizer}
For $\g$ of classical type, we have $\Hilb^{cycl}(\g) \subset \Hilb(\g)$.
\end{prop}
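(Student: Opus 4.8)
The plan is to show that if $(A,B)\in\g^2$ is a commuting pair whose image $(\rho(A),\rho(B))$ under the standard representation admits a cyclic vector, then $\dim Z(A,B)=\rk\g$, so that $[(A,B)]\in\Hilb(\g)$. By Proposition \ref{doublecomm} of Appendix \ref{appendix:B}, $\rk\g$ is the \emph{minimal} possible dimension of $Z(A,B)$ for a commuting pair, so it suffices to prove the opposite inequality $\dim Z(A,B)\le\rk\g$. The idea is to compare the centralizer inside $\g$ with the centralizer inside $\mf{gl}_N$ (where $N$ is the dimension of the standard representation). Since $(\rho(A),\rho(B))$ has a cyclic vector, the centralizer of the pair inside $\End(\C^N)=\mf{gl}_N$ is as small as possible: a standard argument (the one used for the classical reduced Hilbert scheme, cf. Appendix \ref{appendix:A}) shows that a commuting pair with cyclic vector $v$ has commutant equal to the $N$-dimensional space $\{ p(A,B) : p\in\C[x,y]\}$, i.e. $\dim_{\mf{gl}_N}Z(\rho(A),\rho(B))=N$.

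Next I would intersect with $\g\subset\mf{gl}_N$. One has $Z_\g(A,B)=Z_{\mf{gl}_N}(\rho(A),\rho(B))\cap\g$. For $\g=\mf{sl}_N$ this immediately gives $\dim Z_\g(A,B)\le N-1=\rk\g$ (the commutant is the span of $I,A,B,AB,\dots$; intersecting the $N$-dimensional polynomial algebra with the trace-zero hyperplane drops the dimension by exactly one since $I$ has nonzero trace), which combined with Proposition \ref{doublecomm} forces equality. For the orthogonal and symplectic cases $\g=\mf{so}_N$ or $\mf{sp}_N$ preserving a nondegenerate bilinear form $\beta$, the key point is that the polynomial commutant $\C[\rho(A),\rho(B)]$ carries the involution $X\mapsto X^{\dagger}$ (adjoint with respect to $\beta$), and $\g\cap\C[\rho(A),\rho(B)]$ is precisely the $-1$-eigenspace of this involution, while $\mf{gl}_N\cap\C[\rho(A),\rho(B)]=\C[\rho(A),\rho(B)]$ decomposes into $\pm1$-eigenspaces. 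One then checks that the $\dagger$-fixed (i.e. $\beta$-symmetric) part of $\C[\rho(A),\rho(B)]$ has dimension at least $N-\rk\g$, equivalently that the antisymmetric part has dimension at most $\rk\g$; for the cyclic pair this can be read off from the explicit description of $\C[\rho(A),\rho(B)]$ together with the values of $\rk\g$ ($=\lfloor N/2\rfloor$ for $\mf{so}_N$, $=N/2$ for $\mf{sp}_N$). Again Proposition \ref{doublecomm} upgrades the inequality to an equality.

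I expect the main obstacle to be the orthogonal/symplectic bookkeeping: getting the exact dimension count of the $\beta$-antisymmetric part of the polynomial commutant, and in particular verifying that no degeneration (e.g. nilpotent pairs whose Jordan-type data interacts badly with $\beta$) pushes the antisymmetric part above $\rk\g$. A clean way to organize this is to note that $\C[\rho(A),\rho(B)]\cong\C^N$ as a vector space equipped with a nondegenerate symmetric or skew pairing induced by $\beta$ and the cyclic vector, and that the involution $\dagger$ is the adjoint for that pairing; then the split into symmetric/antisymmetric parts is governed entirely by whether this induced pairing is of orthogonal or symplectic type, which is exactly what distinguishes $\mf{so}$ from $\mf{sp}$. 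Once the count $\dim(\text{antisymmetric part})\le\rk\g$ is established uniformly, the proposition follows in all classical types; the type $A$ case is the easy prototype and should be presented first.
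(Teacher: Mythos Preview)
Your proposal is correct and follows essentially the same strategy as the paper: identify the $\mf{gl}_m$-centralizer of a cyclic commuting pair with the $m$-dimensional polynomial algebra $\C[A,B]\cong\C[x,y]/I$, then intersect with $\g$ (trace-zero condition for type $A$, odd-polynomial condition for types $B,C,D$, which is exactly your $\dagger$-antisymmetry). Your explicit invocation of Proposition~\ref{doublecomm} to reduce to an upper bound, and the suggestion to organize the $B,C,D$ dimension count via the pairing induced by $\beta$ and the cyclic vector, are mild refinements of the paper's ``one checks in each case'', but the architecture is identical.
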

\begin{proof}
Recall $\rho$, the natural representation of $\g$ on $\C^m$. For simplicity, we write $A$ instead of $\rho(A)$ here.

Let $(A,B) \in \g^2$ admitting a cyclic vector $v$. Let $C \in Z(A,B)$. Then $C$ is a polynomial in $A$ and $B$. Indeed, there is $P\in \C[x,y]$ such that $Cv = P(A,B)v$. Since $C$ commutes with $A$ and $B$, we then get for any polynomial $Q$ that $CQ(A,B)v = Q(A,B)Cv = Q(A,B)P(A,B)v= P(A,B)Q(A,B)v$, so $C=P(A,B)$.

Therefore the common centralizer of $(A,B)$ in $\mf{gl}_m$ is $\C[A,B]/I$ where $I=\{P\in \C[x,y] \mid P(A,B) = 0\}$. We know from Appendix \ref{appendix:A} that $I$ is of codimension $m$ since $(A,B)$ admits a cyclic vector. We have $Z(A,B) = Z_{\mf{gl}_m}(A,B) \cap \g$. One can easily check that for $\g$ of type $A_n$, a polynomial $P(A,B)$ is in $\g$ iff its constant term has a specific form, given by the other coefficients (to ensure trace zero). For type $B_n, C_n$ and $D_n$, $P(A,B)$ is in $\g$ iff $P$ is odd. One checks in each case that the dimension of $Z(A,B)$ equals the rank of $\g$.
\end{proof}

In general, the inclusion of the cyclic Hilbert scheme is strict as shows the following example:
\begin{example}
Consider $A=\left(\begin{smallmatrix} 0 & 1 & 0 \\ 0 & 0 & 0 \\ 0 & 0 & 0 \end{smallmatrix} \right)$ and $B=\left(\begin{smallmatrix} 0 & 0 & 1 \\ 0 & 0 & 0 \\ 0 & 0 & 0 \end{smallmatrix}\right)$ in $\mf{sl}_3$. One easily checks that the pair $(A,B)$ does not admit any cyclic vector, but that their common centralizer is of dimension 2. So $[(A,B)] \in \Hilb(\mf{sl}_3)\backslash \Hilb^{cycl}(\mf{sl}_3)$.
\end{example}
This example will be used in Subsection \ref{topology} to show that $\Hilb(\g)$ is not Hausdorff.

In general, there is no link between regular and cyclic part. Example \ref{Young} shows that cyclic elements are not always regular and the following example shows that regular element are not always cyclic:
\begin{example}\label{regnotcyclic}
For $\g$ of type $D_n$, let $f$ be a principal nilpotent element. Then one checks that $[(f,0)] \in \Hilb(\mf{so}_{2n})$ is regular but not cyclic (see also Subsection \ref{Dn}).
\end{example}

Let us turn to the regular part. It turns out that if one fixes a \textit{principal slice} $f+Z(e)$ in $\g$ (see Appendix \ref{appendix:B}), there is a preferred representative for regular classes:
\begin{prop}\label{paramit}
Any class $[(A,B)] \in \Hilb^{reg}(\g)$ where $A$ is regular can uniquely be conjugated to $(A \in f+Z(e), B \in Z(A))$.
\end{prop}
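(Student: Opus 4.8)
The plan is to reduce the statement to a classical fact about the principal slice, due to Kostant: every regular element of $\g$ is conjugate to a unique element of the principal slice $f+Z(e)$. So given $[(A,B)] \in \Hilb^{reg}(\g)$ with $A$ regular, first conjugate so that $A \in f+Z(e)$; this determines $A$ uniquely, and determines the conjugating element up to the stabilizer $Z_G(A)$ of $A$ in $G$. Since $[A,B]=0$, we automatically have $B \in Z(A)$ once $A$ is fixed, so the pair is of the claimed form. The only remaining issue is uniqueness: I must show that no nontrivial element of $Z_G(A)$ moves $(A,B)$ while keeping $A$ in the slice.

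The key step is therefore the following. If $g \in Z_G(A)$ (so $g$ fixes $A$, hence automatically keeps $A \in f+Z(e)$), I need $g \cdot B = B$, i.e. $g$ also centralizes $B$. Here I would invoke the structure of the centralizer of a regular element: by Kostant's theorem (cited as \ref{thmKost} in the excerpt), $Z(A) = \mf{z}(A)$ is abelian of dimension $\rk\g$, and the corresponding connected subgroup $Z_G(A)^\circ$ is an abelian group whose Lie algebra is $Z(A)$. Since $B \in Z(A)$ and the group is abelian, the adjoint action of $Z_G(A)^\circ$ fixes $B$. One must also handle the component group $Z_G(A)/Z_G(A)^\circ$; for $G$ adjoint and $A$ regular this centralizer is in fact connected (for $A$ regular semisimple it is a maximal torus; for $A$ regular nilpotent it is connected abelian by Kostant; the general regular case follows from the Jordan decomposition and the fact that the slice representative is regular), so there is nothing further to check.

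Concretely I would organize the write-up as: (1) existence — apply Kostant's slice theorem to the regular element $A$ to conjugate $A$ into $f+Z(e)$, and note $B$ lands in $Z(A)$ since $[A,B]=0$ is conjugation-invariant; (2) uniqueness of $A$ — this is the uniqueness clause in Kostant's slice theorem, so the only ambiguity in the conjugating element $g$ is composition with $Z_G(A)$; (3) uniqueness of $B$ — show $Z_G(A)$ acts trivially on $Z(A)$, using that $Z_G(A)$ is connected abelian with Lie algebra the abelian $Z(A)$. I expect the main obstacle to be precisely point (3): pinning down that $Z_G(A)$ is connected (equivalently, that its action on $Z(A)$ is trivial) in the general regular — neither semisimple nor nilpotent — case. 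This can be dispatched either by the general theorem that centralizers of regular elements in a group of adjoint type are connected abelian, or by a direct argument via Jordan decomposition $A = A_s + A_n$: $Z_G(A) = Z_{Z_G(A_s)}(A_n)$, $Z_G(A_s)$ is a connected reductive (in fact, since $A$ is regular, a torus times a semisimple part in which $A_n$ is regular nilpotent) group, and one applies the nilpotent case there. Everything else is routine. $\qed$
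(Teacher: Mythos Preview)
Your proposal is correct and follows essentially the same route as the paper: Kostant's slice theorem handles existence and uniqueness of $A$, and uniqueness of $B$ reduces to showing that $Z_G(A)$ acts trivially on $Z(A)$, which in turn reduces (via abelianness of $Z(A)$) to connectedness of $Z_G(A)$ for regular $A$ in the adjoint group. The paper isolates exactly this connectedness statement as a separate lemma and proves it by your second suggested method --- Jordan decomposition to reduce to the principal nilpotent case in a smaller semisimple Lie algebra, then a case check via the Collingwood--McGovern tables --- while explicitly remarking that no classification-free argument seems to be in the literature; so your instinct that this is the one nontrivial point is spot on.
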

\begin{proof}
By the property of the principal slice, there is a unique conjugate of $A$ which is in the principal slice $f+Z(e)$. Denote still by $A$ and $B$ these conjugates. The only thing to show is that $B$ is unique which is done in the next lemma. 
\end{proof}

\begin{lemma}
If $A\in \g$ is regular, $g\in G$ such that $Ad_g(A)=A$ and $B\in Z(A)$, then $Ad_g(B)=B$.
\end{lemma}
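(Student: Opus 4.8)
The statement to prove is: if $A \in \g$ is regular, $g \in G$ with $\Ad_g(A) = A$, and $B \in Z(A)$, then $\Ad_g(B) = B$.

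The plan is to use Kostant's theorem that the centralizer $Z(A)$ of a regular element is abelian of dimension $\rk\g$, together with the fact that the stabilizer $\Stab_G(A) = \{g \in G : \Ad_g(A) = A\}$ has Lie algebra exactly $Z(A)$, and is connected. First I would recall that since $A$ is regular, $\dim Z(A) = \rk\g$, and $\Lie(\Stab_G(A)) = Z(A)$ which is abelian; hence the identity component $\Stab_G(A)^\circ$ is an abelian connected group with Lie algebra $Z(A)$. Now the key point: for $A$ regular, $\Stab_G(A)$ is actually connected — this is a standard fact (e.g. for regular semisimple $A$ it is a maximal torus, hence connected; for general regular $A$ one uses that $A$ lies in a unique Borel... actually the cleanest route is to invoke that $G$ adjoint and $A$ regular implies $\Stab_G(A)$ connected, cf. the appendix on regular elements). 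So $g \in \Stab_G(A)^\circ = \exp(Z(A))$, i.e. $g = \exp(C)$ for some $C \in Z(A)$.

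Given $g = \exp(C)$ with $C \in Z(A)$, I would then argue $\Ad_g(B) = \Ad_{\exp C}(B) = e^{\ad_C}(B)$. Since $B \in Z(A)$ and $Z(A)$ is abelian (Kostant), we have $[C, B] = 0$, hence $\ad_C(B) = 0$, so $e^{\ad_C}(B) = B$. This gives $\Ad_g(B) = B$, as desired.

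The main obstacle is establishing connectedness of $\Stab_G(A)$ for a general regular element $A$ (not necessarily semisimple or nilpotent) in the adjoint group $G$. If that fact is not already available in Appendix \ref{appendix:B}, I would instead argue more directly: write $A = A_s + A_n$ for its Jordan decomposition; both $A_s, A_n \in Z(A)$, and $\Stab_G(A) = \Stab_G(A_s) \cap \Stab_G(A_n)$. One reduces to the centralizer of the semisimple part $A_s$, a connected reductive (indeed in the adjoint group, connected) subgroup $L = Z_G(A_s)$; inside $L$, the element $A_n$ is a regular nilpotent of $\Lie(L)$, and the centralizer of a regular nilpotent in a connected group is connected (Kostant). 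Alternatively — and this may be the slickest — since $Z(A)$ is abelian, any $g \in \Stab_G(A)$ normalizes $Z(A)$ and acts on it; but $B \mapsto \Ad_g(B)$ preserves $A$ and... one still needs connectedness somewhere. I would therefore lead with the connectedness statement from the appendix and fall back on the Jordan-decomposition reduction only if needed.
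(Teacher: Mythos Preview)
Your approach is essentially the same as the paper's: reduce to the infinitesimal statement via Kostant's theorem that $Z(A)$ is abelian, and then pass from the Lie algebra to the group using connectedness of $\Stab_G(A)$. The paper does exactly this, but note that the connectedness of $\Stab_G(A)$ for general regular $A$ is \emph{not} in Appendix~\ref{appendix:B}; the paper isolates it as a separate lemma, reducing via the Jordan decomposition (as in your fallback) to the principal nilpotent case, and then verifies that case by checking the tables in Collingwood--McGovern using the classification of simple Lie algebras --- so your attribution of the nilpotent case to Kostant is not quite right, and a classification-free argument is in fact not known.
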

\begin{proof}
By Kostant's theorem \ref{thmKost}, we know that $Z(A)$ is abelian. So the infinitesimal version of the lemma is true. We conclude by the connectedness of the stabilizer of $A$, given by the next lemma.
\end{proof}
\begin{lemma}
For a regular element $A \in \g$, its stabilizer $\Stab(A) = \{g \in G \mid Ad_g(A)=A\}$ in the adjoint group $G$ is connected.
\end{lemma}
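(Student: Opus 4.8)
The plan is to reduce the statement to a standard fact about centralizers of regular elements in a connected adjoint group. First I would recall that $G$ is the adjoint group of $\g$, so $G$ is connected (being the identity component of $\Aut(\g)$), and that for a regular element $A\in\g$ the stabilizer $\Stab(A)$ has Lie algebra $Z(A)$, which by Kostant's theorem \ref{thmKost} is abelian of dimension $\rk\g$. The key point is that $\Stab(A)$ is connected; once this is known, the exponential map sends the abelian Lie algebra $Z(A)$ onto a neighborhood of the identity in $\Stab(A)$, hence generates all of $\Stab(A)$ by connectedness, so every $g\in\Stab(A)$ is a product of elements $\exp(C_i)$ with $C_i\in Z(A)$. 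Then for $B\in Z(A)$ we have $[C_i,B]=0$, hence $\Ad_{\exp(C_i)}(B)=e^{\ad_{C_i}}(B)=B$, and multiplying these together gives $\Ad_g(B)=B$.

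So the real content is the connectedness of $\Stab(A)$ for $A$ regular. I would handle the semisimple regular case first: if $A$ is regular semisimple, $Z(A)=\h$ is a Cartan subalgebra and $\Stab(A)$ is the corresponding maximal torus $H$, which is connected. For the general regular case one uses the Jordan decomposition $A=A_s+A_n$ with $[A_s,A_n]=0$: then $\Stab(A)=\Stab(A_s)\cap\Stab(A_n)$. The group $L:=\Stab(A_s)$ is a connected reductive (Levi) subgroup of the adjoint group $G$ — connectedness here is exactly where adjointness matters, since centralizers of semisimple elements in a simply connected or adjoint group are connected (Steinberg), and for the adjoint group this is classical. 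Inside $L$, the element $A_n$ is a regular nilpotent (regularity of $A$ in $\g$ forces regularity of $A_n$ in $\Lie(L)$), and then one invokes the fact that the centralizer of a regular nilpotent element in a connected reductive group is connected — indeed Kostant's description shows $Z_L(A_n) = Z(L)^\circ \cdot U$ where $U$ is a connected unipotent group, so $\Stab(A)=Z_L(A_n)$ is connected. Alternatively, and perhaps more self-containedly, one can argue directly using the principal slice: by Kostant, the adjoint quotient map restricted to $f+Z(e)$ is an isomorphism onto $\g/\!/G$, and the $G$-orbit through a regular element $A$ meets $f+Z(e)$ in exactly one point, which forces the orbit map $G/\Stab(A)\to \mathcal{O}_A$ to identify the orbit with an affine space fibering over the slice; smoothness and the dimension count $\dim\mathcal{O}_A=\dim G-\rk\g$ then pin down $\Stab(A)$ up to its component group, and a separate argument rules out extra components.

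The main obstacle is proving connectedness of $\Stab(A)$ cleanly. For adjoint $G$ one wants to cite Steinberg's theorem (centralizers of semisimple elements are connected when the derived group is simply connected) — but the adjoint group's derived group is not simply connected, so one must instead use the complementary fact that works for adjoint groups, or pass through the simply connected cover and control the center's action. Concretely, I expect the cleanest route is: let $\tilde G$ be the simply connected cover with center $Z$, $\pi:\tilde G\to G$; lift $A$ to $\tilde\g=\g$; then $\pi^{-1}(\Stab_G(A))=Z\cdot\Stab_{\tilde G}(A)$, and $\Stab_{\tilde G}(A)$ is connected by Steinberg (for the semisimple part) combined with connectedness of $Z_L(A_n)$; hence $\Stab_G(A)=\pi(\Stab_{\tilde G}(A))$ is connected as a continuous image of a connected set. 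I would therefore structure the proof as: (i) state and use $\Ad_{\exp C}B=B$ for $C\in Z(A)$, $B\in Z(A)$; (ii) reduce to connectedness of $\Stab(A)$; (iii) prove connectedness via Jordan decomposition, the Levi $\Stab(A_s)$, and Kostant's structure of $Z(A_n)$ inside it, using the simply connected cover to transfer Steinberg's result to the adjoint group.
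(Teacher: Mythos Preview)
Your overall strategy---Jordan decomposition $A=A_s+A_n$, pass to the connected reductive group $L=\Stab_G(A_s)$, and reduce to the centralizer of the regular nilpotent $A_n$ in $L$---is exactly the paper's route. The difference lies in how the endgame is handled, and there your argument has a genuine gap.

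First, a structural point: the statement you are asked to prove is \emph{only} the connectedness of $\Stab(A)$. Your opening paragraph (deducing $\Ad_g(B)=B$ from connectedness via $\exp$) is the content of the \emph{preceding} lemma in the paper, not this one.

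Now the real issue. You try to finish the regular nilpotent case in two ways, and neither works as stated.
\begin{itemize}
\item The claim ``Kostant's description shows $Z_L(A_n)=Z(L)^\circ\cdot U$'' is not justified. What is known is the Levi-type decomposition $Z_L(A_n)=C\ltimes U$ with $U$ connected unipotent and $C=Z_L(e,h,f)$ reductive. For a principal $\mf{sl}_2$-triple the Lie algebra of $C$ is zero (no trivial summand in the adjoint representation), so $C$ is finite---but proving $C$ is actually trivial (or contained in $Z(L)^\circ$) is precisely the component-group computation you are trying to avoid.
\item The simply connected cover argument fails as written: $\Stab_{\tilde G}(A)$ is \emph{not} connected in general. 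Already for $\tilde G=\SL_n$ and $A=e$ a single Jordan block, the centralizer $\{P(e)\in\SL_n\}$ has $n$ connected components (the constant term of $P$ must be an $n$-th root of unity), with component group $Z(\SL_n)$. Steinberg's theorem concerns centralizers of semisimple \emph{group} elements, and does not give what you claim here. A correct version would show that the component group of $\Stab_{\tilde G}(A)$ is exactly $Z(\tilde G)$, so that dividing by the center yields a connected group---but establishing that inclusion is again the same component-group question.
\end{itemize}

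The paper confronts this honestly: after the same Jordan reduction, it invokes the classification and checks the component group of the principal nilpotent orbit in the adjoint group case by case using the tables in Collingwood--McGovern. The paper's own Remark explicitly says that a classification-free proof would be desirable but is not known. So your outline matches the paper up to the last step, but the step you hand-wave is exactly the one the paper resolves by classification; you would need either to do that check or to supply a genuinely new argument.
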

\begin{proof}
Decompose $A$ into Jordan form: $A=A_s+A_n$ with $A_s$ semisimple, $A_n$ nilpotent and $[A_s,A_n]=0$. So $A_n\in Z(A_s)$. The structure of the centralizer $Z(A_s)$ is well-known: it is a direct sum of a Cartan $\h$ containing $A_s$ with all root spaces $\g_{\alpha}$ where $\alpha$ is a root such that $\alpha(A_s)=0$. It is also known that $Z(A_s)$ is reductive, so a direct sum $Z(A_s)=\mf{c}\oplus \g_s$ where $\mf{c}$ is the center and $\g_s$ is the semisimple part of $Z(A_s)$. In particular the center $\mf{c}$ is included in $\h$. So $A_n \in \g_s$ since $A_n$ is nilpotent. Denote by $G_s$ the Lie group with trivial center with Lie algebra $\g_s$.

We know that $A$ is regular is equivalent to $A_n$ being regular nilpotent in $\g_s$ (see \cite{Kost2}, proposition 0.4). 
We also know that the $G$-equivariant fundamental group of the orbit of $A$ (which is the space of connected components of $\Stab(A)$) is the same as the $\Stab(A_s)$-equivariant fundamental group of the $\Stab(A_s)$-orbit of $A_n$ (see Proposition 6.1.8. of \cite{Coll} adapted to the adjoint group). In other words, the connected components of $\Stab_G(A)$ are the same as the connected components of $\Stab_{G_s}(A_n)$ since the $\Stab(A_s)$-orbit of $A_n$ is equal to the $G_s$-orbit of $A_n$.

So we are reduced to the principal nilpotent case. Using the classification of simple Lie algebras, one can check explicitly in Collingwood-McGovern's book \cite{Coll} the tables 6.1.6. for classical $\g$ and the tables at the end of chapter 8 for exceptional $\g$ that the stabilizer of a principal nilpotent element is always connected.
\end{proof}
\begin{Remark}
It is surprising that the last lemma has never been stated (at least not to our knowledge). It would be interesting to find a direct argument, without using the classification of simple Lie algebras.
\end{Remark}

\begin{coro}\label{regzerohilb}
The regular zero-fiber $\Hilb^{reg}_0(\g) = \Hilb^{reg}(\g)\cap \Hilb_0(\g)$ is an affine space of dimension $\rk \g$.
\end{coro}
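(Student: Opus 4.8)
The plan is to combine Proposition \ref{paramit} with the fact that for a regular nilpotent element $f$, its centralizer $Z(f)$ is an abelian subalgebra of dimension $\rk\g$ consisting entirely of nilpotent elements. First I would observe that by definition $\Hilb^{reg}_0(\g)$ consists of classes $[(A,B)]$ with $[A,B]=0$, both $A,B$ nilpotent, $\dim Z(A,B)=\rk\g$, and (at least) one of them regular. The key reduction is that if $A$ or $B$ is regular \emph{and} nilpotent, then it is in fact a principal (regular) nilpotent element, since a regular nilpotent is precisely a principal nilpotent; up to swapping the roles of $A$ and $B$ (which is harmless as the conditions are symmetric) we may assume $A$ is principal nilpotent.

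Next I would apply Proposition \ref{paramit}: every class in $\Hilb^{reg}(\g)$ with $A$ regular has a unique representative with $A\in f+Z(e)$ and $B\in Z(A)$. I would then note that imposing $A$ nilpotent forces $A=f$ exactly (the only nilpotent element in the principal slice $f+Z(e)$ is $f$ itself, since the slice maps bijectively to $\h/W$ via the Chow/adjoint-quotient map and $f$ is the unique point mapping to $0$). With $A=f$ pinned down, the remaining freedom is the choice of $B\in Z(f)$, and by the example early in the paper $B\in Z(f)$ is automatically nilpotent and $(f,B)$ automatically satisfies $Z(f,B)=Z(f)$ of dimension $\rk\g$, so every such $B$ gives a valid point. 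Hence the map $B\mapsto [(f,B)]$ is a bijection from $Z(f)$ onto $\Hilb^{reg}_0(\g)$.

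Finally, $Z(f)$ is a linear subspace of $\g$ of dimension $\rk\g$ (this is Kostant's theorem, invoked as \ref{thmKost} and in Appendix \ref{appendix:B}), so $\Hilb^{reg}_0(\g)$ is an affine space of dimension $\rk\g$. The one point requiring a little care is the well-definedness and injectivity of the parametrization: I should check that two distinct elements $B_1,B_2\in Z(f)$ are never $G$-conjugate via an element simultaneously fixing $f$ — but this is exactly the content of the lemma following Proposition \ref{paramit} (if $Ad_g(f)=f$ then $Ad_g$ fixes $Z(f)$ pointwise, using connectedness of $\Stab(f)$), so uniqueness is already established. The main (mild) obstacle is thus simply assembling these pieces cleanly, in particular justifying that "regular nilpotent $=$ principal nilpotent" and that $f$ is the unique nilpotent in the principal slice; both are standard facts from Appendix \ref{appendix:B}.
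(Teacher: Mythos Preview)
Your proposal is correct and follows essentially the same approach as the paper's own proof: invoke Proposition~\ref{paramit}, observe that the only nilpotent element of the principal slice $f+Z(e)$ is $f$ itself, and conclude that $\Hilb^{reg}_0(\g)$ is parametrized by $Z(f)\cong\C^{\rk\g}$. Your write-up is simply more explicit about the auxiliary facts (nilpotency of $Z(f)$, injectivity via the lemma on $\Stab(f)$) that the paper's two-line proof leaves implicit.
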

\begin{proof}
This follows directly from the previous proposition using the fact that $A \in f+Z(e)$ is nilpotent iff $A=f$. So $\Hilb^{reg}_0(\g)$ is described by $Z(f)$ which is a vector space of dimension $\rk \g$.
\end{proof}

We know that both the regular and the cyclic part are in general strictly included in the $\g$-Hilbert scheme. But they are dense subspaces:
\begin{prop}\label{density}
The regular part $\Hilb^{reg}(\g)$ is dense in $\Hilb(\g)$. For classical $\g$, the cyclic part is also dense in $\Hilb(\g)$.
\end{prop}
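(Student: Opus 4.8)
The plan is to establish density of the regular part $\Hilb^{reg}(\g)$ in $\Hilb(\g)$ by showing that every commuting regular pair $(A,B)$ (in the sense $\dim Z(A,B) = \rk\g$) can be approximated by pairs in which one of the two elements is a regular element of $\g$. The natural strategy is to perturb within the common centralizer: fix $(A,B)$ with $\dim Z(A,B) = \rk\g$, consider the family $(A + tC, B)$ for $C \in Z(A,B)$ and $t$ a small parameter, and argue that for generic $C$ and all small $t\neq 0$ the element $A+tC$ is regular while the pair $(A+tC, B)$ still lies in $\Comm(\g)$ with a common centralizer of dimension exactly $\rk\g$. Commutativity is immediate since $C$ commutes with both $A$ and $B$; the condition $\dim Z(A+tC, B) = \rk\g$ follows by upper semicontinuity of $\dim Z(\cdot,\cdot)$ together with Proposition \ref{doublecomm} (minimality of $\rk\g$), so the only real content is producing a $C$ making $A+tC$ regular for small $t$. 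For this I would use the structure of $Z(A,B)$: by Proposition \ref{doublecomm} it contains a regular element, or at least its generic element is regular; more carefully, one knows the centralizer of a commuting pair contains a Cartan-like abelian subalgebra, and choosing $C$ to be a generic (hence regular) element of this $\rk\g$-dimensional space, the element $A + tC$ has, for small $t$, the same regularity as $tC$ perturbed, i.e. is regular. The set of regular elements being open and dense (Appendix \ref{appendix:B}), this gives curves through $[(A,B)]$ meeting $\Hilb^{reg}(\g)$, proving density.

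For the cyclic part in the classical case, I would argue similarly but track the existence of a cyclic vector for $(\rho(A),\rho(B))$ under perturbation. The property of admitting a cyclic vector is a Zariski-open condition on pairs of commuting endomorphisms of a fixed representation space whose algebra $\C[\rho(A),\rho(B)]$ has the right dimension; so it suffices to exhibit, arbitrarily close to a given regular pair, one cyclic pair. Here I would combine two facts: first, by Proposition \ref{cycliccentralizer} the cyclic part is contained in $\Hilb(\g)$, and by Corollary \ref{regzerohilb} and Proposition \ref{paramit} the regular part is well-understood via the principal slice, where one can write $A \in f + Z(e)$ explicitly; second, a generic element of the principal slice is a regular semisimple element, and a regular semisimple element of classical $\g$ does admit a cyclic vector in the standard representation (its eigenvalues in $\rho$ are distinct, or distinct up to the symmetry forced by the bilinear form, which still yields cyclicity for $(\rho(A),\rho(B))$ with $B$ a generic element of $Z(A) \cong \h$). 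Perturbing any regular pair within $f + Z(e) \times Z(A)$ toward such a generic point, and invoking density of $\Hilb^{reg}(\g)$ from the first part, chains the two density statements together.

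The main obstacle I anticipate is the second claim: verifying that a regular (even regular semisimple) pair in classical $\g$ genuinely admits a cyclic vector in the standard representation $\rho$, because $\rho$ is not the regular representation and the bilinear-form symmetry in types $B_n, C_n, D_n$ pairs up eigenvalues $\lambda, -\lambda$, so naive "distinct eigenvalues" arguments fail; one must check that $Z(A) \cong \h$ acting on $\C^m$ still has a cyclic vector, which amounts to a concrete linear-algebra computation in each classical type (closely parallel to, and reusing, the coefficient analysis already carried out in the proof of Proposition \ref{cycliccentralizer}). A secondary subtlety is making the semicontinuity argument for $\dim Z(A,B)$ fully rigorous on the quotient $\Hilb(\g)$ rather than on $\Comm(\g)$ — but since density can be checked upstairs on the $G$-invariant locus of regular points of $\Comm(\g)$ and then pushed down, this is routine. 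I would therefore structure the write-up as: (1) the perturbation lemma inside $Z(A,B)$ for the regular part; (2) openness of the cyclic condition and reduction to exhibiting one nearby cyclic pair; (3) the type-by-type check that generic regular pairs are cyclic, citing the computation in Proposition \ref{cycliccentralizer}.
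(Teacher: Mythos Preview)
Your approach to the density of the regular part has a genuine gap. The key step asserts that for a generic $C \in Z(A,B)$ the element $A + tC$ is regular for small $t \neq 0$, i.e.\ that the abelian space $Z(A,B)$ meets $\g^{reg}$. This is false in general. Take the pair $(A,B) = (E_{12}, E_{13})$ in $\mf{sl}_3$ discussed in Subsection~\ref{topology}: one computes $Z(A,B) = \Span(E_{12}, E_{13})$, and every element of this plane is a rank\nobreakdash-one (hence non-regular) nilpotent matrix. Thus no perturbation of the form $(A + tC, B)$ with $C \in Z(A,B)$ ever reaches the regular part. The point is that although $Z(A,B)$ is abelian of dimension $\rk\g$, it need not be conjugate to a Cartan and need not contain any regular element; perturbing inside it is too restrictive.

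The paper circumvents this entirely by invoking Richardson's theorem (Theorem~\ref{Richardson}): commuting pairs with both entries semisimple are dense in all of $\Comm(\g)$, and within those the pairs with $A$ regular semisimple form an open dense subset. Any such pair automatically lies in $\Hilb(\g)$ and in $\Hilb^{reg}(\g)$, so density upstairs in $\Comm(\g)$ pushes down to the quotient. Your argument for the cyclic part (perturbing within the principal slice toward a regular semisimple element, which in the standard representation has distinct eigenvalues and is hence cyclic) is correct and is exactly what the paper does in one line --- but it rests on the regular-part density, for which you need Richardson or an equivalent global input rather than the local perturbation you propose.
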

\begin{proof}
By a theorem of Richardson (see \ref{Richardson}), the set of semisimple commuting pairs is dense in the commuting variety $\Comm(\g)$. So the set of semisimple regular elements is also dense in $\Comm(\g)$. Passing to the quotient by $G$, we get that the classes of semisimple regular pairs are dense in $\Hilb(\g)$ since $\Hilb(\g)\subset \Comm(\g)/G$ and all semisimple regular pairs are in $\Hilb(\g)$. Since the semisimple regular pairs are in the regular part, we get the density of $\Hilb^{reg}(\g)$ in $\Hilb(\g)$.

For classical $\g$, we have the same argument for the cyclic part since semisimple regular pairs are cyclic.
\end{proof}

To end the section, we state an analogue of Kostant's theorem about abelian subalgebras of centralizers:
\begin{prop}
For any commuting pair $(A,B) \in \Comm(\g)$, there is an abelian subspace of dimension $\rk \g$ in the common centralizer $Z(A,B)$.
\end{prop}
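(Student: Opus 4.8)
The plan is to reduce the statement to the known theorem of Kostant that for a \emph{single} element the centralizer contains an abelian subalgebra of dimension $\rk\g$, combined with a degeneration argument. First I would use the theorem of Richardson (cited in the excerpt as \ref{Richardson}): semisimple commuting pairs are dense in $\Comm(\g)$. For a semisimple commuting pair $(A,B)$ we may simultaneously diagonalize, i.e. conjugate into a fixed Cartan subalgebra $\h$; then $\h\subseteq Z(A,B)$, so $\h$ itself is an abelian subspace of dimension $\rk\g$ inside the common centralizer. Thus the statement is clear on a dense subset of $\Comm(\g)$.

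The substance is the passage to the limit. I would fix the Grassmannian-type setup: consider the incidence variety
$$\mathcal{Z} = \{\,((A,B),V) \in \Comm(\g) \times \mathrm{Gr}(\rk\g, \g) \ \mid\ V \text{ abelian}, \ [A,V]=[B,V]=0\,\},$$
equipped with the projection $p\colon \mathcal{Z}\to\Comm(\g)$. The condition ``$V$ abelian and $V$ centralizes $A$ and $B$'' is Zariski closed in $(A,B)$ and $V$ (it is cut out by vanishing of brackets, which are polynomial), so $\mathcal{Z}$ is a closed subvariety of $\Comm(\g)\times\mathrm{Gr}(\rk\g,\g)$, and since the Grassmannian is projective, $p$ is a proper morphism; in particular its image is closed in $\Comm(\g)$. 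By the previous paragraph this image contains the dense set of semisimple pairs, hence $p$ is surjective. Unwinding, every commuting pair $(A,B)$ admits an abelian $V\subseteq Z(A,B)$ with $\dim V = \rk\g$, which is exactly the claim.

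The main obstacle I anticipate is purely a matter of making the limiting step honest: one must ensure the abelian subspaces $V$ attached to nearby semisimple pairs stay inside a \emph{compact} parameter space so that a limit exists, which is why I phrase everything through the compact Grassmannian $\mathrm{Gr}(\rk\g,\g)$ and invoke properness rather than trying to take naive limits of subspaces. A minor point to check is that the limiting $V$ still has dimension exactly $\rk\g$ — this is automatic because $\mathrm{Gr}(\rk\g,\g)$ parametrizes subspaces of that fixed dimension — and that the bracket conditions defining $\mathcal{Z}$ are genuinely closed, which is immediate since for fixed $(A,B)$ the maps $V\mapsto \mathrm{ad}_A|_V$, $V\mapsto\mathrm{ad}_B|_V$ and the bracket $\Lambda^2 V\to\g$ vary algebraically over the Grassmannian. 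An alternative to the properness argument, if one prefers to avoid it, is to run a one-parameter degeneration: take a curve $(A_t,B_t)$ of semisimple pairs with $(A_1,B_1)=(A,B)$ in the closure, pick $V_t=\h$ for $t\neq 1$ after trivializing, and extract a limit using the valuative criterion of properness for the Grassmannian bundle; this is really the same argument in disguise. Either way, no classification of Lie algebras is needed — only Richardson's density theorem and Kostant's results already invoked in the paper.
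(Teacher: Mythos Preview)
Your proof is correct and follows essentially the same strategy as the paper: both arguments use Richardson's density of (regular) semisimple commuting pairs, observe that for such pairs a Cartan subalgebra sits inside $Z(A,B)$, and then pass to the limit via compactness of the Grassmannian $\mathrm{Gr}(\rk\g,\g)$. The only cosmetic difference is that you phrase the limiting step algebro-geometrically (properness of the projection from a closed incidence variety over a projective factor), whereas the paper argues topologically by extracting a convergent subsequence of the spaces $Z(A_n,B_n)$ in the compact Grassmannian; these are the same argument in different dialects.
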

\begin{proof}
The proof is completely analogous to Kostant's proof for Theorem \ref{thmKost}: we use a limit argument. Let $(A_n,B_n)$ be a sequence of regular semisimple pairs converging to $(A,B)$ (exists since regular semisimple pairs are dense). We know that $Z(A_n,B_n)$ is a $\rk \g$-dimensional abelian subspace of $\g$. Since the Grassmannian $Gr(\rk \g, \dim \g)$ is compact, there is a subsequence of $Z(A_n,B_n)$ which converges. It is easy to prove that the limit is included in $Z(A,B)$ and is commutative.
\end{proof}
\begin{coro}
For $[(A,B)] \in \Hilb(\g)$, the common centralizer $Z(A,B)$ is abelian.
\end{coro}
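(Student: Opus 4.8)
The plan is to deduce this immediately from the preceding Proposition together with the defining condition of the $\g$-Hilbert scheme. Recall that $[(A,B)]\in\Hilb(\g)$ means, by Definition \ref{maindef}, that $[A,B]=0$ and that $\dim Z(A,B)=\rk\g$. In particular $(A,B)\in\Comm(\g)$, so the Proposition just proved applies.

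First I would invoke that Proposition to produce an abelian linear subspace $\mf{a}\subset Z(A,B)$ with $\dim\mf{a}=\rk\g$. Next I would compare dimensions: since $\mf{a}$ is a subspace of the vector space $Z(A,B)$ and $\dim Z(A,B)=\rk\g=\dim\mf{a}$, equality of dimensions forces $\mf{a}=Z(A,B)$. Hence $Z(A,B)$ itself is abelian, which is the claim.

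There is essentially no obstacle here — the only point worth spelling out is that the abelian object furnished by the Proposition is a \emph{linear} subspace of the ambient vector space $Z(A,B)$, so that the dimension equality $\dim\mf{a}=\dim Z(A,B)$ genuinely forces $\mf{a}=Z(A,B)$ rather than merely $\mf{a}$ being maximal among abelian subspaces. One might add the remark that this is the exact analogue, for commuting pairs, of the classical fact (Kostant's theorem \ref{thmKost}) that the centralizer of a regular element is abelian: there, regularity pins the centralizer to minimal dimension $\rk\g$, and here the condition $\dim Z(A,B)=\rk\g$ plays the same role.
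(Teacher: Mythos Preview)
Your proof is correct and follows exactly the approach implicit in the paper: apply the preceding Proposition to obtain an abelian subspace of dimension $\rk\g$ inside $Z(A,B)$, then use the defining condition $\dim Z(A,B)=\rk\g$ of $\Hilb(\g)$ to conclude that this subspace is all of $Z(A,B)$. The paper states this as an immediate corollary without spelling out the argument, which is precisely the dimension comparison you give.
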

\begin{Remark}
For classical $\g$, the corollary is easy for the cyclic part since $Z(A,B) = \C[x,y]/I \cap \g$ which is abelian since $\C[x,y]$ is.
\end{Remark}

In the following sections, we generalize as far as possible the other viewpoints of the usual Hilbert scheme (resolution of configuration space and idealic viewpoint) to our setting.

\subsection{Chow map}

We want to generalize the Chow map, which goes from $\Hilb^n(\C^2)$ to the configuration space (see \ref{resofsing}).

Fix a Cartan subalgebra $\h$ in $\g$. Recall the Jordan decomposition in a semisimple Lie algebra: for $x \in \g$, there is a unique pair $(x_s, x_n)$ with $x=x_s+x_n$, $x_s$ semisimple, $x_n$ nilpotent and $[x_s,x_n]=0$. For a semisimple element $x$, denote by $x^*$ a conjugate in the Cartan $\h$ (unique up to $W$-action).

The \textbf{Chow map}  $ch: \Hilb(\g) \rightarrow \h^2/W$ is defined by $$ch([(A,B)]) = [(A_s^*,B_s^*)]$$ where the brackets $[.]$ denotes the equivalence class. For semisimple regular pairs, this map corresponds to a simultaneous diagonalization.

\begin{prop}
The Chow map $ch$ is well-defined and continuous.
\end{prop}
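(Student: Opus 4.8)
The plan is to show two things: that $ch$ does not depend on the representative chosen for $[(A,B)]$, and that it is continuous.

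For well-definedness, I would first observe that the Jordan decomposition is compatible with the adjoint action: if $(A',B') = (\Ad_g A, \Ad_g B)$, then $A'_s = \Ad_g A_s$ and $B'_s = \Ad_g B_s$, because $\Ad_g$ sends the (unique) Jordan decomposition of $A$ to a decomposition of $A'$ into commuting semisimple and nilpotent parts, which by uniqueness must be the Jordan decomposition of $A'$. Hence $A'^*_s$ and $A^*_s$ are conjugate to the same semisimple element, so they lie in the same $W$-orbit in $\h$, and likewise for $B$. The one subtlety is that $ch$ lands in $\h^2/W$ for the \emph{diagonal} $W$-action, so I must check that $A_s^*$ and $B_s^*$ can be simultaneously conjugated into $\h$ and that the \emph{pair} is well-defined up to a single $W$-element. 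This uses that $A$ and $B$ commute: then $A_s$ and $B_s$ commute (the semisimple parts of commuting elements commute), so they lie in a common Cartan subalgebra $\h'$; choosing $k\in G$ with $\Ad_k \h' = \h$ conjugates the pair $(A_s,B_s)$ simultaneously into $\h$, and any two such choices differ by an element normalizing $\h$, i.e. by an element of $W$ acting diagonally. So $ch([(A,B)])$ is a well-defined point of $\h^2/W$.

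For continuity, I would argue that the composite map $\Comm(\g) \to \h^2/W$, $(A,B)\mapsto (A_s^*,B_s^*)$, is continuous, and then that it descends continuously to the quotient $\Hilb(\g) \subset \Comm(\g)/G$ by the universal property of the quotient topology (the quotient map $\Comm(\g)^{\mathrm{reg}} \to \Hilb(\g)$ is open/continuous and surjective). The map $(A,B)\mapsto (A_s^*, B_s^*)$ is the classical ``adjoint quotient'' type map. One clean way: the characteristic-polynomial-type invariants, i.e. the $G$-invariant polynomials on $\g$ evaluated on $A$ and on $B$ (and more generally joint invariants detecting the pair), depend only on the semisimple parts and separate points of $\h^2/W$ — Chevalley's restriction theorem identifies $\C[\g]^G \cong \C[\h]^W$, and the map $\h^2/W \to \h/W \times \h/W$ is finite, so the pair $(A_s^*,B_s^*)$ mod $W$ is recovered continuously (indeed algebraically) from the coefficients of the characteristic polynomials of $A+tB$ for varying $t$, which are manifestly continuous (polynomial) in $(A,B)$. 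This gives continuity of $ch$ upstairs, hence downstairs.

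The main obstacle I anticipate is the bookkeeping around the \emph{diagonal} $W$-action in $\h^2/W$: making precise that the simultaneous conjugation of $(A_s,B_s)$ into $\h$ is unique up to a single $W$-element (not $W\times W$), and that the ``joint spectral data'' map to $\h^2/W$ is genuinely continuous rather than merely continuous after projecting to $\h/W \times \h/W$. Injectivity of $\h^2/W \hookrightarrow \prod \h/W$ fails, so one really needs the finer invariants (e.g. characteristic polynomials of all linear combinations $A_s + tB_s$, or of $\rho(A)+ t\rho(B)$ in a faithful representation), and one must check these cut out exactly the diagonal quotient. Everything else — compatibility of Jordan decomposition with $\Ad$, commuting of semisimple parts, Chevalley restriction — is standard and can be quoted.
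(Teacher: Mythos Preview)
Your well-definedness argument is essentially identical to the paper's: both use that $[A,B]=0$ forces $[A_s,B_s]=0$, so the pair can be simultaneously conjugated into $\h$, and the remaining ambiguity is a single Weyl group element acting diagonally. Nothing to add there.

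For continuity the two arguments diverge. The paper simply asserts that $x\mapsto x_s^*$ is continuous ``by continuity of eigenvalues'' and declares the Chow map continuous. You correctly point out that this, taken at face value, only yields continuity of the composite into $\h/W\times\h/W$, not into the finer quotient $\h^2/W$ by the diagonal action; the natural map $\h^2/W\to\h/W\times\h/W$ is finite but not injective, so some extra argument is needed. Your route via joint $G$-invariants --- in effect, the observation that $\h^2/W\cong\operatorname{Spec}\C[\g\times\g]^G$ restricted to commuting pairs, so the Chow map is computed by invariant \emph{polynomials} in $(A,B)$ (e.g.\ coefficients of $\chi_{\rho(A+tB)}$ for varying $t$ in a faithful representation $\rho$) --- closes this gap cleanly and even upgrades continuity to regularity. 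So your approach is the same in spirit but genuinely more careful on the one point where the paper is terse; what you flag as the ``main obstacle'' is exactly the step the paper elides.
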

\begin{proof}
Since $[A,B]=0$, we also have $[A_s,B_s]=0$ by a simultaneous Jordan decomposition in a faithful representation. Hence there is a conjugate of the pair $(A_s,B_s)$ which lies in $\h^2$. Since the adjoint action of $G$ on $\g$ restricts to the $W$-action on $\h$, the map $ch$ is well-defined.

The map $x\mapsto x_s^*$ is continuous which simply follows from the continuity of eigenvalues. Hence the Chow map is continuous as well.
\end{proof}
\begin{Remark}
The Jordan decomposition $x\mapsto (x_s,x_n)$ is not continuous at all, since semisimple elements are dense in $\g$ for which we have $x_n=0$ and for all non-semisimple elements we have $x_n\neq 0$. But the map $x\mapsto x_s$ is continuous.
\end{Remark}

This map permits to think of a generic element of $\Hilb(\g)$ as a point in $\h^2/W$, or via a representation of $\g$ on $\C^m$, as a set of $m$ points in $\C^2$ with a certain symmetry. For $\g=\mf{sl}_n$ for example, these are $n$ points with barycenter 0.

Since $\Hilb(\g)$ is even not Hausdorff (see Subsection \ref{topology}), it cannot be a non-singular variety. Nevertheless we conjecture the following:

\begin{conj}\label{conj1}
There is a modified version of $\Hilb(\g)$, identifying some points, which is a smooth projective variety such that the Chow morphism is a resolution of singularities.
\end{conj}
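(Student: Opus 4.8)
The plan is to realise the modified scheme as a geometric invariant theory quotient with a nontrivial linearisation, to read off smoothness from explicit charts modelled on the regular part, and to deduce properness of the Chow morphism from a valuative-criterion argument in the spirit of the compactness arguments already used above. First I would locate the defect of $\Hilb(\g)$: the regular locus $\Comm^{reg}(\g) \subset \Comm(\g)$ is open by upper semicontinuity of $\dim Z(A,B)$, and $\Comm(\g)$ is irreducible (Richardson; see \ref{Richardson}); since a regular semisimple pair is uniquely simultaneously diagonalisable up to $W$, the Chow morphism $ch$ is an isomorphism over the regular semisimple locus of $\h^2/W$, so $\Hilb(\g)$ is birational to $\h^2/W$ and the entire problem is concentrated over the discriminant. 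I would stratify $\Comm^{reg}(\g)$ by the pair of Jordan types of $(A,B)$ and, over each point of $\h^2/W$, describe the finitely many $G$-orbits in the fibre of $ch$, determining exactly which of them lie in the closure $\overline{\Hilb^{reg}(\g)}$; the non-Hausdorff behaviour recorded in Subsection \ref{topology} is precisely the statement that several such orbits can be glued to one point, so the phrase ``identifying some points'' should be read as ``separating the glued orbits correctly''.

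Next I would construct the model itself. The proposal is to take $\widetilde{\Hilb}(\g)$ to be the closure of $\Hilb^{reg}(\g)$ inside the GIT quotient of $\Comm(\g) \times \mathbb{P}(V)$ by $G$, for a suitable $G$-module $V$ and a judiciously chosen $G$-linearised ample line bundle: the extra projective factor and the nontrivial character serve to pull apart the orbits that the naive quotient collapses, in direct analogy with the ADHM/Nakajima presentation of $\Hilb^n(\C^2)$ as commuting matrices plus a framing vector in type $A$. For classical $\g$ the natural representation $\rho$ is the obvious choice of $V$, and there I would first run the construction on the cyclic part $\Hilb^{cycl}(\g)$, whose points are genuine ideals, and then verify — using Proposition \ref{density} and comparisons of the kind in Example \ref{regnotcyclic} — that $\Hilb^{reg}(\g)$ embeds densely in the resulting scheme.

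Then I would establish smoothness and properness. On the dense open $\Hilb^{reg}(\g)$ smoothness is immediate: by Proposition \ref{paramit} it fibres over the principal slice $f+Z(e)$ with fibre $Z(A)$, and by Corollary \ref{regzerohilb} the regular zero-fibre is the affine space $Z(f)$ of dimension $\rk\g$. Near the boundary strata glued in above, charts must be produced from the deformation theory of the relevant modules and ideals and the obstruction spaces shown to vanish — this is the point at which the chosen linearisation has to be proved to impose exactly the stability condition excluding the obstructed pairs. Properness of $ch$ on $\widetilde{\Hilb}(\g)$ would follow from the valuative criterion: a one-parameter family of regular commuting pairs with bounded spectral data, after conjugation and after extracting a convergent subsequence of common centralisers in $Gr(\rk\g,\dim\g)$ — exactly the device used above to produce an abelian subspace of dimension $\rk\g$ inside $Z(A,B)$ — extends to a limit in $\Comm^{reg}(\g)$, and the linearisation makes the limiting point of $\widetilde{\Hilb}(\g)$ unique; restricting to $ch^{-1}(0)$ then yields the projective smooth model over the origin.

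The hard part, and the reason this remains a conjecture, is the exceptional (and $D_n$) case. For $W$ of type $A_n$ or $B_n/C_n$ the singularity $(\h \oplus \h)/W$ is a symplectic quotient singularity admitting a symplectic, hence crepant, resolution — in type $A$ it is literally $\Hilb^n(\C^2)$ — which canonically pins down the modification; but for $W$ of type $D_n$ with $n \geq 4$, $G_2$, $F_4$, $E_6$, $E_7$, $E_8$ there is no symplectic resolution of $(\h \oplus \h)/W$ (Verbitsky; Ginzburg--Kaledin; Bellamy), so any resolution is non-crepant and a priori non-canonical. Establishing the conjecture there requires showing that the commuting-pair geometry still singles out a distinguished resolution, which appears to demand a description of the local structure of $\Comm(\g)$ at non-regular commuting pairs that is not presently available for exceptional $\g$, together with a proof that the modified quotient is reduced and normal rather than a purely set-theoretic modification — a point that is new even for $\mf{sl}_n$ beyond what is proved in \cite{Thomas}.
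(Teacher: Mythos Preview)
The statement you are addressing is a \emph{conjecture} in the paper, not a theorem; the paper offers no proof, only motivation (the non-Hausdorff phenomena of Subsection~\ref{topology} and the analogy with the $\mf{sl}_n$ case where the reduced Hilbert scheme is known to resolve $\h^2/W$). There is therefore nothing in the paper for your proposal to be compared against.

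Your text is not a proof either, and you are evidently aware of this: what you have written is a research programme, and you explicitly flag the decisive obstruction yourself. Two remarks are still worth making. First, your GIT-with-framing strategy is natural and is exactly the mechanism the paper gestures at in Conjecture~\ref{git-conj}; but the paper's own discussion around Figure~\ref{nonhaus} already warns that naive GIT does not separate the relevant semistable orbits, and you have not said what character or stability condition would do so outside type $A$ --- in type $A$ the cyclic-vector framing is canonical, whereas for other $\g$ there is no obvious intrinsic choice of $V$, and the standard representation is known (Example~\ref{regnotcyclic}, Example~\ref{idealnotcont}) to behave badly in type $D_n$. Second, your invocation of the non-existence of symplectic resolutions of $(\h\oplus\h)/W$ for $W$ of type $D_n$ ($n\geq 4$), $E$, $F$, $G$ is genuinely relevant, but it cuts the other way from how you use it: the paper later conjectures (Section~\ref{spectralcurve}) that the smooth model is symplectic with Lagrangian zero-fibre, so the Bellamy--Ginzburg--Kaledin results you cite are in tension not merely with your approach but with the conjunction of Conjecture~\ref{conj1} and the paper's symplectic conjecture. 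A fair summary is that you have correctly located why this is open, but have not closed the gap.
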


\subsection{Idealic map}\label{idealic}

In this subsection, $\g$ is a classical Lie algebra. In that case we can associate to any regular element of the $\g$-Hilbert scheme an ideal, which we call \textit{idealic map}. Recall the standard representation $\rho$ of $\g$ on $\C^m$ (see Definition \ref{partshilb}). We will write $A$ instead of $\rho(A)$.

We wish to define a map like in \ref{bijhilbert}: \begin{equation}\label{ideal}[(A,B)] \mapsto I(A,B)=\{P\in \C[x,y] \mid P(A,B) = 0\}.\end{equation} 
If $[(A,B)] \in \Hilb^{cycl}(\g)$ is cyclic, this ideal is of codimension $m$. But if the pair is not cyclic, there is no reason why the codimension should be $m$. In fact, there are examples for $\g$ of type $D_n$ where the codimension is smaller. 

We wish the idealic map to be continuous, so $I$ has to be of constant codimension. A strategy would be to define the idealic map $I$ on the cyclic part $\Hilb^{cycl}(\g)$ (which is dense by Proposition \ref{density}) and to extend it by continuity. Unfortunately, the map can not be extended in a continuous way as shown in the following example:
\begin{example}\label{idealnotcont}
Take $\g$ of type $D_n$. Denote by $f$ a principal nilpotent element. The pair $[(f,0)] \in \Hilb(\mf{so}_{2n})$ is not cyclic (seen in Example \ref{regnotcyclic}). Using the matrix $S$ defined in Equation \eqref{matrixS}, we can approach $(f,0)$ by $(f,tS)$ or by $(f+tS^\top,0)$ for $t\in\C^{\times}$ going to 0. These pairs are all cyclic. In the first case, the ideal is $I=\langle x^{2n-1}, xy, y^2=t^2x^{2n-2}\rangle$ which converges as $t$ goes to 0 to $\langle x^{2n-1}, xy, y^2\rangle$. In the second case, the ideal is $I=\langle x^{2n}+t^2, y \rangle$ converging to $\langle x^{2n},y \rangle$.
\end{example}

Because of this difficulty, our strategy is to define a space of ideals $I_{\g}(\C^2)$, then a map $\Hilb^{cycl}(\g)\rightarrow I_{\g}(\C^2)$ and to extent it over the regular part $\Hilb^{reg}(\g)$ (in a non-continuous way). The last step is only necessary for $\g$ of type $D_n$ since for the other classical types the regular part is included in the cyclic part as we will see in the sequel. The extension for $D_n$ will be defined \textit{ad hoc} in Subsection \ref{Dn}.

The previous section taught us to think of a generic element of $\Hilb(\g)$ as a $m$-tuple of points in $\C^2$ invariant under the Weyl group $W$. For type $A_n$ this means that the barycenter of the points is the origin. For the other classical types, this means that the set of points is symmetric with respect to the origin. Thus the defining ideal of these points is also invariant under the action of $W$. Hence the following definition.

\begin{definition}
We define the \textbf{space of ideals} of type $\g$, denoted by $I_{\g}(\C^2)$, to be the set of ideals in $\C[x,y]$ which are of codimension $m$ and $W$-invariant. For type $B_n, C_n$ and $D_n$ this means that $I$ is invariant under $(x,y)\mapsto (-x,-y)$.
\end{definition}

The map $I: \Hilb^{cycl}(\g)\rightarrow I_{\g}(\C^2)$ given by Equation \eqref{ideal} above is well-defined. Indeed, the codimension is $m$ by cyclicity and the ideal is $W$-invariant since this is a closed condition and it is true on the dense subset of regular semisimple pairs.

Notice that $I_{\g}(\C^2)$ is the same for $\g$ of type $C_n$ or $D_n$. But we will see that the idealic map $I$ has not the same image in the two cases.
We will also see that for $\g$ of type $A_n, B_n$ or $C_n$ the idealic map is injective. But for type $D_n$ it is not (it is generically 2 to 1). This comes from the fact that the Weyl group acting on the generic $2n$ points, coming in $n$ pairs $(P_i, P_{i+1}=-P_i)$, cannot exchange $P_1$ and $P_2$ while leaving all other points fixed.

As for the usual Hilbert scheme, there is a direct link between the idealic map and the Chow morphism:
\begin{prop}
The Chow map $ch$ is the composition of the idealic map with the map which associates to an ideal its support, seen as an element of $\h^2/W$: 
$$ch([(A,B)]) = \supp I(A,B).$$
\end{prop}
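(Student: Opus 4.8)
The plan is to unwind the definitions on both sides and check they agree. Recall that for a cyclic pair $(\rho(A),\rho(B))$ with cyclic vector $v$, the evaluation map $\C[x,y]\to \C^m$, $P\mapsto P(\rho(A),\rho(B))v$, is surjective with kernel exactly $I(A,B)$, so $\C[x,y]/I(A,B)\cong\C^m$ as a vector space, and the finite scheme $V(I(A,B))\subset \C^2$ has length $m$. Its support, as a set with multiplicities, is an element of $\Sym^m(\C^2)$, and the claim is that after recording it as a $W$-symmetric $m$-tuple it coincides with the image of $[(A,B)]$ under $ch$, which by definition is $[(A_s^*,B_s^*)]\in\h^2/W$ viewed via $\rho$ as $m$ points in $\C^2$ (the weights of $\rho$ evaluated on $(A_s^*,B_s^*)$).

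First I would reduce to the case where $A$ and $B$ are semisimple. The key point is that the support of $V(I(A,B))$ is the set of pairs $(a,b)\in\C^2$ such that $a$ is an eigenvalue of $\rho(A)$ and $b$ is an eigenvalue of $\rho(B)$ on the corresponding generalized joint eigenspace; equivalently, $(a,b)$ lies in the support iff the localization of $\C[x,y]/I(A,B)$ at $(x-a,y-b)$ is nonzero, which happens precisely when $\ker(\rho(A)-a)^N\cap\ker(\rho(B)-b)^N\neq 0$ for $N\gg 0$. Since $[\rho(A),\rho(B)]=0$, we may simultaneously block-decompose $\C^m$ into generalized joint eigenspaces, and on each block $\rho(A)=a\cdot\mathrm{id}+(\text{nilpotent})$, $\rho(B)=b\cdot\mathrm{id}+(\text{nilpotent})$. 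Passing through the Jordan decomposition $A=A_s+A_n$, $B=B_s+B_n$ (compatible with $\rho$), the joint generalized eigenvalues $(a,b)$ of $(\rho(A),\rho(B))$ are exactly the joint eigenvalues of $(\rho(A_s),\rho(B_s))$, i.e. the values $(\alpha(A_s^*),\alpha(B_s^*))$ as $\alpha$ ranges over the weights of $\rho$. The last equality identifies this multiset (as an element of $\Sym^m(\C^2)$) with the image of $[(A_s^*,B_s^*)]\in\h^2/W$ under the natural embedding $\h^2/W\hookrightarrow\Sym^m(\C^2)$ induced by the weights of $\rho$ — which is precisely the identification under which $\supp I(A,B)$ is declared to live in $\h^2/W$.

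The remaining thing to check is multiplicities: that the length of the local ring of $V(I(A,B))$ at $(a,b)$ equals the dimension of the joint generalized eigenspace, i.e. the total multiplicity of weights $\alpha$ with $(\alpha(A_s^*),\alpha(B_s^*))=(a,b)$. This follows from the cyclic-vector hypothesis: cyclicity is inherited by each block (a joint generalized eigenspace is $\C[\rho(A),\rho(B)]$-stable, hence the projection of $v$ to it is a cyclic vector for the restricted pair), so the local length equals the block dimension, summing correctly. I would then note that for type $D_n$ the idealic map was extended \emph{ad hoc} over the non-cyclic regular locus, and one checks directly (using the explicit formulas in Subsection \ref{Dn}, e.g. the ideals computed in Example \ref{idealnotcont}) that this extension still has support equal to $\supp$ of the limiting configuration, which by continuity of $ch$ agrees with $ch([(A,B)])$; this is a short case-check rather than a new idea.

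I expect the main obstacle to be purely bookkeeping: making the identification $\h^2/W\hookrightarrow \Sym^m(\C^2)$ via the weights of $\rho$ fully explicit and compatible with the (possibly several-to-one, e.g. in type $D_n$) nature of the weight map, and tracking multiplicities through the Jordan decomposition on each block. No deep input is needed beyond the structure of finite-length modules over $\C[x,y]$ and the compatibility of Jordan decomposition with representations, both of which are standard; the statement is essentially a translation exercise between the matrix and idealic pictures, mirroring the classical fact (recalled in Appendix \ref{appendix:A}) that the Chow map on $\Hilb^n(\C^2)$ sends an ideal to its support cycle.
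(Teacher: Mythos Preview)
Your argument is correct, but it takes a different route from the paper's own proof. The paper argues by density and continuity: the identity $ch([(A,B)])=\supp I(A,B)$ is obvious on regular semisimple pairs (where both sides are the unordered list of joint eigenvalues), these are dense, and for types $A_n,B_n,C_n$ both the Chow map and the idealic map are continuous, so equality propagates; for $D_n$ the idealic map is \emph{defined} as a particular limit, and the support of a limit of ideals with converging supports is the limiting support, which again is $ch$.

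You instead prove the equality pointwise on the entire cyclic locus by decomposing $\C^m$ into joint generalized eigenspaces of $(\rho(A),\rho(B))$, identifying these eigenvalues with the weights of $\rho$ on $(A_s^*,B_s^*)$, and matching multiplicities via inherited cyclicity on each block. This is a genuinely more hands-on argument: it does not appeal to continuity of the idealic map (established earlier in the paper), and it makes transparent \emph{why} the support computes the semisimple parts --- namely because nilpotent perturbations do not move the scheme-theoretic support, only thicken it. The paper's proof is shorter and more in the spirit of the surrounding text (which repeatedly uses Richardson-type density arguments), while yours gives a self-contained structural explanation and would survive even if one had not yet verified continuity of $I$. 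Your treatment of the $D_n$ non-cyclic locus ends up in the same place as the paper's, since both must fall back on the ad~hoc limiting definition from Subsection~\ref{Dn}.
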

\begin{proof}
The statement is true on regular semisimple pairs which is a dense subset. For $\g$ of type $A_n$, $B_n$ and $C_n$, it follows by continuity of both the Chow map and the idealic map. For $D_n$, our definition of the idealic map is to pick one of the various possible limits. In particular, the support of the ideal is still given by the Chow map.
\end{proof}

\subsection{Morphisms}\label{mu2}
In this subsection, we analyze the functorial behavior of the $\g$-Hilbert scheme. In particular we construct two maps linked to the zero-fiber of the Hilbert scheme of $\mf{sl}_2$ which will lead in the construction of the moduli space $\hat{\mathcal{T}}_{\g}\Sigma$ of $\g$-complex structures to maps from and to Teichm\"uller space.

Let $\psi: \g_1 \rightarrow \g_2$ be a morphism of Lie algebras. For $[(A,B)] \in \Hilb(\g_1)$, we can associate $[(\psi(A), \psi(B))]$ which is a well-defined map to $\Comm(\g_2)/G_2$. But there is no reason why $\dim Z(\psi(A), \psi(B))$ should be minimal.

If we accept Conjecture \ref{conj1}, that there is a modified version of the $\g$-Hilbert scheme which is a resolution of $\h^2/W$, we have a functorial behavior:
\begin{prop}
Assuming Conjecture \ref{conj1}, there is an induced map $\Hilb(\g_1) \rightarrow \Hilb(\g_2)$.
\end{prop}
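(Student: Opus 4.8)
The plan is to reduce the statement to a construction on the base configuration spaces $\h^2/W$ and then to lift along the Chow resolutions provided by Conjecture \ref{conj1}. First one disposes of the trivial cases: a nonzero morphism of simple Lie algebras has trivial kernel (the kernel is an ideal), so one may assume $\psi$ injective. Fix a Cartan subalgebra $\h_1\subset\g_1$. Since a homomorphism of semisimple Lie algebras preserves the abstract Jordan decomposition, $\psi(\h_1)$ is a toral subalgebra of $\g_2$, hence contained in some Cartan $\h_2\subset\g_2$; this yields a linear map $\psi\colon\h_1\to\h_2$ and thus $\psi^{\times 2}\colon\h_1^2\to\h_2^2$.

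The next step is to show that $\psi^{\times 2}$ descends to a morphism of varieties $\bar\psi\colon\h_1^2/W_1\to\h_2^2/W_2$. For well-definedness, given $w\in W_1$ write $w\cdot(x,y)=(Ad_g\,x,Ad_g\,y)$ for $g$ in the simply connected group with Lie algebra $\g_1$ representing $w$, integrate $\psi$ to a group homomorphism $\Psi$ so that $\psi(Ad_g\,v)=Ad_{\Psi(g)}\,\psi(v)$, and conclude that $\psi^{\times 2}(w\cdot(x,y))$ is $G_2$-conjugate to $\psi^{\times 2}(x,y)$. Since two tuples in $\h_2^2$ that are $G_2$-conjugate are already $W_2$-conjugate — they lie in a common Levi subalgebra, in which the relevant Cartans are conjugate, so one may correct the conjugating element to normalize $\h_2$ — the map $\bar\psi$ is well defined; it is a morphism because restriction along $\psi^{\times 2}$ sends $W_2$-invariant polynomials to $W_1$-invariant ones.

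Now write $ch_i\colon\widehat{\Hilb}(\g_i)\to\h_i^2/W_i$ for the resolutions from Conjecture \ref{conj1}, with $\widehat{\Hilb}(\g_i)$ smooth, and consider $\bar\psi\circ ch_1\colon\widehat{\Hilb}(\g_1)\to\h_2^2/W_2$. On the open set over which this lands in the locus where $ch_2$ is an isomorphism, there is a unique lift to $\widehat{\Hilb}(\g_2)$; let $\Gamma\subset\widehat{\Hilb}(\g_1)\times\widehat{\Hilb}(\g_2)$ be the closure of its graph. As $\Gamma$ is contained in the fibre product $\widehat{\Hilb}(\g_1)\times_{\h_2^2/W_2}\widehat{\Hilb}(\g_2)$, which is proper over $\widehat{\Hilb}(\g_1)$ (base change of the proper $ch_2$), the first projection $\Gamma\to\widehat{\Hilb}(\g_1)$ is proper and birational; resolving $\Gamma$ if needed produces a smooth $\Gamma'$ with a proper birational map to $\h_1^2/W_1$, which one may therefore take as the modified $\Hilb(\g_1)$, and the composite $\Gamma'\to\Gamma\to\widehat{\Hilb}(\g_2)$ is an induced map compatible with the Chow morphisms by construction.

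The hard part is exactly this lifting: a resolution does not in general receive morphisms from varieties over its base, and the graph-closure above is birational over $\widehat{\Hilb}(\g_1)$ only when $\bar\psi\circ ch_1$ meets the isomorphism locus of $ch_2$ densely — equivalently, only when $\psi(\h_1)$ lies on no reflection hyperplane of $W_2$, so that $\psi$ carries regular semisimple pairs to regular pairs. This holds for the principal $\mf{sl}_2$, but it fails, for instance, for a non-principal $\mf{sl}_2\hookrightarrow\mf{sl}_4$ whose Cartan image sits on a mirror: there the entire image of $\bar\psi$ lies in the singular locus of $\h_2^2/W_2$, and one must instead choose, algebraically in the parameters, a consistent point in each exceptional fibre of $ch_2$. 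Carrying this out — and, more seriously, carrying it out compatibly for all morphisms at once, so that the modified Hilbert schemes are genuinely \emph{functorial} — is precisely what the mere existence statement of Conjecture \ref{conj1} does not supply, and is the reason the proposition is stated conditionally.
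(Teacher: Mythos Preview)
Your overall strategy matches the paper's: construct a map $\h_1^2/W_1 \to \h_2^2/W_2$ on the base and then lift along the Chow resolutions. You are more careful on the descent step --- the paper simply asserts $\psi(\h_1)=\h_2$ and that $\psi$ induces a homomorphism of Weyl groups, neither of which holds for, say, $\mf{sl}_2\hookrightarrow\mf{sl}_n$ with $n>2$; your argument via $G_2$-conjugacy correctly shows only that one obtains a well-defined map on the quotients.

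The real divergence is the lifting step. The paper dispatches it in one phrase, ``by the universal property of a minimal resolution, the map lifts,'' whereas you attempt a graph-closure construction and observe that it does not close without further modifying the source. Your concern is legitimate: resolutions do not in general receive morphisms from arbitrary smooth varieties over the base, and Conjecture~\ref{conj1} as literally stated (existence of \emph{some} smooth projective resolution) supplies no such universal property. The paper is tacitly reading the conjecture as producing a \emph{minimal} resolution with the relevant lifting property, and your final paragraph is right that genuine functoriality would require this strengthening to be made explicit. In short, you have not missed the paper's argument so much as identified that it is thin at exactly the point where you get stuck.
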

\begin{proof}
Choose Cartan subalgebras $\h_1$ and $\h_2$ such that $\psi(\h_1)=\h_2$. Consider the composition $\h_1^2 \rightarrow \h_2^2 \rightarrow \h_2^2/W_2$ using $\psi$ for the first arrow. Since $\psi$ induces a homomorphism between the Weyl groups, we can factor the composition to get a map $\h_1^2/W_1 \rightarrow \h_2^2/W_2$. Finally, consider the composition $\Hilb(g_1) \rightarrow \h_1^2/W_1 \rightarrow \h_2^2/W_2$ where the first arrow comes from the minimal resolution. This is a continuous map and by the universal property of a minimal resolution, the map lifts to $\Hilb(\g_1) \rightarrow \Hilb(\g_2)$.
\end{proof}

Let us study this induced map in the case of the reduced Hilbert scheme, which is a minimal resolution (see Appendix \ref{appendix:A}). Take $\psi:\mf{sl}_m\rightarrow \mf{sl}_n$  inducing a map $\Hilb^m_{red}(\C^2)\rightarrow \Hilb^n_{red}(\C^2)$. In the matrix viewpoint, this map is not given by $[(\psi(A),\psi(B))]$. Consider for example the map $\psi: \mf{sl}_2\rightarrow \mf{sl}_4$ given on the standard generators $(e,f,h)$ of $\mf{sl}_2$ by 
$$\psi(e)=\left(\begin{smallmatrix} 0&&&1 \\&0&&\\&&0&\\&&&0\end{smallmatrix}\right), \psi(f)=\left(\begin{smallmatrix} 0&&& \\&0&&\\&&0&\\1&&&0\end{smallmatrix}\right) \text{ and } \psi(h)=\left(\begin{smallmatrix} 1&&& \\&0&&\\&&0&\\&&&-1\end{smallmatrix}\right).$$
 The element $[(h,0)] \in \Hilb^2_{red}(\C^2)$ corresponds to the ideal $I=\langle x^2-1,y\rangle$ which through $\psi$ goes to $\langle x^4-x^2,y\rangle$ which in turn gives the matrices $[(M,0)]$ where $M=\left(\begin{smallmatrix} 1&&& \\&0&1&\\&0&0&\\&&&-1\end{smallmatrix}\right)$.
This is not $[(\psi(h), \psi(0))]$.
It would be interesting to describe the induced map in the matrix viewpoint.

\medskip
Despite this complication, there are two cases where a map between $\g$-Hilbert schemes naturally exists.

The first one is linked to the principal map $\psi: \mf{sl}_2 \rightarrow \g$ which induces a map 
\begin{equation}\label{teichcopy} \Hilb(\mf{sl}_2) \rightarrow \Hilb^{reg}(\g). \end{equation} 
Indeed, any non-zero element of $\mf{sl_2}$ is regular and cyclic. So if $[(A,B)] \in \Hilb(\mf{sl}_2)$ such that $A$ is non-zero, there is by Proposition \ref{paramit} a unique representative $(f+te,B\in Z(e+tf))$ where $(e,f,h)$ denotes the standard generators of $\mf{sl}_2$ and $t\in \C$. So the image is $[(\psi(f)+t\psi(e),\psi(B))]$. Since $(\psi(e), \psi(f), \psi(h))$ is a principal $\mf{sl}_2$-triple (property of the principal map), we know that $\psi(f)+t\psi(e)$ is in the principal slice, thus it is regular, so we land in $\Hilb^{reg}(\g)$.

The second one is a sort of inverse map to the first one, but only on the level of the zero-fiber. Given $[(A,B)]\in \Hilb^{reg}_0(\g)$ where $A$ is regular, there is a principal $\mf{sl}_2$-subalgebra $\mc{S}$ with $A$ as nilpotent element. There is no reason why $B$ should be in $\mc{S}$ but there is a ``best approximation'' in the following sens:

\begin{prop}\label{mu2prop}
Let $A$ be a principal nilpotent element and $B \in Z(A)$. Then there is a unique $\mu_2 \in \C$ such that $B-\mu_2 A$ is not regular.
\end{prop}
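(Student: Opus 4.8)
\emph{Plan of proof.}
The plan is to work inside a fixed principal $\mf{sl}_2$-triple $(e,A=f,h)$ with $A$ principal nilpotent, and to use the $h$-grading $\g = \bigoplus_{k} \g_k$. Since $B \in Z(A) = Z(f)$, and $Z(f)$ is spanned by lowest weight vectors of the irreducible $\mf{sl}_2$-submodules, $B$ decomposes as $B = \sum_{i=1}^{r} B_i$ with $B_i \in \g_{-m_i}$, where $m_1=1 \le m_2 \le \dots \le m_r$ are the exponents of $\g$; note $m_1 = 1$ so that $\g_{-1} \cap Z(f) = \C\, f$ is exactly the line spanned by $A$ itself. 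Thus $B - \mu_2 A$ simply replaces the $\g_{-1}$-component $B_1 = c\, f$ of $B$ by $(c-\mu_2)f$, and the question becomes: for which value of the coefficient on $f$ is $B - \mu_2 A$ non-regular?

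The key point is a criterion for regularity of elements of $Z(f)$. First I would recall (from Appendix \ref{appendix:B}, or Kostant's theory of the principal slice) that an element $X \in f + Z(e)$ — the principal slice — is always regular. Here instead we have elements of $Z(f)$, the ``opposite'' slice through $f$, but the situation is governed by the same phenomenon: an element $Y = \sum_i Y_i \in Z(f)$ with $Y_i \in \g_{-m_i}$ is regular if and only if its lowest-degree component $Y_1 \in \g_{-1} = \C f$ is non-zero, i.e. iff the coefficient on $f$ is non-zero. One direction is immediate: if $Y_1 = 0$ then $Y$ is a nilpotent element lying in $\bigoplus_{k \le -2}\g_k$, which cannot be principal nilpotent (a principal nilpotent has a non-zero $\g_{-1}$-component in the principal grading, up to conjugacy one sees its centralizer has dimension $>\rk\g$), hence is non-regular. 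For the converse, if $Y_1 = c f$ with $c \neq 0$, one rescales by the one-parameter group generated by $h$: conjugating $Y$ by $\exp(t\,\ad h)$ multiplies $Y_i$ by $e^{-m_i t}$, so $e^{t}\cdot \mathrm{Ad}_{\exp(-t\ad h)}(Y) = cf + \sum_{i\ge 2} e^{(1-m_i)t} Y_i \to cf$ as $t \to +\infty$; since $cf$ is regular and regularity is an open condition invariant under conjugation and scaling, $Y$ is regular.

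Granting this criterion, the proposition is immediate: writing $B_1 = c\, f$ for the $\g_{-1}$-component of $B$, the element $B - \mu_2 A = (c - \mu_2) f + \sum_{i \ge 2} B_i$ is non-regular precisely when $c - \mu_2 = 0$, i.e. for the unique value $\mu_2 = c$. Uniqueness and existence both follow at once.

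\emph{Main obstacle.} The only non-routine ingredient is the regularity criterion for elements of $Z(f)$ graded by the principal grading — specifically the implication ``$\g_{-1}$-component non-zero $\Rightarrow$ regular.'' I expect this to follow cleanly from the rescaling/limit argument above combined with the openness of the regular locus, but one must be a little careful that the limit $cf$ really is principal nilpotent (it is, since $f$ is) and that the set of regular elements, being the complement of the zero locus of the discriminant, is Zariski-open and conjugation- and scaling-invariant, so that an element degenerating to a regular one is itself regular. An alternative, perhaps cleaner, route is to invoke directly the structure of $Z(f)$ as a free module over the ring of $G$-invariants (Kostant), which pins down exactly when such an element is regular; either way this is the heart of the matter and everything else is bookkeeping with the grading.
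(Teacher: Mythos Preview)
Your proof is correct and takes a genuinely different route from the paper's. The paper works directly in a root basis: it conjugates $A$ to $\sum_{\alpha\text{ simple}} e_\alpha$, reads off from $[A,B]_{\alpha+\alpha'}=0$ that $B_\alpha=B_{\alpha'}$ whenever $\alpha,\alpha'$ are adjacent in the Dynkin diagram, and then uses connectedness of the diagram (simplicity of $\g$) to conclude that all simple-root coefficients of $B$ coincide; both directions of the regularity claim then follow from Proposition~\ref{prinnilp}. You instead package the same content via the principal $\mf{sl}_2$-grading: the fact $Z(f)\cap\g_{-1}=\C f$ that you invoke (multiplicity one of the exponent $1$ for simple $\g$) is precisely what the paper's root computation establishes by hand, and your $\C^\times$-degeneration argument replaces the ``if'' direction of Proposition~\ref{prinnilp}. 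Your approach makes the link to the exponents explicit and gives a pleasant geometric reason for regularity, at the price of importing a bit more structure theory; the paper's argument is more elementary and entirely self-contained within its appendix. Two small wrinkles worth tightening: (i) there is a sign slip in your scaling formula --- with $Y_i\in\g_{-m_i}$ one has $\exp(t\,\ad_h)Y_i=e^{-m_it}Y_i$, so it is $e^{t}\exp(t\,\ad_h)(Y)$, not $\exp(-t\,\ad_h)$, that tends to $cf$; (ii) the phrase ``an element degenerating to a regular one is itself regular'' should be stated more carefully as: the $G\times\C^\times$-orbit of $Y$ has $cf$ in its closure, hence meets the open regular locus, hence $Y$ itself is regular by orbit-invariance of regularity. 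For the implication $Y_1=0\Rightarrow$ not regular, note that $Y\in Z(f)\subset\mf{n}_-$ is nilpotent with all simple-root coefficients zero, so the $\mf{n}_-$-version of Proposition~\ref{prinnilp} gives the conclusion cleanly; this is sharper than the centralizer-dimension remark in your parenthetical.
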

\begin{proof}
The strategy of the proof is to use Proposition \ref{prinnilp} of the appendix which characterizes principal nilpotent elements $x$ as those nilpotent elements whose coefficients $x_\alpha$ in the root vector basis $(e_\alpha)$ are non-zero for all simple roots $\alpha$. 

Let $R$ be a root system in $\h^*$ and denote by $R_+$ and $R_s$ the positive and respectively the simple roots.
We can conjugate $A$ to the element given by $\sum_{\alpha \in R_s}e_\alpha$.
The proposition is then equivalent to the statement that $B_{\alpha} = B_{\alpha'}$ for all simple roots $\alpha$ and $\alpha'$.

For two simple roots $\alpha$ and $\alpha'$ such that $\alpha+\alpha' \in R$, using $[A,B]=0$ and $A=\sum_{\alpha \in R_s}e_\alpha$ we get:
$$0=[A,B]_{\alpha+\alpha'}= A_{\alpha}B_{\alpha'}-A_{\alpha'}B_{\alpha} = B_{\alpha'}-B_{\alpha}.$$

Since $\g$ is simple, its Dynkin diagram is connected, so $B_\alpha = B_{\alpha'}$ for all simple roots. The common value $\mu_2$ gives the unique complex number such that $B-\mu_2 A$ is not regular.
\end{proof}

With this proposition, we can now define a map \begin{equation}\label{mu} \mu: \Hilb^{reg}_0(\g) \rightarrow \Hilb_0(\mf{sl}_2)\end{equation} given by $\mu([(A,B)])=[(e,\mu_2 e)]$ or $[(\mu_2 e, e)]$ depending whether $A$ or $B$ is regular. 

An equivalent way to define the map $\mu$ is the following: we can use the previous proposition \ref{mu2prop} to show that the centralizer $Z(A)$ of a principal nilpotent element is a direct product $$Z(A) = \Span(A) \times Z(A)^{irreg}$$ where $Z(A)^{irreg}$ denotes the irregular elements of $Z(A)$. The map $\mu$ is nothing but the projection to the first factor.

\begin{Remark}
We can describe the regular part of the $\g$-Hilbert scheme $\Hilb^{reg}(\g)$ as those classes $[(A,B)]$ such that $\Span(A,B)$ intersects the regular part $\g^{reg}$ non-trivially. This description is more symmetric since it does not prefer $A$ or $B$. From Proposition \ref{mu2prop} we see that the intersection of $\Span(A,B)$ with $\g^{reg}$ is the whole two-dimensional $\Span(A,B)$ from which we have to take out a line. Hence, the intersection has two components. 
\end{Remark}

\subsection{Topology of \texorpdfstring{$\g$}{g}-Hilbert schemes}\label{topology}

It is clear that $\Hilb(\g)$ is a topological space, as a quotient of a subset of $\g^2$.
In this section, we explore this topology of $\Hilb(\g)$, especially for $\g=\mf{sl}_n$. We then formulate some conjectures on its general structure.

For $\g=\mf{sl}_2$, every non-zero element $A\in \g$ is regular and cyclic. Since the centralizer of the pair $(0,0)$ is all of $\mf{sl}_2$, this pair is not in $\Hilb(\mf{sl}_2)$. Thus we have $\Hilb(\mf{sl}_2) = \Hilb^{cycl}(\mf{sl}_2) = \Hilb^2_{red}(\C^2)$ which is a smooth projective variety.

For $\g=\mf{sl}_3$, a detailed analysis, putting $A$ into Jordan normal form, shows that $(A,B)$ has minimal centralizer and is not cyclic iff it is conjugated to a pair $P_1(b):=\left(\left(\begin{smallmatrix} 0&1&0 \\ 0&0&0 \\ 0&0&0\end{smallmatrix}\right), \left(\begin{smallmatrix} 0&b&1 \\ 0&0&0 \\ 0&0&0\end{smallmatrix}\right)\right)$. So 
$$\Hilb(\mf{sl}_3) = \Hilb^3_{red}(\C^2) \cup \{P_1(b) \mid b\in\C\}.$$
At first sight, the topology seems to be a smooth variety (the reduced Hilbert scheme) and a complex line. But a closer look shows that each point of the extra line is infinitesimally close to a point in the variety, meaning that these two points cannot be separated by open sets, infringing the Hausdorff property. The pair $P_1(b)$ is infinitesimally close to $P_2(b):=\left(\left(\begin{smallmatrix} 0&1&0 \\ 0&0&0 \\ 0&0&0\end{smallmatrix}\right),\left( \begin{smallmatrix} 0&b&0 \\ 0&0&0 \\ 0&1&0\end{smallmatrix}\right)\right)$. Indeed any neighborhood of the first pair $P_1(b)$ contains $\left(\left(\begin{smallmatrix} 0&1&0 \\ 0&0&0 \\ 0&0&0\end{smallmatrix}\right), \left(\begin{smallmatrix} 0&b&1 \\ 0&0&0 \\ 0&s&0\end{smallmatrix}\right)\right)$ for some small $s\in \C$ which is conjugated to $\left(\left(\begin{smallmatrix} 0&1&0 \\ 0&0&0 \\ 0&0&0\end{smallmatrix}\right),\left( \begin{smallmatrix} 0&b&s \\ 0&0&0 \\ 0&1&0\end{smallmatrix}\right)\right)$ which lies in a neighborhood of the second pair$P_2(b)$.
Since the idealic map is continuous and for $\mf{sl}_n$ injective on the cyclic part, there cannot be another point of the cyclic part which is infinitesimally close to the first pair $P_1(b)$. Finally, two elements of the extra line can be separated by open sets.
Hence, the space $\Hilb(\mf{sl}_3)$ is obtained from a smooth variety by adding ``double points'' (here in the sens of infinitesimally close points) along a complex line.

Since the idealic map is injective on the cyclic part $\Hilb^{cycl}(\mf{sl}_n)$, the same analysis holds for $\mf{sl}_n$, i.e. $\Hilb(\mf{sl}_n)$ is obtained from a smooth variety (the reduced Hilbert scheme) by adding double points.

There should exist a procedure, like a GIT quotient, giving a modified $\g$-Hilbert scheme which is a Hausdorff space. The GIT quotient does not apply here since $\{(A,B)\in \g^2 \mid [A,B]=0, \dim Z(A,B)=\rk \g\}$ is not a closed variety. In the language of GIT quotients, the pairs $P_1$ and $P_2$ above are both semistable, but there is no polystable element in their closure. 

To give a feeling on what happens, consider the action of $\R_{>0}$ on $\R^2 \backslash \{(0,0)\}$ given by $\lambda.(x_1, x_2)=(\lambda x_1, \lambda^{-1} x_2)$. The orbits are drawn in Figure \ref{nonhaus}.
The quotient space is a set of two lines $L_1$ and $L_2$ with origins $O_1$ and $O_2$ together with two extra points $O_3$ and $O_4$ such that the pairs $(O_1, O_3), (O_1, O_4), (O_2, O_3)$ and $(O_2, O_4)$ are infinitesimally close points (the four points $O_i$ correspond to the four half-axis). In the figure, the dashed lines indicate infinitesimally close points.
From the GIT perspective, all points are semistable (take the constant function 1), the four half-axis are semistable and all other orbits are stable. The orbits of the half-axis are closed in $\R^2 \backslash \{(0,0)\}$ so they should be polystable, but in the quotient the points are still infinitesimally close.
\begin{figure}[h]
\centering
\includegraphics[height=4cm]{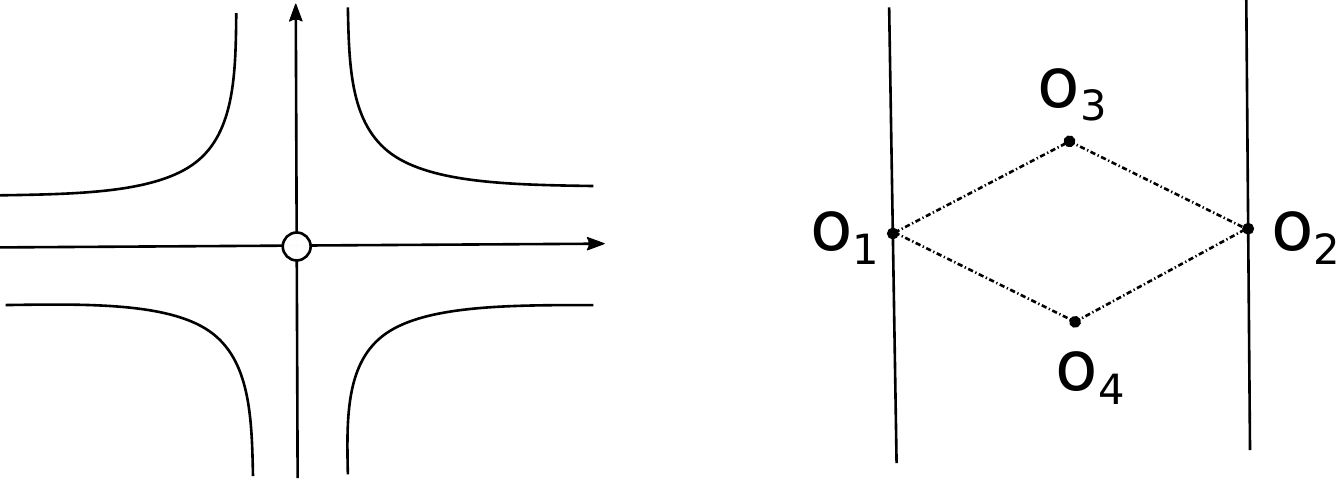}

\caption{Non-Hausdorff quotient}\label{nonhaus}
\end{figure}

We conjecture the following:
\begin{conj}\label{git-conj}
There is a generalized GIT quotient procedure identifying infinitesimally close points in $\Hilb(\g)$, giving a modified $\g$-Hilbert scheme which is Hausdorff, and even smooth. 
\end{conj}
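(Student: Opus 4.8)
The plan is to realise the modified $\g$-Hilbert scheme as a genuine GIT quotient of a $G$-equivariant modification of the commuting variety $\Comm(\g)$, controlled through the Chow map. As a preliminary step one shows that the non-Hausdorff phenomenon is confined to the non-regular locus: by Proposition~\ref{paramit}, together with the uniqueness of representatives proved there, $\Hilb^{reg}(\g)$ is already a smooth separated variety --- over the ``$A$ regular'' chart it is the total space of a vector bundle of rank $\rk\g$ over the principal slice $f+Z(e)$, namely $A\mapsto Z(A)$, and the ``$A$ regular'' and ``$B$ regular'' charts glue separately --- so only $\Hilb(\g)\setminus\Hilb^{reg}(\g)$ has to be modified. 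One then makes the relation precise: two classes $[(A,B)],[(A',B')]\in\Hilb(\g)$ are infinitesimally close exactly when the $G$-orbit of one meets the closure of the $G$-orbit of the other \emph{inside} the open subvariety $\Comm(\g)^{reg}$ of regular points, the pairs $P_1(b),P_2(b)$ of Subsection~\ref{topology} and the toy model of Figure~\ref{nonhaus} being the local picture; let $\sim$ be the equivalence relation generated by this. Using that $\Comm(\g)^{reg}$ is a smooth $G$-variety and that the offending orbits are isolated within their Chow fibres, one checks that the non-diagonal part of $\sim$ lies over a proper closed subset of $\h^2/W$, so that $\Hilb^{reg}(\g)$ and the regular semisimple locus are untouched.

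To obtain a model that is Hausdorff \emph{and} smooth one cannot quotient $\Comm(\g)^{reg}$ directly, since (as noted in Subsection~\ref{topology}) it is not closed in $\Comm(\g)$ and ordinary GIT does not apply. Instead I would construct a $G$-equivariant proper birational modification $\pi\colon Y\to\Comm(\g)$, an isomorphism over the regular semisimple locus, obtained by blowing up $G$-invariant centres supported on $\Comm(\g)\setminus\Comm(\g)^{reg}$, together with a $G$-linearised ample line bundle on $Y$ for which (i) every point lying over $\Comm(\g)^{reg}$ is polystable, so that the lifts of $P_1(b)$ and $P_2(b)$ acquire \emph{distinct} closed orbits and are separated in the quotient, and (ii) the GIT quotient $Y/\!\!/G$ is smooth. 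One then defines the modified $\g$-Hilbert scheme to be $Y/\!\!/G$ (or its relevant open locus); the resulting rational map from $\Hilb(\g)$ descends to an isomorphism away from the $\sim$-locus, and the induced morphism $Y/\!\!/G\to\Comm(\g)/\!\!/G=\h^2/W$ should be proper and birational, hence the resolution predicted by Conjecture~\ref{conj1}. As a guide and consistency check, for $\g=\mf{sl}_n$ one must recover the smooth variety $\Hilb^n_{red}(\C^2)$ through its presentation as a GIT quotient of framed commuting pairs $\{(A,B,v)\}/\!\!/\GL_n$ with cyclicity as the stability condition, which should pin down the right $Y$ and linearisation --- presumably a framing vector in a suitable representation of $\g$; the explicit matrix models of Section~\ref{classical-case} should make all of this hands-on for classical $\g$.

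The main obstacle is the smoothness of $Y/\!\!/G$ together with the choice of $Y$ itself. For $\mf{sl}_n$ smoothness is Fogarty's theorem, but no such input is available in general; one would have to realise $Y/\!\!/G$ as a fine moduli space with unobstructed deformation theory, or else stratify by the orbit type of the semisimple part $(A_s,B_s)$ and the accompanying nilpotent data (using Proposition~\ref{mu2prop} and Corollary~\ref{regzerohilb} to describe the fibres of the Chow map) and verify smoothness in local models chart by chart. More fundamentally, the quotient singularity $\h^2/W=(\h\oplus\h)/W$ generically admits no canonical resolution --- by the classification of symplectic reflection groups, essentially only the $\mf{gl}_n$-type cases carry a symplectic one --- so there is no preferred candidate, and the statement may ultimately have to be relaxed to the existence, via resolution of singularities, of \emph{some} smooth model compatible with $\sim$ on $\Hilb(\g)$ rather than a canonical (let alone symplectic) one.
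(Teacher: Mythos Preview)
The statement you are attempting to prove is labelled and treated in the paper as a \emph{conjecture}; the paper offers no proof, only motivation via the $\mf{sl}_3$ example, the toy $\R_{>0}$-action of Figure~\ref{nonhaus}, and the remark that ordinary GIT does not apply because the minimal-centralizer locus is not closed. There is therefore nothing to compare your argument against.

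What you have written is not a proof either, and you are candid about this: the construction of the $G$-equivariant modification $Y$, the existence of a linearisation with the stated polystability properties, and above all the smoothness of $Y/\!\!/G$ are all left as desiderata (``I would construct'', ``should be'', ``one would have to''). Your proposal is a reasonable outline of a research programme, and your consistency check against the Nakajima presentation of $\Hilb^n_{red}(\C^2)$ for $\mf{sl}_n$ is the right anchor, but none of the essential steps is carried out. Your final paragraph in fact identifies the genuine obstruction: for $W$ not of type $A$, the quotient $(\h\oplus\h)/W$ is known not to admit a symplectic resolution (Ginzburg--Kaledin, Bellamy), so if one also wants the modified space to satisfy the later conjecture in Section~\ref{spectralcurve} that it be symplectic with the Chow map a resolution, the combined package is already impossible outside type $A$. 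Even dropping the symplectic requirement, producing a \emph{canonical} smooth model by a GIT-type procedure --- rather than an arbitrary resolution of singularities --- remains entirely open, which is precisely why the paper states this as a conjecture.
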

In particular one should find the reduced Hilbert scheme for $\g=\mf{sl}_n$. See also Conjecture \ref{conj1} for a modified $\g$-Hilbert scheme as a resolution of $\h^2/W$.

Assume a smooth version of the $\g$-Hilbert scheme exists. In the $\mf{sl}_n$-case the reduced Hilbert scheme is covered by charts parametrized by partitions of $n$, which also parametrizes nilpotent orbits of $\mf{sl}_n$. For $\g$ of classical type, the nilpotent orbits are parametrized by special partitions (see \cite{Coll}, chapter 5). In general, we conjecture the following for the $\g$-Hilbert scheme:

\begin{conj}\label{charts-by-nilpotent}
The smooth version of $\Hilb(\g)$ is covered by charts parametrized by nilpotent orbits and all these charts are necessary.
 
In particular for classical $\g$, we conjecture that the modified version of $\Hilb_0(\g)$ is isomorphic to the space of ideals of $\C[x,y]$ which are of codimension $m$, $W$-invariant, supported at 0 and which lie in a chart associated to a partition of type $\g$.
\end{conj}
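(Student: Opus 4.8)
The statement is conditional on the smooth model existing, so the first and largest part of any proof must be Conjectures~\ref{conj1} and \ref{git-conj}: one needs a canonical modification $\widetilde{\Hilb}(\g)$ of the $\g$-Hilbert scheme which is smooth, resolves $\h^2/W$ via the Chow map, and --- for classical $\g$ --- embeds into the usual punctual Hilbert scheme $\Hilb^m(\C^2)$ extending the idealic map of Subsection~\ref{idealic}. Granting this, write $\widetilde{\Hilb}_0(\g)$ for the fibre over $0\in\h^2/W$ and recall from Appendix~\ref{appendix:A} that $\Hilb^m(\C^2)$ is covered by Haiman's affine charts $U_\lambda$, one per partition $\lambda\vdash m$, each containing a unique torus-fixed point, the monomial ideal $I_\lambda$, at which the pair of multiplication operators is the ``translate right / translate down'' pair of the Young diagram $\lambda$ (Example~\ref{Young}). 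The conjecture then says that $\widetilde{\Hilb}_0(\g)$ is the union of the $W$-invariant, origin-supported loci $U_\lambda\cap\widetilde{\Hilb}_0(\g)$ over the partitions $\lambda$ ``of type $\g$'' --- those attached to a nilpotent $G$-orbit via the standard representation $\rho$.

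For classical $\g$ the argument would go as follows. The zero-fibre is invariant under the torus action $(A,B)\mapsto(sA,tB)$, which corresponds on ideals to the torus on $\C^2$; hence $\widetilde{\Hilb}_0(\g)$ is a torus-invariant subvariety of $\Hilb^m_0(\C^2)$, so it is the union of its pieces $U_\lambda\cap\widetilde{\Hilb}_0(\g)$ and any nonempty such piece must contain the torus-fixed point $I_\lambda$. The problem thus reduces to showing that $I_\lambda\in\widetilde{\Hilb}_0(\g)$ if and only if $\lambda$ is of type $\g$. For the ``if'' direction, a partition of type $\g$ admits a basis of $\C^m$ in which the translate-right and translate-down operators preserve the defining bilinear form of $\rho(\g)$ --- this is exactly what the parity conditions on the parts of $\lambda$ provide --- so the Young pair lies in $\rho(\g)$, has minimal common centraliser by Proposition~\ref{cycliccentralizer}, and gives a point of $\Hilb_0(\g)$ with ideal $I_\lambda$; in particular all such charts are necessary. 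For the ``only if'' direction, since the modification only glues points and creates no new torus-fixed point, $I_\lambda$ being in $\widetilde{\Hilb}_0(\g)$ means it is the ideal of a genuine pair in $\rho(\g)$, whose $A$-component is a nilpotent of Jordan type $\lambda$, forcing $\lambda$ to be of type $\g$; the \emph{specialness} of $\lambda$ should come out of the minimal-centraliser condition.

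It then remains to handle gluing and two anomalies. Gluing is automatic once the embedding into $\Hilb^m(\C^2)$ is in hand, since the transition maps are restrictions of Haiman's polynomial ones. For $\g$ of type $D_n$ the idealic map is generically $2$-to-$1$ (Example~\ref{regnotcyclic}, Subsection~\ref{Dn}), so $\widetilde{\Hilb}_0(\mf{so}_{2n})$ is not the ideal space but a double cover of its image, and one must identify the branch locus and check the cover is still smooth. For exceptional $\g$ (and for $D_3,D_4$, where by Definition~\ref{partshilb} the standard representation is not the minimal one) there is no idealic description to run this on, so one must define the chart attached to a nilpotent orbit $\mc O$ intrinsically --- the natural candidate being the stratum of $\Hilb_0(\g)$ along which the pencil $\Span(A,B)$ degenerates under the torus action to the distinguished commuting nilpotent pair supported in $\mc O$ --- and then rerun the two implications using the parametrisation of the regular part by the principal slice (Proposition~\ref{paramit}) and the splitting $Z(f)=\Span(f)\times Z(f)^{irreg}$ of Subsection~\ref{mu2}.

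The main obstacle is clearly the construction of the smooth model itself: at present there is no such object, and a generalized GIT-type procedure on the non-closed locus $\{(A,B)\mid[A,B]=0,\ \dim Z(A,B)=\rk\g\}$ has to be invented and shown to be canonical before ``covered by charts'' is even well-posed. Granting the model, the two genuinely delicate points are, first, pinning down that the index set is \emph{exactly} the nilpotent orbits --- the ``if'' direction above is clean, but controlling specialness, and giving the intrinsic chart definition in the exceptional cases, are not --- and, second, the type-$D_n$ branched-cover analysis, which already shows that the phrase ``isomorphic to the space of ideals'' in the statement has to be relaxed to a finite cover of it.
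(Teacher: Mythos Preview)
The statement is a \emph{conjecture}; the paper offers no proof of it. What the paper does supply, immediately after the statement, is evidence and caveats: the $\mf{sl}_n$ case works via Young diagrams and transposes, the follow-up conjecture that every nilpotent $A$ admits a partner $B$ with $[(A,B)]\in\Hilb_0(\g)$ is stated only for $\rk\g\geq 3$, the $\mf{sp}_4$ example shows it fails in rank $2$, and the $\mf{sp}_{16}$ example with partition $[7,5,3,1]$ shows that even when a partner exists it need not be cyclic. So there is nothing to compare your argument against; your write-up is really an outline of what a proof \emph{would} have to contain, and you say as much in your final paragraph.

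That said, your ``if'' direction has a concrete gap that the paper's own examples expose. You assert that for a partition $\lambda$ of type $\g$ one can choose a basis so that the translate-right/translate-down pair of Example~\ref{Young} lies in $\rho(\g)$, and then invoke Proposition~\ref{cycliccentralizer}. This is false already for $\mf{sp}_4$ and $\lambda=[2,2]$: a short computation (or the paper's discussion right after the conjecture) shows that no nondegenerate symplectic form is preserved by both Young operators, and more generally that \emph{no} nilpotent $B\in Z(A)$ gives $\dim Z(A,B)=2$. So the Young pair does not always land in $\g$, and the chart for $[2,2]$ is not witnessed this way. Even in higher rank, the $\mf{sp}_{16}$ example shows that for some $\lambda$ of type $C$ there is no cyclic $B$, so Proposition~\ref{cycliccentralizer} is unavailable and the minimal-centraliser condition must be checked by hand. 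Your sketch therefore needs a different mechanism to produce, for each nilpotent orbit, a commuting pair in $\g$ with minimal centraliser --- which is precisely the content of the paper's follow-up conjecture, itself open.
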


In particular, for every nilpotent $A\in \g$, there has to be an element in $\Hilb(\g)$ containing the conjugacy class of $A$. More precisely, we conjecture:

\begin{conj}
Let $\g$ be of rank at least 3. For a nilpotent element $A\in \g$, there is $B\in Z(A)$ nilpotent such that $\dim Z(A,B) = \rk \g$, i.e. $[(A,B)] \in \Hilb_0(\g)$. This should be true for a generic element $B \in Z(A)$.
\end{conj}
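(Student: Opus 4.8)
The plan is to reduce the statement, by upper semicontinuity of centralizer dimension, to a single explicit construction per nilpotent orbit, carried out completely for $\g=\mf{sl}_n$ and sketched for the other types.

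First I would fix a Jacobson--Morozov $\mf{sl}_2$-triple $(A,H,F)$ through $A$ and set $\mf{l}=Z(A,H,F)$; this is reductive, and with respect to the $\ad H$-grading one has $Z(A)=\mf{l}\oplus\mf{m}$, where $\mf{m}=\bigoplus_{i>0}Z(A)_i$ is the nilradical of $Z(A)$. For a principal nilpotent $f_{\mf l}\in\mf{l}$, every element of the affine space $f_{\mf l}+\mf{m}$ is nilpotent in $\g$: applying $\operatorname{Ad}(\exp(-sH))$ and letting $s\to+\infty$ shows $f_{\mf l}\in\overline{G\cdot(f_{\mf l}+m)}$, so all adjoint invariants of $f_{\mf l}+m$ vanish. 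More generally the set of $\g$-nilpotent elements of $Z(A)$ equals $\mathcal{N}(\mf{l})\oplus\mf{m}$ and is irreducible, and on it the function $B\mapsto\dim Z(A,B)=\dim\ker\big(\ad B|_{Z(A)}\big)$ is upper semicontinuous, with minimum $\geq\rk\g$ by Proposition~\ref{doublecomm}. Hence it suffices to produce, for each nilpotent orbit, one nilpotent $B_0\in Z(A)$ with $\dim Z(A,B_0)=\rk\g$; the generic nilpotent $B$ then has the same property.

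For $\g=\mf{sl}_n$ this is Example~\ref{Young}: if $A$ has Jordan type $\lambda$, let $(A,B)$ be the pair of "translate to the right'' and "translate to the bottom'' operators of the Young diagram of $\lambda$ (the former has Jordan type $\lambda$). They commute, are nilpotent, and admit a cyclic vector (the corner box), so $[(A,B)]\in\Hilb^{cycl}(\mf{sl}_n)$ and Proposition~\ref{cycliccentralizer} gives $\dim Z(A,B)=n-1=\rk\g$. For the other classical $\g$ I would run the same argument through the standard representation: realise the given orbit by a commuting pair of nilpotent operators on $\C^m$ admitting a cyclic vector compatible with the orthogonal/symplectic form, and invoke Proposition~\ref{cycliccentralizer}. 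Concretely this reduces to constructing the corresponding $W$-invariant colength-$m$ ideal of $\C[x,y]$ supported at the origin with the prescribed partition (cf.\ Conjecture~\ref{charts-by-nilpotent}) --- a combinatorial question about symmetrised monomial-type ideals, which I expect to be the first real difficulty.

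For the remaining (in particular the exceptional) cases one must work with $B_0=f_{\mf l}+m$ for generic $m\in\mf{m}$ and control $\dim\ker(\ad B_0|_{Z(A)})$ directly. Splitting along the grading reduces this to showing that the nilpotent operator $\ad B_0|_{\mf m}$ and the induced map $Z_{\mf l}(f_{\mf l})\to\mf{m}/\operatorname{im}(\ad B_0|_{\mf m})$ are, for generic $m$, as non-degenerate as the bound $\rk\g$ permits --- an expected-dimension statement in the representation theory of $\mf{l}$ acting on $\mf{m}$, and the main obstacle, since I see no classification-free way to establish it. It is most delicate for the distinguished orbits, where $\mf{l}=0$ and $Z(A)=\mf{m}$ is a non-abelian nilpotent Lie algebra whose generic element must be shown to have $Z(A,\cdot)$ of dimension exactly $\rk\g$; even the cheap substitute of a semisimple $B$ (taking $h_{\mf l}$ regular semisimple in $\mf{l}$, which frequently makes $A$ a regular nilpotent of the reductive algebra $Z_\g(h_{\mf l})$ and so forces $\dim Z(A,h_{\mf l})=\rk\g$) only lands in $\Hilb(\g)$, not in $\Hilb_0(\g)$, and breaks down for distinguished orbits. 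Barring a uniform argument, the exceptional types would be checked orbit by orbit from the Bala--Carter list with explicit $\mf{sl}_2$-triples, and the low-rank excluded cases inspected by hand.
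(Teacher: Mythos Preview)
This statement is a \emph{conjecture} in the paper, not a theorem; the paper gives no proof beyond verifying the case $\g=\mf{sl}_n$ (via the Young-diagram cyclic pair, exactly as you do) and explaining the rank~$\geq 3$ hypothesis through the $\mf{sp}_4$ counterexample. So there is no ``paper's proof'' to match; the question is whether your outline could be completed.

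Your reduction by upper semicontinuity on the irreducible variety $\mathcal{N}(\mf{l})\oplus\mf{m}$ of $\g$-nilpotent elements of $Z(A)$ is sound, and your treatment of $\mf{sl}_n$ is correct and agrees with the paper. The genuine gap is in your plan for the other classical types. You propose to find, for each nilpotent orbit, a $B$ with $(A,B)$ cyclic in the standard representation and then invoke Proposition~\ref{cycliccentralizer}. The paper explicitly shows this is impossible in general: for $\g=\mf{sp}_{16}$ and $A$ of Jordan type $[7,5,3,1]$ (a legitimate $C_8$-partition), \emph{no} nilpotent $B\in Z(A)$ makes $(A,B)$ cyclic, because the resulting colength-$16$ ideal would have to lie in a chart indexed by a $C$-type partition, and one checks none of the candidate ideals do. Thus the ``combinatorial question about symmetrised monomial-type ideals'' you flag as the first real difficulty already has a negative answer, and the cyclic route cannot cover all classical orbits. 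Any approach to the conjecture for types $B$, $C$, $D$ must therefore go through the non-cyclic part of $\Hilb_0(\g)$, i.e.\ exhibit pairs with $\dim Z(A,B)=\rk\g$ that do \emph{not} admit a cyclic vector --- which is precisely the regime where no off-the-shelf criterion like Proposition~\ref{cycliccentralizer} is available. Your fallback strategy (control $\ker(\ad B_0|_{Z(A)})$ directly, and treat exceptional types orbit by orbit from Bala--Carter) is a reasonable programme but, as you yourself note, not a proof; the paper leaves the statement open for exactly this reason.
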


For $\mf{sl}_n$ the conjecture is true: we can associate to a nilpotent element $A$ a partition $\nu$. To the transpose partition $\nu^\top$ (using the transpose of the Young diagram) correspond a nilpotent element $B$ which satisfies the requirements since $(A,B)$ is cyclic. An equivalent way is to use Example \ref{Young} to produce $B$.

For $\g=\mf{sp}_{4}$ of type $C_2$, there is the following counterexample. That is why we formulate the conjecture only for Lie algebras of rank at least 3.
Take the nilpotent element 
$$A = \left(\begin{array}{c|c}
  0  & \id \\
\hline
  0 & 0
\end{array}\right).$$
Its centralizer is given by
$$Z(A) = \left(\begin{array}{cc|cc}
  0&b  & x&y \\
	-b&0 &y&z \\\hline
  0 & 0 &0 & b\\
	0&0&-b&0
\end{array}\right).$$
An element $B$ of the centralizer is nilpotent iff $b= 0$. In that case the common centralizer $Z(A,B)$ is at least of dimension 3, so $[(A,B)]$ is not in $\Hilb_0(\mf{sp}_4)$.

In general, we cannot hope to find $B\in Z(A)$ such that $(A,B)$ is cyclic. For example take $\g=\mf{sp}_{16}$ and $A$ a nilpotent element corresponding to the partition $[7,5,3,1]$ of 16. If there is $B\in Z(A)$ nilpotent and such that $(A,B)$ is cyclic, there would be an ideal $I$ of codimension 16 whose associated matrices are $A$ and $B$ (see Example \ref{Young}). Using $A$, we see that $I$ has to be of the form 
$$I=\langle x^4, x^3y, x^2y^3, xy^5, y^7=Q(x,y) \rangle$$ where $Q$ is a polynomial with monomial terms in the Young diagram $D$. A partition of type $C_n$ has all odd parts with even multiplicity and one can check that for all choices of the polynomial $Q$, the ideal $I$ is never in a chart with all odd parts with even multiplicity.

\section{Hilbert schemes for classical Lie algebras}\label{classical-case}

In this section, we study the regular part $\Hilb^{reg}(\g)$ and its zero-fiber case by case for classical $\g$.

\subsection{Case \texorpdfstring{$A_n$}{An}}
Consider $\g=\mf{sl}_n$ (of type $A_{n-1}$). We describe first $\Hilb^{reg}_0(\mf{sl}_n)$, its idealic map and then $\Hilb^{reg}(\mf{sl}_n)$ using Proposition \ref{paramit}.

Fix the following principal nilpotent element (with 1 on the diagonal line just under the main diagonal):
$$f=\begin{pmatrix} & & & \\ 1& & & \\ &\ddots & & \\ & &1 & \end{pmatrix}.$$

This element $f$ is cyclic, so we know from \ref{cycliccentralizer} that the centralizer is given by polynomials: 
$Z(f)=\{\mu_2 f+\mu_3 f^2+...+\mu_n f^{n-1}\}$.
So an element of $\Hilb^{reg}_0(\mf{sl}_n)$ can be represented by $(f,Q(f))$ where $Q$ is a polynomial without constant term of degree at most $n-1$. The coefficients $\mu_i$ are called \textit{higher Beltrami coefficients}.

Since here we have $\Hilb^{reg}_0(\mf{sl}_n) \subset \Hilb^{cycl}(\mf{sl}_n)$ (already $f$ is cyclic), the idealic map is given by $$I(f,Q(f))=\{P\in \C[x,y] \mid P(f,Q(f))=0\} = \langle x^n, -y+Q(x) \rangle.$$ We recognize the big cell of the zero-fiber of the punctual Hilbert scheme.

To describe the whole regular part $\Hilb^{reg}(\mf{sl}_n)$, we take the following principal slice given by companion matrices:
$$\begin{pmatrix} & & & t_n\\ 1& & & \vdots\\ &\ddots & &t_2\\ & &1 & \end{pmatrix}.$$

Let $A$ be a matrix of companion type. Notice that the characteristic polynomial of a companion matrix is given by $x^n+t_2x^{n-2}+...+t_n$. 
Since $A$ is still cyclic, its centralizer consists of polynomials in $A$ with constant term determined by the other coefficients (in order to ensure trace zero). Thus, a representative of $\Hilb^{reg}(\mf{sl}_n)$ is given by $(A,B=Q(A))$.

The idealic map is thus given by 
$$I(A,B)=\langle x^n+t_2x^{n-2}+...+t_n, -y+\mu_1+\mu_2x+...+\mu_nx^{n-1} \rangle$$ where $\mu_1$ is given by $\mu_1=\sum_{k=2}^{n-1}\frac{k}{n}t_k\mu_{k+1} \mod t^2$.
One recognizes the big cell of the reduced punctual Hilbert scheme. Notice that the idealic map is injective here.

\subsection{Case \texorpdfstring{$B_n$}{Bn}}\label{Bn}

Consider $\g=\mf{so}_{2n+1}$. Represent $\g$ on $\C^{2n+1}$ using the metric given by $g(e_i,e_j)=\delta_{i,n-j}$ (where $e_i$ are standard vectors), i.e. $g=\left(\begin{smallmatrix} & & 1\\ & \udots & \\ 1& & \end{smallmatrix}\right)$.
A matrix $A$ is in $\g$ iff $\sigma(A)=-A$ where $\sigma$ is the involution consisting in a reflection along the anti-diagonal. In other words $A \in \g$ iff $A_{i,j}=-A_{n+1-j,n+1-i}$ for all $i,j$.

We fix the following principal nilpotent element:
$$f=\begin{pmatrix} &&&&&& \\ 1&&&&&& \\ &\ddots &&&&&\\ &&1&&&&\\ &&&-1&&& \\ &&&& \ddots &&\\ &&&&&-1&\end{pmatrix}.$$

This element is cyclic, so its centralizer by \ref{cycliccentralizer} consists of all odd polynomials: $Z(f)=\{\mu_2f+\mu_4f^3+...+\mu_{2n}f^{2n-1}\}$. A representative of $\Hilb^{reg}_0(\g)$ is thus given by $(f,Q(f))$ where $Q$ is an odd polynomial of degree at most $2n-1$. The coefficients $\mu_{2i}$ are called the higher Beltrami coefficients for $B_n$.

A principal slice is given by $$\begin{pmatrix} &&&&&t_{2n}& \\ 1&&&&\udots && -t_{2n}\\ &\ddots &&t_2&&\udots& \\  &&1&&-t_2&&\\ &&&-1&&& \\ &&&& \ddots &&\\ &&&&&-1&\end{pmatrix}.$$
Let $A$ be a matrix of this type. Its characteristic polynomial is given by $x^{2n+1}-2t_2x^{2n-1}+2t_4x^{2n-3}\pm ... +(-1)^n\times 2t_{2n}x$. So we can really think of the principal slice as a generalized companion matrix. Changing slightly $t_{2i}$ we can get rid of signs and the factor 2 in the characteristic polynomial, which we will do in the sequel.

The matrix $A$ is still cyclic, so we have the inclusion $\Hilb^{reg}(\g) \subset \Hilb^{cycl}(\g)$. A representative of $\Hilb^{reg}(\g)$ is given by $(A,B=Q(A))$ where $Q$ is still an odd polynomial of degree at most $2n-1$. The idealic map is then given by 
$$I(A,B)=\langle x^{2n+1}+t_2x^{2n-1}+t_4x^{2n-3}+...+t_{2n}x, -y+\mu_2x+\mu_4x^3+...+\mu_{2n}x^{2n-1}\rangle.$$
This ideal is invariant under the map $(x,y)\mapsto (-x,-y)$. This is not surprising since a generic element of the $\mf{so}_{2n+1}$-Hilbert scheme is a pair of two diagonal matrices with entries $(x_1,...,x_n,0,-x_n,...,-x_1)$ and $(y_1,...,y_n,0,-y_n,...,-y_1)$. So they can be thought of as $2n+1$ points in $\C^2$ with one point being the origin and the other points being symmetric with respect to the origin. This set is invariant under the map $-\id$, so is its defining ideal.

The next type, $C_n$, is quite similar to $B_n$.

\subsection{Case \texorpdfstring{$C_n$}{Cn}}
 
Let $\g=\mf{sp}_{2n}$. We use the symplectic structure $\omega=\sum_i e_i\wedge e_{n+i}$ of $\C^{2n}$ to represent $\g$. So a matrix 
$$\left(\begin{array}{c|c}
  A  & B \\
\hline
  C & D
\end{array}\right)$$
is in $\g$ iff $D=-A^\top$ and $B$ and $C$ are symmetric matrices.

Fix the principal nilpotent by

$$f=\left(\begin{array}{cccc|cccc}
  &&&&&&& \\
	1&&&&&&&\\
	&\ddots&&&&&&\\
	&&1&&&&&\\ \hline
	&&&&&-1&&\\
	&&&&&&\ddots&\\
	&&&&&&&-1\\
	&&&1&&&&
\end{array}\right)$$

This element is cyclic, so its centralizer is given by odd polynomials: $Z(f)=\{\mu_2f+\mu_4f^3+...+\mu_{2n}f^{2n-1}\}$. As for $B_n$ we call the $\mu_{2i}$ higher Beltrami coefficients.

A principal slice is given by 

$$\left(\begin{array}{cccc|cccc}
  &&&& t_{2n}&&& \\
	1&&&&&t_{2n-2}&&\\
	&\ddots&&&&&\ddots&\\
	&&1&&&&&t_2\\ \hline
	&&&&&-1&&\\
	&&&&&&\ddots&\\
	&&&&&&&-1\\
	&&&1&&&&
\end{array}\right)$$

Let $A$ be an element of this form. Its characteristic polynomial is given by $x^{2n}-t_2x^{2n-2}+t_4x^{2n-4}\pm...+(-1)^nt_{2n}$. By changing signs in the $t_{2i}$ we can omit the minus signs in the characteristic polynomial.

The matrix $A$ is still cyclic so a representative of $\Hilb^{reg}(\mf{sp}_{2n})$ is given by $(A,B=Q(A))$ where $Q$ is an odd polynomial of degree at most $2n-1$.

The idealic map is given by $$I(A,B)= \langle x^{2n}+t_2x^{2n-2}+t_4x^{2n-4}+...+t_{2n}, -y+\mu_2x+\mu_4x^3+...+\mu_{2n}x^{2n-1} \rangle.$$

As for $B_n$, this ideal is invariant under $-\id$ which comes from the fact that two diagonal matrices in $\mf{sp}_{2n}$ give $2n$ points in $\C^2$ which are symmetric with respect to the origin.

The last classical type, $D_n$, has some surprises.

\subsection{Case \texorpdfstring{$D_n$}{Dn}}\label{Dn}

Let $\g=\mf{so}_{2n}$. We use the same representation as for $B_n$. 

Fix the following principal nilpotent element:
\begin{equation}
f=\left(\begin{array}{@{}cccc|cccc@{}}
&&&&&&& \\
1&&&&&&& \\
&\ddots&&&&&& \\
&&1&&&&& \\ \hline
&&1&0&&&& \\
&&&-1&-1&&& \\
&&&&&\ddots && \\
&&&&&&-1&\\
\end{array}\right).
\end{equation}

This elements is not cyclic, since $f^{2n-1}=0$. A direct computation shows that $Z(f)=\{\mu_2f+\mu_4f^3+...+\mu_{2n-2}f^{2n-3}\} \cup \{\sigma_n S\}$ where $S$ is the matrix 

\begin{equation}\label{matrixS}
S=\left(\begin{array}{@{}ccc|ccc@{}}
  &&&&& \\
	&&&&&\\
	1&&&&& \\ \hline
	-1&&&&&\\
	&&&&&\\
	&&1&-1&&
\end{array}\right)
\end{equation}

We can give an intrinsic definition of the matrix $S$: let $R$ be a root system and $v_{\alpha}$ be a root vector in $\g$ for the root $\alpha \in R$. Choose a base $\alpha_1, ..., \alpha_n$ of $R$ (the simple roots) such that $\alpha_{n-1}$ and $\alpha_n$ correspond to the two non-adjacent vertices in the Dynkin diagram of $D_n$ (see Figure \ref{Dynkin}). We can choose $f$ to be $\sum_i v_{\alpha_i}$. The matrix $S$ is then given by 
$$S=v_{\alpha_1+...+\alpha_{n-1}} \pm v_{\alpha_1+...+\alpha_{n-2}+\alpha_n}$$ where the sign depends on the choice of the root vectors.
\begin{figure}[H]
\centering
\includegraphics[height=2cm]{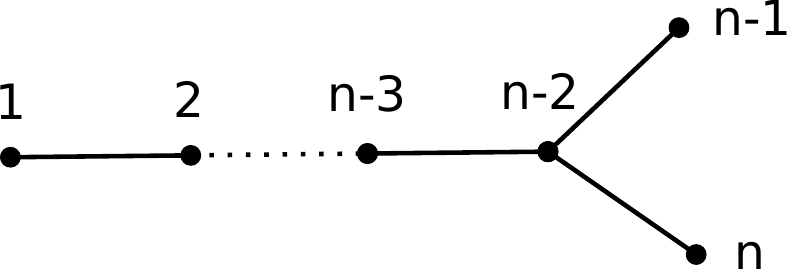}

\caption{Dynkin diagram for $D_n$}\label{Dynkin}
\end{figure}
\vspace*{0.3cm}

A representative of $\Hilb^{reg}_0(\mf{so}_{2n})$ is given by $(A=f,B=Q(f)+\sigma_nS)$ where $Q$ is an odd polynomial of degree at most $2n-3$. Such a pair is cyclic iff $\sigma_n \neq 0$. 

Let us compute the ideal in the cyclic case. One checks easily that $fS=Sf$ and that $S^2=2f^{2n-2}$. Hence for $B=\mu_2f+...+\mu_{2n-2}f^{2n-2}+\sigma_nS$, we get $AB=fB=\mu_2f^2+...+\mu_{2n-2}f^{2n-2}$ and $B^2=(\mu_2f+...+\mu_{2n-2}f^{2n-3})^2+2\sigma_n^2f^{2n-2}$. Hence, the idealic map is given by
$$I(A,B)=\langle x^{2n-1}, xy=\mu_2x^2+\mu_4x^4+...+\mu_{2n-2}x^{2n-2}, y^2=\nu_2x^2+\nu_4x^4+...+\nu_{2n-2}x^{2n-2} \rangle$$
where $\nu_{2k}=\sum_{i=1}^{k}\mu_{2i}\mu_{2k+2-2i}$ for $k=1,...,n-2$ and $\nu_{2n-2}=2\sigma_n^2+\sum_{i=1}^{n-1}\mu_{2i}\mu_{2n-2i}$. So we see that $(\mu_2, \mu_4, ..., \mu_{2n-2}, \nu_{2n-2})$ is a set of independent variables which we call higher Beltrami differentials for $D_n$. We will also call $\sigma_n$ a higher Beltrami differential.
If $\sigma_n=0$, we define the idealic map to be the continuous extension of the above ideal which is still of the same form.
\begin{Remark}
We have seen in Example \ref{idealnotcont} that inside $\Hilb^{cycl}(\g)$ there is no well-defined continuous extension of the idealic map. But inside the zero-fiber, the limit is unique.
\end{Remark}

The Hilbert scheme is covered by charts indexed by partitions (see \cite{Haiman}).
The chart in which $I$ is written corresponds to the partition $2n=(2n-1)+1$ which we write also $[2n-1,1]$. In fact, this is the highest partition of $2n$ of type $D_n$ (see \cite{Coll}, chapter 5 for special types of partitions).

A principal slice is given by

$$\left(\begin{array}{@{}cccc|cccc@{}}
  &&&\tau_n&-\tau_n&&t_{2n-2}& \\
	1&&&&&\udots&&-t_{2n-2}\\
	&\ddots&&t_2&t_2& &\udots&\\ 
	&&1&&&-t_2&&\tau_n\\ \hline
	&&1&0&&-t_2&&-\tau_n\\
	&&&-1&-1&&&\\
	&&&&&\ddots&&\\
	&&&&&&-1&
\end{array}\right).$$

Notice that the matrix for $\tau_n$ is $S^\top$.
Let $A$ be a matrix of this type. Its characteristic polynomial is given by $$\chi(A)=x^{2n}-4t_2x^{2n-2}+4t_4x^{2n-4}\pm... +(-1)^{n-1}\times 4t_{2n-2}x^2+(-1)^n\tau_n^2.$$ By changing signs and factors in $t_{2i}$ and $\tau_n$, we can omit signs and the factor 4 in the characteristic polynomial.

One can compute that the minimal polynomial of $A$ is equal to the characteristic polynomial iff $\tau_n\neq 0$. So $A$ is cyclic iff $\tau_n\neq 0$ (by Proposition \ref{regularsln}). In that case, the centralizer consists of all odd polynomials in $A$ of degree at most $2n-1$. If $\tau_n=0$, the centralizer is given by $$Z(A)=\{\mu_2A+\mu_4A^3+...+\mu_{2n-2}A^{2n-3}\} \cup \{\sigma_n S_t\}$$ where the matrix $S_t$ is given by $S_t= S+t_{2n-2}S^\top$. 
The minimal polynomial is given by $\chi(x)/x$ (which is a polynomial since $\tau_n=0$).

The pair $(A,B)$ is cyclic iff either $\tau_n \neq 0$ or $\tau_n=0$ and $\sigma_n \neq 0$. 
In the first case, the idealic map is given by
$$I=\langle x^{2n}+t_2x^{2n-2}+t_4x^{2n-4}+...+t_{2n-2}x^2+\tau_n^2, -y+\mu_2x+\mu_4x^3+...+\mu_{2n}x^{2n-1}\rangle.$$
In the second case, we need three generators for the ideal, like for the zero-fiber. We can compute that
\begin{align*}
I(A,B)= \; \langle x^{2n-1} &= u_2x+u_4x^3+...+u_{2n-2}x^{2n-3}+uy,\\
  xy &=  v_0+v_2x^2+...+v_{2n-2}x^{2n-2}, \\
 y^2 &=  w_0+w_2x^2+...+w_{2n-2}x^{2n-2}\rangle
\end{align*}
where the coordinates can be chosen to be $(u_2,u_4,...,u_{2n-2},u,v_2,...,v_{2n-2},w_{2n-2})$, i.e. all the other variables are functions of these. For a unified way to get coordinates in Hilbert schemes, see Appendix \ref{haimancoords} or directly Haiman's paper \cite{Haiman}.

The second ideal is in the chart corresponding to the partition $[2n-1,1]$ whereas the first corresponds to the trivial partition $[2n]$. If $u \neq 0$ we can write the second ideal in the first chart, i.e. perform a coordinate change in the Hilbert scheme. The link between the coordinates is given by 
$$
\left \{ \begin{array}{cl}
\tau_n^2 = uv_0 \\
\mu_{2n}=\frac{1}{u} \\
\mu_{2k} = -\frac{u_{2k}}{u} &\text{ for } 1\leq k < n\\
t_{2k} = u_{2n-2k}+uv_{2n-2k} & \text{ for } 1\leq k \leq n-1 
\end{array}\right.
$$

A regular pair $[(A,B)]$ which is not cyclic has both $\tau_n$ and $\sigma_n$ equal to 0. In that case, we define the idealic map $I(A,B)$ to be the limit of $I(A,B+tS_t)$ for $t\in\C$ goes to 0. So we stay in a chart associated to the partition $[2n-1,1]$.

Notice that the map from $\Hilb^{reg}(\g)$ to the space of ideals $I_{\g}(\C^2)$ is not injective, since for $\tau_n$ and $-\tau_n$ we get the same ideal. Even in the zero-fiber the map is not injective, since $\sigma_n$ and $-\sigma_n$ give the same ideal. In addition, the map is not surjective neither. Indeed the ideal $I=\langle x^5-y, xy,y^2 \rangle \in I_{\g}(\C^2)$ is not in the image since with the notations above we have $v_0=0$ and $u\neq 0$. Changing the chart, we can compute that $\tau_n^2=uv_0 = 0$. But for a matrix in $\Hilb^{reg}(\g)$ with $\tau_n=0$ we get $u=0$.

\begin{Remark}
In the usual Hilbert scheme, there is only one cell of maximal dimension. Comparing type $C_n$ and type $D_n$, we see that the zero-fiber of $$\{I \text{ ideal of }\C[x,y] \mid \codim I=2n, I \text{ invariant under } -\id\}$$ has two components of maximal dimension, those corresponding to the zero-fibers $\Hilb^{reg}_0(\mf{sp}_{2n})$ and $\Hilb^{reg}_0(\mf{so}_{2n})$.
\end{Remark}

\begin{Remark}
We notice the following analogy to Higgs bundles: the pair $[(f,0)] \in \Hilb(\mf{so}_{2n})$ should correspond to the Higgs field given by $\Phi = f$ on the bundle $V=K^2\oplus K\oplus K^0 \oplus K^{-2} \oplus K^{-1}\oplus K^0$. This Higgs bundle $(V,\Phi)$ is not stable, only polystable. This could explain why the idealic map can not be continuously extended to $[(f,0)]$. The link between Higgs bundles and higher complex structures remains mysterious, see the perspectives in the introduction.
\end{Remark}

\section{\texorpdfstring{$\g$}{g}-complex structures}\label{section2}

Using the $\g$-Hilbert scheme we are able to construct a geometric structure on a smooth surface, generalizing both complex and higher complex structures. The construction and methods are inspired by those used for higher complex structures in \cite{FockThomas}. We recall the ideas of constructing higher complex structures before defining the $\g$-complex structure.

\subsection{Complex and higher complex structures}

A complex structure on a surface $\Sigma$ is completely encoded in the \textit{Beltrami differential}. 

This goes as follows: For surfaces, a complex structure is equivalent to an almost complex structure, i.e. an endomorphism $J(z)$ in $T^*_z\Sigma$ such that $J^2=-\id$ and varying smoothly with $z\in\Sigma$ ($J$ imitates the multiplication by $i$). We can diagonalize $J$ by complexifying the cotangent bundle. We get a decomposition into eigenspaces $$T^{*\C}\Sigma = T^{*(1,0)}\Sigma \oplus T^{*(0,1)}\Sigma.$$ In addition $T^{*(1,0)}\Sigma$ is the complex conjugate of $T^{*(0,1)}\Sigma$, so one determines the other. Hence, the complex structure is completely encoded in a direction in each complexified cotangent space, i.e. in a section $s$ of $\mathbb{P}(T^{*\C}\Sigma)$ which is nowhere real (meaning $s$ and $\bar{s}$ are linear independent). The projectivization can also be obtained by the zero-fiber of the punctual Hilbert scheme of length 2: $$\Hilb^2_0(\C^2) \cong \mathbb{P}(\C^2).$$ In coordinates, we can write $T^{*(0,1)}_z\Sigma = \Span(\bar{p}-\mu_2(z)p)$ where $p$ and $\bar{p}$ are linear coordinates on $T^{*\C}\Sigma$.
The coefficient $\mu_2(z)$ is the Beltrami differential. The condition that the section $s$ is nowhere real translates to $\mu_2(z)\bar{\mu}_2(z) \neq 1$ for all $z \in \Sigma$.

Generalizing this idea, we defined in \cite{FockThomas} the \textbf{higher complex structure} as a section $I$ of $\Hilb^n_0(T^{*\C}\Sigma)$ satisfying $I(z) + \bar{I}(z) = \langle p,\bar{p}\rangle$ at every point $z\in \Sigma$. Here $p$ and $\bar{p}$ are linear coordinates on $T^{*\C}\Sigma$. The condition on $I$ generalizes the condition above of a nowhere real section. We call it the \textit{non-reality constraint}.

We use exclusively the idealic viewpoint of the punctual Hilbert scheme in this definition. Since the $\g$-Hilbert scheme uses the matrix viewpoint, we have to rewrite the definition of higher complex structure in that picture.
So we replace the ideal $I(z)$ by a conjugacy class of commuting matrices $A(z)$ and $B(z)$. We can put them together in a gauge class of a $\mf{sl}_n$-valued 1-form $\Phi(z)=A(a) dz+ B(z) d\bar{z}$. The commutativity of $A$ and $B$ translates to the fact that $\Phi$ satisfies $\Phi \wedge \Phi = 0$. 

It is not surprising to use 1-forms since a generic point of the Hilbert scheme gives $n$ distinct points in each fiber $T^{*\C}_z\Sigma$ which can be put together to $n$ sections of $T^{*\C}\Sigma$, i.e. a $n$-tuple of complex 1-forms. Going to the zero-fiber of the Hilbert scheme means that all these 1-forms are collapsed to the zero-section $\Sigma \subset T^{*\C}\Sigma$.

\subsection{Definition}
We are now ready to give the definition of a $\g$-complex structure, but one difficulty stays: we have to incorporate the non-reality constraint in the matrix viewpoint. Recall the map $\mu_2: \Hilb^{reg}_0(\g) \rightarrow \C$ associating to $[(A,B)]$ the unique $\mu_2\in \C$ such that $B-\mu_2A$ is irregular (Equation \eqref{mu}). 

\begin{definition}\label{def-g-complex-1}
A \textbf{$\g$-complex structure} is a $G$-gauge class of elements locally of the form $$\Phi_1(z) dz+ \Phi_2(z) d\bar{z} \in \Omega^1(\Sigma, \g) = \Omega^1(\Sigma,\C)\otimes \g$$ such that $$[(\Phi_1(z), \Phi_2(z))] \in \Hilb^{reg}_0(\g)$$ and $\mu_2(z)\bar{\mu}_2(z) \neq 1$ for all $z\in \Sigma$.
\end{definition}

Notice that for complex structures, the map $\mu_2(z)$ is nothing but the Beltrami differential. In particular, for $\g=\mf{sl}_2$, we get a usual complex structure.

\begin{Remark}
An equivalent definition, which uses only global objects, goes as follows: A $\g$-complex structure is a pair $(V,\Phi)$ where $V$ is a trivial $G$-bundle and $\Phi\in\Omega^1(\S,\ad(V))$ satisfying
\begin{enumerate}
	\item Commutativity: $\Phi \wedge \Phi = 0$.
	\item Nilpotency: $\Phi(z).X(z)$ is nilpotent $\forall z\in \S$ and $\forall X(z)\in T_z^{*\C}\S$.
	\item Regularity: $\Phi(z).X(z)$ is regular $\forall X(z)\in T_z^{*\C}\S\backslash L(z)$ where $L(z)$ is a one-dimensional subspace of $T_z^{*\C}\S$.
	\item Non-reality: $L(z) \cap \bar{L}(z) = \{0\} \;\forall z\in \S$.
\end{enumerate}
The direction given by $L$ corresponds to $\Span(\Phi_1-\mu_2\Phi_2)$, the direction in which $\Phi$ is not principal nilpotent.
\end{Remark}

Using this line-subbundle $L$, we get the following link between $\g$-complex structure and complex structures:

\begin{prop}\label{inducedcomplex}
A $\g$-complex structure induces a complex structure on $\Sigma$.
\end{prop}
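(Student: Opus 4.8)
The plan is to exploit the line subbundle $L \subset T^{*\C}\Sigma$ that comes packaged with the $\g$-complex structure, together with the non-reality constraint, to manufacture an almost complex structure in the classical sense. Concretely, at each point $z \in \Sigma$ we have a one-dimensional subspace $L(z) \subset T^{*\C}_z\Sigma$ with $L(z) \cap \overline{L(z)} = \{0\}$; since $\dim_\C T^{*\C}_z\Sigma = 2$, this forces the decomposition $T^{*\C}_z\Sigma = L(z) \oplus \overline{L(z)}$. Declaring $L(z)$ to be the $(0,1)$-part (or equivalently $\overline{L(z)}$ the $(1,0)$-part) determines an endomorphism $J_z$ of $T^{*\C}_z\Sigma$ acting as $-i$ on $L(z)$ and $+i$ on $\overline{L(z)}$; this $J_z$ is real (commutes with complex conjugation), squares to $-\id$, and hence restricts to an almost complex structure on the real cotangent space $T^*_z\Sigma$. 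Smoothness of $J$ in $z$ follows from smoothness of the section $L$, which in turn follows from regularity of $\Phi$ (the direction $L(z) = \Span(\Phi_1 - \mu_2(z)\Phi_2)$ varies smoothly because $\mu_2$ does — the map $\mu_2: \Hilb^{reg}_0(\g) \to \C$ is continuous/smooth). Finally, on a surface every almost complex structure is integrable (Newlander–Nirenberg is automatic in real dimension $2$), so we obtain a genuine complex structure.

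The key steps, in order: (1) extract $L$ from the $\g$-complex structure and verify $L$ is a smooth line subbundle of $T^{*\C}\Sigma$, using Proposition \ref{mu2prop} / the map $\mu$ of Equation \eqref{mu} to identify $L(z)$ with $\Span(\Phi_1(z) - \mu_2(z)\Phi_2(z))$ and the continuity of $\mu_2$; (2) invoke the non-reality constraint $\mu_2(z)\bar\mu_2(z) \neq 1$, which is exactly the condition that $L(z)$ and $\overline{L(z)}$ are linearly independent, to get the pointwise splitting $T^{*\C}_z\Sigma = L(z) \oplus \overline{L(z)}$; (3) define $J$ by $\pm i$ on the two summands and check it is real and $J^2 = -\id$; (4) conclude integrability automatically in dimension $2$. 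One should also note that the construction is $G$-gauge invariant: a gauge transformation acts on $\Phi_1, \Phi_2$ by a common conjugation, which does not change the span $\Span(\Phi_1 - \mu_2\Phi_2)$ as a subspace of $T^{*\C}_z\Sigma$ — wait, more carefully, $\Phi_1, \Phi_2$ are $\g$-valued while $L$ lives in the cotangent bundle; the point is that $L(z)$ is defined as the set of covectors $X$ with $\Phi(z).X$ irregular, and conjugating $\Phi$ does not change regularity, so $L$ is gauge-invariant.

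I would organize the proof as: first recall that a $\g$-complex structure carries the data of $L$ (referencing the Remark after Definition \ref{def-g-complex-1}), then verify the non-reality constraint gives the splitting, then build $J$, then cite automatic integrability. The main obstacle — really the only subtle point — is verifying that $L$ is genuinely a smooth (not merely continuous) subbundle and that its definition is independent of the gauge representative; this rests on the smoothness of the map $\mu_2$ from Proposition \ref{mu2prop} and on the fact that "being regular" is a conjugation-invariant and open condition, so that $L(z) = \{X \in T^{*\C}_z\Sigma \mid \Phi_1(z).X_{(1,0)\text{-coeff}} - \ldots \text{ is irregular}\}$ cuts out a line depending smoothly on $z$. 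Everything else is the standard dictionary between almost complex structures on a surface and nowhere-real line fields in the complexified cotangent bundle, exactly as recalled in the subsection "Complex and higher complex structures," specialized from $\Hilb^n_0$ back down to the $n=2$ / $\mf{sl}_2$ picture via the map $\mu$.
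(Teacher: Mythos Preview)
Your proposal is correct and follows essentially the same approach as the paper: both rely on the map $\mu_2$ from Proposition \ref{mu2prop} (equivalently the map $\mu$ of Equation \eqref{mu}) to extract from the $\g$-complex structure the data of a Beltrami differential, and then use the non-reality constraint $\mu_2\bar\mu_2 \neq 1$ to conclude. The only difference is packaging: the paper's proof is a one-liner that says ``apply $\mu$ to land in $\Hilb_0(\mf{sl}_2)$, and an $\mf{sl}_2$-complex structure is already a complex structure,'' whereas you unpack this last clause explicitly by building $J$ from the splitting $T^{*\C}\Sigma = L \oplus \bar L$ --- this is exactly the dictionary recalled in the subsection on complex and higher complex structures, so the two arguments are the same in content.
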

\begin{proof}
Recall the map $\mu:\Hilb^{reg}_0(\g) \rightarrow \Hilb_0(\mf{sl}_2)$ given by $\mu([(A,B)])=[(e,\mu_2 e)]$ or $[(\mu_2 e, e)]$ depending on whether $A$ or $B$ is regular (see Equation \eqref{mu}). Since a $\mf{sl}_2$-complex structure is a complex structure, the map $\mu$ induces a map from $\g$-complex structures to complex structures.
\end{proof}
\begin{Remark}
To define the map $\mu$ in \ref{mu2}, we really need $\g$ to be simple. For a semisimple (non-simple) $\g$, a $\g$-complex structure would induce several complex structures.
\end{Remark}

In the definition of a higher complex structure in \cite{FockThomas}, we use the zero-fiber $\Hilb^n_0(\C^2)$, without imposing to be in the regular part. The fact that we actually are in the regular part follows from the non-reality constraint $I+ \bar{I}=\langle p,\bar{p} \rangle$.
The same can be obtained for $\g$ of classical type, where we can reformulate the definition of $\g$-complex structures in a nicer way using the idealic map.

\subsection{Idealic viewpoint}

Recall the space of ideals $I_{\g}(\C^2)$ constructed in \ref{idealic}. Denote by $I_{\g,0}(\C^2)$ the set of those ideals of $I_{\g}(\C^2)$ which are supported at the origin (the zero-fiber). We can rewrite the definition of a $\g$-complex structure in the following way:

\begin{definition}\label{def-g-complex}
For $\g$ of classical type, a \textbf{$\g$-complex structure} is a section $I$ of $I_{\g,0}(T^{*\C}\Sigma)$ such that 
$$I(z) + \bar{I}(z)= \left \{ \begin{array}{cl}
\langle p, \bar{p} \rangle & \text{ if  } \g \text{ of type } A_n, B_n, C_n \\
\langle p, \bar{p} \rangle^2 &\text{ if } \g \text{ of type } D_n.
\end{array} \right.$$
\end{definition}

Notice that the condition on the ideals does not depend on coordinates since $\langle p, \bar{p}\rangle$ is the maximal ideal associated to the origin.

We prove the equivalence of both definitions. For that recall that to an ideal $I$ one can associate a class of commuting matrices $[(A,B)]$ (see \ref{bijhilbert}).
\begin{prop}
For classical $\g$, the condition on $I+ \bar{I}$ given in Definition \ref{def-g-complex} is equivalent to $[(A(z),B(z))]$ being in the regular part $\Hilb^{reg}_0(\g)$ and having $\mu_2\bar{\mu}_2\neq 1$, i.e. the condition in Definition \ref{def-g-complex-1}.
\end{prop}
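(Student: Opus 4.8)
The plan is to prove both implications by working chart by chart in the classical cases, using the explicit idealic descriptions established in Section~\ref{classical-case}. The key observation is that the non-reality constraint is a \emph{pointwise} condition on a single fiber $T^{*\C}_z\Sigma$, so I can fix $z$, choose a linear coordinate $(p,\bar p)$, and reduce everything to a statement about a single pair of ideals $I, \bar I \subset \C[p,\bar p]$ and the associated commuting pair $[(A,B)]$.

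First I would treat the easy direction: assume $[(A(z),B(z))] \in \Hilb^{reg}_0(\g)$ with $\mu_2\bar\mu_2 \neq 1$, and show $I + \bar I$ equals $\langle p,\bar p\rangle$ (types $A_n,B_n,C_n$) or $\langle p,\bar p\rangle^2$ (type $D_n$). For types $A_n, B_n, C_n$, by Proposition~\ref{paramit} and the computations in the respective subsections, $I(A,B) = \langle p^m, -\bar p + Q(p)\rangle$ (with $Q$ odd for $B_n, C_n$), where $m = \dim\rho$; here $Q(p) = \mu_2 p + (\text{higher order})$ since the linear coefficient of $Q$ is precisely the invariant $\mu_2$ of Proposition~\ref{mu2prop}. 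Then $\bar I = \langle \bar p^m, -p + \bar Q(\bar p)\rangle$, and $I + \bar I$ contains $-\bar p + \mu_2 p + \cdots$ and $-p + \bar\mu_2\bar p + \cdots$; substituting one into the other, the linear part is governed by the $2\times2$ matrix $\left(\begin{smallmatrix} -1 & \mu_2 \\ \bar\mu_2 & -1\end{smallmatrix}\right)$, whose determinant $1 - \mu_2\bar\mu_2$ is nonzero exactly under the hypothesis. Hence $I + \bar I \supseteq \langle p,\bar p\rangle$, and since both $I, \bar I$ are proper the sum is exactly the maximal ideal. For $D_n$ one runs the same argument with the three-generator ideal of Subsection~\ref{Dn}: the $xy$- and $y^2$-generators are quadratic, so $I + \bar I$ cannot contain all of $\langle p,\bar p\rangle$, but the $x^{2n-1}$-type generators combined with their conjugates still force $I + \bar I \supseteq \langle p,\bar p\rangle^2$; the constraint $\mu_2\bar\mu_2\neq 1$ is again what makes the relevant linear-algebra nondegenerate (it controls whether the leading $xy$-relations are independent enough modulo $\langle p,\bar p\rangle^3$).

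For the converse, suppose $I(z) + \bar I(z)$ is the prescribed ideal. One must show (a) $[(A,B)]$ is regular, i.e.\ lies in $\Hilb^{reg}_0(\g)$ rather than merely $\Hilb_0(\g)$, and (b) $\mu_2\bar\mu_2 \neq 1$. Part (b) is the contrapositive of the computation above: if $\mu_2\bar\mu_2 = 1$ then the determinant $1 - \mu_2\bar\mu_2$ vanishes, so $I+\bar I$ is strictly smaller than $\langle p,\bar p\rangle$ (resp.\ $\langle p,\bar p\rangle^2$), a contradiction. For part (a), I would argue that the hypothesis that $I$ has the \emph{correct codimension} $m$ and the colength condition together with non-reality force $I$ into the big cell (the chart of the trivial partition, or the $[2n-1,1]$ chart for $D_n$) — which is exactly the regular locus by the chart descriptions in Section~\ref{classical-case}. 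Concretely: a section of $I_{\g,0}(T^{*\C}\Sigma)$ valued in a \emph{non-regular} chart has all its ideals of the form $\langle p^a, p^{a-1}\bar p, \dots\rangle$ with $a < m$ in one direction, and such an ideal plus its conjugate can never fill up $\langle p,\bar p\rangle$ (resp.\ $\langle p,\bar p\rangle^2$), because both $I$ and $\bar I$ then lie in the ideal generated by $p^{a-1}, \bar p^{\,a-1}$ modulo lower terms — there simply aren't enough linear (resp.\ quadratic) elements. This is where Corollary~\ref{regzerohilb} and the explicit form of $Z(f)$ enter: regularity of the pair is equivalent to $A = f$ being principal nilpotent, which is equivalent to $I$ lying in the big cell.

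The main obstacle is the $D_n$ case, where the idealic map is not injective and not continuous (Example~\ref{idealnotcont}), and the target ideal $\langle p,\bar p\rangle^2$ is more subtle than a maximal ideal. One has to check carefully that the three generators $x^{2n-1} = \cdots$, $xy = \cdots$, $y^2 = \cdots$ of $I$, together with the conjugate three generators of $\bar I$, really do span $\langle p,\bar p\rangle^2$ modulo $\langle p,\bar p\rangle^3$ precisely when $\mu_2\bar\mu_2 \neq 1$ — and that the extra Beltrami parameter $\sigma_n$ (or $\tau_n$) plays no role in this particular nondegeneracy condition, since the non-reality constraint only sees the $\mu_2$-part via the map $\mu$ of Equation~\eqref{mu}. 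I would isolate this as a short lemma about pairs of ideals in $\C[p,\bar p]$ of the specific shape occurring for $D_n$, and verify the dimension count by a direct (but finite) computation of $\dim_\C \C[p,\bar p]/(I+\bar I)$ in both cases $\mu_2\bar\mu_2 = 1$ and $\mu_2\bar\mu_2\neq 1$.
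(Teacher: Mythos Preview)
Your backward direction is correct and matches the paper: once $[(A,B)]$ is in $\Hilb^{reg}_0(\g)$, Proposition~\ref{paramit} gives the explicit ideal, and the $2\times 2$ determinant $1-\mu_2\bar\mu_2$ governs whether $I+\bar I$ reaches $\langle p,\bar p\rangle$ (resp.\ $\langle p,\bar p\rangle^2$). The contrapositive for $\mu_2\bar\mu_2\neq 1$ in the forward direction is also fine.

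The gap is in part~(a) of the forward direction, where you try to show regularity by arguing that an ideal ``in a non-regular chart'' cannot satisfy the $I+\bar I$ condition. This chart-exclusion heuristic is not a proof: Hilbert-scheme charts overlap, ``not in the big cell'' is not the same as ``lies in some other chart with $a<m$'', and your description of a generic non-regular ideal as $\langle p^a, p^{a-1}\bar p,\dots\rangle$ is not accurate (non-monomial ideals in the zero-fiber can look quite different). More importantly, at this stage you do not yet \emph{know} that $I$ has any particular shape --- you only know it has codimension $m$, is $W$-invariant, is supported at $0$, and satisfies $I+\bar I=\langle p,\bar p\rangle$ or $\langle p,\bar p\rangle^2$. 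From this alone you must \emph{derive} that $I$ lies in the regular chart.

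The paper's method (and the method of \cite{FockThomas} for $A_n$) is a filtration argument: set $I_k = I \bmod \langle p,\bar p\rangle^{k+1}$ and use $I_k+\bar I_k = (I+\bar I)_k$ together with a dimension count to pin down $\dim I_k$ step by step. For $A_n$ one finds $\dim I_1=1$, giving a relation $\bar p = \mu_2 p + (\text{higher})$, and then an iteration (terminating since $\langle p,\bar p\rangle^n\subset I$) produces $\bar p = Q(p)$; this is exactly the regular form. For $B_n$ and $C_n$ the paper does not redo this but simply embeds into $\mf{sl}_m$ via the standard representation, applies the $A_n$ result, and then uses the $(-\id)$-invariance to force $Q$ to be odd --- a reduction you do not mention. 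For $D_n$ the filtration argument is genuinely more delicate: one first shows $I_1=0$ (ruling out linear terms), then $\dim I_2=2$ with a $p\bar p$-term present, then normalizes and iterates to get $p\bar p$ and $\bar p^2$ as even polynomials in $p$. Your proposal to ``verify the dimension count by a direct computation of $\dim \C[p,\bar p]/(I+\bar I)$'' presupposes the regular form of $I$, which is precisely what is at stake.
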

\begin{proof}
The backwards direction is a direct computation using the preferred representatives for $\Hilb^{reg}_0(\g)$ from Proposition \ref{paramit}. So we concentrate on the direct implication.

\underline{Case $A_n$.} The case $\g$ of type $A_n$ has been treated in \cite{FockThomas}, Appendix 5.1. The idea of the proof is similar to the case $D_n$ below.

\underline{Case $B_n$.} For $\g$ of type $B_n$ the standard representation gives $\mf{so}_{2n+1} \hookrightarrow \mf{sl}_{2n+1}$. By virtue of the case $A_n$, we know that $I+ \bar{I} =\langle p, \bar{p} \rangle$ implies $\mu_2\bar{\mu}_2\neq 1$ and $(A,B)$ regular for $\mf{sl}_{2n+1}$, i.e. 
$$I(A,B)=\langle p^{2n+1}, -\bar{p}+\mu_2p+\mu_3p^2+...+\mu_{2n}p^{2n} \rangle.$$
Since we know that in case $B_n$, the ideal $I$ is invariant under the map $-\id$, we get $\mu_{2k+1}=0$ for all $k=1,...,n-1$. So $I$ corresponds to a pair $(f,Q(f))$ for $Q$ an odd polynomial of degree at most $2n-1$, which is precisely a representative of $\Hilb^{reg}_0(\mf{so}_{2n+1})$ (see Subsection \ref{Bn}).

\underline{Case $C_n$.} This case is exactly analogous to $B_n$ via the injection $\mf{sp}_{2n}\hookrightarrow \mf{sl}_{2n}$.

\underline{Case $D_n$.} We imitate the strategy of the proof for case $A_n$ in \cite{FockThomas} appendix 5.1 with only difference that we have to go further in the analysis, needing some computations. 
The main argument is an iteration process which always ends since $p^k\bar{p}^l = 0 \mod I$ for $k+l \geq 2n$.

Put $I_1 = (I \mod \langle p, \bar{p} \rangle^2)$, i.e. the set of all terms of degree at most 1 appearing in $I$.  If $I_1$ is of dimension 2, then $I=\langle p, \bar{p}\rangle$ since both $p$ and $\bar{p}$ can be expressed by higher terms which by iteration become 0. If $I_1$ is of dimension 1, then we have a relation of the form $\bar{p}=\mu_2p+p^2R(p,\bar{p})$ where $R$ is a polynomial, which gives $\bar{p}$ as a polynomial in $p$ by iteration. We can then explicitly check that $I+ \bar{I}$ is either $\langle p, \bar{p} \rangle$ or $\langle p=\bar{p}, p\bar{p}, p^2\rangle$. Hence $I_1=\{0\}.$

Put $I_2 = (I \mod \langle p, \bar{p}\rangle^3)$. We have $I_2+ \bar{I}_2 = (I+ \bar{I})_2 = \langle p^2, p\bar{p}, \bar{p}^2\rangle$ by assumption on $I$. If $I_2$ is of dimension 3, then all of $p^2, p\bar{p}$ and $\bar{p}^2$ can be expressed by higher terms. By iteration, we get $I=\langle p^2, p\bar{p}, \bar{p}^2\rangle$ which is not of type $D_n$. If $\dim I_2 \leq 1$, then we also have $\dim \bar{I}_2 \leq 1$, so $2\geq \dim I_2+\dim \bar{I}_2 = \dim \langle p^2,p\bar{p}, \bar{p}^2 \rangle_2 = 3$, a contradiction. Hence $\dim I_2=2.$

There is a term containing $p\bar{p}$ in $I_2$ since if not, no such term would neither exist in $\bar{I}_2$, so neither in $I_2+\bar{I}_2 = \langle p^2, p\bar{p}, \bar{p}^2 \rangle$, a contradiction. Without loss of generality, we can assume that there is another term containing $\bar{p}^2$ (if not change the role of $I$ and $\bar{I}$).

So there exist $\alpha, \beta, \gamma, \delta \in \C$ such that 
$$ \left \{ \begin{array}{cl}
\bar{p}^2 = \alpha p^2+\beta p\bar{p} &\mod I_2\\
p\bar{p} = \gamma p^2 +\delta \bar{p}^2  &\mod I_2
\end{array}\right. $$

\noindent If $\beta\gamma \neq 1$, we can simplify by substitution one into the other to
$$\left \{ \begin{array}{cl}
\bar{p}^2 = \alpha' p^2 &\mod I_2 \\
p\bar{p} = \gamma' p^2 &\mod I_2
\end{array}\right. $$
If $\beta\gamma = 1$, we have $p^2\in I_2$, so $p\bar{p}=\delta \bar{p}^2 \mod I_2$, so changing $I$ to $\bar{I}$ we are in the previous situation.

Iterating the substitution process we get that $\bar{p}^2$ and $p\bar{p}$ are polynomials in $p$. Using the invariance of $I$ under $-\id$, we see that these are polynomials in $p^2$, i.e. even polynomials. So the most generic ideal is given by 
$$I=\langle p^{2n-1}, p\bar{p}=\mu_2p^2+\mu_4p^4+...+\mu_{2n-2}p^{2n-2}, \bar{p}^2=\nu_2p^2+\nu_4p^4+...+\nu_{2n-2}p^{2n-2}\rangle$$
which corresponds to a regular element of $\Hilb^{reg}_0(\mf{so}_{2n})$.
One checks that $I+ \bar{I}$ with $I$ of the form above equals $\langle p,\bar{p}\rangle ^2$ iff $\mu_2\bar{\mu_2} \neq 1$.
\end{proof}

To end this section, we determine the geometric nature of the various higher Beltrami coefficients. Since $p$ and $\bar{p}$ are linear coordinates on $T^{*\C}\Sigma$, we can identify $p=\frac{\partial}{\partial z}= \partial$ and $\bar{p}=\frac{\partial}{\partial \bar{z}}=\bar{\partial}$. Denote by $K$ the canonical bundle, i.e. $K=T^{*(1,0)}\Sigma$, and by $\Gamma(B)$ the space of sections of a bundle $B$. 

Analyzing the behavior under a coordinate change $z \mapsto w(z,\bar{z})$ analogous to the computation in \cite{FockThomas} section 3.1., we get  \begin{equation}\label{naturemu}\mu_i\in \Gamma(K^{1-i}\otimes \bar{K}) \text{ and } \nu_{2i} \in \Gamma(K^{-2i}\otimes \bar{K}^2).\end{equation} 
Since $\sigma_n^2$ has the same nature as $\nu_{2n-2}$, we get $\sigma_n\in \Gamma(K^{1-n}\otimes \bar{K})$.

\section{Moduli space}\label{section3}

In this section, we define the moduli space of $\g$-complex structures and explore its properties. In most of the section $\g$ is of classical type. We first have to define an equivalence relation on $\g$-complex structures, which is accomplished by the notion of higher diffeomorphisms.

\subsection{Higher diffeomorphisms}

In order to get a finite-dimensional moduli space, it is not sufficient to quotient by the diffeomorphisms of $\Sigma$ isotopic to the identity, as for Teichm\"uller space. The reason is that the $\g$-complex structure is non-linear in the cotangent spaces $T^{*\C}_z\Sigma$. Diffeomorphisms act linearly on the cotangent space, so it cannot act much on $\g$-complex structures. 

For higher complex structures, in Section 3.2 in \cite{FockThomas} higher diffeomorphisms are defined to be Hamiltonian diffeomorphisms of $T^*\Sigma$ preserving the zero-section $\Sigma \subset T^*\Sigma$. This gives the higher diffeomorphisms for type $A_n$.
We generalize this idea to general $\g$. 

To this end, we need a faithful representation of $\g$, i.e. an injection $\rho: \g \hookrightarrow \mf{sl}_m$ for some $m \in \bb{N}^*$. This always exists by Ado's theorem. For classical $\g$, we will take the standard representation of $\g$ on $\C^m$ (i.e. $\mf{sl}_n \subset \mf{gl}_n, \mf{so}_n \subset \mf{gl}_n$ and $\mf{sp}_{2n} \subset \mf{gl}_{2n}$).

As stated several times, one should think of a $\g$-complex structure as a $m$-tuple of 1-forms with some symmetry, which collapses all to the zero-section. In a given fiber, these $m$ points are given by the common eigenvalues of the two commuting matrices. The extra symmetry expresses the fact that we deal will a subset of $\mf{sl}_m$, coming from a representation of $\g$.

The space of higher diffeomorphisms which we are looking for has to preserve this symmetry. 
To be more precise, we are interested in the eigenvalues of $\rho(g)$ for $g\in \g$. Consider the set $D_\rho$ of those $m$-tuples $(x_1, ..., x_m)\in \C^m$ which appear as the spectrum of some $\rho(g)$. By the Jordan decomposition, we can restrict attention to semisimple elements $g_s$. Since regular semisimple elements are dense, and can be conjugated to the Cartan $\h$, the set $D_\rho$ is the image $\rho(\h)$, simultaneously diagonalized in some basis of $\C^m$. Hence $D_\rho$ is a vector subspace of $\C^m$.
An $m$-tuple of points in $\C^2$ with coordinates $(x_i, y_i)_{1\leq i \leq n}$ is called \textit{$\rho(\g)$-symmetric} if both $(x_1,...,x_m)$ and $(y_1,...,y_m)$ are in $D_\rho$.

For example for $\mf{sl}_n$, we have $D_\rho = \{(x_1,...,x_n \mid \sum_i x_i = 0)\}$. So $\mf{sl}_n$-symmetric points are simply $n$ points with barycenter the origin.
That is why a higher diffeomorphisms has to preserve the zero-section. For $\g$ of type $B_n, C_n$ or $D_n$, a set of $\rho(\g)$-symmetric points is symmetric with respect to the origin.

\begin{definition}\label{g-rho-diffeo}
A \textbf{higher diffeomorphism of type $(\g, \rho)$} is a Hamiltonian diffeomorphism of $T^*\Sigma$ whose extension to $T^{*\C}\S$ preserves the space of $\rho(\g)$-symmetric $m$-tuples.
For classical $\g$, we use the standard representation and omit $\rho$. The group of all higher diffeomorphisms of type $\g$ is denoted by $\Symp(\g, \S)$.
\end{definition}

For $\g$ of type $B_n, C_n$ or $D_n$, a higher diffeomorphism is a Hamiltonian diffeomorphism of $T^*\Sigma$ invariant under the map $(z,p,\bar{p})\mapsto (z,-p,-\bar{p})$. 
In coordinates a Hamiltonian diffeomorphism is generated by a function $H(z,\bar{z},p,\bar{p})$ which can be Taylor developed to 
$\sum_{k,l} w_{k,l}(z,\bar{z})p^k\bar{p}^l$. The associated flow preserves the zero-section iff $w_{0,0}=0$. It is invariant under $-\id$ iff it has only odd terms, i.e. 
$w_{k,l}=0$ for all $k+l$ even.

\subsection{Action on \texorpdfstring{$\g$}{g}-complex structures}\label{actiondiff}

We can now analyze how higher diffeomorphisms act on $\g$-complex structures. From now on, we consider only $\g$ of classical type.

Intuitively, Hamiltonian diffeomorphisms of $T^*\Sigma$ act on the space of 1-forms, so also on $m$-tuples of them. The invariance condition implies that the symmetry of the $m$ 1-forms is preserved. This action persists at the limit when the $m$-tuple of 1-forms is collapsed to the zero-section.

To compute the action, it is better to work in the idealic viewpoint. We imitate the steps from \cite{FockThomas} section 3.2.

Let $I$ be an ideal representing a $\g$-complex structure. Write $I$ with generators $\langle f_1, ..., f_r \rangle$. Each $f_k$ can be considered as a function on $T^{*\C}\Sigma$, so its variation under a Hamiltonian $H$ is given by the Poisson bracket $\{H, f_k\}$.
The tangent space at $I$ in the space of all ideals of codimension $m$ is the set of all ring homomorphisms from $I$ to $A/I$. Thus a Hamiltonian $H$ changes $I$ to 
$\langle f_1+\varepsilon\{H,f_1\} \mod I, ..., f_r+\varepsilon \{H,f_r\} \mod I \rangle$.

We restate a lemma from \cite{FockThomas} (lemma 4) which allows to simplify $H$:
\begin{lemma}\label{simplification}
Let $I=\left\langle f_1, ..., f_r \right\rangle$ be an ideal of $\mathbb{C}[z,\bar{z},p,\bar{p}]$ such that $\{f_i, f_j\} = 0 \mod I$ for all $i$ and $j$. Then for all polynomials $H$ and all $k \in \{1,...,r\}$ we have $\{H, f_k\} \mod I = \{H \mod I, f_k\} \mod I$.
\end{lemma}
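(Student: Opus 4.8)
The plan is to expand $H$ in powers of the generators $f_i$ and isolate the part that matters modulo $I$. First I would write $H = H_0 + \sum_i g_i f_i$ where $H_0 = H \bmod I$ is a chosen polynomial representative and $g_i \in \mathbb{C}[z,\bar{z},p,\bar{p}]$. Then, using the Leibniz rule for the Poisson bracket, I would compute
\begin{equation*}
\{H, f_k\} = \{H_0, f_k\} + \sum_i \{g_i f_i, f_k\} = \{H_0, f_k\} + \sum_i \bigl( g_i \{f_i, f_k\} + f_i \{g_i, f_k\} \bigr).
\end{equation*}
Now I would reduce modulo $I$: the term $f_i \{g_i, f_k\}$ vanishes mod $I$ since $f_i \in I$; the term $g_i \{f_i, f_k\}$ vanishes mod $I$ by the hypothesis $\{f_i, f_j\} \equiv 0 \bmod I$ (note that $\{f_i,f_k\} \in I$ implies $g_i\{f_i,f_k\} \in I$ since $I$ is an ideal). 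Hence $\{H, f_k\} \equiv \{H_0, f_k\} \bmod I$, which is exactly the claim.

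**Remaining care.** The one place to be slightly careful is that the statement writes "$\{H \bmod I, f_k\} \bmod I$": I should note that this expression is well-defined, i.e. independent of the choice of polynomial representative $H_0$ of the class $H \bmod I$. That follows from the same computation applied to the difference of two representatives (which lies in $I$), so in fact the independence is a special case of the identity being proved. I would remark on this to make the lemma statement unambiguous. I also want to make sure the Poisson bracket is the standard one on $\mathbb{C}[z,\bar{z},p,\bar{p}] = \mathbb{C}[T^{*\mathbb{C}}\Sigma]$ so that the Leibniz rule $\{ab, c\} = a\{b,c\} + b\{a,c\}$ applies verbatim.

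**Main obstacle.** Honestly there is no serious obstacle here — this is a short formal computation once one decomposes $H$ along the generators and invokes Leibniz plus the involutivity hypothesis $\{f_i,f_j\} \equiv 0 \bmod I$. The only thing one must not overlook is that involutivity is needed precisely to kill the $g_i\{f_i,f_k\}$ terms; without it, only the $f_i\{g_i,f_k\}$ terms drop out automatically. So the "hard part," such as it is, is simply organizing the bookkeeping so that both families of cross terms are seen to vanish mod $I$ for the stated reasons. Since this is quoted from \cite{FockThomas}, I would keep the proof to a couple of lines and refer there for the original.
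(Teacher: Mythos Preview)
Your proof is correct and follows essentially the same approach as the paper: decompose $H$ as a representative plus an $I$-part, apply the Leibniz rule, and observe that the cross terms $g_i\{f_i,f_k\}$ and $f_i\{g_i,f_k\}$ vanish modulo $I$ by the involutivity hypothesis and the ideal property respectively. The paper phrases it as invariance under $H\mapsto H+gf_l$ for a single generator rather than expanding all at once, but the computation is identical.
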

\begin{proof}
The only thing to show is that if we replace $H$ by $H+gf_l$ for some polynomial $g$ and some $l \in \{1,...,r\}$, the expression does not change. Indeed, 
$\{H+g f_l, f_k\}=\{H, f_k\}+g\{f_l,f_k\}+\{g,f_k\}f_l = \{H, f_k\}  \mod I$ using the assumption.
\end{proof}

\begin{prop}
The ideals of $\Hilb^{reg}_0(\g)$ for $\g$ classical satisfy the condition of the previous lemma.
\end{prop}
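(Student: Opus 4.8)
The plan is to check the condition $\{f_i,f_j\}\equiv 0\pmod I$ directly on the explicit generators of the ideals furnished by the idealic map in Section~\ref{classical-case}, now with the Beltrami coefficients regarded as functions of $(z,\bar z)$. I use the canonical Poisson bracket on $T^{*\C}\Sigma$ determined by $\{z,p\}=\{\bar z,\bar p\}=1$ with all other brackets among $z,\bar z,p,\bar p$ vanishing, i.e. $\{u,v\}=u_zv_p-u_pv_z+u_{\bar z}v_{\bar p}-u_{\bar p}v_{\bar z}$. Since $\{f,f\}=0$, only the off-diagonal brackets need treatment. The types $A_n$, $B_n$, $C_n$ will be handled uniformly, and $D_n$, which is the genuine obstacle, separately.

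For $\g$ of type $A_n$, $B_n$ or $C_n$ the ideal has two generators $f_1=p^{N}$ (with $N=n,\,2n+1,\,2n$ respectively) and $f_2=-\bar p+Q(p)$, where $Q\in\C[z,\bar z][p]$ has no constant term (and is an odd polynomial in the $B_n$, $C_n$ cases). Because $f_1$ depends on $p$ only, $\{f_1,f_2\}=-N\,p^{N-1}\,\partial_zQ(p)$; as $\partial_zQ$ again has no constant term, $p^{N-1}\partial_zQ(p)$ is divisible by $p^{N}=f_1$, so $\{f_1,f_2\}\in I$. This is the only bracket to check, so these cases are complete.

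For type $D_n$ the ideal has three generators $f_1=p^{2n-1}$, $f_2=p\bar p-M(p)$, $f_3=\bar p^2-N(p)$, with $M,N\in\C[z,\bar z][p]$ even polynomials vanishing to order $2$ at $p=0$ and $N$ tied to $M$ and $\sigma_n$ by the relation recorded in Subsection~\ref{Dn}; the latter is equivalent to the polynomial identity $p^2N=M^2+E$ with $E$ having all its $p$-monomials of degree $\ge 2n$. The brackets $\{f_1,f_2\}=(2n-1)p^{2n-2}\partial_zM$ and $\{f_1,f_3\}=(2n-1)p^{2n-2}\partial_zN$ lie in $I$ because $\partial_zM,\partial_zN$ vanish to order $2$. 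The remaining bracket works out to $\{f_2,f_3\}=(M_zN_p-M_pN_z)+\bar p\,(N_z-2M_{\bar z})+p\,N_{\bar z}$. Since $N_z-2M_{\bar z}$ has no constant term, I would eliminate the $\bar p$-term using $p^{2j}\bar p=p^{2j-1}f_2+p^{2j-1}M$, reducing $\{f_2,f_3\}$ modulo $I$ to a polynomial $T\in\C[z,\bar z][p]$. Then I would multiply $T$ by $p^2$ and substitute the identities obtained by differentiating $p^2N=M^2+E$, namely $p^2N_p=2MM_p+E_p-2pN$, $p^2N_z=2MM_z+E_z$ and $p^2N_{\bar z}=2MM_{\bar z}+E_{\bar z}$; after the evident cancellations every surviving summand of $p^2T$ carries a factor $E$, $E_p$, $E_z$ or $E_{\bar z}$, hence has $p$-degree $\ge 2n+1$. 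Thus $p^{2n+1}\mid p^2T$ in $\C[z,\bar z][p]$, so $p^{2n-1}\mid T$, whence $T\in(f_1)\subseteq I$ and $\{f_2,f_3\}\equiv 0\pmod I$.

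The delicate point is exactly this last bracket for $D_n$: after reducing away the $\bar p$-term one is left with a polynomial $T$ in $p$ whose vanishing modulo $I$ is \emph{not} visible from $p^{2n-1}\equiv 0$ alone — it genuinely uses the relation linking $N$, $M$ and $\sigma_n$ (equivalently, that $(A,B)$ really represents a point of $\Hilb^{reg}_0(\mf{so}_{2n})$, not just an arbitrary $W$-invariant codimension-$2n$ ideal). The bookkeeping in the $p^2T$ computation is the most error-prone step, but it is mechanical once the identity $p^2N=M^2+E$ and the orders of vanishing of $M$, $N$, $E$ are recorded.
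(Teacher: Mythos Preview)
Your argument is correct. For types $A_n,B_n,C_n$ it is essentially identical to the paper's. For $D_n$ you take a genuinely different route from the paper, and both work.

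The paper first invokes a degree argument (any monomial in $p,\bar p$ of total degree $\ge 2n-1$ lies in $I$) to discard the top terms $\mu_{2n-2}p^{2n-2}$ and $\nu_{2n-2}p^{2n-2}$ from $f_2,f_3$ before taking the bracket; it then spots a clean factorisation
\[
\{-p\bar p+p\tilde Q,\,-\bar p^{\,2}+\tilde Q^{\,2}\}
= 2\bigl(p\bar p-Q\bigr)\Bigl(\bar\partial\tilde Q-\tfrac{\tilde Q}{p}\,\partial\tilde Q\Bigr),
\]
where $\tilde Q=M/p$, and finishes by noting $p\bar p-Q\equiv \mu_{2n-2}p^{2n-2}\bmod I$ while the second factor is divisible by $p$. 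Your approach instead keeps all terms, reduces the $\bar p$-contribution via $p\bar p\equiv M$, and then clears denominators by multiplying by $p^2$ and substituting the differentiated identity $p^2N=M^2+E$ together with $p^2N=M^2+E$ itself for the stray $N$ in $p^2N_p=2MM_p+E_p-2pN$. After cancellation every survivor carries $E$, $E/p$, $E_p$, $E_z$ or $E_{\bar z}$ and the degree count gives $p^{2n+1}\mid p^2T$.

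What each buys: the paper's factorisation is short once seen but requires noticing that $R-\tilde Q^2$ is concentrated in top degree, which is exactly your identity $p^2N-M^2=E$ in disguise. Your method is more mechanical and makes the role of the constraint linking $N$ to $M$ and $\sigma_n$ (i.e.\ $E=O(p^{2n})$) completely explicit; it would also adapt more readily if one wanted to track exactly which coefficients of $T$ vanish for what reason. One small imprecision: when you say ``every surviving summand carries a factor $E,E_p,E_z$ or $E_{\bar z}$'', the honest list also includes $E/p$ (arising from $2pN=2(M^2+E)/p$); this is harmless since $E/p$ still has $p$-degree $\ge 2n-1$ and the degree bound goes through unchanged.
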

\begin{proof}
For $A_n$, we have $I=\langle p^n, \bar{p}=\mu_2p+...+\mu_np^{n-1}=Q(p) \rangle$. We compute $\{p^n, -\bar{p}+Q(p)\} = np^{n-1}\partial Q = 0 \mod I$ since there is no constant term in $Q$. 

The same argument holds for $B_n$ and $C_n$ since their ideals are special cases of the ideal of type $A_n$.

For $D_n$, the ideal $I$ is given by
\begin{align*}
\langle p^{2n-1}, p\bar{p} &= \mu_2p^2+\mu_4p^4+...+\mu_{2n-2}p^{2n-2}=Q(p)+\mu_{2n-2}p^{2n-2}, \\
 \bar{p}^2 &= \nu_2p^2+\nu_4p^4+...+\nu_{2n-2}p^{2n-2}=R(p)+\nu_{2n-2}p^{2n-2}\rangle.
\end{align*}
As before the Poisson brackets with the first generator $p^{2n-1}$ vanishes modulo $I$ since $Q$ and $R$ have no constant terms. To compute the last Poisson bracket, define $\tilde{Q}=Q/p$. By the relations in $I$, we have $R=\tilde{Q}^2+p^{2n-2}\tilde{R}$ for some polynomial $\tilde{R}$ (see Subsection \ref{Dn}). Remark further that $\{a(z, \bar{z})p^k\bar{p}^l, b(z, \bar{z})p^{k'}\bar{p}^{l'}\}=0 \mod I$ whenever $k+l+k'+l' > n-1$ since any term of degree $n-1$ in $p$ and $\bar{p}$ is in $I$ and the Poisson bracket lowers this degree by 1.
With all this, we compute
\begin{align*}
&\{-p\bar{p}+Q+\mu_{2n-2}p^{2n-2}, -\bar{p}^2+R+\nu_{2n-2}p^{2n-2}\} \\
=& \;\{-p\bar{p}+p\tilde{Q}+\mu_{2n-2}p^{2n-2}, -\bar{p}^2+\tilde{Q}^2+p^{2n-2}(\tilde{R}+\nu_{2n-2})\}\\
=& \;\{-p\bar{p}+p\tilde{Q}, -\bar{p}^2+\tilde{Q}^2\} & \text{ by degree argument}  \\ 
=& \; 2\bar{\partial}\tilde{Q}(p\bar{p}-p\tilde{Q})-2\tilde{Q}\partial \tilde{Q}(\bar{p}-\tilde{Q}) \\
=& \; 2(p\bar{p}-Q)(\bar{\partial}\tilde{Q}-\frac{\tilde{Q}}{p}\partial \tilde{Q}) \\
=& \; 2\mu_{2n-2}p^{2n-2}(\bar{\partial}\tilde{Q}-\frac{\tilde{Q}}{p}\partial \tilde{Q}) &\mod I \\
=& \; 0 &\mod I 
\end{align*}
where the last line comes from the fact that $p$ divides the polynomial $\bar{\partial}\tilde{Q}-\frac{\tilde{Q}}{p}\partial \tilde{Q}$.
\end{proof}

As a consequence, when computing the action of a Hamiltonian $H$ on a $\g$-complex structure, we can reduce it modulo $I$. In particular if $H \mod I = 0$, the higher diffeomorphism generated by $H$ does not act at all. For $\g$ of type $A_n, B_n$ or $C_n$ we can reduce $H$ to a polynomial in $p$, and for $D_n$ we can reduce it to $H=w_{-}\bar{p}+\sum_{k=0}^{n-2} w_{2k+1}p^{2k+1}$.

\subsection{Local theory}

Now, we can study the local theory of $\g$-complex structures. Let $z_0$ be a point on $\Sigma$ and take a small chart around it which sends to the unit disk $\Delta$ in the complex plane (with $z_0$ send to the origin). 

\begin{thm}\label{thm1}
For $\g$ of type $A_n$, $B_n$ or $C_n$, any two $\g$-complex structures are locally equivalent under higher diffeomorphism of type $\g$.

For $\g$ of type $D_n$, all $\g$-complex structures with non-vanishing $\sigma_n$ on $\Delta$ are equivalent under higher diffeomorphisms. However, the zero locus of $\sigma_n$ on $\Delta$ is an invariant.
\end{thm}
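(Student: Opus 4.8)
The plan is to mimic, for each classical type, the strategy from \cite{FockThomas} for type $A_n$: use the idealic viewpoint (Definition \ref{def-g-complex}), write a $\g$-complex structure in its preferred normal form with higher Beltrami coefficients $\mu_2, \mu_4, \dots$ (and $\sigma_n$ in the $D_n$-case), and construct step by step a Hamiltonian flow that kills the Beltrami coefficients one degree at a time. Concretely, first I would reduce to the case $\mu_2 = 0$ (that is, the induced complex structure, via Proposition \ref{inducedcomplex}, is the standard one): the non-reality constraint $\mu_2\bar\mu_2 \neq 1$ is exactly Teichm\"uller's condition, so a genuine diffeomorphism of $\Delta$ — which is a higher diffeomorphism generated by a Hamiltonian linear in $p,\bar p$ — brings $\mu_2$ to $0$ locally by the measurable Riemann mapping theorem (or just its smooth/local version on the disk). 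After this normalization, I work with a fixed background complex structure and the remaining Beltrami coefficients $\mu_4, \mu_6, \dots$ (resp.\ $\mu_3,\dots,\mu_n$ for $A_n$) and, for $D_n$, also $\sigma_n$.

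The core is an induction on the degree. By Subsection \ref{actiondiff}, the action of a Hamiltonian $H = \sum_{k} w_k(z,\bar z)\,p^k$ (for $A_n,B_n,C_n$; odd $k$ only for $B_n,C_n$; and $H = w_-\bar p + \sum_k w_{2k+1}p^{2k+1}$ for $D_n$) on the ideal is computed by the Poisson bracket modulo $I$, and by Lemma \ref{simplification} we may reduce $H$ modulo $I$. Suppose inductively that $\mu_2 = \mu_4 = \dots = \mu_{2j-2} = 0$ and we wish to kill $\mu_{2j}$. Taking $H = w_{2j-1}(z,\bar z)\,p^{2j-1}$, a direct computation of $\{H, -\bar p + Q(p)\} \bmod I$ (with $Q = \sum \mu_i p^{i-1}$, currently $Q = \mu_{2j}p^{2j-1}+\dots$) shows the leading variation of $\mu_{2j}$ is of the form $\partial_{\bar z} w_{2j-1} + (\text{lower-order terms in already-normalized data})$; since we are working over the standard $\bar\partial$-operator and $\mu_{2j}$ is a smooth $(0,1)$-form with values in $K^{1-2j}$, the equation $\partial_{\bar z} w_{2j-1} = -\mu_{2j}$ is solvable on the disk by the Cauchy–Pompeiu / Dolbeault lemma (solving $\bar\partial$ locally). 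This removes $\mu_{2j}$; higher coefficients get modified but the process terminates because $p^k\bar p^l \in I$ for $k+l \geq 2n$, so only finitely many coefficients are ever in play. Composing the finitely many flows gives local equivalence with the trivial $\g$-complex structure, hence any two are locally equivalent.

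For $D_n$ the same scheme handles $\mu_2, \dots, \mu_{2n-2}$ (and $\nu_{2n-2}$, which is determined by them and $\sigma_n$), but the coefficient $\sigma_n \in \Gamma(K^{1-n}\otimes\bar K)$ behaves differently: by the computation in Subsection \ref{actiondiff} the Poisson bracket of the two non-trivial generators vanishes modulo $I$, and one checks that no admissible Hamiltonian produces a term that can create or destroy $\sigma_n$ where it vanishes — the variation of $\sigma_n$ under $H$ is proportional to $\sigma_n$ itself (a multiplicative, not additive, action), so $\{\sigma_n = 0\}$ is preserved set-theoretically. Hence when $\sigma_n$ is nowhere zero on $\Delta$ we can (after normalizing the $\mu$'s) further use the freedom in $H$ to scale $\sigma_n$ to a constant, giving equivalence; but the zero locus of $\sigma_n$ is a higher-diffeomorphism invariant. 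I would make the invariance precise by noting that under a higher diffeomorphism $\sigma_n \mapsto (\text{nowhere-zero factor})\cdot \sigma_n + (\text{terms vanishing on }\{\sigma_n=0\})$, so its vanishing set is unchanged.

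The main obstacle I expect is twofold: first, carefully bookkeeping the nonlinear "back-reaction" — killing $\mu_{2j}$ perturbs $\mu_{2j+2}, \mu_{2j+4}, \dots$ and (in $D_n$) the $\nu$'s, so one must set up the induction so that already-normalized low-degree coefficients stay zero; this is exactly the point where Lemma \ref{simplification} and the degree/nilpotency bound $p^k\bar p^l \equiv 0 \bmod I$ for $k+l \geq 2n$ do the essential work, and it needs a clean statement that the leading-order term of the variation of $\mu_{2j}$ is $\partial_{\bar z}w_{2j-1}$ with everything else of strictly higher filtration. Second, and specific to $D_n$, one must verify rigorously that $\sigma_n$ transforms multiplicatively rather than additively — i.e.\ that there is genuinely no Hamiltonian producing an additive shift of $\sigma_n$ — which requires examining the $\bar p$-component $w_-$ of the reduced Hamiltonian and checking it only rescales, never translates, the $\sigma_n$-coefficient; this is the heart of why $D_n$ has an extra invariant.
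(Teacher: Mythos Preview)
Your overall strategy --- a Moser-type flow killing the Beltrami coefficients degree by degree via local solvability of $\bar\partial$ --- is exactly what the paper does, and for $A_n$, $B_n$, $C_n$ and the equivalence half of $D_n$ your outline matches the paper's proof (the paper is even terser for $B_n,C_n$: it just invokes the $A$-case via the standard embedding rather than rerunning the induction). For $D_n$ with $\sigma_n\neq 0$ the paper, like you, first kills the $\mu_{2i}$ using Hamiltonians polynomial in $p$, then uses $H=w_-\bar p+w_{2n-3}p^{2n-3}$; it computes explicitly $\tfrac{d}{dt}\sigma_n^t=\bar\partial(w_-^t\sigma_n^t)$, chooses the linear path $\sigma_n^t=(1-t)\sigma_n^0+t$, and solves for $w_-^t$ with the Cauchy--Green operator, adjusting $w_{2n-3}$ to keep $\mu_{2n-2}=0$.

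The one genuine difference is the \emph{invariance} of the zero locus of $\sigma_n$. You propose to show directly that the infinitesimal variation of $\sigma_n$ under any admissible Hamiltonian is homogeneous in $\sigma_n$ (``multiplicative, not additive''), hence cannot create or destroy zeros. The paper's own computation $\tfrac{d}{dt}\sigma_n=\bar\partial(w_-\sigma_n)$ supports this, but note two caveats: that formula is derived only \emph{after} the $\mu_{2i}$ are already zero, and it is a transport equation, so zeros can move --- your phrase ``preserved set-theoretically'' is too strong as written. The paper instead gives a singularity-theoretic argument: the ideal $\langle p^{2n-1},\,p\bar p,\,-\bar p^2+\nu_{2n-2}p^{2n-2}\rangle$ is, up to adding $\langle p,\bar p\rangle^{n-1}$, the Jacobian ideal of $f=-\tfrac{\nu_{2n-2}}{2n-1}p^{2n-1}+p\bar p^2$, which is a Kleinian singularity of type $D_{2n}$ precisely when $\nu_{2n-2}\neq 0$; since the singularity type of the fibre is a diffeomorphism invariant, one cannot pass from $\nu_{2n-2}=0$ to $\nu_{2n-2}\neq 0$. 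Your dynamical argument can be made to work with care, but the paper's route is cleaner because it avoids tracking $\sigma_n$ along an arbitrary flow and gives a conceptual reason (the hidden $D$-type singularity) for the extra invariant.
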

Since we work locally, it is sufficient to show that we can send all higher Beltrami differentials to 0 using higher diffeomorphisms.
\begin{proof}
The proof for $\g$ of type $A_n$ was done in \cite{FockThomas}, Appendix 5.2, using a method in the spirit of the proof of Darboux's theorem in symplectic geometry. 

If $\g$ is of type $B_n$ or $C_n$, the standard representations realizes the $\g$-complex structure as a substructure of type $A_n$. Since the last is trivializable, so is the $\g$-complex structure in that case.

For $\g$ of type $D_n$, we use the same method as for type $A_n$ by a Hamiltonian flow argument.
We start with an ideal $I$ determined by higher Beltrami differentials $(\mu_2, \mu_4, ..., \mu_{2n-2}, \nu_{2n-2})$. The action on $\mu_{2i}$ is the same as for $\g=\mf{sl}_{2n}$ so we can trivialize them using a Hamiltonian $H$ which is a polynomial in $p$. So we are left with 
\begin{equation}\label{idealdndn}
I=\langle p^{2n-1}, p\bar{p}, -\bar{p}^2+\nu_{2n-2}p^{2n-2}\rangle.
\end{equation}
We have seen at the end of Subsection \ref{actiondiff} that in the case $D_n$, any Hamiltonian can be reduced to $H=w_{-}\bar{p}+\sum_{k=0}^{n-2}w_{2k+1}p^{2k+1}$.
The only part of this Hamiltonian acting on $\nu_{2n-2}$ is $H=w_{-}\bar{p}$, which also changes $\mu_{2n-2}$. So in order to assure that $\mu_{2n-2}$ stays zero, we use $$H=w_{-}\bar{p}+w_{2n-3}p^{2n-3}.$$

We compute the action of this Hamiltonian on the ideal $I$. For the second generator of $I$ we get:
$$\{w_{-}\bar{p}+w_{2n-3}p^{2n-3}, -p\bar{p}\} = p^{2n-2}(\bar{\partial}w_{2n-3}+\partial w_{-}\nu_{2n-2}) \mod I.$$
For the third generator of $I$ we get
$$\{w_{-}\bar{p}+w_{2n-3}p^{2n-3}, -\bar{p}^2+\nu_{2n-2}p^{2n-2}\} = p^{2n-2}(w_{-}\bar{\partial}\nu_{2n-2}+2\bar{\partial}w_{-}\nu_{2n-2}) \mod I.$$
Denote by $\mu_{2-2}^t$ and $\nu_{2n-2}^t$ the image of $\mu_{2n-2}$ and $\nu_{2n-2}$ under the flow generated by $H$ at time $t$.
From the above computation we get 
$$ \left \{\begin{array}{cl}
\frac{d}{dt} \mu_{2n-2}^t &= \bar{\partial}w_{2n-3}+\partial w_{-}\nu_{2n-2} \\
\frac{d}{dt} \nu_{2n-2}^t &= (w_{-}\bar{\partial}+2\bar{\partial}w_{-})\nu_{2n-2}
\end{array}\right. $$

Instead of keeping $\nu_{2n-2}$, we work with the higher Beltrami differential $\sigma_n$. Since all the $\mu_{2i}$ are zero in $I$, we have $\nu_{2n-2}=\sigma_n^2$. Therefore we get from the second equation above $\frac{d}{dt}(\sigma_n^2)=(w_{-}\bar{\partial}+2\bar{\partial}w_{-})(\sigma_n^2)$ which gives
\begin{equation}\label{varsigma}
\frac{d}{dt}\sigma_n^t = \bar{\partial}(w_{-}^t\sigma_n^t).
\end{equation}

We wish to have $\frac{d}{dt} \mu_{2n-2}^t=0$ to stay with $\mu_{2n-2}=0$. For $\sigma_n$, we show that we can deform it to the constant function 1 on the unit disk, assuming $\sigma_n$ vanishes nowhere on $\Delta$. We choose the path $\sigma_n^t=(1-t)\sigma_n^0+t$ from the initial $\sigma_n^0$ to the constant function 1. If $\sigma_n^t=0$ for some $t$, we have to modify slightly the path. We get $\frac{d}{dt}\sigma_n^t=1-\sigma_n^0$. 

Denote by $T$ the local inverse of the $\bar{\partial}$-operator, i.e. $\bar{\partial}(Tf)=f=T\bar{\partial}f$ for all $f\in L^2(\Delta)$. The operator $T$ is a pseudo-differential operator given by
$$Tf(z)=\frac{1}{2\pi i}\int_{\mathbb{C}}\frac{f(\zeta)}{\zeta-z}d\zeta \wedge d\bar{\zeta}.$$

We can solve Equation \eqref{varsigma} with $T$: $$w_{-}^t=\frac{1}{\sigma_n^t}T(1-\sigma_n^0).$$
Putting this solution into the equation for $\frac{d}{dt}\mu_{2n-2}^t$, we can solve for $w_{2n-3}$:
$$w_{2n-3}^t=-T(\partial w_{-}^t\nu_{2n-2}^t).$$

Finally, we multiply $H$ by a bump function, a function on $\Delta$ which is 1 in a neighborhood of the origin and 0 outside a bigger neighborhood of the origin, which ensures that the Hamiltonian vector field is compactly supported, so it can be integrated to all times. In particular for $t=1$ we get $\sigma_n(z)=1$ for all $z$ near the origin.

To show that the zero locus of $\sigma_n$ can not be changed by a higher diffeomorphism, consider the singularity defined by $$f(p, \bar{p})=-\frac{\nu_{2n-2}}{2n-1}p^{2n-1}+p\bar{p}^2=0$$ which is a Kleinian singularity of type $D_{2n}$ if $\nu_{2n-2}\neq 0$. Its deformation ideal $\left\langle \frac{\del f}{\del p}, \frac{\del f}{\del \bar{p}}  \right\rangle$ is directly linked to our ideal $I$ from Equation \eqref{idealdndn} by $$\left\langle \frac{\del f}{\del p}, \frac{\del f}{\del \bar{p}}  \right\rangle + \langle p, \bar{p}\rangle^{n-1} = \langle p^{2n-1}, p\bar{p}, -\bar{p}^2+\nu_{2n-2}p^{2n-2} \rangle.$$ 
Since the type of a singularity is invariant under diffeomorphisms, so is its deformation ideal. This is why we cannot change $\nu_{2n-2}=0$ to $\nu_{2n-2}\neq 0$ by higher diffeomorphisms. 
\end{proof}
\begin{Remark}
It is interesting to notice the appearance of Kleinian singularities, which have an $ADE$-classification. The fact that for $\g$ of type $D_n$ the singularity is of type $D_{2n}$ is linked to the representation of $\mf{so}_{2n}$ on $\C^{2n}$. There should be a more intrinsic way to link $\g$-complex structures to singularities of type $\g$.

An idea in this direction is the following: the singularity of type $\g$ appears inside the Lie algebra $\g$, more precisely inside the nilpotent variety along the subregular locus (see \cite{Steinberg}). A minimal resolution of this singularity is given by the Springer resolution. There should be a link between $\g$-Hilbert schemes and the Springer resolution.
\end{Remark}

Since there are no local invariants for $\g$-complex structures, only their global geometry is non-trivial.

\subsection{Definition of the moduli space}

To define the moduli space of $\g$-complex structures, there is one more subtlety: in order to get one component, we have to fix an orientation on $\Sigma$. We then call a complex structure \textit{compatible} if the induced orientation coincides with the given orientation on $\Sigma$. We call a $\g$-complex structure \textit{compatible} if the induced complex structure is.
\begin{definition}
The moduli space $\hat{\mc{T}}_{\g}$ is the space of all compatible $\g$-complex structures modulo the action of higher diffeomorphisms of type $\g$.
\end{definition}

Notice that a $\g$-complex structure is compatible iff $\mu_2(z)\bar{\mu}_2(z) < 1$. 
Reverting the orientation on $\Sigma$ we get another copy of $\hat{\mc{T}}_{\g}$ corresponding to those $\g$-complex structures with $\mu_2(z)\bar{\mu}_2(z) > 1$. 

\begin{Remark}
One might define a moduli space of $\g$-complex structures for general $\g$ (not of classical type), by using a representation $\rho:\g \hookrightarrow \mf{sl}_m$. We conjecture that the associated moduli space does not depend on the choice of the representation $\rho$.
\end{Remark}

For $\g=\mf{sl}_2$ we get Teichm\"uller space since we can reduce any Hamiltonian to $H=w(z,\bar{z})p$ which generates a linear diffeomorphism of $T^*\Sigma$, coming from a diffeomorphism on $\Sigma$ isotopic to the identity.

For general $\g$, there is a copy of Teichmüller space inside:
\begin{prop}\label{teich-copy}
There is an injective map from Teichm\"uller space into the moduli space $\hat{\mc{T}}_{\g}$.
\end{prop}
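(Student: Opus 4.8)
The plan is to use the principal map $\psi:\mf{sl}_2\hookrightarrow\g$ to transport a complex structure on $\S$ (equivalently, a point of Teichm\"uller space) to a $\g$-complex structure, and to show this assignment is injective by exhibiting a left inverse. Concretely, a complex structure on $\S$ is a Beltrami differential $\mu_2$ with $\mu_2\bar\mu_2<1$; the associated $\mf{sl}_2$-complex structure is the gauge class of $e\,dz+\mu_2 e\,d\bar z$ (with $(e,f,h)$ the standard generators of $\mf{sl}_2$, or in the preferred slice-representative of Proposition~\ref{paramit}, the pair $(f+te,\,\mu_2(f+te))$ up to conjugation). Applying $\psi$ pointwise gives $\Phi_1\,dz+\Phi_2\,d\bar z$ with $\Phi_1=\psi(f+te)$ and $\Phi_2=\mu_2\psi(f+te)$; since $(\psi(e),\psi(f),\psi(h))$ is a principal $\mf{sl}_2$-triple, $\Phi_1$ is in the principal slice, hence regular and (at $t=0$) principal nilpotent, $\Phi_2\in Z(\Phi_1)$, and $[\Phi_1,\Phi_2]=0$. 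Thus $[(\Phi_1(z),\Phi_2(z))]\in\Hilb^{reg}_0(\g)$ for all $z$, and by construction its image under the map $\mu$ of Equation~\eqref{mu} is exactly $[(e,\mu_2 e)]$, so the non-reality constraint $\mu_2\bar\mu_2\neq 1$ is inherited. This defines a map $j:\mathcal T(\S)\to\hat{\mc T}_\g$, where I must check that $G$-gauge classes and $\mathrm{Symp}(\g,\S)$-equivalence are respected: conjugation in $\mf{sl}_2$ maps to conjugation in $\g$ under $\psi$, and a diffeomorphism of $\S$ isotopic to the identity lifts to a higher diffeomorphism of type $\g$ (it acts linearly on $T^{*\C}\S$, hence preserves $\rho(\g)$-symmetry of any $m$-tuple).

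For injectivity, the key observation is that $j$ is a section of the map of Proposition~\ref{inducedcomplex}: the $\g$-complex structure $j([\mu_2])$ induces, via $\mu:\Hilb^{reg}_0(\g)\to\Hilb_0(\mf{sl}_2)$, precisely the complex structure $[\mu_2]$ we started with. Therefore if $j([\mu_2])$ and $j([\mu_2'])$ are equivalent under higher diffeomorphisms of type $\g$, I need to know that the induced complex structures are then equivalent under ordinary diffeomorphisms of $\S$ isotopic to the identity — i.e. that the map $\hat{\mc T}_\g\to\mathcal T(\S)$ from Proposition~\ref{inducedcomplex} is well-defined at the level of moduli spaces, not just on structures. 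This requires checking that a higher diffeomorphism of type $\g$ descends, under $\mu$, to an honest diffeomorphism of $\S$; the point is that the Hamiltonian $H=\sum w_{k,l}p^k\bar p^l$ acts on the $\mu_2$-component only through its linear-in-momentum part $w_{1,0}(z,\bar z)\,p$ (higher-order terms in $p,\bar p$ cannot affect the degree-one data that $\mu$ records), and $w_{1,0}p$ generates exactly the lift of a surface diffeomorphism. Granting this, $[\mu_2]=[\mu_2']$ in $\mathcal T(\S)$, so $j$ is injective.

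The main obstacle is the second paragraph: verifying cleanly that $\mu$ is compatible with the two notions of equivalence, i.e. that higher diffeomorphisms of type $\g$ act on the induced complex structure exactly as surface diffeomorphisms do. This amounts to tracking how a general Hamiltonian $H$ on $T^{*\C}\S$ acts on the ``$\mu_2$-direction'' $L(z)=\Span(\Phi_1-\mu_2\Phi_2)$ and checking that the higher-order terms of $H$ act trivially on $\mu_2$ while the linear term reproduces the classical Beltrami-differential transformation law — a computation entirely parallel to the action computations in Subsection~\ref{actiondiff} and to \cite{FockThomas}, once one restricts to the image of $j$ (where $\sigma_n$ and the higher $\mu_i$ all vanish). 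The remaining points — well-definedness of $j$, equivariance under gauge, the section property — are formal consequences of the properties of the principal map and of Propositions~\ref{paramit}, \ref{mu2prop} and \ref{inducedcomplex} already established.
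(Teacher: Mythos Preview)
Your proposal is correct and follows essentially the same line as the paper's proof: both construct the map via the principal inclusion $\psi:\mf{sl}_2\hookrightarrow\g$ (Equation~\eqref{teichcopy}), and both reduce injectivity to the observation that the terms of degree $\geq 2$ in a Hamiltonian $H$ do not affect $\mu_2$, so any higher-diffeomorphism equivalence between two image points is already realized by the linear part of $H$, i.e.\ by a surface diffeomorphism.

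The only difference is one of packaging. The paper argues directly: if $[(\mu_2,0,\ldots,0)]\sim[(\mu_2',0,\ldots,0)]$ under a higher diffeomorphism, strip $H$ down to its linear part and conclude $[\mu_2]=[\mu_2']$ in Teichm\"uller space. You instead phrase injectivity by exhibiting the induced-complex-structure map of Proposition~\ref{inducedcomplex} as a left inverse on moduli, which requires checking that this map descends through higher diffeomorphisms. That is a slightly stronger statement than what is strictly needed (you only need it on the image of $j$, as you note at the end), but the verification is the same computation. Your route has the minor conceptual bonus of establishing that $\hat{\mc T}_\g\to\mathcal T(\S)$ is a well-defined map of moduli spaces; the paper's route is more economical since it never leaves the locus $\mu_3=\cdots=0$.
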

\begin{proof}
The proposition follows from the map $\psi:\Hilb(\mf{sl}_2) \rightarrow \Hilb^{reg}(\g)$ constructed in Equation \eqref{teichcopy} in Section \ref{mu2}. This map restricts to a map between the zero-fibers and extends over the surface $\Sigma$. Finally the map descends to the quotient by higher diffeomorphisms since for $\mf{sl}_2$ we only quotient by diffeomorphisms of $\Sigma$.
In terms of higher Beltrami differentials, this map is simply given by $\psi([\mu_2]) = [(\mu_2,0,...,0)].$

For injectivity, suppose $[(\mu_2,0,...,0)]$ is equivalent to $[(\mu_2',0,...,0)]$ via a higher diffeomorphism generated by $H$. Since terms of degree 2 or more do not affect $\mu_2$, the equivalence is already obtained by the linear part of $H$, which is the extension of a diffeomorphism of $\S$. This diffeomorphism of $\S$ sends $\mu_2$ to $\mu_2'$, so they are equivalent.
\end{proof}

Furthermore, the moduli space has the following properties:
\begin{thm}\label{thm2}
For $\g$ of type $A_n, B_n$ or $C_n$, and a surface $\Sigma$ of genus $g\geq 2$, the moduli space $\hat{\mathcal{T}}_{\g}$ is a contractible manifold of complex dimension $(g-1)\dim \g$. 
Further, along the copy of Teichmüller space from Proposition \ref{teich-copy}, the cotangent bundle at any point $I$ is given by 
$$T^*_{I}\hat{\mathcal{T}}_{\g} = \bigoplus_{m=1}^{r} H^0(K^{m_i+1})$$ where $(m_1,...,m_r)$ are the exponents of $\g$ and $r=\rk \g$ denotes the rank of $\g$.

\noindent For type $D_n$, the moduli space $\hat{\mathcal{T}}_{\g}$ is a contractible topological space. The locus where the zero-set of the higher Beltrami differential $\sigma_n$ is a discrete set on $\S$ is a smooth manifold with the same properties as above (dimension, cotangent space), with the only difference that we have to take another copy of Teichmüller space (not the one from Proposition \ref{teich-copy}).
\end{thm}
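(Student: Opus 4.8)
The plan is to establish the three asserted properties of $\hat{\mc{T}}_{\g}$ — manifold structure with the stated dimension, contractibility, and the cotangent-space description along the Teichm\"uller locus — in that order, treating the classical types $A_n$, $B_n$, $C_n$ uniformly and the type $D_n$ case with the extra care forced by the invariant $\sigma_n$ found in Theorem \ref{thm1}.

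\textbf{Step 1: Parametrizing $\g$-complex structures by higher Beltrami differentials.} First I would use Proposition \ref{paramit} together with the case-by-case analysis of Section \ref{classical-case} to identify, at each point $z\in\S$, the space $\Hilb^{reg}_0(\g)$ with the affine space of higher Beltrami coefficients (of dimension $\rk\g$ by Corollary \ref{regzerohilb}), and Equation \eqref{naturemu} to record that $\mu_i$ is a section of $K^{1-i}\otimes\bar K$ (and $\sigma_n\in\Gamma(K^{1-n}\otimes\bar K)$ for $D_n$). Thus a $\g$-complex structure is a tuple $(\mu_2,\ldots)$ of such sections with $\mu_2\bar\mu_2<1$; the space of all of these is an open convex subset of an infinite-dimensional Fr\'echet (or Sobolev-completed Banach) space, hence contractible. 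The group $\Symp(\g,\S)$ of higher diffeomorphisms acts; one shows its identity component acts freely enough after fixing the complex structure, using the injectivity statement in Proposition \ref{teich-copy} and the local triviality Theorem \ref{thm1}.

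\textbf{Step 2: Slice theorem and the manifold/dimension statement.} The standard approach is a Teichm\"uller-theoretic slice argument: fix a reference complex structure $J_0$ (a point of Teichm\"uller space), and over a neighborhood in Teichm\"uller space use the induced-complex-structure map of Proposition \ref{inducedcomplex} as a fibration $\hat{\mc{T}}_{\g}\to\mc{T}(\S)$. The fiber over $J_0$ consists of $\g$-complex structures inducing $J_0$, modulo higher diffeomorphisms fixing $J_0$; by Theorem \ref{thm1} every such structure is locally trivial, so the fiber is the quotient of the global sections of the bundle of "traceless" higher Beltrami data by the image of the $\bar\partial$-type operator coming from the Hamiltonian action computed in Subsection \ref{actiondiff}. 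Via Dolbeault/Riemann--Roch this cokernel is $\bigoplus_{i=2}^{r}H^0(K^{m_i+1})$, which for $g\geq 2$ and each exponent $m_i\geq 1$ has dimension $(2m_i+1)(g-1)$; summing over exponents and adding the $3g-3$ from the Teichm\"uller base and using $\sum_i(2m_i+1)=\dim\g$ (since $\sum m_i=\tfrac12(\dim\g-r)$) gives total complex dimension $(g-1)\dim\g$. Local triviality of the fibration plus smoothness of both base and fiber gives that $\hat{\mc{T}}_{\g}$ is a complex manifold of that dimension. Contractibility then follows: the base $\mc{T}(\S)$ is contractible (a cell), each fiber is a complex vector space (hence contractible), and the fibration is locally trivial, so a deformation retraction of the base lifts to one of the total space.

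\textbf{Step 3: The cotangent space along Teichm\"uller space and the $D_n$ modifications.} Along the copy of Teichm\"uller space of Proposition \ref{teich-copy} (where all higher Beltrami coefficients except $\mu_2$ vanish), the fibration above is (infinitesimally) split, and the cotangent space is the dual of the tangent space, which is $T_J\mc{T}(\S)\oplus(\text{fiber})$; Serre duality identifies $T^*_J\mc{T}(\S)=H^0(K^2)$ and the fiber directions dualize to $\bigoplus_{i=2}^{r}H^0(K^{m_i+1})$, and since $m_1=1$ these combine to $\bigoplus_{i=1}^{r}H^0(K^{m_i+1})$ as claimed — the appearance of the Hitchin base. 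For type $D_n$, Theorem \ref{thm1} shows $\sigma_n$ has a diffeomorphism-invariant zero locus, so the naive quotient is not a manifold; one restricts to the open locus where the zero set of $\sigma_n$ is discrete (equivalently $\sigma_n\not\equiv 0$), on which the trivialization argument of Theorem \ref{thm1} still applies, yielding the manifold structure and the same dimension count (note $\sigma_n\in\Gamma(K^{1-n}\otimes\bar K)$ replaces $\mu_{2n}$ in the count, but $\sigma_n^2$ has the weight of $\nu_{2n-2}$, keeping the dimension unchanged). The subtlety there is that the point $[(f,0)]$, where the idealic map fails to extend continuously (Example \ref{idealnotcont}), forces one to use a \emph{different} copy of Teichm\"uller space — namely one sitting inside the $\sigma_n\neq 0$ locus rather than the $\sigma_n\equiv 0$ locus — to get a smooth slice, and contractibility of the full (non-manifold) $\hat{\mc{T}}_{\g}$ is proved directly by the scaling retraction $(\mu_2,\ldots,\mu_{2n-2},\sigma_n)\mapsto(t\mu_2,\ldots,t\mu_{2n-2},t\sigma_n)$ combined with the Teichm\"uller retraction on $\mu_2$.

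\textbf{Main obstacle.} The hard part will be the slice theorem in the presence of the infinite-dimensional group $\Symp(\g,\S)$: one must show that the action is proper with local slices, so that the quotient is Hausdorff and locally Euclidean, and that the linearization (the Hamiltonian action computed in Subsection \ref{actiondiff}) has closed range with the cohomology computed above — this requires the elliptic theory behind the $\bar\partial$-operator and care that the "bump function" truncation in the proof of Theorem \ref{thm1} globalizes, i.e. that local triviality patches to a global normal form over any compact $\S$. For $D_n$ the additional obstacle is controlling the boundary behavior near the stratum $\sigma_n\equiv 0$, which is precisely why only the open locus is claimed to be a manifold.
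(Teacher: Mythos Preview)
Your proposal is correct in outline and arrives at the same conclusions, but the route differs in emphasis from the paper's argument. The paper does not set up an explicit fibration $\hat{\mc{T}}_\g\to\mc{T}(\S)$ with a slice theorem; instead it works directly at a point of the Teichm\"uller locus and computes the infinitesimal Hamiltonian action from Subsection~\ref{actiondiff}: at such a point one has $\delta\mu_{2i}=\bar\partial w_{2i-1}$ (and for $D_n$ also $\delta\sigma_n=\bar\partial(w_-\sigma_n)$), and the cotangent space is read off as the annihilator under the integration pairing between $\Gamma(K^{1-i}\otimes\bar K)$ and $\Gamma(K^i)$. This immediately gives $H^0(K^{m_i+1})$ in each slot, and the dimension formula follows from Riemann--Roch and $\sum_i(2m_i+1)=\dim\g$. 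All of the analytic content you flag as the ``main obstacle'' (properness, closed range, Hausdorffness) is not argued anew but simply inherited from the $A_n$ case treated in \cite{FockThomas}.

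For $D_n$ you have the right idea but miss the precise reason discreteness of the zero set of $\sigma_n$ enters. In the paper's pairing computation, the condition $\int t_n\,\bar\partial(w_-\sigma_n)=0$ for all $w_-$ becomes after integration by parts $\int (\bar\partial t_n)\, w_-\sigma_n=0$ for all $w_-$, hence $\sigma_n\bar\partial t_n=0$; this forces $\bar\partial t_n=0$ \emph{only} because $\sigma_n$ vanishes on a set of measure zero. That is the exact mechanism producing $H^0(K^n)$ in the $\sigma_n$-direction, and it explains concretely why the alternative Teichm\"uller copy (with $\sigma_n$ a fixed section with isolated zeros) is needed rather than the one from Proposition~\ref{teich-copy}, where $\sigma_n\equiv 0$ and the argument collapses. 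Your remark that ``$\sigma_n^2$ has the weight of $\nu_{2n-2}$, keeping the dimension unchanged'' is not the right bookkeeping: the exponent $n-1$ for $D_n$ gives the summand $H^0(K^n)$ directly as the dual of the $\sigma_n$-variations.
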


Notice that the differentials in $H^0(K^{m_i+1})$ are holomorphic with respect to the complex structure induced from the $\g$-complex structure (see Proposition \ref{inducedcomplex}).

For the case $D_n$, we conjecture that the moduli space $\hat{\mathcal{T}}_{\g}$ is a topological manifold everywhere. The points where the zero-set of $\sigma_n$ is not discrete can have a cotangent space which is strictly bigger than the space of holomorphic differentials. One can think for example of the curve in $\R^2$ given by $t\mapsto (t^3,t^2)$, shown in Figure \ref{curvett2}, which has a cusp at the origin, but is still a topological manifold.

\vspace*{0.3cm}
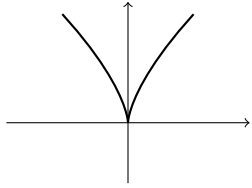
\begin{figure}[h!]
\begin{center}
\begin{tikzpicture}[scale=4]
	\draw[->, thin] (0,-0.2)--(0,0.4);
	\draw[->, thin] (-0.4,0)--(0.4,0);
\draw [domain=-0.6:0.6, thick] plot (\x*\x*\x, \x*\x);
\end{tikzpicture}
\caption{Curve with cusp}
\label{curvett2}
\end{center}
\end{figure}

\begin{proof}
The case for $A_n$ has been treated in Theorem 2 of \cite{FockThomas}. The cases $B_n$ and $C_n$ are exactly analogous:

One shows that at every point, the cotangent space exists. From this follows that $\hat{\mc{T}}_{\g}$ is a manifold.
We have to check the appearance of the exponents of the Lie algebra. Since $\mu_{2i}$ is a section of $K^{1-2i}\otimes \bar{K}$ (see Equation \eqref{naturemu}) its dual $t_{2i}$ is a section of $K^{2i}$. Since the exponents for $B_n$ and $C_n$ are the same and equal to $(1,3,...,2n-1)$, we get the desired form stated in the theorem.

For $\g$ of type $D_n$ we consider the subset on $\hat{\mathcal{T}}_{\g}$ where the zero-locus of $\sigma_n$ is discrete in $\S$. Note that the copy of Teichmüller space given by Proposition \ref{teich-copy} has $\sigma_n(z)=0$ for all $z\in \S$. Instead, we consider an injection $\T^2 \hookrightarrow \hat{\mc{T}}_\g$ given by $\mu_{2i}(z)=0$ for all $i=2,...,n-1$ and $\sigma_n(z)$ a fixed smooth section vanishing on a finite number of points.

Along this part, we know that the variation of $\mu_{2i}$ under a higher diffeomorphism generated by $H=w_{-}\bar{p}+\sum_{k=0}^{n-2}w_{2k+1}p^{2k+1}$ is given by $\delta \mu_{2i}=\bar{\partial}w_{2i-1}$ and Equation \eqref{varsigma} gives $\delta \sigma_n=\bar{\partial}(w_{-}\sigma_n)$. The variation of $\mu_{2i}$ is the same as in the case of type $A_n$, so we know that these contribute to the cotangent bundle by a term $H^0(K^{2i})$. For the term $\sigma_n$ we use the pairing between differential of type $(1-n,1)$ and of type $(n,0)$ given by integration over the surface. We get 
\begin{align*}
(\{\delta\sigma_n\} / \bar{\partial}(w_{-}\sigma_n))^* &=  \{t_n \in \Gamma(K^n) \mid \textstyle\int t_n \bar{\partial}(w_{-}\sigma_n) = 0 \; \forall \, w_{-} \in \Gamma(\bar{K}) \} \\
&=  \{t_n \in \Gamma(K^n) \mid \textstyle\int \bar{\partial}t_n w_{-}\sigma_n = 0 \; \forall \, w_{-} \in \Gamma(\bar{K}) \} \\
&= \{t_n \in \Gamma(K^n) \mid \bar{\partial}t_n = 0 \} \\
&= H^0(K^n)
\end{align*}
where we used that $\sigma_n$ vanishes only on a discrete set.

Hence the cotangent bundle is given by $$T^*_I\hat{\mathcal{T}}_{\g} = \bigoplus_{m=1}^{n-1}H^0(K^{2m})\oplus H^0(K^n).$$
The exponents of $\mf{so}_{2n}$ are precisely $(1,3,...,2n-3,n-1)$, so the cotangent bundle is of the form stated in the theorem.

For the dimension of $\hat{\mathcal{T}}_{\g}$, we use $\dim H^0(K^{m_i+1})=(g-1)(2m_i+1)$ by Riemann-Roch (using $g \geq 2$). We get $$\dim \hat{\mc{T}}_{\g}=(g-1)\sum_{i=1}^r (2m_i+1)=(g-1)\dim \g$$ using a well-known formula coming from the decomposition of $\g$ as $\mf{sl}_2$-module using the principal $\mf{sl}_2$-triple.

Contractibility for all types is analogous to the case $A_n$.
\end{proof}

From the previous theorem, we see that our moduli space $\hat{\mc{T}}_{\g}$ shares a lot of properties with the $G$-Hitchin component, in particular the dimension, contractibility and the copy of Teichmüller space.
For $G$-Hitchin components, this copy of Teichmüller space can be described as follows: any representation of the $G$-Hitchin component is a deformation of a representation of the form $$\pi_1(\Sigma) \rightarrow PSL_2(\R) \rightarrow G$$ where the first map is a Fuchsian representation and the second one is the principal map. These maps form a copy of $\T^2$. Note that in both situations, Hitchin component and $\hat{\mc{T}}_\g$, the copy of Teichmüller space is constructed using the principal map.

Of course, we conjecture the equivalence of Hitchin's component and the moduli space of $\g$-complex structures:
\begin{conj}\label{conjmaj}
The moduli space $\hat{\mathcal{T}}_{\g}\Sigma$ is canonically homeomorphic to Hitchin's component in the character variety $\Hom(\pi_1(\Sigma),G)/G$ where $G$ is the real split Lie group associated to $\g$.
\end{conj}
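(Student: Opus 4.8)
Although this conjecture is open, I can describe the program I would follow, which extends the picture sketched in the Perspectives above and the partial results of \cite{Thomas} for $\g=\mf{sl}_n$. The plan is to build, out of a point of $T^*\hat{\mc{T}}_\g$, a flat $G^\C$-connection on $\S$, imitating Hitchin's construction in \cite{Hit.1} but without fixing a complex structure. Starting from a $\g$-complex structure in the normal form of Proposition \ref{paramit} and Corollary \ref{regzerohilb} --- $\Phi_1=f$ principal nilpotent, $\Phi_2\in Z(f)$ nilpotent --- I would deform $f$ inside the principal slice $f+Z(e)$ using a tuple of holomorphic differentials $(t_{m_1+1},\dots,t_{m_r+1})\in\bigoplus_i H^0(K^{m_i+1})$, i.e.\ a point of the Hitchin base (Theorem \ref{thm2}), and form the family $\mc{A}(\l)=\l\,\Phi+A+\l^{-1}\Phi^*$ with $\Phi=\Phi_1\,dz+\Phi_2\,d\bar z$. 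The identity $\Phi\wedge\Phi=0$ is automatic from the $\g$-Hilbert scheme constraint; the remaining flatness of $\mc{A}(\l)$ for all $\l$ amounts to a nonlinear elliptic system for a metric together with the connection form $A$ --- the analogue of the self-duality equation --- and solving it is the analytic core.

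Granting solvability, I would first identify the resulting object: the total space $\mc{M}$ of solutions should carry a hyperkähler structure with $(\mc{M},I)\cong T^*\hat{\mc{T}}_\g$ and $(\mc{M},J)\cong(\mc{M},K)\cong\Hom(\pi_1(\S),G^\C)/G^\C$, the analogue of Hitchin's fibration being the projection $\pi:T^*\hat{\mc{T}}_\g\to\hat{\mc{T}}_\g$. The conjectured homeomorphism is then the non-abelian Hodge map restricted to the zero section of $\pi$, namely $\hat{\mc{T}}_\g\hookrightarrow T^*\hat{\mc{T}}_\g\to\Hom(\pi_1(\S),G^\C)/G^\C$; for $\g=\mf{sl}_2$ this specialises to the identification of Teichmüller space with the $\PSL_2(\R)$-Fuchsian locus. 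Next I would show the image lands in the $G$-Hitchin component: at the distinguished point --- a $\g$-complex structure induced by a Fuchsian complex structure, with all differentials zero --- the connection is that of a Fuchsian representation postcomposed with the principal map $\mf{sl}_2\to\g$, which lies in the Hitchin component by Proposition \ref{teich-copy}; since $\hat{\mc{T}}_\g$ is connected (Theorem \ref{thm2}) and the Hitchin component is the connected component of that point, the claim follows once one proves the \emph{reality} statement --- that every connection in the image has monodromy conjugate into the split real form $G$ --- which should come from identifying the relevant anti-holomorphic involution on $\mc{M}$ with the one cutting out $G$-monodromy.

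It would then remain to upgrade this to a homeomorphism. For injectivity, from a flat $G^\C$-connection in the image I would recover the underlying complex structure and the $\g$-complex structure by a degeneration argument as $\l\to 0$ (passing to the associated Higgs-type object and extracting its nilpotent part via the map $\mu$ of Equation \eqref{muintro}), and the differentials from the spectral data of \cite{Hit3} read off through the Chow map. For surjectivity and the covering-map conclusion: by Theorem \ref{thm2} the manifold $\hat{\mc{T}}_\g$ (for type $D_n$, its smooth locus) and the Hitchin component have the same real dimension $(2g-2)\dim\g$; the implicit function theorem applied to the self-duality system makes the map a local homeomorphism; a priori estimates on solutions --- controlling degeneration of the $\g$-complex structure by degeneration of the monodromy --- make it proper; and a proper local homeomorphism between connected manifolds of equal dimension with non-empty image is a covering, hence a homeomorphism since the Hitchin component is simply connected. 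Finally I would check that the map descends to the quotient: a higher diffeomorphism of type $\g$ acts on $\Phi$ by a transformation which, after re-solving the self-duality equation, changes $\mc{A}(\l)$ only by an honest $G^\C$-gauge transformation, so the monodromy conjugacy class is unchanged.

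The hard part will be making the non-abelian Hodge correspondence rigorous \emph{without a fixed complex structure on $\S$}: here the complex structure is part of the unknown --- it is $\mu$ applied to the $\g$-complex structure, and is only defined up to higher diffeomorphism --- so the standard elliptic theory cannot be quoted directly. One route is to use the local triviality of $\g$-complex structures (Theorem \ref{thm1}) to gauge-fix the $d\bar z$-component and reduce to a controlled perturbation of Hitchin's equation; another is to develop the analysis intrinsically on $T^*\hat{\mc{T}}_\g$. Closely related, and equally delicate, is the precise reality statement --- that the monodromy lies in the \emph{split} real form and not merely in some real form --- which for $\g=\mf{sl}_n$ is among the steps carried out in \cite{Thomas} and which in general will require a careful comparison of the involutions on the twistor family. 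Finally, pinning down the precise canonical map asserted in the statement is itself part of the program rather than an afterthought.
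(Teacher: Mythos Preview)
The statement is a conjecture; the paper offers no proof, only the heuristic picture in the \emph{Perspectives} paragraph of the introduction (a hyperk\"ahler manifold $\mc{M}$ with $(\mc{M},I)\cong T^*\hat{\mc{T}}_\g$ and $(\mc{M},J)$ the space of flat $G^\C$-connections, the projection $\pi:T^*\hat{\mc{T}}_\g\to\hat{\mc{T}}_\g$ playing the role of the Hitchin fibration, and the need for an analogue of non-abelian Hodge together with a reality statement for the monodromy). Your proposal is not a proof either---and you say so up front---but a program, and that program is essentially the one the paper itself sketches, fleshed out with a concrete strategy for the homeomorphism step (connectedness to land in the right component, a degeneration $\l\to 0$ for injectivity, proper local homeomorphism plus simple connectivity for surjectivity, and gauge-invariance under higher diffeomorphisms for well-definedness). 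You also correctly isolate the two genuinely hard analytic points: running non-abelian Hodge when the complex structure is itself part of the data, and pinning down the split-real involution.

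One point worth tightening: in your first paragraph you ``deform $f$ inside the principal slice $f+Z(e)$ using the holomorphic differentials'' and then still write $\Phi=\Phi_1\,dz+\Phi_2\,d\bar z$ with $\Phi_2\in Z(f)$ nilpotent. Once $\Phi_1$ leaves $f$ for a generic point of $f+Z(e)$, the pair $(\Phi_1,\Phi_2)$ is no longer in the zero-fiber $\Hilb^{reg}_0(\g)$ but in $\Hilb^{reg}(\g)$; this is exactly the passage from $\hat{\mc{T}}_\g$ to $T^*\hat{\mc{T}}_\g$ implicit in Section~\ref{spectralcurve}, but the way you phrase it momentarily conflates the $\g$-complex structure itself with a cotangent vector to it. This is cosmetic rather than a gap, but since the whole architecture rests on keeping these two layers distinct, it is worth stating cleanly which object lives where before writing down $\mc{A}(\l)$.
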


\subsection{Spectral curve}\label{spectralcurve}

In this part, we construct a spectral curve in $T^{*\C}\Sigma$ associated to a cotangent vector to $\hat{\mathcal{T}}_{\g}$, i.e. a $\g$-complex structure and a set of holomorphic differentials. 

The case for $\g$ of type $A_n$ was treated in \cite{FockThomas}, Section 4. In that paper, we proved that the zero-fiber $\Hilb^n_0(\C^2)$ is Lagrangian in the reduced Hilbert scheme $\Hilb^n_{red}(\C^2)$. This stays true for all classical $\g$:
\begin{prop}
The regular zero-fiber $\Hilb^{reg}_0(\g)$ is a Lagrangian subspace of $\Hilb^{reg}(\g)$ for classical $\g$.
\end{prop}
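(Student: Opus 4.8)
The plan is to exhibit $\Hilb^{reg}(\g)$ (for classical $\g$) as an open subset of a symplectic space and then compute that $\Hilb^{reg}_0(\g)$ sits inside it as a Lagrangian. First I would fix a principal slice $f + Z(e)$ and use Proposition \ref{paramit} to coordinatize $\Hilb^{reg}(\g)$: every class with $A$ regular is uniquely represented by $(A \in f+Z(e), B \in Z(A))$. The base $f+Z(e)$ is parametrized by the invariants $t_{m_i+1}$ (the coefficients of the characteristic polynomial, of degrees $m_i+1$), and the fiber $Z(A)$ is parametrized by the higher Beltrami coefficients $\mu_{m_i+1}$ (for $A_n,B_n,C_n$; for $D_n$ one replaces the top pair by $(\mu_{2n-2},\sigma_n)$ or works with $\tau_n$ in the slice as in Subsection \ref{Dn}). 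So $\Hilb^{reg}(\g)$ is a rank-$r$ affine bundle over the affine space $f+Z(e)$, of total dimension $2r = 2\rk\g$, and the zero-fiber is the section $t_{m_i+1} = 0$, which is visibly of dimension $r$ — half the total. The real content is to produce the symplectic form making this half-dimensional subspace Lagrangian.

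The symplectic structure I would use is the one coming from the idealic viewpoint: via the idealic map of Subsection \ref{idealic}, $\Hilb^{reg}(\g)$ embeds (for $A_n,B_n,C_n$ injectively; for $D_n$ generically two-to-one) into a $W$-invariant locus of the punctual Hilbert scheme $\Hilb^m(\C^2)$, and the latter carries its standard holomorphic symplectic form — in Haiman's coordinates this is the form discussed in Appendix \ref{haimancoords}. Pulling this form back gives a symplectic (or at least closed non-degenerate, off the $D_n$ branch locus) form $\omega$ on $\Hilb^{reg}(\g)$. In the coordinates $(t_{m_i+1}, \mu_{m_i+1})$ the claim is that $\omega = \sum_i d t_{m_i+1} \wedge d\mu_{m_i+1}$ up to an invertible change (i.e. these are Darboux-type coordinates), so that the zero-fiber $\{t_\bullet = 0\}$ is exactly a coordinate Lagrangian. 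This is the direct analogue of the $A_n$ computation in \cite{FockThomas}, Section 4, where $\Hilb^n_0(\C^2)$ is shown Lagrangian in $\Hilb^n_{red}(\C^2)$, and I would import that argument verbatim for $\mf{sl}_n$ and then restrict: for $B_n$ and $C_n$ the standard representation realizes $\Hilb^{reg}(\g)$ as the $(-\id)$-fixed (equivalently, odd-polynomial) sublocus inside $\Hilb^{reg}(\mf{sl}_m)$, and the fixed locus of a symplectic involution is itself symplectic, with the zero-fiber being the intersection with the ambient Lagrangian zero-fiber — hence Lagrangian in it for dimension reasons once one checks the fixed locus has the expected dimension $2r$.

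The main obstacle is the $D_n$ case, exactly as everywhere else in the paper: the idealic map is not injective and not defined continuously at the non-cyclic locus, and $\Hilb^{reg}(\mf{so}_{2n})$ is not simply the fixed locus of a single involution inside $\Hilb^{reg}(\mf{sl}_{2n})$ — the top Beltrami differential $\sigma_n$ with $\sigma_n^2 \sim \nu_{2n-2}$ introduces a square-root phenomenon. Here I would argue directly in the coordinates of Subsection \ref{Dn}: compute the restriction of the Haiman symplectic form to the chart indexed by the partition $[2n-1,1]$ using the explicit generators $\langle p^{2n-1},\, p\bar p = Q(p),\, \bar p^2 = R(p)\rangle$ and the coordinate set $(u_2,\dots,u_{2n-2},u,v_2,\dots,v_{2n-2},w_{2n-2})$ together with the chart-change formulas already given, verify non-degeneracy off the branch locus $\{\tau_n = 0\}$, and check that the pairing of $d t_{m_i+1}$ with $d\mu_{m_i+1}$ (and $d(\tau_n^2)$ with $d\sigma_n^{-1}$-type variables, i.e. $d\tau_n^2 \wedge d\mu_{2n}$) is the standard one, so that $\{t_\bullet = 0, \tau_n = 0\}$ — which is $\Hilb^{reg}_0(\mf{so}_{2n})$ — is isotropic of dimension $r$, hence Lagrangian. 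The only subtlety is that this identifies $\Hilb^{reg}_0$ as Lagrangian in the smooth locus $\{\tau_n \neq 0\}$ only after passing to its closure; since Lagrangianity is a closed condition and $\Hilb^{reg}_0(\g)$ is an affine space (Corollary \ref{regzerohilb}) contained in that closure, this suffices.
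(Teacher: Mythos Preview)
Your approach is essentially the same as the paper's: both embed $\Hilb^{reg}(\g)$ into $\Hilb^m_{red}(\C^2)$ via the standard representation (equivalently the idealic map), pull back the symplectic form, and verify Lagrangianity in explicit coordinates coming from the parametrization of Proposition \ref{paramit}. The paper's proof is very terse (it says only ``we can explicitly check''), so your proposal is effectively a fleshed-out version of the same argument; in particular your careful treatment of the $D_n$ branch locus and the closure argument goes beyond what the paper spells out.
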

\begin{proof}
Since we are in the regular part, Proposition \ref{paramit} gives a parametrization. For classical $\g$, via the standard representation we can consider $\Hilb^{reg}(\g)$ as subset of $\Hilb^m_{red}(\C^2)$ which remains symplectic and we can explicitly check that the zero-fiber $\Hilb^{reg}_0(\g)$ is Lagrangian.
\end{proof}

For general $\g$, we conjecture the following, based on Conjecture \ref{git-conj}:
\begin{conj}
The conjectural smooth version of the $\g$-Hilbert scheme is symplectic and the zero-fiber is a Lagrangian subspace.
\end{conj}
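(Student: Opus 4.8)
The plan is the following. Fix the chart of $\Hilb^{reg}(\g)$ in which $A$ is regular; by Proposition~\ref{paramit} it is the total space of the vector bundle $A\mapsto Z(A)$ over the affine principal slice $f+Z(e)$ (all of whose elements are regular), hence a smooth manifold of complex dimension $2\rk\g$, and by Corollary~\ref{regzerohilb} it contains $\Hilb^{reg}_0(\g)=\{(f,B)\mid B\in Z(f)\}$, of dimension $\rk\g=\tfrac12\dim\Hilb^{reg}(\g)$ (the chart where $B$ is regular is symmetric). So once a symplectic form is in place it suffices to show that $\Hilb^{reg}_0(\g)$ is isotropic. For the form I would take the $2$-form $\omega_\g$ obtained by descending $\langle dA\wedge dB\rangle$ from $\g\times\g$ to $\Hilb(\g)$, where $\langle\cdot,\cdot\rangle$ is the trace form of the standard representation (a multiple of the Killing form); this descends because $\Hilb(\g)$ is the locus of regular points of the commuting variety modulo $G$, and $\langle dA\wedge dB\rangle$ is closed and $G$-basic on $\Comm(\g)$ by a routine check. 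For $\g=\mf{sl}_m$ this $\omega_\g$ is the restriction to $\Hilb^{reg}(\mf{sl}_m)\subset\Hilb^m_{red}(\C^2)$ of the canonical symplectic form of the matrix model of Appendix~\ref{appendix:A}, equivalently of Haiman's expression of Appendix~\ref{haimancoords}; this is the case $A_n$, already treated in \cite{FockThomas}. For $\g$ of type $B_n$ or $C_n$ the (here injective) idealic map identifies $\Hilb^{reg}(\g)$ with an open subset of the fixed locus of the symplectic involution $-\id$ acting on $\Hilb^m_{red}(\C^2)$, so $\omega_\g$ is again the restriction of the Hilbert scheme form, and in particular non-degenerate, fixed loci of symplectic involutions being symplectic.

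Next come two local verifications, uniform in the type. Written in the slice representative, a tangent vector to $\Hilb^{reg}(\g)$ at $[(A,B)]$ has its $A$-component in $T_A(f+Z(e))=Z(e)$ and its $B$-component in $Z(A)$, and from the shape of $\langle dA\wedge dB\rangle$ one gets $\omega_\g((\dot A_1,\dot B_1),(\dot A_2,\dot B_2))=\langle\dot A_1,\dot B_2\rangle-\langle\dot A_2,\dot B_1\rangle$. \emph{Isotropy of the zero-fiber} is then immediate: along $\Hilb^{reg}_0(\g)$ the $A$-component is pinned at $f$, so every tangent vector has $\dot A=0$ and $\omega_\g$ vanishes identically there; with the dimension count this is exactly the Lagrangian property once non-degeneracy is known. \emph{Non-degeneracy} also drops out of the slice: if $(\dot A_1,\dot B_1)$ lies in the radical, testing against vectors with $\dot A_2=0$ forces $\dot A_1\perp Z(A)$, hence $\dot A_1\in Z(e)\cap\mathrm{im}(\ad_A)=\{0\}$ by the transversality $\g=Z(e)\oplus\mathrm{im}(\ad_A)$ of the slice (Appendix~\ref{appendix:B}); the tangency constraint then gives $\dot B_1\in Z(A)$, and testing against all $\dot A_2\in Z(e)$ forces $\dot B_1\in Z(e)^{\perp}$, so $\dot B_1\in Z(A)\cap Z(e)^{\perp}=(\mathrm{im}(\ad_A)+Z(e))^{\perp}=\{0\}$ by the same transversality. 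This establishes the proposition for types $A_n$, $B_n$ and $C_n$.

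The genuine obstacle is type $D_n$. There the image of a regular pair under the standard representation need not be cyclic — already $[(f,0)]\in\Hilb(\mf{so}_{2n})$ is regular but not cyclic (Example~\ref{regnotcyclic}) — so $\Hilb^{reg}(\mf{so}_{2n})$ does not embed in $\Hilb^{2n}_{red}(\C^2)$ through the matrix model, and the idealic map into the space of ideals is only generically $2$-to-$1$, branched along $\{\sigma_n=0\}$ on the zero-fiber (and along $\{\tau_n=0\}$ in general). Hence the naive pullback of the Hilbert scheme symplectic form degenerates along this branch locus and cannot be the structure we want; instead one takes $\omega_\g=\langle dA\wedge dB\rangle$ — the trace form on $\mf{so}_{2n}$, which is defined on $\Hilb(\mf{so}_{2n})$ irrespective of cyclicity — as the symplectic structure, checks that it agrees with the pulled-back form on the dense cyclic locus, and verifies in Haiman's coordinates on the chart indexed by the partition $[2n-1,1]$ (Subsection~\ref{Dn}) that $\omega_\g$ extends across $\{\sigma_n=0\}$ as a non-degenerate form. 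This last point — a short but real computation in those coordinates — is where I expect essentially all of the work to be; once it is done, isotropy of $\Hilb^{reg}_0(\mf{so}_{2n})$ follows from the same $\dot A=0$ observation (or by continuity from the cyclic locus, where it is immediate), completing the case $D_n$.
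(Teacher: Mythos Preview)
The statement you were given is a \emph{conjecture}, not a proposition or theorem, and the paper does not prove it. It concerns a ``conjectural smooth version'' of $\Hilb(\g)$ whose very existence is Conjecture~\ref{git-conj}. The only argument the paper offers is a one-line heuristic immediately following the conjecture: assuming Conjecture~\ref{conj1} (that the modified $\g$-Hilbert scheme is a minimal resolution of $\h^2/W$), the canonical symplectic structure on $\h^2=T^*\h$ is $W$-invariant and hence lifts to the resolution. Nothing is said about the Lagrangian part.

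What you have actually written is a proof of the \emph{preceding} Proposition --- that $\Hilb^{reg}_0(\g)$ is Lagrangian in $\Hilb^{reg}(\g)$ for classical $\g$ --- and your argument is considerably more careful than the paper's. The paper's proof of that Proposition is two sentences: embed $\Hilb^{reg}(\g)$ into $\Hilb^m_{red}(\C^2)$ via the standard representation, restrict the symplectic form, and ``explicitly check'' the zero-fiber is Lagrangian. Your approach is genuinely different: you construct $\omega_\g$ intrinsically by descending $\langle dA\wedge dB\rangle$ and verify non-degeneracy directly on the slice using the transversality $\g=Z(e)\oplus\mathrm{im}(\ad_A)$, which is type-independent and does not rely on any embedding. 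This buys you a uniform argument and, more importantly, you correctly identify that the paper's embedding argument is problematic for $D_n$: since $[(f,0)]$ is regular but not cyclic (Example~\ref{regnotcyclic}), $\Hilb^{reg}(\mf{so}_{2n})$ does \emph{not} sit inside $\Hilb^{2n}_{red}(\C^2)$, so the paper's ``restrict the symplectic form'' step is not literally available there. Your intrinsic trace-form construction sidesteps this, and your slice computation of non-degeneracy goes through for $D_n$ without needing the Haiman-coordinate check you anticipate.

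In short: there is no proof in the paper to compare against for the conjecture as stated; for the proposition you are implicitly targeting, your argument is more robust than the paper's and patches a gap the paper leaves in type $D_n$.
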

If we assume Conjecture \ref{conj1} true, stating that the modified version of the $\g$-Hilbert scheme is a minimal resolution of $\h^2/W$, we get a symplectic structure. Indeed $\h^2=T^*\h$ has a canonical symplectic structure, which is invariant under the action of $W$. Hence it lifts to the minimal resolution.

\bigskip
Now we construct the spectral curve. First, we look at $\g$ of type $A_n$, $B_n$ or $C_n$.
We can write a cotangent vector in $T^*\hat{\mc{T}}_{\g}$ as an equivalence class of higher Beltrami differentials $\mu_i$ and holomorphic differentials $t_i$. To write in a uniform way, set $\mu_i$ or $t_i$ to 0 whenever it does not appear for $\g$. For example for type $B_n$ or $C_n$ all variables with odd index are 0.

Associate polynomials $P(p)=p^m+\sum_i t_ip^{m-i}$ and $Q(p, \bar{p})=-\bar{p}+\sum_i \mu_ip^{i-1}$ (where $m$ is the dimension of the standard representation of $\g$). Put $I=\langle P, Q \rangle$. Define the \textbf{spectral curve} $\tilde{\Sigma} \subset T^{*\C}\Sigma$ by the zero set of $P$ and $Q$. It is a ramified cover over $\Sigma$ with $m$ sheets. 

For $\g$ of type $D_n$, a generic point in the cotangent bundle $T^*\hat{\mc{T}}_{\g}$ corresponds to the ideal 
$$I=\langle p^{2n}+t_2p^{2n-2}+...+t_{2n-2}p^2+\tau_n^2, -\bar{p}+\mu_2p+...+\mu_{2n}p^{2n-1} \rangle$$ which can be seen as a special case of $A_n$. Thus we can proceed as above. In the case where $\tau_n = 0$ we have seen in \ref{Dn} that the ideal changes to an ideal with three generators. The zero-set of these generators still define a spectral curve in $T^{*\C}\Sigma$. It is the limit of the curve when $\tau_n \rightarrow 0$.

\begin{prop}
The spectral curve $\tilde{\Sigma}$ is Lagrangian to order 1 in the holomorphic differentials $t$.
\end{prop}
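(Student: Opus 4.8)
The plan is to work in the idealic viewpoint and pull back the canonical symplectic form of $T^{*\C}\Sigma$ to the spectral curve, showing that it vanishes modulo terms of order $\geq 2$ in the holomorphic differentials $t$. Recall that on $T^{*\C}\Sigma$ the tautological symplectic form is $\omega = dp\wedge dz + d\bar p \wedge d\bar z$, and that the spectral curve $\tilde\Sigma$ is cut out by $P(p,z,\bar z)=0$ and $Q(p,\bar p,z,\bar z)=0$ with $P = p^m + \sum_i t_i(z) p^{m-i}$ and $Q = -\bar p + \sum_i \mu_i(z,\bar z) p^{i-1}$. First I would note that along the zero section (i.e.\ at $t=0$), $\tilde\Sigma$ degenerates to $m$ copies of $\Sigma$ sitting inside $\Sigma\subset T^{*\C}\Sigma$, on which $\omega$ clearly vanishes; so the statement is really an infinitesimal (first-order) assertion about how $\tilde\Sigma$ moves away from the zero section as $t$ is turned on.

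The key computation is to parametrize $\tilde\Sigma$, to first order in $t$, by $\Sigma$ itself: on each sheet labelled by a root $p = p^{(k)}(z,\bar z)$ of $P$ and the compatible value of $\bar p$ from $Q$, one has $p^{(k)} = \zeta_k + O(t)$ where $\zeta_k$ ranges over the $m$-th roots of the relevant monomial (so $\zeta_k = 0$ at $t=0$), and more precisely $p^{(k)}$ is determined implicitly by $P(p^{(k)},z,\bar z)=0$. I would then compute the pullback of $\omega$ via $\frac{\partial p^{(k)}}{\partial z}$, $\frac{\partial p^{(k)}}{\partial \bar z}$ and the analogous derivatives of $\bar p^{(k)} = \sum_i \mu_i (p^{(k)})^{i-1}$, and expand everything in powers of $t$. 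Since $p^{(k)} = O(t^{1/m})$ on a generic sheet and the differentials $t_i$ enter $P$ linearly, each term in $\omega|_{\tilde\Sigma}$ acquires an explicit power of $t$; the claim is that the terms of total order $<2$ cancel. The cancellation at order $1$ should be exactly the content of the previous proposition: $\Hilb^{reg}_0(\g)$ being Lagrangian in $\Hilb^{reg}(\g)$ is precisely the statement that the first-order variation of the ideal $I$ in the direction of the higher Beltrami differentials $\mu_i$ lies in a Lagrangian, and the spectral-curve construction is fibered over $\Sigma$ by these Hilbert-scheme fibers. So I would phrase the argument as: $\omega|_{\tilde\Sigma}$ is a sum of a ``vertical'' contribution (the restriction of the fiberwise symplectic form of $\Hilb^{reg}(\g)$ to $\Hilb^{reg}_0(\g)$, which vanishes by the Lagrangian proposition) and a ``horizontal'' contribution coming from the $z$-dependence, which is itself $O(t)$ because $\mu_i$ and $t_i$ both vanish to first order along the zero section — hence the product is $O(t^2)$.

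For the $D_n$ case with $\tau_n = 0$, where the ideal has three generators, I would not redo the computation from scratch but instead invoke the limiting argument already used in Subsection \ref{Dn} and in the construction of the spectral curve above: the three-generator curve is the $\tau_n\to 0$ limit of the two-generator family, the property of being Lagrangian-to-order-$1$ is a closed condition, and it holds on the dense locus $\tau_n\neq 0$ where the $A_n$-type description applies, hence it holds in the limit.

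The main obstacle I expect is the bookkeeping of fractional powers of $t$ on the various sheets of $\tilde\Sigma$ and making precise what ``Lagrangian to order $1$'' means when the curve is singular (sheets colliding at $t=0$) — one has to work on a suitable normalization or simply restrict to a generic sheet where $p^{(k)}\neq 0$ and argue by density. The cleanest route is probably to avoid choosing sheets at all: write $\omega|_{\tilde\Sigma}$ intrinsically using the relations $P=Q=0$ to eliminate $dp$ and $d\bar p$, obtaining a $2$-form on the base $\Sigma$ whose coefficient is a rational expression in $t,\mu$; then show this coefficient has no constant or linear term in $t$ by matching with the fiberwise statement. Once set up this way, the actual verification is a short residue-type calculation rather than a sheet-by-sheet expansion.
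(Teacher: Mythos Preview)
Your $D_n$ limiting argument is exactly what the paper does. For the main case, however, the paper takes a much shorter and cleaner route: it simply observes that for a surface cut out by $P=Q=0$ in the symplectic $4$-space $(z,\bar z,p,\bar p)$, being Lagrangian is equivalent to $\{P,Q\}\in I=\langle P,Q\rangle$, so ``Lagrangian to order $1$ in $t$'' means $\{P,Q\}=0\bmod I\bmod t^2$. This single Poisson-bracket identity was already verified in \cite{FockThomas} for type $A_n$, and $B_n,C_n,D_n$ (generically) are literal special cases of that computation. No sheets, no fractional powers, no pullbacks.

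Your proposed route has two genuine weak spots. First, the sheet-by-sheet expansion with $p^{(k)}=O(t^{1/m})$ is exactly the wrong parametrization here: at $t=0$ all sheets collide, the normalization is singular, and ``order $1$ in $t$'' becomes ambiguous unless you first pass to the Poisson-bracket/coisotropy formulation --- which is what the paper does directly. Second, and more seriously, the vertical/horizontal decomposition conflates two different symplectic structures. The previous proposition says $\Hilb^{reg}_0(\g)$ is Lagrangian for the \emph{Hilbert-scheme} symplectic form (which on a fiber is of type $\sum dx_i\wedge dy_i$, equivalently $\tr\,dA\wedge dB$), whereas the spectral curve lives in $T^{*\C}\Sigma$ with $\omega=dp\wedge dz+d\bar p\wedge d\bar z$, which pairs fiber with base. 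There is no ``vertical'' piece of $\omega$ that restricts to the fiberwise Hilbert-scheme form, so the Lagrangianity of $\Hilb^{reg}_0$ in $\Hilb^{reg}$ does not by itself furnish the cancellation you need. The ``cleanest route'' you sketch at the end --- eliminate $dp,d\bar p$ via $dP=dQ=0$ --- is in fact equivalent to computing $\{P,Q\}\bmod I$, and once you see that, the whole argument collapses to the one-line check the paper cites.
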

This is the precise analogue of Proposition 5 in \cite{FockThomas}. 
\begin{proof}
In the case where the ideal has two generators $P$ and $Q$ this is equivalent to $\{P,Q\} = 0 \mod I \mod t^2$ for $I\in T^*\hat{\mc{T}}_{\g}$.
For $A_n$, the proof is given in \textit{loc. cit}. For $B_n$ and $C_n$ it is completely analogous since the $\g$-complex structure can be seen as a special case of $A_n$.

For $\g$ of type $D_n$, a generic ideal has still two generators, so we have a special case of $A_n$. If the ideal has three generators, the spectral curve is still Lagrangian since it can be obtained as a limit of Lagrangian curves, and the property of being Lagrangian is closed.
\end{proof}

Since the spectral curve is Lagrangian to order 1, the periods are well-defined up to this order. The ratios of these periods should give coordinates on $T^*\hat{\mc{T}}_{\g}$ and also on $\hat{\mc{T}}_{\g}$.
For the trivial $\g$-complex structure (where all higher Beltrami differentials are 0) we recover Hitchin's spectral curve.

Finally, we can recover the same spectral data as Hitchin in his paper on stable bundles \cite{Hit3}. 
From a $\g$-complex structure we get a bundle $V$ over the surface $\Sigma$ whose fiber at a point $z\in \Sigma$ is $\C[p,\bar{p}]/I(z)$ where we use the idealic viewpoint. We also get a line bundle $L$ on $\tilde{\Sigma}$ whose fiber is the eigenspace of $M_p$, the multiplication operator by $p$ in the quotient $\C[p,\bar{p}]/I$. This gives the spectral data for type $A_n$.

For $\g$ of type $C_n$, we get in addition an involution $\sigma$ on the spectral curve $\tilde{\Sigma}$ given by $(p,\bar{p})\mapsto (-p, -\bar{p})$.
For $\g$ of type $D_n$, the spectral curve is singular, having a double point. The spectral data is given by a desingularization of $\tilde{\Sigma}$, the involution $\sigma$ as for $C_n$ and the line bundle $L$.
For $\g$ of type $B_n$, there is a canonical subbundle $V_0 \subset V=\C[p,\bar{p}]/I$ generated by the span of the image of $1\in \C[p, \bar{p}]$ in the quotient $\C[p,\bar{p}]/I$ (since for $B_n$, we have $I\subset \langle p, \bar{p}\rangle$). Thus the vector bundle $V$ is an extension $V_0 \rightarrow V\rightarrow V_1$. The spectral data is given by $(V_0, V_1, \sigma, L, \tilde{\Sigma})$.

\section{Higher complex structures for real Lie algebras}\label{realcase}

\subsection{Motivation and preliminaries}

Hitchin's approach to character varieties proceeds in two steps: he considers Higgs bundles for a complex group $G_{\C}$, which by the non-abelian Hodge correspondence describe the complex character variety $\Rep(\pi_1\S, G_{\C})$, and then he finds a subset, invariant under an involution, which corresponds to the representations in the split real group.

There is a notion of Higgs bundles, associated to some real Lie group $G_{\R}$, whose representations have values in $G_\R$. For the definition of these $G_\R$-Higgs bundles (or just $G$-Higgs bundles), see \cite{HKR-section}, Section 5.

We want to define a counterpart of $G_\R$-Higgs bundles in our language of punctual Hilbert schemes. To do this, we need a generalisation of Kostant's theory of regular elements and principal slices to a real Lie algebra. This was done in the paper of Kostant and Rallis \cite{KR}. We give a short summary of the material we need in this section.

Consider a simple real Lie algebra $\g_\R$. Fix a \textit{Cartan decomposition} $$\g_\R = \mf{k}_\R\oplus \mf{p}_\R.$$ 
All elements of $\mf{p}_\R$ are semisimple, so there are no nilpotent elements.
This is why we pass to the complexifications $\mf{p}_\C$ and $\mf{k}_\C$. \emph{Whenever we speak about a complex object, we might omit the index, so we will write $\mf{p}$ instead of $\mf{p}_\C$ etc.}

Denote by $\mf{a}_\R$ a maximal abelian subalgebra of $\mf{p}_\R$. The dimension of $\mf{a}_\R$ is called the \textit{real rank} of $\g_\R$. Its complexification $\mf{a}$ is called the \textit{``baby Cartan''}. For an element $x\in \mf{p}$, it can be shown that 
$$\dim Z_{\mf{p}}(x) \geq \dim \mf{a}$$
where $Z_{\mf{p}}(x)$ denotes the centralizer of $x$ inside $\mf{p}$.
Elements for which equality holds are called \textit{regular}. 

Denote by $\theta$ the \textit{Cartan involution}, defined by $\theta = \id$ on $\mf{k}_\R$ and $\theta = -\id$ on $\mf{p}_\R$. It extends to a Lie algebra involution on the complexification $\g$ and on the group $G$.
A central role is played by $K_\theta := \{g \in G \mid \theta(g) = g\}$. It clearly contains $K = \exp (\mf{k})$ but is strictly bigger (see Proposition 1 in \cite{KR}).

The philosophy of the Kostant-Rallis can then be summarized by: \textit{the analogue of the Kostant theory for $\g$ in the ``real'' case is obtained by replacing $\g$ by $\mf{p}$ and $G$ by $K_\theta$}. Note that the objects we manipulate are complex, but come from a real form.

Let us give three examples of this philosophy. First, the analogue of $\g^{reg}/G \cong \h / W$ for real Lie algebras reads (see Theorem 12 in \cite{KR} for the version on the polynomial function level) 
\begin{equation}\label{preg-conj}
\mf{p}^{reg}/K_\theta \cong \mf{a} / W(\mf{a}).
\end{equation}
Second, there is a unique open dense $K_\theta$-orbit in the nilpotent variety of $\mf{p}$ (see Theorem 6 in \cite{KR}).
This is the precise analogue of the principal nilpotent orbit for complex Lie algebras.
Third, the nilpotent orbits of $\g_\R$ are in one-to-one correspondence to the nilpotent $K$-orbits in $\mf{p}$. This is the so-called \textit{Kostant-Sekiguchi correspondence}.

\subsection{Hilbert scheme associated to real Lie algebras}

Following the philosophy of the Kostant-Rallis paper, we define a Hilbert scheme associated to a real Lie algebra $\g_\R$ by imitating Definition \ref{maindef}. 

\begin{definition}
The punctual Hilbert scheme associated to a real simple Lie algebra $\g_\R$ is defined by 
$$\Hilb(\g_\R) = \{(A,B)\in \mf{p}^2 \mid [A,B]=0, \dim Z_{\mf{p}}(A,B)=\rk \g_\R\} / K_\theta$$
where $Z_{\mf{p}}(A,B)$ is the common centralizer in $\mf{p}$ and $K_\theta=\{g \in G \mid \theta(g)=g\}$.

The zero-fiber are those pairs $(A,B)$ which are nilpotent.
\end{definition}

\begin{Remark}
Conceptually, it might be better to consider pairs $(\g,\theta)$ of a complex Lie algebra $\g$ and a holomorphic involution $\theta$. We get the setting for $\g_\R$ by taking the Cartan involution, and we get the setting for $\g$ by considering $\g\times\g$ and $\theta(x,y) = (y,x)$.
Hence, this puts both situations into the same framework and emphasizes that all objects are holomorphic.
\end{Remark}

Let us analyze the example of the split real form $\mf{sl}_2(\R)$. We will see the necessity of using $K_\theta$, and not only $K = \exp(\mf{k})$.

\begin{example}
Consider $\g_\R = \mf{sl}_2(\R)$. The Cartan decomposition is given by 
$$\mf{sl}_2(\R) = \mf{so}(2) \oplus \mf{p}_\R$$
where $\mf{p}_\R = \{A \in\mf{sl}_2(\R) \mid A^T = A\}$ is the set of symmetric matrices.
Thus, a matrix in $\mf{p}$ is given by 
\begin{equation}\label{mC}
\begin{pmatrix} a & b \\ b & -a\end{pmatrix}
\end{equation}
for $a, b \in \C$. Further, we have $K = \SO(2,\C)$ and $K_\theta = \Ortho(2,\C)$.

A direct computation gives that two matrices $A$ and $B$ in $\mf{p}$ commute iff $B= \mu A$ for some $\mu \in \C P^1$ ($\mu = \infty$ means that $(A,B)=(0,B)$).

In the zero-fiber, we have nilpotent matrices in $\mf{p}$, so we have $a^2+b^2 = 0$ using the parametrization from Equation \eqref{mC}. Hence we have $b=\pm ia$ and we get two possibilities: 
\begin{equation} \label{nilpsl2}
a\begin{pmatrix} 1 & i \\ i & -1\end{pmatrix} \;\; \text{ or }\;\; a\begin{pmatrix} 1 & -i \\ -i & -1\end{pmatrix}.
\end{equation}
Let us first compute the action of $K = \SO(2,\C)$. An element of $K$ is of the form $\left(\begin{smallmatrix} \cos t & \sin t \\ -\sin t & \cos t\end{smallmatrix}\right)$ where $t$ is a complex parameter. One computes that the action of this element on a nilpotent matrix is given by multiplication by $\cos(2t)+i\sin(2t)$ which can be any non-zero complex number (recall that $t\in \C$). Therefore, in the list \eqref{nilpsl2} we can choose $a=1$ using the $\SO(2,\C)$-action.

We see that there are two nilpotent $K$-orbits which would give two components in the zero-fiber of the punctual Hilbert scheme for $\mf{sl}_2(\R)$. 
Using the conjugation by $K_\theta = \Ortho(2,\C)$, the two matrices from Equation \eqref{nilpsl2} are $\Ortho(2,\C)$-conjugated to each other. So we get only one principal nilpotent $K_\theta$-orbit.
\end{example}

Let us analyze some properties of the Hilbert scheme $\Hilb(\g_\R)$.
First, there is a natural map 
\begin{equation}\label{grgc}
\Hilb(\g_\R) \rightarrow \Hilb(\g)
\end{equation}
coming from the inclusions $\mf{p} \subset \g$ and $K_\theta \subset G$. This map is injective since $G$-conjugated points in $\mf{p}$ are $K_\theta$-conjugated.

Using this inclusion, we can define an \textit{idealic map} by composition $\Hilb(\g_\R) \rightarrow \Hilb(\g)\rightarrow I_\g$.
For $\mf{sl}_2(\R)$, this gives as one might expect $$\left(\left(\begin{smallmatrix}1 & i \\ i & -i\end{smallmatrix}\right), \mu \left(\begin{smallmatrix}1 & i \\ i & -i\end{smallmatrix}\right)\right) \mapsto \left\langle x^2, y-\mu x\right\rangle.$$

In analogy with the complex case, we conjecture that modulo some identifications of points, $\Hilb(\g_\R)$ is a resolution of $(\mf{a} \times \mf{a}) / W(\mf{a})$ and is covered by charts associated to the nilpotent orbits of $\g_\R$.
Note that by the Kostant-Sekiguchi correspondence, the nilpotent $K$-orbits in $\mf{p}$ are in bijection to the nilpotent orbits in $\g_\R$. Thus, the second statement generalizes Conjecture \ref{charts-by-nilpotent} to the real case.

In the case of a split real form, our Hilbert scheme gives in fact nothing new:
\begin{thm}\label{split-complex}
For the split real form $\g_{split}$, we have $$\Hilb^{reg}(\g_{split}) \cong \Hilb^{reg}(\g).$$
\end{thm}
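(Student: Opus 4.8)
The plan is to show that the natural injective map $j\colon\Hilb(\g_\R)\to\Hilb(\g)$ of \eqref{grgc}, which for a split form $\g_\R=\g_{split}$ comes from the inclusions $\mf{p}\subset\g$ and $K_\theta\subset G$, restricts to a bijection (indeed an isomorphism) between the regular parts. Two inputs drive everything. First, for a split form the real rank equals the rank, $\rk\g_{split}=\rk\g$, the baby Cartan $\mf{a}$ is a genuine Cartan subalgebra of $\g$, and $W(\mf{a})=W$. Second, by Kostant-Rallis every $x\in\mf{p}$ satisfies $\dim Z_{\mf{p}}(x)\ge\rk\g_\R$. Combining these gives what I will call the \emph{key observation}: if $A\in\mf{p}$ is regular as an element of $\g$, then $Z_\g(A)\subseteq\mf{p}$ (and $A$ is automatically regular ``in $\mf{p}$''). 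Indeed $\theta$ preserves $Z_\g(A)$ since $\theta(A)=-A$, so $\dim Z_\g(A)=\dim Z_{\mf{p}}(A)+\dim\bigl(Z_\g(A)\cap\mf{k}\bigr)$; as $\dim Z_\g(A)=\rk\g$ and $\dim Z_{\mf{p}}(A)\ge\rk\g_{split}=\rk\g$, all inequalities collapse and $Z_\g(A)\cap\mf{k}=0$.

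Next I would build an affine subspace $S\subseteq\mf{p}$ that is simultaneously a principal (Kostant) slice for $\g$ and a Kostant-Rallis section for the pair $(\g,\theta)$. One starts from a principal nilpotent $e_0$ of $\g$ lying in $\mf{p}$: for a split form the unique open dense $K_\theta$-orbit in the nilpotent variety of $\mf{p}$ (Theorem 6 in \cite{KR}) is the principal $G$-orbit. Complete $e_0$ to a normal $\mf{sl}_2$-triple $\{e_0,h_0,f_0\}$ with $h_0\in\mf{k}$ and $f_0\in\mf{p}$, which always exists for a nilpotent of $\mf{p}$. Since $f_0$ is again a principal nilpotent of $\g$, the key observation gives $Z_\g(f_0)\subseteq\mf{p}$, hence Kostant's slice $S=e_0+Z_\g(f_0)$ is contained in $\mf{p}$. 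By Kostant's theorem $S$ meets every regular $G$-orbit in exactly one point; and the induced map $S\to\mf{p}^{reg}/K_\theta$ is injective (two points of $S$ that are $G$-conjugate are equal) and surjective (every regular $G$-orbit meets $\mf{p}$ at its point of $S$, and $G$-conjugate regular elements of $\mf{p}$ are $K_\theta$-conjugate by \cite{KR}), so $S$ is also a Kostant-Rallis section, parametrising $\mf{p}^{reg}/K_\theta\cong\mf{a}/W(\mf{a})$ as in \eqref{preg-conj}. In particular every point of $S$ is regular both in $\mf{p}$ and in $\g$.

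With $S$ in hand I would conclude as follows. First $j$ sends $\Hilb^{reg}(\g_{split})$ into $\Hilb^{reg}(\g)$: if $[(C,D)]\in\Hilb^{reg}(\g_{split})$ with, say, $C$ regular in $\mf{p}$, then $C$ is $K_\theta$-conjugate to a point of $S\subseteq\g^{reg}$, hence regular in $\g$, so $Z_\g(C,D)=Z_\g(C)$ is abelian of dimension $\rk\g$ by Kostant's theorem \ref{thmKost} and $[(C,D)]\in\Hilb^{reg}(\g)$. Injectivity of $j$ is already known from \eqref{grgc}. For surjectivity, given $[(A,B)]\in\Hilb^{reg}(\g)$ with $A$ regular, let $a\in S$ be the unique point of $S$ in the $G$-orbit of $A$, write $A=Ad_g(a)$ and put $B_0=Ad_{g^{-1}}(B)$. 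Then $B_0\in Z_\g(a)$, and $Z_\g(a)\subseteq\mf{p}$ by the key observation, so $(a,B_0)\in\mf{p}^2$ is a commuting pair with $Z_{\mf{p}}(a,B_0)=Z_\g(a)\cap\mf{p}=Z_\g(a)$ of dimension $\rk\g=\rk\g_{split}$; hence it defines a class in $\Hilb^{reg}(\g_{split})$ with $j([(a,B_0)])=[(A,B)]$. Uniqueness of this preimage also follows from the uniqueness of $a$ in $S$ together with the connectedness of $\Stab_G(a)$ proved along with Proposition \ref{paramit} (reproving injectivity), and since the slice projection and the conjugations are algebraic, $j$ and $j^{-1}$ are continuous, so the bijection is an isomorphism.

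The main obstacle is the construction of the common slice $S$: all the downstream steps are bookkeeping, but producing one affine subspace inside $\mf{p}$ that is at once Kostant's slice for $\g$ and a Kostant-Rallis section is precisely where the split hypothesis is used essentially — through $\rk\g_\R=\rk\g$, $\mf{a}=\h$, $W(\mf{a})=W$, and the placement of a principal nilpotent inside $\mf{p}$ — and it rests on the structural results of \cite{KR} about regular elements and nilpotent orbits in $\mf{p}$, which one must quote with care. (Alternatively, the same conclusion can be reached from a Kostant-Rallis analogue of Proposition \ref{paramit}: a unique representative with first component in $S\subseteq\mf{p}$, the second then automatically in $Z_\g(a)\subseteq\mf{p}$.)
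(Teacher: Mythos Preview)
Your proof is correct and rests on the same structural facts as the paper's — the split-form identities $\mf{a}=\h$, $W(\mf{a})=W$, $\rk\g_{split}=\rk\g$, and the principal-slice parametrisation of Proposition~\ref{paramit} together with its Kostant--Rallis analogue \eqref{preg-conj} — so the two arguments are close in spirit. The paper's proof is terser and more abstract: it simply writes both regular Hilbert schemes as products, $\Hilb^{reg}(\g)\cong(\g^{reg}/G)\times\C^{\rk\g}$ and $\Hilb^{reg}(\g_{split})\cong(\mf{p}^{reg}/K_\theta)\times\C^{\rk\g_{split}}$, and matches the factors via $\h/W=\mf{a}/W(\mf{a})$. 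Your version is more constructive: you build a single affine slice $S=e_0+Z_\g(f_0)$ that lies inside $\mf{p}$ and serves simultaneously as Kostant's slice for $\g$ and as a Kostant--Rallis section, and you isolate the clean ``key observation'' that $Z_\g(A)\subseteq\mf{p}$ whenever $A\in\mf{p}$ is regular in $\g$ (via the $\theta$-stability of $Z_\g(A)$ and the rank comparison). That lemma is exactly what makes the paper's factor-matching work but is left implicit there; your explicit slice also makes transparent why the natural map $j$ of \eqref{grgc} is the isomorphism, rather than just producing an abstract bijection. The trade-off is length: the paper's two-line product comparison is quicker, while your argument gives a reusable lemma and a concrete inverse.
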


\begin{proof}
The split form has one key property which makes the link to $\g$: the Cartan subalgebra $\h$ of $\g$ can be chosen to be the baby Cartan $\mf{a}\subset \mf{p}$. In particular, the real rank of $\g_{split}$ is the same as the rank of $\g$. 

Consider the open dense part of $\Hilb^{reg}(\g)$ where the first element in a pair $[(A,B)]$ is regular. Then we can quotient out the $G$-conjugation action on the first element to get 
$$\Hilb^{reg}(\g) \cong \g^{reg}/G \times \C^{\rk \g}$$
since $Z(A) \cong \C^{\rk \g}$.
The same argument applied to $\g_{split}$ yields
$$\Hilb^{reg}(\g_{split}) \cong \mf{p} / K_\theta \times \C^{\rk \g_{split}}.$$
Now, we have $$\g^{reg}/G \cong \h / W = \mf{a} /W(\mf{a}) \cong \mf{p} / K_\theta$$
where we used $\mf{a} = \h$, and $\rk \g = \rk \g_{split}$.

Therefore we get a bijection between the two Hilbert schemes.
\end{proof}

Let us analyze the functorial behavior for $\Hilb(\g_\R)$. The two natural maps from Subsection \ref{mu2} generalize.
In \cite{KR}, Theorem 11, Kostant-Rallis prove the existence of a principal $\mf{sl}_2$-triple $(e,f,h)$ in $\mf{p}$ (unique up to $K_\theta$-action) and provide a principal slice given by $e+Z(f)$. Hence, we can imitate exactly the argument which gave Equation \eqref{teichcopy} to get a map $$\Hilb^{reg}(\mf{sl}_2) \rightarrow \Hilb^{reg}(\g_\R).$$

As in the case for complex $\g$, there is a map $\mu_2: \Hilb^{reg}_0(\g_\R) \rightarrow \C$ which to $[(A,B)]$ associates the unique complex number $\mu_2$ such that $A-\mu_2B$ is not principal nilpotent. The proof is completely analogous to the one of Proposition \ref{mu2prop} using \cite{KR}, Theorem 5.

\subsection{$\g_\R$-complex structures}\label{grcomplexsection}

Once we have the notion of a punctual Hilbert scheme associated to $\g_\R$, it is straight forward to define a $\g_\R$-complex structure, imitating Definition \ref{def-g-complex-1}:
\begin{definition}
A \textbf{$\g_\R$-complex structure} on $\S$ is a $K$-gauge class of matrix-valued 1-forms which locally can be written as $\Phi_1(z)dz+\Phi_2(z)d\bar{z} \in \Omega^1(\S, \mf{p})$, such that $[(\Phi_1(z),\Phi_2(z))]\in \Hilb^{reg}_0(\g_\R)$ and $\mu_2\bar{\mu}_2 \neq 1$.
\end{definition}

From the inclusion $\Hilb(\g_\R) \hookrightarrow \Hilb(\g)$ we get:
\begin{prop}\label{grginduction}
 A $\g_\R$-complex structure induces a $\g$-complex structure. In particular it induces a complex structure.
\end{prop}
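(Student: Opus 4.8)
The plan is to simply chase the two maps that have already been built. A $\g_\R$-complex structure is, by definition, a $K$-gauge class of $1$-forms $\Phi_1\,dz+\Phi_2\,d\bar z\in\Omega^1(\S,\mf p)$ with $[(\Phi_1(z),\Phi_2(z))]\in\Hilb^{reg}_0(\g_\R)$ and $\mu_2\bar\mu_2\neq 1$ everywhere. First I would invoke the injection $\Hilb(\g_\R)\hookrightarrow\Hilb(\g)$ of Equation \eqref{grgc}, which comes from the inclusions $\mf p\subset\g$ and $K_\theta\subset G$; one checks this map restricts to $\Hilb^{reg}_0(\g_\R)\to\Hilb^{reg}_0(\g)$, since it sends nilpotent (resp.\ regular) pairs to nilpotent (resp.\ regular) pairs and the condition ``$A-\mu_2 B$ not principal nilpotent'' is preserved (the $\mu_2$ invariant is literally the same complex number, by the analogue of Proposition \ref{mu2prop} for $\g_\R$ noted at the end of Subsection \ref{grcomplexsection}). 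So pointwise $[(\Phi_1(z),\Phi_2(z))]$ lands in $\Hilb^{reg}_0(\g)$ with the same $\mu_2(z)$, hence the non-reality constraint $\mu_2\bar\mu_2\neq 1$ is inherited.

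Next I would promote this pointwise statement to a statement about gauge classes of $1$-forms on $\S$: the $1$-form $\Phi_1\,dz+\Phi_2\,d\bar z$, now viewed in $\Omega^1(\S,\g)$, still satisfies $[(\Phi_1(z),\Phi_2(z))]\in\Hilb^{reg}_0(\g)$ for all $z$, and a $K$-gauge transformation is in particular a $G$-gauge transformation, so the $K$-gauge class maps to a well-defined $G$-gauge class. That is exactly the data of a $\g$-complex structure in the sense of Definition \ref{def-g-complex-1}. Finally, composing with Proposition \ref{inducedcomplex} (a $\g$-complex structure induces a complex structure on $\S$) — equivalently, with the map $\mu:\Hilb^{reg}_0(\g)\to\Hilb_0(\mf{sl}_2)$ of Equation \eqref{mu} extended over $\S$ — yields the induced complex structure, with Beltrami differential $\mu_2$.

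There is essentially no obstacle here: every ingredient (the injection \eqref{grgc}, the $\g_\R$-analogue of Proposition \ref{mu2prop}, and Proposition \ref{inducedcomplex}) has already been established. The only point deserving a sentence of care is that the inclusion $\mf p\hookrightarrow\g$ sends $\Hilb^{reg}_0(\g_\R)$ into $\Hilb^{reg}_0(\g)$ rather than merely into $\Comm(\g)/G$ — i.e.\ that regularity and the minimal-centralizer condition survive — but this follows from the split/real Kostant--Rallis picture: a regular element of $\mf p$ that is principal nilpotent is principal nilpotent in $\g$, and its centralizer in $\g$ retains dimension $\rk\g=\rk\g_\R$. Hence the proof is a two-line diagram chase, and I would write it as such.

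\begin{proof}
By Equation \eqref{grgc} there is an injection $\Hilb(\g_\R)\hookrightarrow\Hilb(\g)$ induced by $\mf p\subset\g$ and $K_\theta\subset G$; it restricts to a map $\Hilb^{reg}_0(\g_\R)\to\Hilb^{reg}_0(\g)$ preserving the invariant $\mu_2$. Given a $\g_\R$-complex structure, represented locally by $\Phi_1\,dz+\Phi_2\,d\bar z\in\Omega^1(\S,\mf p)$, we view it in $\Omega^1(\S,\g)$: for every $z$ we then have $[(\Phi_1(z),\Phi_2(z))]\in\Hilb^{reg}_0(\g)$ with the same $\mu_2(z)$, so $\mu_2(z)\bar\mu_2(z)\neq 1$ still holds. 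A $K$-gauge transformation is in particular a $G$-gauge transformation, so the construction descends to gauge classes and produces a $\g$-complex structure in the sense of Definition \ref{def-g-complex-1}. By Proposition \ref{inducedcomplex}, the latter induces a complex structure on $\S$ (explicitly, via the map $\mu$ of Equation \eqref{mu}, with Beltrami differential $\mu_2$).
\end{proof}
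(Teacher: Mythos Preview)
Your formal proof is correct and follows exactly the paper's approach: invoke the inclusion \eqref{grgc} on the level of $\Hilb^{reg}_0$, then apply Proposition~\ref{inducedcomplex}. Your version is simply more explicit about the preservation of $\mu_2$ and the passage from $K$-gauge to $G$-gauge classes, which the paper leaves implicit.

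One small slip in your preamble discussion: you write that ``its centralizer in $\g$ retains dimension $\rk\g=\rk\g_\R$'', but $\rk\g_\R$ denotes the \emph{real} rank $\dim\mf a$, which in general is strictly smaller than $\rk\g$ (equality holds only in the split case). This does not affect your formal proof, since there you simply cite \eqref{grgc} as given; but the justification you sketch for why the inclusion lands in $\Hilb(\g)$ (and not merely in $\Comm(\g)/G$) would need a different argument in the non-split case.
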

\begin{proof}
The inclusion $\Hilb(\g_\R) \rightarrow \Hilb(\g)$ (see Equation \ref{grgc}) stays true on the level of the regular part of the zero-fiber. Hence, by the definition, a $\g_\R$-complex structure induces a $\g$-complex structure. The rest follows from Proposition \ref{inducedcomplex}.
\end{proof}

Next, we wish to define higher diffeomorphisms and the moduli space. As for $\g$, we have to use an embedding of $\g_\R$ into some $\mf{sl}_m(\R)$ which always exists by Ado's theorem.

Fix some inclusion $\g_\R \hookrightarrow \mf{sl}_m(\R)$ inducing an inclusion $\mf{p} \hookrightarrow \mf{sl}_m(\C)$. By simultaneous diagonalization inside $\mf{sl}_m(\C)$, a pair $[(A,B)]\in \Hilb^{reg}(\g_\R)$ gives $m$ points in $\C^2$ (the coordinates of these points being the eigenvalues of $(A,B)$). These $m$ points satisfy some symmetry property, expressing the fact that they come from $\Hilb(\g_\R)$.

In analogy with Definition \ref{g-rho-diffeo}, we define a \textbf{higher diffeomorphism of type $(\g_\R, \rho)$} as a Hamiltonian diffeomorphism of $T^*\S$ whose extension to $T^{*\C}\S$ preserves this symmetry property. For $\g_\R$ a real form of a classical Lie algebra $\g$, we use the standard inclusion. We restrict to classical types from now on.

Finally, we define the \textbf{moduli space of $\g_\R$-complex structures}, denoted by $\hat{\mc{T}}_{\g_\R}$, as the equivalence classes of $\g_\R$-complex structures
under the action of higher diffeomorphisms of type $\g_\R$.

Since a $\g_\R$-complex structure induces a complex structure (see Proposition \ref{grginduction}), we get a map from $\hat{\mc{T}}_{\g_\R}$ to Teichmüller space.
Using the map $\Hilb^{reg}(\mf{sl}_2) \rightarrow \Hilb^{reg}(\g_\R)$ (see paragraph before \ref{grcomplexsection}), we get an injection from Teichmüller space into $\hat{\mc{T}}_{\g_\R}$.  Further, we get the following theorem:

\begin{thm}\label{splitmodulispace}
There is a map $$\hat{\mc{T}}_{\g_\R} \rightarrow \hat{\mc{T}}_{\g}$$
which is an isomorphism for the split real form $\g_{split}$.
\end{thm}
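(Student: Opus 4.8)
The plan is to produce the map on moduli spaces by combining Proposition \ref{grginduction} with a compatibility check between higher diffeomorphisms of type $\g_\R$ and of type $\g$, and then, for the split form, to upgrade this map to a bijection using Theorem \ref{split-complex}.

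\emph{Constructing the map.} By Proposition \ref{grginduction}, composing a $\g_\R$-complex structure (an $\Omega^1(\S,\mf p)$-valued datum) with the inclusion $\mf p\hookrightarrow\g$ yields a $\g$-complex structure: the pointwise class $[(\Phi_1(z),\Phi_2(z))]$ passes from $\Hilb^{reg}_0(\g_\R)$ to $\Hilb^{reg}_0(\g)$ under the inclusion \eqref{grgc}, and the non-reality condition $\mu_2\bar\mu_2\neq 1$ is inherited. To descend to $\hat{\mc T}_{\g_\R}\to\hat{\mc T}_\g$ I would show that a higher diffeomorphism of type $\g_\R$ carries the induced $\g$-complex structures into one another through a higher diffeomorphism of type $\g$. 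The key point is that the action of a Hamiltonian diffeomorphism on an idealic complex structure depends only on the Hamiltonian \emph{modulo the defining ideal} (Lemma \ref{simplification} and the proposition following it), and that this reduction is compatible with the inclusion of spaces of ideals $I_{\g_\R,0}(\C^2)\hookrightarrow I_{\g,0}(\C^2)$, which is a Poisson map. Fixing a $\theta$-stable Cartan $\h\supset\mf a$ of $\g$, the $\rho(\g_\R)$-symmetric $m$-tuples form a sub-locus of the $\rho(\g)$-symmetric ones, and after reduction modulo $I$ the Hamiltonian of a higher diffeomorphism of type $\g_\R$ becomes one of the standard reduced Hamiltonians (a polynomial in $p$ for types $A_n$, $B_n$, $C_n$; the shape $H=w_{-}\bar p+\sum_k w_{2k+1}p^{2k+1}$ for $D_n$), which is precisely what generates a higher diffeomorphism of type $\g$. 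Hence equivalent $\g_\R$-complex structures have equivalent images, and the map is well defined.

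\emph{The split case.} For $\g=\g_{split}$ one has $\mf a=\h$ and $\rk\g_{split}=\rk\g$. By Theorem \ref{split-complex}, $\Hilb^{reg}(\g_{split})\cong\Hilb^{reg}(\g)$; intersecting with the nilpotent locus and using that both regular zero-fibers are the affine space $Z(f)$ (Corollary \ref{regzerohilb} together with its $\g_\R$-analogue, which comes from the Proposition \ref{mu2prop}-type splitting $Z_{\mf p}(f)=\Span(f)\times Z_{\mf p}(f)^{irreg}$), the inclusion $\Hilb^{reg}_0(\g_{split})\hookrightarrow\Hilb^{reg}_0(\g)$ is an isomorphism. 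Moreover $D_\rho$ coincides for $\g_{split}$ and $\g$, so $\Symp(\g_{split},\S)=\Symp(\g,\S)$. Injectivity of $\hat{\mc T}_{\g_{split}}\to\hat{\mc T}_\g$ then follows because a higher diffeomorphism of type $\g_{split}$ intertwining two induced $\g$-structures already intertwines the $\g_{split}$-structures themselves (the pointwise inclusion of classes being injective), and surjectivity reduces to lifting a given $\g$-complex structure to one with values in $\mf p$: pointwise this lift is supplied by the canonical isomorphism of affine spaces $\Hilb^{reg}_0(\g)\cong\Hilb^{reg}_0(\g_{split})$, and one then globalizes it over $\S$.

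The step I expect to cost the most work is twofold, and both times it concerns the asymmetry between $\mf a$ and $\h$: first, genuinely verifying that a higher diffeomorphism of type $\g_\R$ acts through one of type $\g$ — the reduction-modulo-ideal mechanism is what collapses the difference, but it has to be carried out case by case in the classical types — and second, in the split case, producing the global $\mf p$-valued lift of a $\g$-complex structure and checking that its residual ambiguity is exactly a $K_\theta$-gauge, so that the lift descends correctly to $\hat{\mc T}_{\g_{split}}$. The remaining verifications (inheritance of non-reality, the affine-space identifications) are routine once Theorem \ref{split-complex} and Proposition \ref{grginduction} are in hand.
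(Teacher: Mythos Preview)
Your proposal is correct and follows essentially the same route as the paper, but you work considerably harder than necessary at both stages.

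For the construction of the map, the paper does not invoke Lemma \ref{simplification} or the reduction-modulo-ideal mechanism at all. It simply asserts that a higher diffeomorphism of type $\g_\R$ is automatically one of type $\g$: since $\mf a\subset\h$, the $\rho(\g_\R)$-symmetric $m$-tuples sit inside the $\rho(\g)$-symmetric ones, and the defining symmetry conditions (barycenter zero for $A_n$, invariance under $-\id$ for $B_n,C_n,D_n$) are inherited. No case-by-case idealic computation is needed to pass to the quotient.

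For the split case, your argument is again the same in substance but organized differently. You separate injectivity and surjectivity and worry about producing a global $\mf p$-valued lift with the correct residual $K_\theta$-gauge ambiguity. The paper bypasses this entirely: from $\mf a=\h$ it gets $D_\rho(\g_{split})=D_\rho(\g)$, hence $\Symp(\g_{split},\S)=\Symp(\g,\S)$, and from Theorem \ref{split-complex} it gets $\Hilb^{reg}_0(\g_{split})\cong\Hilb^{reg}_0(\g)$, hence the two notions of complex structure literally coincide. The isomorphism of moduli spaces is then immediate — there is nothing to lift or globalize, because the two sets being quotiented and the two groups acting are identical. The steps you flag as ``costing the most work'' dissolve once you recognize that in the split case the comparison is an equality of definitions rather than a correspondence to be constructed.
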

\begin{proof}
Equation \eqref{grgc} gives the map between the corresponding Hilbert schemes. Further, a higher diffeomorphism of type $\g_\R$ is always a higher diffeomorphism of type $\g$. 

Consider now the split real form. By Theorem \ref{split-complex}, we have $\Hilb^{reg}_0(\g_{split}) \cong \Hilb^{reg}_0(\g)$. This implies that a $\g_{split}$-complex structure is the same as a $\g$-complex structure.

Finally, we prove that a higher diffeomorphism of type $\g_ {split}$ is the same as a higher diffeomorphism of type $\g$.
Again by Theorem \ref{split-complex}, the possible eigenvalues of a pair $[(A,B)]\in \Hilb^{reg}(\g_{split})$ are the same as for a pair in $\Hilb^{reg}(\g)$, since both are in bijection. So the extra symmetry which has to be preserved by a higher diffeomorphism is the same. The equivalence between the two notions of higher diffeomorphisms, and thus of the two moduli spaces follows.
\end{proof}
It would be very interesting to compute the moduli space for a non-split real form.

In \cite{HKR-section}, the authors construct an analogue to the Hitchin fibration between the moduli space of $G_\R$-Higgs bundles and the Hitchin base, using invariant polynomials. They also construct a section to this fibration, called the \textit{Hitchin-Kostant-Rallis section}. In the split real case, this simply gives the Hitchin section.
 
We close by enlarging Conjecture \ref{conjmaj} about the link between our moduli space and Hitchin's component to the following:
\begin{conj}
Our moduli space $\hat{\mc{T}}_{\g_\R}$ is canonically homeomorphic to the image of the Hitchin-Kostant-Rallis section.
\end{conj}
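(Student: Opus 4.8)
The plan is to reduce the split case to the already-formulated complex conjecture and then to build the general case on the twistor-theoretic picture sketched in the perspectives. First I would dispose of the split real form $\g_{split}$: by Theorem \ref{splitmodulispace} we have $\hat{\mc{T}}_{\g_{split}} \cong \hat{\mc{T}}_{\g}$, and on the Higgs-bundle side the Hitchin-Kostant-Rallis section of \cite{HKR-section} specializes to the ordinary Hitchin section, whose image is exactly Hitchin's component. Hence in the split case the present conjecture coincides with Conjecture \ref{conjmaj}, and no new ingredient is required beyond that (still open) statement.

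For a general real form $\g_\R$ the strategy is to construct a non-abelian Hodge correspondence adapted to $\mf{p}$. A point of $T^*\hat{\mc{T}}_{\g_\R}$ consists of a $\g_\R$-complex structure together with holomorphic differentials valued in the bundles dictated by the Kostant-Rallis invariants of $\mf{p}$. Out of such a datum I would form a $\l$-family of $\g$-valued connections $\mc{A}(\l) = \l\,\Phi_1\,dz + A + \l^{-1}\Phi_2\,d\bar{z}$, where $\Phi_1,\Phi_2 \in \mf{p}$ is the commuting regular-nilpotent pair representing the $\g_\R$-complex structure, obtained by applying Proposition \ref{paramit} inside $\mf{p}$ via the principal $\mf{sl}_2$-triple of \cite{KR}, Theorem 11. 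Flatness of $\mc{A}(\l)$ should encode precisely the commutativity $[\Phi_1,\Phi_2]=0$ together with the reality constraint coming from the Cartan involution $\theta$; it is the latter that forces the monodromy into $G_\R$ rather than merely into $G^{\C}$.

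The decisive step is then to identify, fiber by fiber over $\hat{\mc{T}}_{\g_\R}$, the flat $G_\R$-connection produced above with the one obtained from the Hitchin-Kostant-Rallis section through the $G_\R$-non-abelian Hodge correspondence. Both constructions rest on the same principal $\mf{sl}_2$-triple $(e,f,h)$ in $\mf{p}$ and the same principal slice $e + Z_{\mf{p}}(f)$, so the comparison reduces to matching the deformation of $f$ inside the slice (the HKR recipe) with the addition of a commuting element of $Z_{\mf{p}}(f)$ (our recipe), exactly as the degrees-of-freedom count in the perspectives predicts. Carrying this identification over the fibers of the projection $\pi:T^*\hat{\mc{T}}_{\g_\R}\to\hat{\mc{T}}_{\g_\R}$ would yield the desired homeomorphism onto the image of the section, with canonicity inherited from the canonicity (up to $K_\theta$-action) of the Kostant-Rallis data.

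I expect the main obstacle to be the deformation step itself. One must prove that the purely surface-geometric datum of a $\g_\R$-complex structure, which does not come equipped with a fixed complex structure on $\S$, deforms canonically to an integrable flat $G_\R$-connection, and that the fibers of $\pi$ surject onto the Hitchin-Kostant-Rallis section. This is open already in the complex case (Conjecture \ref{conjmaj}), and the real case carries the additional burden of controlling the $\theta$-reality of the monodromy uniformly in the twistor parameter $\l$, which is exactly where the Kostant-Rallis generalization of Kostant's theory must be pushed further. A secondary difficulty is that, unlike the split case, the image of the section is in general not a full component, so the target must be described intrinsically and one must verify that our construction lands on it and nowhere else.
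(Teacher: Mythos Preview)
The statement you are attempting is a \emph{conjecture}: the paper offers no proof, only a one-sentence remark on possible lines of attack. So there is no ``paper's own proof'' to compare against, and your proposal should be read as a research outline rather than as a purported proof. On that reading, your honesty is commendable: you correctly flag that the split case collapses to Conjecture~\ref{conjmaj} (still open), and that the general case hinges on a real-form non-abelian Hodge correspondence which does not yet exist.

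It is worth contrasting your proposed route with the paper's own suggestions. After stating the conjecture, the author writes that one should either (i) generalize the techniques of \cite{Thomas}, where partial results for $\mf{sl}_n(\C)$ are obtained, or (ii) embed $\g_\R$ into some $\mf{sl}_m(\R)$ via a representation $\rho$ and invoke Theorem~\ref{splitmodulispace} to transport the problem into the $\mf{sl}_m(\C)$ setting. Your plan follows neither of these directly; instead you draw on the twistor picture from the perspectives section in the introduction, aiming to build the $\lambda$-family of connections from scratch for $\g_\R$. That is a legitimate third angle, and your observation that both the HKR construction and the $\g_\R$-complex structure rest on the \emph{same} Kostant--Rallis principal slice $e+Z_{\mf{p}}(f)$ is exactly the heuristic the paper uses in the complex case.

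One technical point deserves care. In your family $\mc{A}(\l) = \l\,\Phi_1\,dz + A + \l^{-1}\Phi_2\,d\bar z$ the two ends are the commuting pair $(\Phi_1,\Phi_2)$, whereas in the Higgs-bundle picture the family is $\l\Phi + A + \l^{-1}\Phi^*$ with $\Phi^*$ the \emph{Hermitian adjoint} of $\Phi$ with respect to a harmonic metric. It is precisely this adjoint, not a second independent commuting element, that forces the monodromy into the real form. Your sentence ``the reality constraint coming from the Cartan involution $\theta$'' gestures at this, but as written your $\mc{A}(\l)$ has no mechanism producing a $\theta$-equivariance along the twistor line; you would need to explain how the pair $(\Phi_1,\Phi_2)\in\mf{p}^2$ together with some yet-to-be-found harmonic data yields such a symmetry. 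This is not a fatal flaw in an outline for an open conjecture, but it is the place where the analogy with the complex case is loosest and where real work would begin.
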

Note that the image of the HKR-section through the non-abelian Hodge correspondence is not in general a component in the character variety $\Rep(\pi_1(\S), G_\R)$.

To attack this conjecture, one should either try to generalize the techniques developed in \cite{Thomas} (where several steps for the case $\mf{sl}_n(\C)$ has been proven), or to use a representation $\rho:\g \rightarrow \mf{sl}_m(\R)$ and Theorem \ref{splitmodulispace} to recast the problem in the realm of $\mf{sl}_m(\C)$.

\appendix

\section{Punctual Hilbert schemes revisited}\label{appendix:A}

In this appendix, we review the punctual Hilbert scheme of the plane with its various viewpoints. Main references are Nakajima's book \cite{Nakajima} and Haiman's paper \cite{Haiman}.

\subsection{Definition}

To start, consider $n$ points in the plane $\mathbb{C}^2$ as an algebraic variety, i.e. defined by some ideal $I$ in $\mathbb{C}[x,y]$. Its function space $\mathbb{C}[x,y]/I$ is of dimension $n$, since a function on $n$ points is defined by its $n$ values. So the ideal $I$ is of codimension $n$. 
The space of all such ideals, or in more algebraic language, the space of all zero-subschemes of the plane of given length, is the punctual Hilbert scheme:

\begin{definition}
The \textbf{punctual Hilbert scheme} $\Hilb^n(\mathbb{C}^2)$ of length $n$ of the plane is the set of ideals of $\mathbb{C}\left[x,y\right]$ of codimension $n$: 
$$\Hilb^n(\mathbb{C}^2)=\{I \text{ ideal of } \mathbb{C}\left[x,y\right] \mid \dim(\mathbb{C}\left[x,y\right]/I)=n \}.$$  
\noindent The subspace of $\Hilb^n(\mathbb{C}^2)$ consisting of all ideals supported at 0, i.e. whose associated algebraic variety is $(0,0)$, is called the \textbf{zero-fiber} of the punctual Hilbert scheme and is denoted by $\Hilb^n_0(\mathbb{C}^2)$.
\end{definition}

A theorem of Grothendieck and Fogarty asserts that $\Hilb^n(\mathbb{C}^2)$ is a smooth and irreducible variety of dimension $2n$ (see \cite{Fogarty}). The zero-fiber $\Hilb^n_0(\mathbb{C}^2)$ is an irreducible variety of dimension $n-1$, but it is in general not smooth. 

A generic element of $\Hilb^n(\C^2)$, geometrically given by $n$ distinct points, is given by
$$I=\left\langle x^n+t_1x^{n-1}+\cdots+t_n,-y+\mu_1+\mu_2x+...+\mu_nx^{n-1}\right\rangle.$$
The second term can be seen as the Lagrange interpolation polynomial of the $n$ points.

A generic element of the zero-fiber is given by
$$I=\left\langle x^n,-y+\mu_2x+...+\mu_nx^{n-1}\right\rangle.$$

\subsection{Resolution of singularities}\label{resofsing}

Given an ideal $I$ of codimension $n$, we can associate its support, the algebraic variety defined by $I$, which is a collection of $n$ points (counted with multiplicity). The order of the points does not matter, so there is a map, called the \textbf{Chow map}, from $\Hilb^n(\mathbb{C}^2)$ to $\Sym^n(\mathbb{C}^2) := (\mathbb{C}^2)^n/\mathcal{S}_n$, the configuration space of $n$ points ($\mathcal{S}_n$ denotes the symmetric group). A theorem of Fogarty asserts that the punctual Hilbert scheme is a \textit{minimal resolution of the configuration space}.

In order to get a feeling for a general Lie algebra, notice that $n$ points of $\mathbb{C}^2$ is the same as two points in the Cartan $\mathfrak{h}$ of $\mathfrak{gl}_n$, and that the symmetric group is the Weyl group $W$ of $\mathfrak{gl}_n$. So the configuration space equals $\h^2/W$ for $\g=\mf{gl}_n$.

\subsection{Matrix viewpoint}

To an ideal $I$ of codimension $n$, we can associate two matrices: the multiplication operators $M_x$ and $M_y$, acting on the quotient $\C[x,y]/I$ by multiplication by $x$ and $y$ respectively. To be more precise, we can associate a conjugacy class of the pair: $[(M_x,M_y)]$.

The two matrices $M_x$ and $M_y$ commute and they admit a cyclic vector, the image of $1 \in \C[x,y]$ in the quotient (i.e. 1 under the action of both $M_x$ and $M_y$ generate the whole quotient).

\begin{prop}\label{bijhilbert}
There is a bijection between the Hilbert scheme and conjugacy classes of certain commuting matrices: $$\Hilb^n(\mathbb{C}^2) \cong \{(A,B) \in \mf{gl}_n^2 \mid [A,B]=0, (A,B) \text{ admits a cyclic vector}\} / GL_n$$
\end{prop}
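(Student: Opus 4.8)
The plan is to construct mutually inverse maps between the two sets. In one direction, I already have the recipe from the excerpt: send an ideal $I$ of codimension $n$ to the conjugacy class $[(M_x,M_y)]$ of the multiplication operators on the $n$-dimensional quotient $\C[x,y]/I$. These commute because multiplication in a commutative ring commutes, and the image $\bar 1$ of $1$ is a cyclic vector since every class in $\C[x,y]/I$ is the image of some polynomial $P(x,y)$, hence equals $P(M_x,M_y)\bar 1$. The map is well-defined up to conjugacy because the quotient is only defined up to choice of basis. In the other direction, given a commuting pair $(A,B)\in\mf{gl}_n^2$ with cyclic vector $v$, define $$I(A,B)=\{P\in\C[x,y]\mid P(A,B)=0\}.$$ This is clearly an ideal; the point is that it has codimension exactly $n$. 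The evaluation map $\C[x,y]\to\C^n$, $P\mapsto P(A,B)v$, is surjective by cyclicity and has kernel containing $I(A,B)$; conversely if $P(A,B)v=0$ then for any $Q$ one has $P(A,B)Q(A,B)v=Q(A,B)P(A,B)v=0$, so $P(A,B)$ kills all of $\C^n$, i.e. $P\in I(A,B)$. Hence $\C[x,y]/I(A,B)\cong\C^n$ and $I(A,B)$ has codimension $n$. This construction is evidently invariant under simultaneous conjugation of $(A,B)$, so it descends to the quotient by $GL_n$.

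Next I would check that the two maps are inverse to each other. Starting from $I$, the pair $(M_x,M_y)$ on $\C[x,y]/I$ has cyclic vector $\bar 1$, and $P(M_x,M_y)\bar 1=\bar P$, so $P(M_x,M_y)=0$ iff $\bar P=0$ iff $P\in I$; thus $I(M_x,M_y)=I$. Conversely, starting from $[(A,B)]$ with cyclic vector $v$, the isomorphism $\C[x,y]/I(A,B)\xrightarrow{\sim}\C^n$, $\bar P\mapsto P(A,B)v$, intertwines multiplication by $x$ (resp.\ $y$) on the left with the action of $A$ (resp.\ $B$) on the right, since $x\cdot\bar P\mapsto (xP)(A,B)v=A\,P(A,B)v$. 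Hence under this isomorphism $M_x\mapsto A$ and $M_y\mapsto B$, so the composite returns the original conjugacy class. This establishes the bijection.

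The only genuinely substantive point, and the one I would flag as the main obstacle, is the well-definedness and codimension count on the matrix side: one must verify carefully that $I(A,B)$ really has codimension $n$ and not less, which is exactly where cyclicity is used in an essential (not merely convenient) way, and one must confirm that the correspondence is a bijection of \emph{sets} rather than needing the finer scheme-theoretic statement — the latter requires comparing the two objects as functors and is where Grothendieck's construction genuinely enters. Since the statement as phrased (Proposition~\ref{bijhilbert}) only asks for a bijection of sets of closed points, the elementary linear-algebra argument above suffices, and I would not belabour the scheme structure beyond remarking that the maps are algebraic in families. The remaining verifications (that $I(A,B)$ is an ideal, that the constructions are $GL_n$-equivariant, that the intertwining identities hold) are routine and I would state them without detailed computation.
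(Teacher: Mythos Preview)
Your proposal is correct and follows exactly the approach sketched in the paper: the paper itself only states the inverse construction $[(A,B)]\mapsto I(A,B)=\{P\mid P(A,B)=0\}$ and asserts it is well-defined of codimension $n$ by cyclicity, referring to Nakajima's book for details. You have supplied precisely those details---the surjectivity/kernel argument for the evaluation map $P\mapsto P(A,B)v$ and the intertwining verification that the two maps are mutually inverse---so there is nothing to add.
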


The inverse construction goes as follows: to a conjugacy class $[(A,B)]$, associate the ideal $I=\{P \in \C[x,y] \mid P(A,B)=0\}$, which is well-defined and of codimension $n$ (using the fact that $(A,B)$ admits a cyclic vector). For more details see \cite{Nakajima}.

It is this bijection which we use in the main text to generalize the punctual Hilbert scheme.
Notice that the \textit{zero-fiber of the Hilbert scheme corresponds to nilpotent commuting matrices}.

\subsection{Reduced Hilbert scheme}

We wish to define a subspace of $\Hilb^n(\C^2)$ corresponding to matrices in $\mf{sl}_n$ in the matrix viewpoint. 
A generic point should be a pair of points in the Cartan $\h$ of $\mf{sl}_n$ modulo order. This corresponds to $n$ points in the plane with barycenter 0.

\begin{definition}
The \textbf{reduced Hilbert scheme} $\Hilb^n_{red}(\C^2)$ is the space of all elements of $\Hilb^n(\C^2)$ whose image under the Chow map ($n$ points with multiplicity modulo order) has barycenter 0.
\end{definition}

With this definition, we get
\begin{prop}
$$\Hilb^n_{red}(\mathbb{C}^2) \cong \{(A,B) \in \mf{sl}_n^2 \mid [A,B]=0, (A,B) \text{ admits a cyclic vector}\} / SL_n.$$
\end{prop}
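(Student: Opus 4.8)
The plan is to reduce the claimed isomorphism to the one for $\GL_n$ already recorded in Proposition \ref{bijhilbert}, by carefully tracking the ``barycenter $0$'' condition on both sides. First I would fix the bijection
$$\Hilb^n(\C^2) \cong \{(A,B)\in\mf{gl}_n^2 \mid [A,B]=0,\ (A,B)\text{ has a cyclic vector}\}/\GL_n,$$
sending $I$ to $[(M_x,M_y)]$ and a pair $[(A,B)]$ to $I(A,B)=\{P\in\C[x,y]\mid P(A,B)=0\}$, and then identify what the Chow map does on the matrix side: the support of $I(A,B)$ is the set of common (generalized) eigenvalues of $(A,B)$, i.e.\ the multiset of pairs $(\lambda_i,\mu_i)$ where $\lambda_i$ are the eigenvalues of $A$ and $\mu_i$ the corresponding eigenvalues of $B$ (these are simultaneously triangularizable since $[A,B]=0$). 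Hence the barycenter of the support is $\big(\tfrac1n\tr A,\ \tfrac1n\tr B\big)$. So the Chow image has barycenter $0$ if and only if $\tr A=\tr B=0$, i.e.\ $(A,B)\in\mf{sl}_n^2$.

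Next I would check that this cuts out exactly $\Hilb^n_{red}(\C^2)$ and that the group reduces from $\GL_n$ to $\SL_n$ without changing the quotient. For the first point: given $[(A,B)]$ with a cyclic vector, the subset of its $\GL_n$-orbit lying in $\mf{sl}_n^2$ is nonempty precisely when the common value $(\tfrac1n\tr A,\tfrac1n\tr B)$ can be removed, and it always can — replace $(A,B)$ by $(A-\tfrac1n(\tr A)\id,\ B-\tfrac1n(\tr B)\id)$, which still commutes, still has the same cyclic vector, and now lies in $\mf{sl}_n^2$ iff we started in $\Hilb^n_{red}(\C^2)$. Conversely any $(A,B)\in\mf{sl}_n^2$ with a cyclic vector has Chow image with barycenter $0$. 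For the second point: if $(A,B)$ and $(A',B')$ lie in $\mf{sl}_n^2$ and are conjugate by $g\in\GL_n$, write $g=(\det g)^{1/n}h$ with $h\in\SL_n$ (choosing any $n$-th root); conjugation by a scalar is trivial, so $h$ conjugates $(A,B)$ to $(A',B')$ as well. Hence the natural map
$$\{(A,B)\in\mf{sl}_n^2 \mid [A,B]=0,\ (A,B)\text{ has a cyclic vector}\}/\SL_n \longrightarrow \Hilb^n_{red}(\C^2)$$
is well-defined, and it is a bijection because it is the restriction of the $\GL_n$-bijection to the locus just identified on each side; injectivity is inherited from the $\GL_n$-statement together with the scalar-reduction argument, and surjectivity from the traceless-representative construction above.

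The only genuinely delicate point — and the one I would write out most carefully — is that the ``barycenter $0$'' locus in $\Hilb^n(\C^2)$ really does correspond to the traceless locus on the matrix side. This rests on the fact that for commuting $A,B$ the Chow map records the simultaneous spectrum with multiplicities, equivalently that $\C[x,y]/I(A,B)$ is supported (as a module) at the points $(\lambda_i,\mu_i)$, and that the multiplicities add up so that $\tr A=\sum_i\lambda_i$ and $\tr B=\sum_i\mu_i$ even in the non-semisimple case; this follows from upper-triangularizing $(A,B)$ simultaneously and reading off the diagonal, and from the description of $\supp I(A,B)$ via the primary decomposition of $I(A,B)$. Everything else is formal manipulation of the two reductions (subtract the trace; divide the conjugating element by an $n$-th root of its determinant), so I expect no further obstacle.
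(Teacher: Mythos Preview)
The paper states this proposition without proof (it is presented in Appendix~\ref{appendix:A} as an immediate consequence of the definition of $\Hilb^n_{red}(\C^2)$ together with Proposition~\ref{bijhilbert}). Your argument is the natural one and is correct in outline: restrict the $\GL_n$-bijection to the barycenter-zero locus, identify that locus with tracelessness via the simultaneous spectrum, and pass from $\GL_n$- to $\SL_n$-conjugacy by dividing out an $n$-th root of the determinant.

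One expository point deserves cleaning up. Your second paragraph conflates two unrelated operations. Conjugation preserves trace, so the $\GL_n$-orbit of $(A,B)$ meets $\mf{sl}_n^2$ if and only if $(A,B)$ is already traceless; the translation $(A,B)\mapsto(A-\tfrac1n(\tr A)\id,\,B-\tfrac1n(\tr B)\id)$ is \emph{not} a conjugation and moves you to a different point of $\Hilb^n(\C^2)$, so it plays no role here. The final clause ``now lies in $\mf{sl}_n^2$ iff we started in $\Hilb^n_{red}(\C^2)$'' is also off: after subtracting the trace the pair always lies in $\mf{sl}_n^2$. None of this damages the proof, since your first paragraph already establishes the equivalence barycenter $0 \Leftrightarrow \tr A=\tr B=0$, and your $n$-th-root argument handles the passage $\GL_n\rightsquigarrow\SL_n$; the second paragraph can simply be deleted.
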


Finally, it can be proven that the reduced Hilbert scheme is symplectic and that the zero-fiber $\Hilb^n_0(\C^2)$ is a Lagrangian subspace of $\Hilb^n_{red}(\C^2)$.

\section{Regular elements in semisimple Lie algebras}\label{appendix:B}

In this appendix, we gather all properties we need in the main text of regular elements in semisimple Lie algebras and we give precise references for these results. The main references are the books of Collingwood and McGovern \cite{Coll}, Steinberg \cite{Steinberg} and Humphreys \cite{Hum}, as well as the papers \cite{Kost} and \cite{Kost2} by Kostant.

\begin{definition}
An element $x \in \g$ is called \textbf{regular} if the dimension of its centralizer $Z(x)$ is equal to the rank of the Lie algebra $\rk(\g)$. A regular nilpotent element is called \textbf{principal nilpotent}.
\end{definition}

\begin{Remark} 
Notice that in older literature, regular elements are defined in another way, using the characteristic polynomial of the adjoint map. The ``old'' notion includes only semisimple regular (in the sens above) elements. 
\end{Remark}

The condition that the dimension of the centralizer has to be equal to the rank, does not come from nowhere: in fact it is the minimal possible dimension.

\begin{prop}
For any $x \in \g$, we have $\dim Z(x) \geq \rk(\g)$.
\end{prop}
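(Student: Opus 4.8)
The plan is to prove the inequality for an arbitrary element by a semicontinuity argument, reducing it to the value of $\dim Z(x)$ at regular semisimple elements, where it is known to equal $\rk\g$. First I would rewrite $Z(x)=\ker(\ad_x)$, so that $\dim Z(x)=\dim\g-\rk(\ad_x)$, where $\rk$ of a linear map denotes its rank. Then I would record the two standard inputs (see \cite{Hum}): (a) the set $\g_{rs}$ of regular semisimple elements is a nonempty Zariski-open subset of $\g$ — it is the non-vanishing locus of the coefficient $c_r(x)$ of $t^r$ in the characteristic polynomial $\det(t\cdot\id-\ad_x)$, intersected with the semisimple locus, and $c_r$ is not identically zero because it is nonzero on a regular semisimple element of a fixed Cartan $\h$; (b) for $x\in\g_{rs}$ the centralizer $Z(x)$ equals the unique Cartan subalgebra containing $x$, so $\dim Z(x)=\rk\g$.

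The key topological fact is that $x\mapsto\dim Z(x)$ is upper semicontinuous: for every integer $k$ the set $\{x\in\g \mid \rk(\ad_x)\le k\}$ is Zariski-closed, being cut out by the vanishing of all $(k+1)\times(k+1)$ minors of the matrix of $\ad_x$, whose entries are linear in $x$. Now fix an arbitrary $x_0\in\g$ and put $m=\dim Z(x_0)$. The set $C=\{x\in\g \mid \dim Z(x)\ge m+1\}$ is then Zariski-closed and does not contain $x_0$, so $\g\setminus C$ is a nonempty Zariski-open subset of the irreducible affine space $\g$. Intersecting it with the nonempty Zariski-open set $\g_{rs}$ yields an element $x\in\g_{rs}\setminus C$. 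By (b) we have $\dim Z(x)=\rk\g$, while $x\notin C$ forces $\dim Z(x)\le m$. Hence $\rk\g\le m=\dim Z(x_0)$, which is exactly the assertion.

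There is no serious obstacle here; the only points requiring care are the two ``standard inputs'' — specifically that $\g_{rs}$ is genuinely nonempty and open, which rests on the existence of a Cartan subalgebra together with the non-triviality of the discriminant polynomial $c_r$, and the elementary fact that two nonempty Zariski-open subsets of the irreducible variety $\g$ always meet. A variant avoiding semicontinuity would use the Jordan decomposition $x=x_s+x_n$ to reduce to the case of a nilpotent element in the reductive algebra $Z(x_s)$ (of rank $\rk\g$) and then count zero-weight summands for an $\mathfrak{sl}_2$-triple through $x_n$; but the semicontinuity argument above is shorter, uniform in $x$, and suffices for all later uses in the paper.
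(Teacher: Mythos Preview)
Your argument is correct. The paper does not supply its own proof of this proposition --- it simply refers to Lemma~2.1.15 in \cite{Coll} --- but your semicontinuity argument is precisely the method the paper itself uses for the analogous ``doubled'' statement, Proposition~\ref{doublecomm}: there the locus of commuting pairs with centralizer of minimal dimension is shown to be Zariski-open in $\Comm(\g)$ and then intersected (via Richardson's density theorem) with the set of regular semisimple pairs, where the common centralizer is a Cartan.

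One cosmetic remark: in your description of $\g_{rs}$, the clause ``intersected with the semisimple locus'' is redundant. The condition $c_r(x)\neq 0$ already forces $x$ to be semisimple, since it says the generalized $0$-eigenspace of $\ad_x$ --- which equals $Z(x_s)$ --- has dimension exactly $r$; this makes $x_s$ regular, hence $Z(x_s)$ is a Cartan, and then $x_n\in Z(x_s)$ nilpotent implies $x_n=0$. This does not affect the validity of your proof.
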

See for example Lemma 2.1.15. in \cite{Coll}.

For the Lie algebras $\mf{gl}_n$ and $\mf{sl}_n$, we have the following characterization of regular elements from Steinberg \cite{Steinberg}, Proposition 2 in Section 3.5:

\begin{prop}\label{regularsln}
For $\g=\mf{gl}_n$ or $\mf{sl}_n$ and $x\in \g$, we have the following equivalence:
$$x \text{ is regular} \Leftrightarrow \mu_x = \chi_x \Leftrightarrow x \text{ admits a cyclic vector}$$ where $\mu_x$ and $\chi_x$ denote respectively the minimal and the characteristic polynomial of $x$, seen as a matrix. 
\end{prop}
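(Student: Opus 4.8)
The plan is to realize $V=\C^n$ as a module over the polynomial ring $\C[t]$, with $t$ acting by $x$, and to translate each of the three conditions into a statement about this module. By the structure theorem for finitely generated modules over the PID $\C[t]$, write $V \cong \bigoplus_{k=1}^{r}\C[t]/(f_k)$ with invariant factors $f_1 \mid f_2 \mid \cdots \mid f_r$; then $\chi_x = \prod_k f_k$ and $\mu_x = f_r$ (all taken monic). Hence $\mu_x=\chi_x$ holds exactly when $f_1,\dots,f_{r-1}$ are constant, i.e. when $V$ is a cyclic $\C[t]$-module. This already gives the equivalence with admitting a cyclic vector: a $\C[t]$-generator of $V$ is precisely a vector $v$ for which $v,xv,\dots,x^{n-1}v$ span $V$; conversely, such a $v$ forces $\deg\mu_x\geq n$, hence $\mu_x=\chi_x$ since $\mu_x\mid\chi_x$ and both are monic of degree $n$.

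Next I would handle the equivalence with regularity, first for $\g=\mf{gl}_n$. Here $Z_{\mf{gl}_n}(x)=\End_{\C[t]}(V)$, and a short computation identifies $\Hom_{\C[t]}\big(\C[t]/(f),\C[t]/(g)\big)\cong \C[t]/(\gcd(f,g))$: such a homomorphism is determined by the image of $1$, which must be annihilated by $f$, and writing $g=\gcd(f,g)\cdot g'$ one checks this submodule of $\C[t]/(g)$ is $(g')/(g)$, of dimension $\deg\gcd(f,g)$. Summing over the invariant factor decomposition, and using that the $f_k$ are totally ordered by divisibility so that $\gcd(f_i,f_j)=f_{\min(i,j)}$, one gets
\[
\dim Z_{\mf{gl}_n}(x)=\sum_{i,j=1}^{r}\deg f_{\min(i,j)}=\sum_{k=1}^{r}\big(2(r-k)+1\big)\deg f_k \geq \sum_{k=1}^{r}\deg f_k = n,
\]
where the middle equality uses that exactly $2(r-k)+1$ pairs $(i,j)$ have $\min(i,j)=k$. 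Equality holds if and only if $\deg f_k=0$ for every $k<r$, i.e. $V$ is cyclic, i.e. $\mu_x=\chi_x$. (Alternatively, one can run the same bookkeeping on the Jordan form, via $\dim Z(x)=\sum_i\sum_j(2j-1)\lambda^{(i)}_j$ where $\lambda^{(i)}$ is the partition of Jordan block sizes at the $i$-th eigenvalue; equality $\dim Z(x)=n$ forces a single block per eigenvalue.)

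Finally, to pass from $\mf{gl}_n$ to $\mf{sl}_n$, I would note that $\mf{gl}_n=\mf{sl}_n\oplus\C\,\id$ and that $\C\,\id\subset Z_{\mf{gl}_n}(x)$, so $Z_{\mf{gl}_n}(x)=Z_{\mf{sl}_n}(x)\oplus\C\,\id$ and $\dim Z_{\mf{sl}_n}(x)=\dim Z_{\mf{gl}_n}(x)-1$. Since $\rk\mf{sl}_n=n-1$, regularity of $x\in\mf{sl}_n$ in $\mf{sl}_n$ is equivalent to regularity in $\mf{gl}_n$, and the statement follows from the $\mf{gl}_n$ case. The only mildly technical ingredient is the $\Hom$ computation together with the counting identity $\sum_{i,j}\deg f_{\min(i,j)}=\sum_k(2(r-k)+1)\deg f_k$; everything else is the structure theorem applied to $\C[t]$, so I do not anticipate any real obstacle and the work is essentially organizational.
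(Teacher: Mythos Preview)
Your argument is correct. The structure-theorem approach is clean: identifying $Z_{\mf{gl}_n}(x)$ with $\End_{\C[t]}(V)$ and computing the dimension via $\Hom$'s between cyclic summands gives exactly the formula $\sum_k(2(r-k)+1)\deg f_k$, and the inequality with equality criterion follows. One small cosmetic point: in the usual convention the invariant factors $f_k$ are all non-units, so the condition ``$\deg f_k=0$ for $k<r$'' literally means $r=1$; you might phrase the conclusion that way to avoid any ambiguity. The passage from $\mf{gl}_n$ to $\mf{sl}_n$ via $Z_{\mf{gl}_n}(x)=Z_{\mf{sl}_n}(x)\oplus\C\,\id$ is fine.

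As for comparison with the paper: there is nothing to compare. The paper does not prove this proposition at all; it simply records the statement and cites Steinberg's \emph{Conjugacy Classes in Algebraic Groups}, Proposition~2 of Section~3.5. Your write-up therefore supplies a self-contained proof where the paper only gives a reference. Steinberg's argument is in the same spirit (working with the rational canonical form / invariant factors), so your route is the standard one rather than a novel detour.
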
 

Let us turn to the study of regular elements which are nilpotent. 

\begin{thm}
There is a unique open dense orbit in the nilpotent variety consisting of principal nilpotent elements.
\end{thm}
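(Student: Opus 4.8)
The plan is to reduce the statement to three standard inputs: the nilpotent variety $\mc{N}\subset\g$ (the set of all nilpotent elements) is irreducible and closed of dimension $\dim\g-\rk\g$; for any nilpotent $x$ the orbit $G\cdot x$ is locally closed of dimension $\dim\g-\dim Z(x)$; and there exists at least one principal nilpotent element. Granting the first two inputs, take any principal nilpotent $x$: by the proposition above $\dim Z(x)=\rk\g$, so $\dim G\cdot x=\dim\g-\rk\g=\dim\mc{N}$. Then $\overline{G\cdot x}$ is an irreducible closed subvariety of the irreducible variety $\mc{N}$ of full dimension, hence $\overline{G\cdot x}=\mc{N}$; that is, $G\cdot x$ is dense, and since an orbit is always open in its closure, $G\cdot x$ is also open in $\mc{N}$. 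So every principal nilpotent orbit is open and dense. Conversely, if an orbit $G\cdot y\subset\mc{N}$ is open it has dimension $\dim\mc{N}$, forcing $\dim Z(y)=\rk\g$, so $y$ is principal nilpotent; the clause ``consisting of principal nilpotent elements'' is therefore automatic.

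For existence I would exhibit a principal nilpotent element explicitly. Fix a Cartan subalgebra $\h$, simple roots $\alpha_1,\dots,\alpha_r$ with $r=\rk\g$, root vectors $e_{\alpha_i}$, and set $e=\sum_{i=1}^r e_{\alpha_i}$; this lies in the nilradical of a Borel, so it is nilpotent. To compute $Z(e)$, embed $e$ into a principal $\mf{sl}_2$-triple $(e,h,f)$ and decompose $\g=\bigoplus_{i=1}^r V_{2m_i}$ into irreducible $\mf{sl}_2$-submodules, where the $m_i$ are the exponents of $\g$; then $Z(e)=\ker(\ad e)$ is spanned by the highest weight vectors, one for each summand, so $\dim Z(e)=r=\rk\g$ and $e$ is principal nilpotent. (Alternatively one invokes \cite{Kost} directly for the existence of the principal $\mf{sl}_2$-triple together with the equality $\dim Z(e)=\rk\g$.) By the first paragraph, $G\cdot e$ is then open and dense in $\mc{N}$.

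For uniqueness, suppose $\mc{O}$ and $\mc{O}'$ are two open dense $G$-orbits in $\mc{N}$. Since $\mc{N}$ is irreducible, the nonempty open sets $\mc{O}$ and $\mc{O}'$ must intersect, and as distinct $G$-orbits are disjoint this forces $\mc{O}=\mc{O}'$. Combined with the first paragraph --- every principal nilpotent orbit is open and dense, and every open orbit consists of principal nilpotent elements --- this shows the principal nilpotent elements form a single $G$-orbit, which is the unique open dense orbit of $\mc{N}$. The main obstacle is the two quoted inputs: the irreducibility and dimension of the nilpotent cone, and the computation $\dim Z(e)=\rk\g$ for $e=\sum_i e_{\alpha_i}$. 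Each of these needs either the principal $\mf{sl}_2$-triple machinery together with the invariant theory of $\g/\!\!/G$, or a direct citation to Kostant; everything downstream is soft bookkeeping about orbits being open in their closures and about irreducible varieties having no two disjoint nonempty open subsets.
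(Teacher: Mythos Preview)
Your argument is correct and is essentially the standard Kostant proof. However, the paper does not actually prove this theorem: it appears in the review appendix on regular elements, and the paper simply records it as a known result with the remark ``The original proof is due to Kostant, see Corollary 5.5.\ in \cite{Kost}. See also Theorem 4.1.6.\ in \cite{Coll}.'' So there is nothing to compare against beyond the citation.

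Your sketch is the expected one: existence of a principal nilpotent via $e=\sum_i e_{\alpha_i}$ and the principal $\mf{sl}_2$-decomposition, then the orbit-dimension argument combined with irreducibility of the nilpotent cone to get density, openness, and uniqueness. You correctly flag the two nontrivial inputs (irreducibility and dimension of $\mc{N}$, and $\dim Z(e)=\rk\g$), both of which ultimately come from \cite{Kost}; the rest is indeed soft. One minor remark: when you invoke the decomposition $\g=\bigoplus V_{2m_i}$ with exactly $r$ summands, that fact is itself part of Kostant's package, so your parenthetical ``alternatively one invokes \cite{Kost} directly'' is the honest route and not really an alternative.
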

The original proof is due to Kostant, see Corollary 5.5. in \cite{Kost}.
See also Theorem 4.1.6. in \cite{Coll}.

There is a useful characterization of principal nilpotent elements in coordinates. For this, fix a root system $R$, and a direction giving the positive roots $R_+$. Denote by $(e_\alpha)_{\alpha \in R}$ a basis of $\g$ given by root vectors. Denote by $\mf{n}_+$ the positive nilpotent elements generated by $(e_\alpha)_{\alpha \in R_+}$ (for $\mf{sl}_n$, we get upper triangular matrices).
\begin{prop}\label{prinnilp}
Let $A \in \mf{n}_+$. Then $A = \sum_{\alpha \in R_+}A_\alpha e_\alpha$ is principal nilpotent iff $A_\alpha\neq 0$ for all simple roots $\alpha$.
\end{prop}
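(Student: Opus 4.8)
The plan is to reduce everything to a single normal form. I claim that an $A\in\mf{n}_+$ is principal nilpotent if and only if it is $G$-conjugate to $e:=\sum_{\alpha\text{ simple}}e_\alpha$. Indeed, $e$ is itself principal nilpotent (it is the nilpositive element of a principal $\mathfrak{sl}_2$-triple with regular semisimple $h$; see \cite{Kost}), and by the theorem above all principal nilpotents form the single open dense orbit $G\cdot e$ in the nilpotent variety; since every element of $\mf{n}_+$ is automatically nilpotent, "$A\in\mf{n}_+$ principal" is equivalent to "$A\in G\cdot e$". So the proposition becomes: $A\in\mf{n}_+$ lies in $G\cdot e$ iff $A_\alpha\neq 0$ for every simple root $\alpha$. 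I will prove the two implications separately, one by an explicit conjugation and one by a rigidity property of regular nilpotents.

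For the "if" direction, assume $A_\alpha\neq 0$ for all simple $\alpha$. Grade $\g=\bigoplus_{k\in\mathbb{Z}}\g_k$ by height (so $\g_0=\h$ and $\g_k$ is spanned by the root spaces of roots of height $k$), and write $A=A_1+A_2+\cdots$ with $A_k\in\g_k$; the hypothesis says precisely that $A_1=\sum_\alpha A_\alpha e_\alpha$ has all coefficients nonzero. Since the simple roots form a basis of $\h^*$, there is a torus element $t$ with $\alpha(t)=A_\alpha^{-1}$ for all simple $\alpha$, so $\mathrm{Ad}_t(A)=e+R_2+R_3+\cdots$ with $R_k\in\g_k$; replace $A$ by this conjugate. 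Now I remove the higher terms inductively: because $e$ is principal, the kernel $Z(e)$ of $\mathrm{ad}(e)$ is $r$-dimensional and contained in $\g_{\geq 1}$, hence $\mathrm{ad}(e)$ maps $\mf{n}_+$ onto $\bigoplus_{k\geq 2}\g_k$, and being graded it gives surjections $\mathrm{ad}(e)\colon\g_k\to\g_{k+1}$ for all $k\geq 1$. So I may choose $y_1\in\g_1$ with $\mathrm{ad}(e)y_1=R_2$ and conjugate by $\exp(\mathrm{ad}\,y_1)\in N$: this leaves $e$ unchanged modulo $\g_{\geq 2}$ and kills the degree-two term, producing $e+R_3'+\cdots$. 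Iterating (the height grading is finite, so the process terminates) conjugates $A$ to $e$, so $A$ is principal nilpotent.

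For the "only if" direction, assume $A\in\mf{n}_+$ is principal nilpotent, say $A=\mathrm{Ad}_g(e)$ for some $g\in G$. I use the classical fact that a regular nilpotent element is contained in a \emph{unique} Borel subalgebra (equivalently, its Springer fibre is a point; see \cite{Coll}, \cite{Steinberg}): both $\mf{b}=\h\oplus\mf{n}_+$ and $\mathrm{Ad}_g\mf{b}$ are Borel subalgebras containing the regular nilpotent $A$ (the latter because $e\in\mf{b}$), hence $\mathrm{Ad}_g\mf{b}=\mf{b}$ and $g\in N_G(\mf{b})=B$. Writing $g=tn$ with $t$ in the torus and $n\in N$, the element $\mathrm{Ad}_n(e)$ still has degree-one part $e$ (since $[\g_{\geq 1},e]\subset\g_{\geq 2}$), and $\mathrm{Ad}_t$ merely rescales each $e_\alpha$ by the nonzero factor $\alpha(t)$; therefore $A=\mathrm{Ad}_g(e)$ has degree-one part $\sum_\alpha\alpha(t)e_\alpha$, so $A_\alpha\neq 0$ for every simple root $\alpha$.

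The main obstacle is not any individual step but locating the right organizing principle: once one knows that $e=\sum e_\alpha$ is principal and that regular nilpotents are rigid inside a Borel, both directions are short bookkeeping. If one prefers to avoid invoking the unique-Borel statement, the alternative for the "only if" direction is to show directly that $G\cdot e\cap\mf{n}_+$ is a single $B$-orbit: it is a nonempty $B$-stable open subset of the irreducible variety $\mf{n}_+$ which by the "if" direction contains the open dense orbit $B\cdot e$, and two full-dimensional $B$-orbits in an irreducible variety must coincide; this is the place where a little care with dimensions is needed, but it is routine.
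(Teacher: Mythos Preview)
The paper does not actually prove this proposition; immediately after the statement it simply records ``This proposition can be found in \cite{Kost}, Theorem 5.3.'' So there is no in-paper argument to compare against.

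Your argument is correct and is essentially the standard one (and close in spirit to Kostant's own). For the ``if'' direction, normalising the height-one part by a torus element and then stripping the higher-height terms via the surjectivity of $\ad(e)\colon\g_k\to\g_{k+1}$ is exactly the right mechanism; your dimension count $\dim\mf{n}_+-\dim Z(e)=\dim\g_{\ge 2}$ is a clean way to obtain that surjectivity without invoking $\mf{sl}_2$-module theory. For the ``only if'' direction, the unique-Borel property of regular nilpotents (available in \cite{Steinberg} and \cite{Coll}) gives $g\in B$ and the conclusion follows immediately from the $B=TN$ factorisation as you wrote.

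One caveat on your proposed \emph{alternative} for the ``only if'' direction: to conclude that every $B$-orbit inside $G\cdot e\cap\mf{n}_+$ is full-dimensional you need $\dim Z_B(A)=r$, i.e.\ $Z_\g(A)\subset\mf{b}$ for each regular nilpotent $A\in\mf{n}_+$, and the most direct reason for that is precisely the unique-Borel statement you were trying to bypass; so as written this is not really an independent route. A genuinely independent argument is: if $A_\alpha=0$ for some simple $\alpha$ then $A$ lies in the nilradical $\mf{u}_\alpha$ of the minimal parabolic $\mf{p}_\alpha$, and since $\dim P_\alpha\cdot A\le\dim\mf{u}_\alpha=\lvert R_+\rvert-1$ one gets $\dim Z_\g(A)\ge\dim\mf{p}_\alpha-(\lvert R_+\rvert-1)=r+2$, so $A$ is not regular.
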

This proposition can be found in \cite{Kost}, Theorem 5.3.

For a principal nilpotent element $f$, its centralizer $Z(f)$ has properties quite analogous to a Cartan, the centralizer of a regular semisimple element:

\begin{thm}\label{thmKost}
For $f$ a principal nilpotent element, its centralizer $Z(f)$ is abelian and nilpotent.
\end{thm}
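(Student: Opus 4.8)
The plan is to use a principal $\mf{sl}_2$-triple to confine $Z(f)$ to a nilpotent subalgebra, which yields the ``nilpotent'' assertion at once, and then to obtain commutativity by a limiting argument (the one alluded to later in the paper, in the proposition on abelian subspaces of $Z(A,B)$).

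First I would embed $f$ into an $\mf{sl}_2$-triple $(e,h,f)$ via Jacobson--Morozov. Because $f$ is regular nilpotent, this triple is principal: $h$ is regular semisimple, all its eigenvalues on $\g$ are even, and the resulting grading $\g=\bigoplus_{k\in\bb{Z}}\g_k$ satisfies $\g_0=\h$ and $\bigoplus_{k<0}\g_k=\mf{n}_-$, the sum of the negative root spaces (standard; see \cite{Kost,Coll}). Decomposing $\g$ into irreducible modules $V_i$ for this $\mf{sl}_2$, no $V_i$ is trivial: a trivial summand would lie in $\g_0\cap Z(f)=\h\cap Z(f)$, which is $0$ by the coordinate description of principal nilpotent elements in Proposition \ref{prinnilp}. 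Hence $\dim V_i=2m_i+1$ with $m_i\ge 1$, and since $Z(f)=\ker\ad_f$ meets each $V_i$ in its lowest-weight line, of $\ad_h$-weight $-2m_i\le -2$, we conclude $Z(f)\subseteq\bigoplus_{k<0}\g_k=\mf{n}_-$. A Lie subalgebra of the nilpotent Lie algebra $\mf{n}_-$ is nilpotent and consists of nilpotent elements of $\g$; this proves the nilpotency part. (Along the way one also sees that the number of summands $V_i$ equals $\dim\ker\ad_f=\rk\g$, since $f$ is regular — this will be used below.)

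For commutativity I would run a limit argument. Regular semisimple elements are dense in $\g$, so choose $x_n\to f$ with each $x_n$ regular semisimple; then each $Z(x_n)$ is a Cartan subalgebra, hence abelian, of dimension $r=\rk\g$. By compactness of the Grassmannian $\mathrm{Gr}(r,\g)$ we may pass to a subsequence with $Z(x_n)$ converging to an $r$-dimensional subspace $\mf{l}\subseteq\g$. Writing members of $\mf{l}$ as limits $u=\lim u_n$, $v=\lim v_n$ with $u_n,v_n\in Z(x_n)$ gives $[u,v]=\lim[u_n,v_n]=0$ and $[f,u]=\lim[x_n,u_n]=0$, so $\mf{l}$ is abelian and $\mf{l}\subseteq Z(f)$. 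Since $f$ is regular, $\dim Z(f)=r=\dim\mf{l}$, forcing $\mf{l}=Z(f)$; hence $Z(f)$ is abelian.

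The thing to be careful about is the first part, which leans entirely on the structure theory of the principal $\mf{sl}_2$ (existence of the triple with regular $h$, the identification $\bigoplus_{k<0}\g_k=\mf{n}_-$, and $\dim\ker\ad_f=\rk\g$ for regular $f$); granted this, the rest is short. In the second part the one delicate point is that a limit in $\mathrm{Gr}(r,\g)$ still has dimension $r$, so that the inclusion $\mf{l}\subseteq Z(f)$ is forced to be an equality — this is exactly where regularity of $f$ is indispensable and cannot be dropped.
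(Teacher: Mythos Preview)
Your proof is correct. The paper does not give its own proof of this theorem but defers to the literature, indicating precisely the two ingredients you use: Kostant's limit/Grassmannian argument for the abelian part (your second step is exactly this, specialized to regular $f$ so that the limiting abelian subspace $\mf{l}\subseteq Z(f)$ is forced to equal $Z(f)$), and a reference to \cite{Steinberg} and \cite{Coll} for nilpotency, the latter of which proceeds via the $\mf{sl}_2$-module decomposition just as you do. One cosmetic point: Proposition~\ref{prinnilp} is stated for $\mf{n}_+$, so when you invoke it to get $\h\cap Z(f)=0$ you are implicitly using the evident symmetry between $\mf{n}_+$ and $\mf{n}_-$; this is harmless but worth a word.
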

Kostant proves even more, using a limit argument: for any element $x\in \g$, there is an abelian subalgebra of $Z(x)$ of dimension $\rk \g$, see \cite{Kost}, theorem 5.7.
The nilpotency of $Z(f)$ can be found in \cite{Steinberg}, corollary in Section 3.7. The more precise structure of $Z(x)$ for any nilpotent $x$ is described in \cite{Coll}, Section 3.4.

A principal nilpotent element permits to give a preferred representative of a conjugacy class of regular elements. Given $f$ principal nilpotent, denote by $e$ the other nilpotent element in a principal $\mf{sl}_2$-triple constructed from $f$ (see Kostant \cite{Kost}). Then we get
\begin{prop}
Any regular orbit intersects $f+Z(e)$ in a unique point. So we have $\g^{reg}/G \cong f+Z(e)$.
\end{prop}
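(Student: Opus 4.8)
The plan is to follow Kostant's original argument for the principal slice. Write $S := f+Z(e)$, and let $p_1,\dots,p_r \in \C[\g]^G$ be homogeneous generators of the invariant ring, of degrees $d_i = m_i+1$, assembled into the adjoint quotient map $\chi = (p_1,\dots,p_r)\colon \g \to \C^r$. I will use two classical facts of Kostant as black boxes: (i) the regular elements of any fibre $\chi^{-1}(c)$ form a single $G$-orbit, open and dense in the fibre, so that $\chi$ induces a bijection $\g^{reg}/G \xrightarrow{\ \sim\ } \C^r$; and (ii) an element $x\in\g$ is regular if and only if $dp_1|_x,\dots,dp_r|_x$ are linearly independent. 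Granting these, the whole statement reduces to showing that the restriction $\phi := \chi|_S\colon S \to \C^r$ is an isomorphism of varieties. Indeed, if $\phi$ is an isomorphism then it is a submersion at every $x\in S$, so the $dp_i|_x$ are independent and $x$ is regular by (ii); hence $S\subset\g^{reg}$, and the composite $S\hookrightarrow\g^{reg}\to\g^{reg}/G\cong\C^r$ is exactly $\phi$, hence a bijection -- which says precisely that every regular orbit meets $S$ in exactly one point.

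To prove $\phi$ is an isomorphism I would use a contracting $\C^\times$-action. Extend the principal nilpotent $f$ to a principal $\mf{sl}_2$-triple $(e,h,f)$; then $\ad h$ has only even integer eigenvalues, $\g$ decomposes into principal-$\mf{sl}_2$ isotypic components, and $Z(e)=\bigoplus_i Z(e)_{2m_i}$ with each $Z(e)_{2m_i}$ one-dimensional, sitting in $\ad h$-degree $2m_i$. Let $\gamma\colon\C^\times\to G$ be the cocharacter for which $Ad(\gamma(t))$ scales the $\ad h$-eigenspace of weight $k$ by $t^{k}$ (well defined, since all such weights are even), and set $t\star x := t^{2}\,Ad(\gamma(t))\,x$. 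Then $t\star f = f$, while $t\star$ acts on $Z(e)_{2m_i}$ by $t^{2d_i}$; so $\star$ preserves $S$ and contracts it onto the fixed point $f$ as $t\to 0$. By $G$-invariance and homogeneity, $p_i(t\star x)=t^{2d_i}p_i(x)$, so $\C^r$ carries the matching weighted contracting action and $\phi$ is $\C^\times$-equivariant with $\phi(f)=0$ (the nilpotent $f$ is killed by all invariants). A $\C^\times$-equivariant morphism between affine spaces carrying contracting actions of the same positive weights, sending fixed point to fixed point and \'etale there, is automatically an isomorphism; thus everything reduces to showing that $d\phi_f\colon T_fS = Z(e)\to\C^r$ is bijective.

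This differential computation is the technical heart, and the step where I expect to lean hardest on structure theory. Using the Killing form $\kappa$, identify $dp_i|_f$ with an element of $\g$; differentiating the identity $p_i\big(Ad(\exp(sv))f\big)=p_i(f)$ in $s$ shows $dp_i|_f\in Z(f)$. Since $f$ is principal nilpotent, hence regular with $\dim Z(f)=r$, fact (ii) makes $\{dp_i|_f\}$ a basis of $Z(f)$. From the representation theory of the triple one has $\g = Z(f)\oplus[e,\g]$ and $Z(e)\perp_\kappa[e,\g]$, so $\kappa$ restricts to a perfect pairing $Z(e)\times Z(f)\to\C$; therefore $z\mapsto \big(\kappa(z,dp_i|_f)\big)_i = \big(dp_i|_f(z)\big)_i = d\phi_f(z)$ is an isomorphism $Z(e)\xrightarrow{\ \sim\ }\C^r$. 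Combining this with the previous paragraph shows $\phi$ is an isomorphism, and the reduction of the first paragraph then yields the proposition. (If one prefers not to invoke (ii), the independence of the $dp_i|_f$ can instead be verified by hand after choosing the $p_i$ adapted to the slice -- e.g. built from traces of powers in a faithful representation -- but quoting Kostant is the cleanest route.)
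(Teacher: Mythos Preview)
Your argument is correct and is essentially Kostant's original proof. In the paper this proposition lives in Appendix~\ref{appendix:B}, which only collects background facts with references; there is no proof given beyond the sentence ``This follows from Lemma 10 of \cite{Kost2}.'' So you have supplied the content behind that citation: the contracting $\C^\times$-action on the slice via the principal cocharacter, the equivariance of the adjoint quotient map, and the Killing-form duality between $Z(e)$ and $Z(f)$ showing $d\phi_f$ is an isomorphism are exactly the ingredients of Kostant's argument. One tiny remark: your parenthetical ``well defined, since all such weights are even'' is slightly stronger than needed---integrality of the $\ad h$-weights already suffices for $\gamma$ to be a genuine cocharacter into the adjoint group---but for a principal $\mf{sl}_2$-triple the weights are indeed all even, so the claim is true.
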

This follows from Lemma 10 of \cite{Kost2}. The set $f+Z(e)$ is called a \textit{principal slice} of $\g$ (also \textit{Kostant section}).

We are now going to ``double'' the previous setting. Define the commuting variety to be $\Comm(\g):=\{(A,B)\in \g^2 \mid [A,B]=0\}$. 

\begin{thm}[Richardson]\label{Richardson}
The set of commuting semisimple elements is dense in the commuting variety $\Comm(\g)$.
\end{thm}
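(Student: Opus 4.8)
The plan is to realise the set of commuting semisimple pairs as a concrete image and then argue by dimension. First I would note that two commuting semisimple elements $A,B$ can always be simultaneously conjugated into a fixed Cartan subalgebra $\h$: the centralizer $Z(A)$ is reductive, $B$ is a semisimple element of it and hence lies in some Cartan subalgebra $\h'$ of $Z(A)$, which is a Cartan subalgebra of $\g$; and $A$, being central in $Z(A)$, lies in $\h'$ as well. Conversely, $\mathrm{Ad}_g(\h\times\h)$ consists of commuting semisimple pairs for every $g\in G$. Thus the set $S$ of commuting semisimple pairs is exactly the image of the morphism
$$\Psi\colon G\times\h\times\h\longrightarrow\g\times\g,\qquad(g,x,y)\longmapsto(\mathrm{Ad}_g x,\mathrm{Ad}_g y),$$
so $S\subseteq\Comm(\g)$ and therefore $\overline S\subseteq\Comm(\g)$, the latter being closed. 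It remains to prove $\overline S\supseteq\Comm(\g)$.

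For the reverse inclusion I would use a dimension count. Let $\g^{rss}$ denote the set of regular semisimple elements, which is open, dense and hence irreducible in $\g$, and let $p_1\colon\Comm(\g)\to\g$ be the first projection. For $x\in\g^{rss}$ the fibre $p_1^{-1}(x)=Z(x)$ is the unique Cartan subalgebra containing $x$, so every element of it is semisimple; hence $p_1^{-1}(\g^{rss})\subseteq S$. Moreover $p_1^{-1}(\g^{rss})$ is the total space of a rank-$\rk\g$ vector bundle over $\g^{rss}$, hence irreducible of dimension $\dim\g+\rk\g$. (The same number comes out of $\Psi$ directly: over a pair $(x,y)$ with $x$ regular in $\h$ the fibre of $\Psi$ is a copy of $N_G(T)$, of dimension $\rk\g$ with finite component group $W$, so $\dim\overline S=\dim G+2\rk\g-\rk\g=\dim\g+\rk\g$.) Now I would invoke the two facts that $\Comm(\g)$ is irreducible and that $\dim\Comm(\g)=\dim\g+\rk\g$: then $\overline{p_1^{-1}(\g^{rss})}\subseteq\overline S$ is an irreducible closed subset of the irreducible variety $\Comm(\g)$ of the same dimension, which forces $\overline S=\Comm(\g)$, i.e. $S$ is dense.

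The real content — and the step I expect to be the main obstacle — is therefore the irreducibility and dimension of the commuting variety itself, which is Richardson's theorem proper. I would prove it along Richardson's original lines: any commuting pair $(A,B)$ generates an abelian, hence solvable, subalgebra of $\g$, which by the Borel fixed-point theorem is contained in some Borel subalgebra; fixing one such $\mathfrak{b}=\h\oplus\mf{n}$, this gives $\Comm(\g)=G\cdot\Comm(\mathfrak{b})$, i.e. $\Comm(\g)$ is the image of $G\times\Comm(\mathfrak{b})$ under the analogue of $\Psi$. One then checks that the generic fibre of $G\times\Comm(\mathfrak{b})\to\Comm(\g)$ is a translate of the Borel subgroup $B$, of dimension $\dim\mathfrak{b}$, so that once $\Comm(\mathfrak{b})$ is known to be irreducible of dimension $\dim\mathfrak{b}+\rk\g$, both the irreducibility of $\Comm(\g)$ and the equality $\dim\Comm(\g)=\dim\g+\rk\g$ follow. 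The genuine work is the irreducibility of $\Comm(\mathfrak{b})$: here one argues by induction on $\dim\g$ (equivalently on $\dim\mf{n}$), studying the projection $\Comm(\mathfrak{b})\to\mathfrak{b}$ whose fibre over a regular semisimple $x\in\mathfrak{b}$ is the $\rk\g$-dimensional space $Z(x)\cap\mathfrak{b}$, and showing that the loci where this fibre jumps contribute no extra component. Since this irreducibility enters only as an input, one may also simply cite it from Richardson's paper.
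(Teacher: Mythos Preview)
The paper does not prove this theorem at all; it simply refers the reader to Richardson's original paper \cite{Richardson}. So in the end your proposal and the paper's treatment coincide: both defer the substance to Richardson.

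Two remarks on your write-up. First, your preliminary reduction is correct but superfluous: once you observe that $p_1^{-1}(\g^{rss})$ is a nonempty \emph{open} subset of $\Comm(\g)$ contained in $S$, the density of $S$ is immediately equivalent to the irreducibility of $\Comm(\g)$, since nonempty open subsets of irreducible varieties are dense. No dimension count is needed; you recognise this yourself when you say the real content is the irreducibility.

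Second, the sketch you give in the last paragraph is not Richardson's argument. Richardson does not pass through $\Comm(\mathfrak{b})$. His proof takes the Jordan decomposition $x=x_s+x_n$ of the first element of a commuting pair, reduces to the reductive centralizer $Z(x_s)$ (a Levi subalgebra), and proceeds by induction on its semisimple rank; the base case of a nilpotent pair is handled by a separate deformation argument using $\mf{sl}_2$-triples. Your reduction $\Comm(\g)=G\cdot\Comm(\mathfrak{b})$ is valid, and the dimension counts you make are correct, but the irreducibility of $\Comm(\mathfrak{b})$ is itself a substantial and delicate question --- the analogous statement for the nilradical $\mf{n}$ is in fact false in general --- and the inductive outline you give does not establish it. Since you conclude by citing Richardson anyway, no harm is done, but if you want to indicate the actual method the Jordan--Levi reduction is the one to describe.
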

See the paper of Richardson \cite{Richardson} for a proof.
As a consequence, $\Comm(\g)$ is an irreducible variety, but highly singular.

With this, we can explore the minimal dimension of a centralizer of a commuting pair:
\begin{prop}\label{doublecomm}
For $(A,B) \in \Comm(\g)$, we have $\dim Z(A,B) \geq \rk \g.$
\end{prop}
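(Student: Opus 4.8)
The plan is to deduce the bound for an arbitrary commuting pair from the same bound on the dense subset of commuting \emph{semisimple} pairs, using upper semicontinuity of the dimension of the common centralizer.

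First I would settle the case where $(A,B)$ is a commuting pair of semisimple elements. Since $A$ is semisimple, $Z(A)$ is a reductive subalgebra of $\g$; it has full rank $\rk\g$ because any Cartan subalgebra of $\g$ through $A$ is contained in $Z(A)$, and $A$ lies in the center of $Z(A)$ since it commutes with everything in $Z(A)$. Now $B\in Z(A)$ is semisimple, so it lies in some Cartan subalgebra $\mf{t}$ of $Z(A)$; such a $\mf{t}$ is a maximal toral subalgebra of dimension $\rk\g$, hence a Cartan subalgebra of $\g$, and it contains the central element $A$. As $\mf{t}$ is abelian and contains both $A$ and $B$, we get $\mf{t}\subseteq Z(A)\cap Z(B)=Z(A,B)$, so $\dim Z(A,B)\geq\rk\g$. (Concretely: two commuting semisimple elements are simultaneously diagonalizable and lie in a common Cartan.)

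Next I would invoke upper semicontinuity. The common centralizer $Z(A,B)$ is the kernel of the linear map $\g\to\g\oplus\g$ sending $C\mapsto([A,C],[B,C])$, whose matrix depends polynomially on $(A,B)$; the locus of $(A,B)\in\Comm(\g)$ where this map has rank at most $r$ is cut out by the vanishing of its $(r+1)\times(r+1)$ minors, hence is Zariski-closed, so $\{(A,B)\in\Comm(\g)\mid \dim Z(A,B)\geq k\}$ is closed for every integer $k$. By Richardson's theorem (\ref{Richardson}) the commuting semisimple pairs are dense in $\Comm(\g)$, so the closed set $\{(A,B)\mid\dim Z(A,B)\geq\rk\g\}$ contains a dense subset, hence equals all of $\Comm(\g)$, which proves the proposition.

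I do not expect a serious obstacle here: the only points needing care are the existence of a common Cartan for a commuting semisimple pair and the (standard) semicontinuity statement. Should one prefer to avoid semicontinuity, one can argue exactly as in Kostant's proof of Theorem \ref{thmKost}: take commuting semisimple pairs $(A_n,B_n)\to(A,B)$, and for each $n$ a $\rk\g$-dimensional abelian subspace $V_n\subseteq Z(A_n,B_n)$; pass to a subsequence with $V_n\to V$ in the compact Grassmannian $Gr(\rk\g,\dim\g)$, and observe that $V$ is a $\rk\g$-dimensional subspace of $Z(A,B)$ because the conditions $[A_n,\cdot]=[B_n,\cdot]=0$ pass to the limit.
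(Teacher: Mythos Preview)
Your proof is correct and follows essentially the same approach as the paper: both arguments use Richardson's density of semisimple commuting pairs together with semicontinuity of $\dim Z(A,B)$. The only cosmetic difference is that the paper phrases semicontinuity as ``the locus where $\rk(\ad_A,\ad_B)$ is maximal is Zariski-open'' (so this open set meets the dense semisimple pairs, where the bound holds), whereas you phrase it dually as ``the locus where $\dim Z(A,B)\geq\rk\g$ is closed'' (so it contains the dense semisimple pairs, hence everything). Your alternative Grassmannian-limit argument is exactly the one the paper uses a few lines later to prove the stronger statement that $Z(A,B)$ always contains an abelian subspace of dimension $\rk\g$.
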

\begin{proof}
Consider the set $M$ of elements with centralizer of minimal dimension. Since $$M=\{(A,B) \in \Comm(\g) \mid \rk(ad_A, ad_B) \text{ maximal}\}$$ we see that $M$ is Zariski-open. By the theorem of Richardson it intersects the space of semisimple pairs for which the common centralizer is a Cartan $\h$, so of dimension $\rk \g$.
\end{proof}

\section{Haiman coordinates}\label{haimancoords}
We already mentioned a uniform way to get coordinates in the chart of $\Hilb^n(\C^2)$ associated to a Young diagram $D$. These are described by Haiman in his paper \cite{Haiman}. We describe them here for completeness and to give a detail of its symplectic structure which seems to be new.

The construction of Haiman's coordinates goes as follows: For each box $B_x \in D$ consider the rightmost box $B_r \in D$ in the same row as $B_x$ and the bottommost box $B_b \in D$ in the same column as $B_x$ (see Figure \ref{Haimancoo}). The box $B_{r+1}$ to the right of $B_r$ is not in $D$, so gives a linear combination of boxes in $D$. Denote by $b_{x,r}$ the coefficient of $B_b$ in this linear combination. Similarly, denote by $b_{x,b}$ the coefficient of $B_r$ in the linear combination associated to the box $B_{b+1}$ at the bottom of $B_b$. Haiman shows that the set $\{b_{x,r}, b_{x,b}\}_{x\in D}$ is a coordinate system. 
\begin{figure}[h]
\centering
\includegraphics[height=2cm]{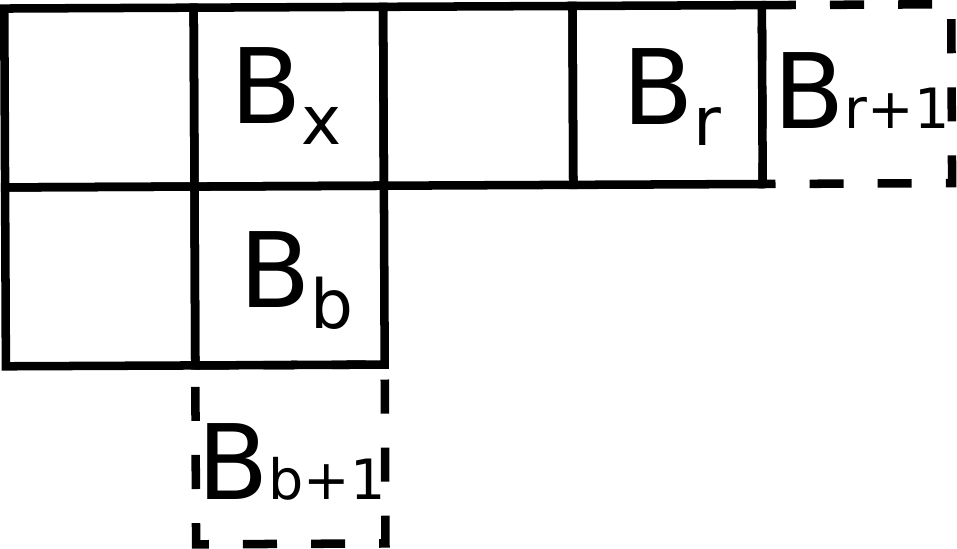}

\caption{Haiman's coordinates}\label{Haimancoo}
\end{figure}

One can try to write the symplectic structure in the Haiman coordinates. In general, this gives a quite complicated expression. In one special case, the situation is easy:

\begin{prop}
If the Young diagram is a rectangle, then Haiman's coordinates $\{b_{x,r}, b_{x,b}\}_{x\in D}$ are canonical coordinates with respect to the symplectic structure of the punctual Hilbert scheme.
\end{prop}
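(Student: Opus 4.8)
The plan is to pass to the matrix model of $\Hilb^n(\C^2)$, in which the holomorphic symplectic form is $\omega=\tr(dX\wedge dY)$ on $\{(X,Y)\in\mf{gl}_n^2 \mid [X,Y]=0,\ (X,Y)\text{ cyclic}\}/GL_n$: this form is conjugation-invariant, descends to the quotient, and on the dense locus of reduced subschemes restricts to $\sum_{k=1}^n dx_k\wedge dy_k$, where $(x_k,y_k)$ are the $n$ points (see \cite{Nakajima}; concretely $\Hilb^n(\C^2)$ is the Nakajima quiver variety for the Jordan quiver with one-dimensional framing, on whose stable locus the framing map ``$j$'' vanishes, so that $\tr(di\wedge dj)$ drops out of the ambient form and only $\tr(dX\wedge dY)$ survives). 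Fix $D$ to be the $a\times b$ rectangle, $n=ab$, and in the chart $U_D$ index the monomial basis of $\C[x,y]/I$ by boxes $(i,j)$ with $0\le i\le b-1$, $0\le j\le a-1$, so that $e_{ij}\leftrightarrow x^iy^j$; let $X=M_x$ and $Y=M_y$ be multiplication by $x$ and $y$.

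First I would record the shape of $X$ and $Y$ in this chart. Since $D$ is a rectangle, $Xe_{ij}=e_{i+1,j}$ whenever $i<b-1$, so the only columns of $X$ with non-constant entries are those indexed by the last column $(b-1,j)$, where $Xe_{b-1,j}$ is the reduction of $x^by^j$ modulo $I$; symmetrically, the only columns of $Y$ with non-constant entries are those indexed by the bottom row $(i,a-1)$, where $Ye_{i,a-1}$ is the reduction of $x^iy^a$. Comparing with Haiman's definition, $b_{(i,j),r}$ is the coefficient of $x^iy^{a-1}$ in the reduction of $x^by^j$, i.e. $b_{(i,j),r}=X_{(i,a-1),(b-1,j)}$, while $b_{(i,j),b}$ is the coefficient of $x^{b-1}y^j$ in the reduction of $x^iy^a$, i.e. $b_{(i,j),b}=Y_{(b-1,j),(i,a-1)}$.

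The computation is then immediate. Writing $\tr(dX\wedge dY)=\sum_{p,q}dX_{pq}\wedge dY_{qp}$, a summand can be non-zero only if both $X_{pq}$ and $Y_{qp}$ are non-constant. By the previous paragraph, $X_{pq}$ non-constant forces the column index $q=(b-1,j)$ for some $j$; and since $Y_{qp}$ is the coefficient of $e_q$ in $Ye_p$, it is non-constant only if $p=(i,a-1)$ for some $i$. Hence only the $n$ pairs $(p,q)=\big((i,a-1),(b-1,j)\big)$ contribute, and $\tr(dX\wedge dY)=\sum_{i=0}^{b-1}\sum_{j=0}^{a-1}dX_{(i,a-1),(b-1,j)}\wedge dY_{(b-1,j),(i,a-1)}=\sum_{(i,j)\in D}db_{(i,j),r}\wedge db_{(i,j),b}$, which is exactly the statement that the $b_{x,r}$ are Darboux ``positions'' with conjugate ``momenta'' $b_{x,b}$.

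The points requiring care, rather than genuine obstacles, are: identifying the symplectic form in the matrix model and checking that it descends and agrees with $\sum dx_k\wedge dy_k$ on the dense distinct-point locus of $U_D$ (which also fixes the normalization and serves as a cross-check, e.g. in the case $a=1$, where $U_D$ becomes a companion-matrix chart); and keeping the matrix-index bookkeeping aligned with the Young-diagram conventions when matching the non-constant entries of $X$ and $Y$ to $b_{x,r}$ and $b_{x,b}$ — the rectangle is precisely the case in which the last column of $X$ and the bottom row of $Y$ meet the rest of the diagram so that the trace form sees exactly these $2n$ coefficients and nothing else. That $\{b_{x,r},b_{x,b}\}_{x\in D}$ genuinely forms a coordinate system on $U_D$ is Haiman's theorem and may simply be quoted; the relations $[X,Y]=0$ cut the non-constant entries of $X$ and $Y$ down to exactly these coordinates but play no further role in the form computation. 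Finally, ``canonical'' is to be read up to the harmless relabeling of which family is positions and which momenta, and an overall sign.
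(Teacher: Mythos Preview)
Your proof is correct and follows essentially the same route as the paper: identify the symplectic form as $\tr(dX\wedge dY)$ in the matrix model, observe that for a rectangular diagram the only non-constant entries of $X$ (resp.\ $Y$) sit in the columns indexed by the last column (resp.\ last row) of $D$, and read off that the surviving terms in the trace are exactly $\sum_{x\in D}db_{x,r}\wedge db_{x,b}$. The one presentational difference is that the paper establishes $\tr(dM_x\wedge dM_y)=\tr(dN_x\wedge dN_y)$ by an explicit transition-matrix computation (using cyclicity of the trace and $[M_x,M_y]=0$), whereas you invoke the Nakajima quiver-variety description to assert directly that $\tr(dX\wedge dY)$ is conjugation-invariant and descends; both justifications are fine.
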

The idea of the proof is to compute the symplectic form in the basis adapted to the Young diagram.
\begin{proof}
The symplectic structure of the punctual Hilbert scheme comes from the canonical symplectic structure of $\C^{2n}$ given by $\omega=\sum_i dx_i\wedge dy_i.$
Consider the multiplication operators $M_x$ and $M_y$ in the quotient $\C[x,y]/I$ where $I$ is an element in the Hilbert scheme (idealic viewpoint). Diagonalizing these operators give diagonal matrices with entries $(x_1,...,x_n)$ and $(y_1,...,y_n)$. Hence we can express the symplectic structure by $$\omega = \sum_i dx_i\wedge dy_i = \tr dM_x \wedge dM_y.$$

Changing to the base adapted to the Young diagram $D$ (basis generated by monomials $x^iy^j$ where $(i,j) \in D$), the matrix $M_x$ becomes a matrix $N_x$ with entries 1 on the line under the diagonal, apart from some columns where the linear combination associated to some $B_{r+1}$ is written. Similarly, the matrix $M_y$ becomes a matrix $N_y$ where the only columns which are non-constant are the last ones where the linear combination associated to the $B_{b+1}$ are written. 

If we denote by $T$ the transition matrix, we get $d(TM_xT^{-1}) = d(T) M_xT^{-1} + Td(M_x)T^{-1}+TM_xd(T^{-1})$. A lengthy but straight forward computation, using the cyclicity of the trace and the fact that $M_x$ and $M_y$ commute, shows that $$\tr M_x \wedge M_y = \tr d(TM_xT^{-1}) \wedge d(TM_yT^{-1}) = \tr N_x \wedge N_y.$$

Let us now use the fact that the Young diagram is a rectangle, say with $k$ rows and $l$ columns. By definition of the Haiman coordinates, we can compute where they appear in $N_x$ and $N_y$. For $\alpha \in [1,...,k]$ and $\beta \in [1,...,l]$, we get 
$$(N_x)_{l(k-1)+\beta,\alpha l} = b_{(\alpha, \beta),r} \text{ and } (N_y)_{\alpha l,l(k-1)+\beta} = b_{(\alpha, \beta),b}.$$

Finally, since the only non-zero rows of $dN_x$ are those in position $\alpha l$ and the only non-zero columns of $dN_y$ are those in position $l(k-1)+\beta$, we see that $\tr dN_x \wedge dN_y = \sum_{x\in D} db_{x,r}\wedge db_{x,b}$. Therefore we conclude:
$$\omega = \tr dM_x \wedge dM_y = \tr dN_x \wedge dN_y = \sum_{x\in D} db_{x,r}\wedge db_{x,b}.$$
\end{proof}

\end{document}